\documentclass[11pt]{article}

\usepackage[a4paper,margin=1in]{geometry}
\usepackage{amsmath,amssymb,amsthm,mathtools}
\usepackage{array}
\usepackage{enumitem}
\usepackage{hyperref}
\usepackage{microtype}
\usepackage{bm}

\hypersetup{colorlinks=true,linkcolor=blue,citecolor=blue,urlcolor=blue}

\newtheorem{theorem}{Theorem}[section]
\newtheorem{proposition}[theorem]{Proposition}
\newtheorem{lemma}[theorem]{Lemma}
\newtheorem{corollary}[theorem]{Corollary}
\theoremstyle{definition}
\newtheorem{assumption}[theorem]{Assumption}
\newtheorem{condition}[theorem]{Condition}
\newtheorem{definition}[theorem]{Definition}
\newtheorem{criterion}[theorem]{Criterion}
\theoremstyle{remark}
\newtheorem{remark}[theorem]{Remark}

\newcommand{\E}{\mathbb E}
\newcommand{\R}{\mathbb R}
\newcommand{\Var}{\operatorname{Var}}
\newcommand{\Cov}{\operatorname{Cov}}
\newcommand{\cum}{\operatorname{cum}}
\newcommand{\Tr}{\operatorname{Tr}}
\newcommand{\spec}{\operatorname{Spec}}
\newcommand{\diag}{\operatorname{diag}}
\newcommand{\scal}{\mathrm{sc}}
\newcommand{\ii}{\mathrm i}
\newcommand{\dd}{\mathrm d}
\newcommand{\op}{\mathrm{op}}
\newcommand{\HS}{\mathrm{HS}}

\newcommand{\one}{\mathbf 1}
\newcommand{\abs}[1]{\lvert #1\rvert}
\newcommand{\norm}[1]{\lVert #1\rVert}

\title{Linear Spectral Statistics for Entrywise-Transformed Spiked Wigner Matrices under Shifted $L^4$ Profile Admissibility}
\author{Tsz-Kin Chan\footnote{Department of Mathematical Sciences, KAIST, Daejeon, 34141, Korea \newline email: \texttt{tkchan@kaist.ac.kr}} and Ji Oon Lee\footnote{Department of Mathematical Sciences, KAIST, Daejeon, 34141, Korea \newline email: \texttt{jioon.lee@kaist.edu}}}
\date{\today}

\begin{document}

\maketitle
\begin{abstract}
We prove a derivative-free analytic linear spectral statistics theorem for
entrywise-transformed rank-one spiked Wigner matrices with a general
microscopic noise law.  The transform is required to be centered,
variance-normalized, and admissible under the small translations generated by
the spike: its shifted mean and second moment have first- and second-order
profiles, while its centered shifted fourth cumulants and fourth tails are
stable.  At every microscopic shift we construct an explicit uniformly
bounded three-point variable matching the first four centered moments of the
target entry exactly.  A generalized-Wigner LSS theorem applies to the
resulting bounded triangular array, and a global Fourier--Duhamel derivative
estimate transfers the analytic statistic back to the rough transform without
a common truncation, coefficient-stability assumption, or local law.
The order-one mean consists of the homogeneous Wigner bias, a rank-one
Woodbury response, a zero-diagonal correction, and a quadratic
variance-profile response.  The centered covariance is the standard
zero-diagonal real-Wigner covariance with fourth-cumulant parameter
$\kappa_4^{f,\nu}$.  We distinguish the bulk contour statistic from the full
trace in the supercritical regime and show that the separated outlier adds
exactly $\varphi(\theta+\theta^{-1})$ to the full-trace centering.  As a
self-contained consequence, Gaussian noise with
$f\in L^{4+\epsilon}(\gamma)$ satisfies the theorem without differentiability
of $f$; the resulting bulk and full-trace corollary has explicit Hermite
coefficients and includes every centered, variance-normalized
polynomial-growth transform.
Concrete likelihood-ratio, smooth-transform, bounded rough-transform, and
atomic criteria are provided, together with obstructions showing that bare
$L^4(\nu)$ is insufficient.
\end{abstract}

\tableofcontents
\section{Introduction}

\subsection{Model and contribution}

In this paper, we study linear spectral statistics (LSS) for an entrywise-transformed rank-one spiked Wigner model.  The LSS are observables of the form
\[
        \Tr \varphi(A_N)=\sum_{i=1}^N \varphi(\lambda_i(A_N)),
\]
where \(\lambda_1(A_N),\ldots,\lambda_N(A_N)\) are the eigenvalues of a random matrix \(A_N\).  In the simplest homogeneous Wigner case, one expects that
\[
        \Tr\varphi(A_N)-N\int \varphi\,\dd\rho_{\scal}
\]
converges, after the correct order-one centering, to a Gaussian random variable.  The present model contains two additional features: a rank-one deterministic spike and an entrywise nonlinear transform, possibly of low regularity.

The microscopic noise variable is now allowed to have a general Wigner law.  Let \(\zeta_{ij}\) be independent copies of a real random variable \(\zeta\sim\nu\), normalized by
\[
        \E\zeta=0,\qquad \E\zeta^2=1.
\]
The normalized off-diagonal entries before transformation are
\[
        \zeta_{ij}+\sqrt{\lambda N}\,x_i x_j=:\zeta_{ij}+s_{ij},
\]
where \(\lambda\ge0\) and \(x=(x_1,\ldots,x_N)\) is a deterministic delocalized spike vector.  The observed zero-diagonal matrix after entrywise transformation is
\[
        M_N^f(i,j)=N^{-1/2}f(\zeta_{ij}+s_{ij}),\qquad i<j.
\]
This is the Wigner-noise version of the transformed spiked Wigner model. Correspondingly, the Gaussian-noise version refers to the case when \(\nu=\gamma\) is the standard Gaussian law.

The transformed model is a natural generalization of spiked Wigner models used in weak detection, nonlinear PCA, and signal-plus-noise matrix models \cite{Feldman2025,GuionnetKoKrzakalaMergnyZdeborova2023,MoniriHassani2024,LeeLee2025}.  The non-Gaussian Wigner case is especially natural: in transformed-spiked inference, non-Gaussian noise is often exactly the reason for applying an entrywise score or nonlinear transformation \cite{PerryWeinBandeiraMoitra2018,ChungLee2022}.

The main objective is to analyze how the spike and the transform jointly affect the bulk LSS of \(M_N^f\) when \(f\) is not assumed smooth.  In the Gaussian special case, the shifted profiles of \(f(\xi+s)\) are derived by the Gaussian likelihood-ratio identity
\[
        \E h(\xi+s)=\E\bigl[h(\xi)e^{s\xi-s^2/2}\bigr].
\]
For general Wigner noise there is no universal identity of this form.  Hence the Gaussian Hermite coefficient layer is replaced by a profile-admissibility condition on
\[
        m_f(s)=\E_\nu f(\zeta+s),\qquad
        q_f(s)=\E_\nu f(\zeta+s)^2.
\]
The required coefficients are defined by the expansions
\[
        m_f(s)=a_1s+o(s),\qquad
        q_f(s)=1+b_1s+b_2s^2+o(s^2).
\]
Here and throughout, $f$ denotes the possibly rough entry transform, whereas
$\varphi$ denotes the analytic spectral test function.  These two regularity
questions are independent: the theorem lowers the regularity imposed on $f$
but not the analytic regularity imposed on $\varphi$.

These assumptions are the low-regularity analogue of the score, density, smoothness, and moment assumptions used in the transformed-spiked Wigner literature.  They are stronger than bare \(f\in L^4(\nu)\), but weaker in spirit than requiring pointwise differentiability of \(f\) in the Gaussian case.  A central point of this article is that these target-profile assumptions already suffice for an exact bounded reduction.  At each microscopic shift we replace the centered target entry by an explicit three-point variable with the same first four centered moments.  This removes every lower-order term in the edgewise comparison and avoids both a common truncated transform and coefficient-stability assumptions.

Under these hypotheses, the deterministic LSS mean has three model-specific channels:
\[
        m_\varphi^{\mathrm{rank1}},\qquad
        c_\varphi^{\mathrm{diag0}},\qquad
        c_\varphi^{\mathrm{quad-var}}.
\]
The rank-one channel comes from the deterministic mean profile, the zero-diagonal channel comes from the convention that the diagonal is removed, and the quadratic variance-profile channel comes from the spike-induced second-order change in the entry variance.  The leading centered covariance remains the standard real-Wigner analytic LSS covariance, with fourth-cumulant parameter \(\kappa_4^{f,\nu}\).

The main contributions are therefore:
\begin{enumerate}[label=(\roman*),leftmargin=2.5em]
\item a shifted-profile formulation for general Wigner noise and rough entry
transforms;
\item an explicit bounded three-point completion matching the first four
centered moments at every microscopic shift;
\item an order-one LSS centering that separates the homogeneous, rank-one,
zero-diagonal, and quadratic variance-profile responses; and
\item a derivative-free Gaussian corollary for every centered and
variance-normalized $f\in L^{4+\epsilon}(\gamma)$.
\end{enumerate}

The Gaussian instance is stated explicitly rather than left as an implicit
substitution into the general theorem.  If \(\xi_{ij}\) are independent
standard Gaussian variables and
\[
 M_N^f(i,j)=N^{-1/2}f\!\left(\xi_{ij}+\sqrt{\lambda N}\,x_ix_j\right),
 \qquad i<j,
\]
with zero diagonal, then every centered and variance-normalized
\(f\in L^{4+\epsilon}(\gamma)\) satisfies W1--W5.  The effective spike and
fourth-cumulant parameters are
\[
 \theta_f=\sqrt\lambda\,\E[\xi f(\xi)],
 \qquad
 \kappa_4^{f,\gamma}=\E f(\xi)^4-3.
\]
Consequently the general bulk theorem and both of its full-trace consequences
give a derivative-free Gaussian-noise LSS CLT, including the deterministic
contribution of the separated outlier when \(|\theta_f|>1\).  The complete
statement and all Gaussian coefficients are recorded in
Corollary~\ref{cor:gaussian-LSS-main}.

\subsection{Main result in informal form}
\label{subsec:intro-main-result}

Suppose that $(f,\nu)$ satisfies the shifted profile conditions W1--W5 and
that the spike is delocalized and weakly balanced as in
Assumption~\ref{ass:spike}.  For every analytic bulk test function $\varphi$,
the principal conclusion is
\begin{equation}
 \mathcal L_{M_N^f}^{\Gamma}(\varphi)
 -N\int_{-2}^2\varphi(t)\rho_{\scal}(t)\,\dd t
 -m_{\varphi,\Gamma}^{f,\nu}
 \ \Longrightarrow\ G_\varphi,
 \label{eq:intro-informal-clt}
\end{equation}
where $G_\varphi$ is centered Gaussian and
\begin{equation}
 \Cov(G_\varphi,G_\psi)
 =\mathcal V_{\kappa_4^{f,\nu}}^\Gamma(\varphi,\psi).
 \label{eq:intro-informal-covariance}
\end{equation}
The centering $m_{\varphi,\Gamma}^{f,\nu}$ is the sum of the homogeneous
zero-diagonal Wigner bias, the rank-one response, the zero-diagonal
correction, and the quadratic variance-profile response.  If
$|\theta_{f,\nu}|\leq1$, the bulk statistic in
\eqref{eq:intro-informal-clt} equals the full trace with probability tending
to one.  If $|\theta_{f,\nu}|>1$, the full-trace centering contains the
additional deterministic contribution
\[
 \varphi(\rho_\theta),\qquad
 \rho_\theta=\theta_{f,\nu}+\theta_{f,\nu}^{-1}.
\]
The precise hypotheses, contour conditions, and formulas are stated in
Theorem~\ref{thm:bulk-main-package},
Corollary~\ref{cor:subcritical-full-package}, and
Theorem~\ref{thm:R6-supercritical-full}.

\subsection{Related works}

\paragraph*{Position of the present result.}
The contribution here is the combination of general Wigner noise, a
possibly rough entry transform described through shifted profiles, exact
edgewise four-moment completion, a spike-generated variance profile, and an
explicit bulk-LSS centering that also records the contribution of a separated
outlier.  For Gaussian noise, the likelihood-ratio identity verifies the
profiles for every centered and normalized
$f\in L^{4+\epsilon}(\gamma)$, without introducing a separate Gaussian
comparison argument.  The theorem does not lower the regularity threshold on
the spectral test function, and it does not address critical BBP or edge-scale
fluctuations.

\paragraph*{Classical and low-regularity LSS.}
Central limit theorems for linear spectral statistics of invariant, sample
covariance, and Wigner ensembles go back, among others, to
Johansson~\cite{Johansson1998}, Bai--Silverstein~\cite{BaiSilverstein2004},
Lytova--Pastur~\cite{LytovaPastur2009}, and
Shcherbina~\cite{Shcherbina2011}.  Regularity of the spectral test function
has subsequently been lowered by Sosoe--Wong~\cite{SosoeWong2013} and, for
bulk-supported statistics under smooth entry laws, by
Landon--Sosoe~\cite{LandonSosoe2022}; multi-resolvent methods give another
route to refined fluctuation estimates~\cite{CipolloniErdosSchroeder2022}.
This line of work concerns regularity of the spectral observable
\(\varphi\).  The low-regularity issue in the present paper is different:
\(\varphi\) is analytic, while the entry transform \(f\) need not possess
pointwise derivatives.

\paragraph*{Variance profiles and Wigner-type matrices.}
Linear statistics for matrices with nonconstant variance profiles have been
studied by Adhikari--Jana--Saha~\cite{AdhikariJanaSaha2021VarianceProfileLES},
while Li--Xu give global and mesoscopic fluctuation formulas for generalized
Wigner matrices~\cite{LiXu2021GeneralizedWignerLES}.  The local spectral
theory of general Wigner-type matrices and the stability theory of the matrix
Dyson equation were developed by Ajanki--Erd\H{o}s--Kr\"uger and related
works~\cite{AEK2017WignerType,AEK2019MDE,Erdos2019MDE}; more recent
mesoscopic Wigner-type statistics are treated by Riabov~\cite{Riabov2025}.
Here the variance profile is not an arbitrary macroscopic input.  It is a
weak, spike-generated perturbation of the flat profile.  We use the
generalized-Wigner LSS theory as a backend and separately compute the
order-one deterministic response of its linear and quadratic channels.

\paragraph*{Finite-rank deformations and deformed-Wigner LSS.}
The statistical and edge-transition origins include Johnstone's spiked
covariance model~\cite{Johnstone2001}, the Baik--Ben Arous--P\'{e}ch\'{e}
transition~\cite{BaikBenArousPeche2005}, and the spiked sample covariance
analysis of Baik--Silverstein~\cite{BaikSilverstein2006}.  Finite-rank
deformations of Wigner and related matrices have been analyzed by
F\'{e}ral--P\'{e}ch\'{e}~\cite{FeralPeche2007},
Capitaine--Donati-Martin--F\'{e}ral~\cite{CapitaineDonatiMartinFeral2009},
Benaych-Georges--Nadakuditi~\cite{BenaychGeorgesNadakuditi2011},
Pizzo--Renfrew--Soshnikov~\cite{PizzoRenfrewSoshnikov2013},
Knowles--Yin~\cite{KnowlesYin2013}, and
Bloemendal--Vir\'{a}g~\cite{BloemendalVirag2013}.  Their principal outputs
concern outliers, eigenvectors, edge fluctuations, and BBP transitions.
For linear statistics, Ji--Lee~\cite{JiLee2020} and
Dallaporta--F\'{e}vrier~\cite{DallaportaFevrier2025} prove CLTs for deformed
Wigner matrices and identify deformation-dependent deterministic equivalents
or fluctuation channels.  In those models a deformation is added to a Wigner
matrix.  In the present model the spike is inserted before the nonlinear
entry transform and therefore changes the entry mean, variance profile, and
higher cumulants simultaneously.

\paragraph*{LSS and spectral methods for inference.}
PCA and related spectral estimators extract edge eigenvalues and eigenvectors;
representative spiked-model results include
Paul~\cite{Paul2007}, Nadler~\cite{Nadler2008},
Onatski~\cite{Onatski2012}, Fan--Wang~\cite{FanWang2017}, and
Donoho--Gavish--Johnstone~\cite{DonohoGavishJohnstone2018}.  LSS instead
aggregate the full spectrum.  Chung--Lee prove an LSS CLT for rank-one
spiked-Wigner weak detection and show that a score-type entrywise transform
can improve testing under known non-Gaussian noise~\cite{ChungLee2022};
Jung--Chung--Lee treat related rectangular detection problems
\cite{JungChungLee2021}.  For spiked sample covariance and related Hermitian
ensembles, Passemier--McKay--Chen~\cite{PassemierMcKayChen2015} and
Wang--Silverstein--Yao~\cite{WangSilversteinYao2014} derive LSS CLTs or
centering expansions with order-one spike corrections.

\paragraph*{Entrywise nonlinear spiked models.}
Feldman treats elementwise-transformed spiked matrices, including highly
nonlinear or discontinuous transformations, with leading spectral behavior
and PCA as the main observables~\cite{Feldman2025}.
Guionnet--Ko--Krzakala--Mergny--Zdeborov\'{a} derive effective spiked-Wigner
reductions and phase transitions indexed by generalized information
coefficients~\cite{GuionnetKoKrzakalaMergnyZdeborova2023}; their qualitative
rank-one equivalence allows rough transforms under regular noise, while
quantitative remainder bounds use additional smoothness.  Moniri--Hassani
develop signal-plus-noise decompositions for nonlinear spiked models
\cite{MoniriHassani2024}, and Lee--Lee identify BBP-type largest-eigenvalue
fluctuations for transformed spiked Wigner matrices~\cite{LeeLee2025}.
These main results concern spectral decomposition, eigenvalues, or
eigenvectors.  The present paper instead follows the order-one bulk LSS,
including the deterministic mean channels created by the transform.

\paragraph*{The fourth-moment boundary.}
The standard Wigner LSS covariance is not expected to persist unchanged once
the transformed entries fall below the fourth-moment threshold.
Benaych-Georges--Maltsev obtain a tail-index-dependent fluctuation scale for
half-heavy-tailed matrices~\cite{BenaychGeorgesMaltsev2016}, and
Lodhia--Maltsev identify the corresponding nonstandard covariance kernel
\cite{LodhiaMaltsev2023}.  These results explain why W4--W5 are not merely
technical substitutes for bare integrability: they place the present theorem
in the finite-fourth-moment Wigner fluctuation class.

\subsection{Organization of the paper}

Section~\ref{sec:model-main} defines the model and the shifted-profile
assumptions.  Section~\ref{sec:bulk-main-results} introduces the bulk contour
statistic, fixes the explicit homogeneous zero-diagonal mean and covariance,
and states the bulk and full-trace results.  Its Gaussian
subsections first verify profile-admissibility from the Gaussian translation
identity and then state the resulting Gaussian-noise bulk and full-trace LSS
corollary.  Section~\ref{sec:proof-overview} gives the proof architecture.
The four appendices follow that hierarchy.  Appendix~\ref{app:proof-notation}
develops the shifted-tail and spike estimates and the exact four-moment atomic
completion.  Appendix~\ref{sec:R3-package} proves the bounded-array LSS input,
the deterministic responses, and spectral confinement.
Appendix~\ref{sec:R5-package} performs the global four-moment replacement and
adds the separated-outlier contribution.
Appendix~\ref{app:assumption-W-criteria} gives concrete
profile-admissibility criteria, including the detailed Gaussian shifted-tail
estimate, and explains why bare $L^4(\nu)$ is insufficient.
\section{Model and Assumptions}\label{sec:model-main}

\subsection{Model and profile-admissibility}

Let \(\nu\) be a probability law on \(\R\), and let \(\zeta\sim\nu\).  Throughout the paper, \(\zeta_{ij}\), \(1\le i<j\le N\), are independent copies of \(\zeta\).  We assume
\[
        \E\zeta=0,
        \qquad
        \E\zeta^2=1.
\]
The transformed spiked matrix is the real symmetric zero-diagonal matrix \(M_N^f\) defined by
\begin{equation}\label{eq:model}
        M_N^f(i,j)=N^{-1/2}f(\zeta_{ij}+s_{ij}),
        \qquad
        s_{ij}=\sqrt{\lambda N}\,x_i x_j,
        \qquad i<j,
\end{equation}
and \(M_N^f(i,i)=0\).  Here \(\lambda\ge0\) is fixed.

The Gaussian-noise model uses exactly the same normalization and zero-diagonal
convention: it is obtained by taking \(\nu=\gamma\) and writing
\(\zeta=\xi\sim N(0,1)\).  No separate matrix scaling or spike convention is
introduced in the Gaussian specialization.

\begin{condition}[Wigner profile-admissibility]\label{cond:profile-admissibility}
The transform is a fixed Borel measurable representative
$f:\mathbb R\to\mathbb R$, and the microscopic noise law $\nu$ satisfies the
following conditions with $f$.
\begin{enumerate}[label=(W\arabic*)]
\item \textbf{Centering and normalization.}
\[
        f\in L^4(\nu),\qquad
        \E_\nu f(\zeta)=0,\qquad
        \E_\nu f(\zeta)^2=1 .
\]
\item \textbf{Shifted mean profile.} With
\[
        m_f(s):=\E_\nu f(\zeta+s),
\]
there exists $a_1\in\mathbb R$ such that
\[
        \sup_{0<|s|\le\delta}
        \frac{|m_f(s)-a_1s|}{|s|}\longrightarrow0
        \qquad(\delta\downarrow0).
\]
\item \textbf{Shifted second-moment profile.} With
\[
        q_f(s):=\E_\nu f(\zeta+s)^2,
\]
there exist $b_1,b_2\in\mathbb R$ such that
\[
        \sup_{0<|s|\le\delta}
        \frac{|q_f(s)-1-b_1s-b_2s^2|}{s^2}\longrightarrow0
        \qquad(\delta\downarrow0).
\]
Equivalently,
\[
        \Var(f(\zeta+s))
        =1+b_1s+\beta_2^{f,\nu}s^2+o(s^2),\qquad
        \beta_2^{f,\nu}:=b_2-a_1^2.
\]
\item \textbf{Shifted fourth-cumulant stability.} If
\[
        Y_s:=f(\zeta+s)-m_f(s),
\]
then
\[
        \cum_4(Y_s)\longrightarrow
        \kappa_4^{f,\nu}:=\E_\nu f(\zeta)^4-3
        \qquad(s\to0).
\]
\item \textbf{Shifted fourth-moment Lindeberg condition.}
\[
        \lim_{R\to\infty}\limsup_{\delta\downarrow0}
        \sup_{|s|\le\delta}
        \E\Bigl[|Y_s|^4{\bf 1}_{\{|Y_s|>R\}}\Bigr]=0.
\]
\end{enumerate}
\end{condition}

\begin{remark}[Why a representative and shifted profiles are required]
Bare membership in $L^4(\nu)$ specifies only a $\nu$-almost-everywhere
equivalence class, while the matrix in \eqref{eq:model} evaluates $f$ under
the translated laws
\[
 \nu_s(A):=\nu(A-s).
\]
Unless $\nu_s\ll\nu$ for every relevant small shift, two representatives of
the same $L^4(\nu)$ class can produce different matrices.  This occurs, for
example, for atomic noise.  Even when the translated laws are equivalent to
$\nu$, as for Gaussian noise, bare $L^4(\nu)$ need not control shifted fourth
moments or shifted fourth tails.  Accordingly, the model uses a fixed
representative and W2--W5 record the translated information needed by the
theorem.  Concrete sufficient conditions and counterexamples are given in
Appendix~\ref{app:assumption-W-criteria}.
\end{remark}

\begin{assumption}[Spike delocalization and balance]\label{ass:spike}
The deterministic vector \(x=x^{(N)}\in\R^N\) satisfies
\[
        \|x\|_2=1,
        \qquad
        \|x\|_\infty=o(N^{-1/4}),
\]
and the weak balance condition
\[
        \sum_{i=1}^N x_i=O(N^{o(1)}).
\]
Here \(O(N^{o(1)})\) means that, for every fixed \(\epsilon>0\), there is a
constant \(C_\epsilon\) such that
\(
 |\sum_i x_i|\le C_\epsilon N^\epsilon
\)
for all sufficiently large \(N\).
Consequently,
\[
        \max_{i<j}|s_{ij}|=o(1),
        \qquad
        \sum_{i,j}s_{ij}^2=O(N).
\]
\end{assumption}

The exponent $1/4$ in the delocalization condition is the threshold needed
by the present uniform small-shift formulation: since
$|s_{ij}|\leq\sqrt{\lambda N}\|x\|_\infty^2$, it gives
$\max_{i<j}|s_{ij}|=o(1)$.  The weak balance condition has a different role:
it suppresses the order-one linear variance-profile response.  Thus neither
condition is an unexplained generic delocalization hypothesis.

\subsection{Assumption hierarchy and deterministic parameters}
\label{subsec:assumption-hierarchy}

The five profile assumptions have separate roles.  W1 fixes the homogeneous
normalization.  W2 identifies the deterministic rank-one channel.  W3
identifies the linear and quadratic variance profiles.  W4 fixes the limiting
fourth-cumulant parameter.  W5 supplies both the triangular-array Lindeberg
estimate and a uniformly bounded exact four-moment surrogate.  The spike
assumption controls the microscopic shifts and eliminates the order-one
linear variance response.  No local law for the rough transform is assumed.

The effective coefficients are
\begin{equation}\label{eq:profile-coeffs}
 a_1,\qquad b_1,\qquad
 \beta_2^{f,\nu}:=b_2-a_1^2,\qquad
 \kappa_4^{f,\nu}:=\E f(\zeta)^4-3,
\end{equation}
and the effective rank-one strength is
\begin{equation}\label{eq:theta-nu}
 \theta_{f,\nu}:=\sqrt\lambda\,a_1.
\end{equation}
Uniformly over the microscopic shifts,
\begin{align}
 N^{-1/2}\E f(\zeta+s_{ij})
 &=\sqrt\lambda\,a_1x_ix_j+o(x_ix_j),
 \label{eq:matrix-mean-expansion}\\
 \Var(f(\zeta+s_{ij}))
 &=1+b_1\sqrt{\lambda N}x_ix_j
 +\lambda\beta_2^{f,\nu}Nx_i^2x_j^2
 +o(Nx_i^2x_j^2).
 \label{eq:matrix-variance-profile}
\end{align}

For Gaussian noise, W1 is the explicit centering, variance normalization, and
integrability requirement on $f$.  The Gaussian likelihood-ratio expansion
makes W2 and W3 automatic, while the additional
$L^{4+\epsilon}(\gamma)$ integrability in
Lemma~\ref{lem:gaussian-profile-main} supplies W4 and W5.  In this case
\begin{equation}\label{eq:Gaussian-coefficients-summary}
 \begin{aligned}
 a_1&=\E[\xi f(\xi)],
 &\qquad b_1&=\E[\xi f(\xi)^2],\\
 b_2&=\frac12\E[(\xi^2-1)f(\xi)^2],
 &\qquad \beta_2^{f,\gamma}
 &=\frac12\E[(\xi^2-1)f(\xi)^2]
  -\bigl(\E[\xi f(\xi)]\bigr)^2.
 \end{aligned}
\end{equation}
Moreover,
\begin{equation}\label{eq:Gaussian-kappa-theta-summary}
 \kappa_f:=\kappa_4^{f,\gamma}=\E f(\xi)^4-3,
 \qquad
 \theta_f:=\theta_{f,\gamma}
 =\sqrt\lambda\,\E[\xi f(\xi)].
\end{equation}
\section{Bulk Contours and Main Results}
\label{sec:bulk-main-results}

\subsection{Result structure}

The main theorem has one bulk statement and two full-trace consequences.  The
bulk statistic always excludes a separated outlier.  The relation to the full
trace is
\[
\begin{array}{c|c}
 |\theta_{f,\nu}|\leq1
 & \mathcal L_{M_N^f}^{\Gamma}(\varphi)=\Tr\varphi(M_N^f)
   \quad\text{with probability }1-o(1),\\[2mm]
 |\theta_{f,\nu}|>1
 & \Tr\varphi(M_N^f)
   =\mathcal L_{M_N^f}^{\Gamma}(\varphi)+\varphi(\rho_\theta)+o_{\mathbb P}(1).
\end{array}
\]
The next three subsections define the contour statistic, the homogeneous
Wigner mean and covariance, and the profile-dependent centering.  The formal
bulk theorem and its two full-trace forms are then stated together in
Subsection~\ref{subsec:bulk-full-results}.

\subsection{Contour functional calculus}

Let \(\Gamma\) be a positively oriented simple closed contour, and denote its
bounded interior by \(D_\Gamma\).  The convention in the article is
\begin{equation}\label{eq:resolvent-convention-package}
 G_A(z):=(A-zI)^{-1}.
\end{equation}
This convention introduces a minus sign in the Cauchy representation of a
linear spectral statistic.

\begin{definition}[Bulk contour statistic]\label{def:bulk-stat-package}
Let \(A\) be a real symmetric matrix and let \(\varphi\) be analytic on a
neighborhood of \(\overline{D_\Gamma}\).  Define
\begin{equation}\label{eq:bulk-stat-package}
 \mathcal L_A^\Gamma(\varphi)
 :=\sum_{j=1}^N
 \varphi(\lambda_j(A))\one_{\{\lambda_j(A)\in D_\Gamma\}}.
\end{equation}
If \(\spec(A)\cap\Gamma=\varnothing\), then
\begin{equation}\label{eq:bulk-contour-formula-package}
 \mathcal L_A^\Gamma(\varphi)
 =-\frac{1}{2\pi\ii}
 \oint_\Gamma
 \varphi(z)\Tr(A-zI)^{-1}\,\dd z.
\end{equation}
\end{definition}

\begin{proof}
Diagonalize \(A\).  Since
\(
 (\lambda-z)^{-1}=-(z-\lambda)^{-1},
\)
the residue of \((\lambda-z)^{-1}\) at \(z=\lambda\) is \(-1\).
The residue theorem therefore gives
\[
 -\frac{1}{2\pi\ii}\oint_\Gamma
 \frac{\varphi(z)}{\lambda-z}\,\dd z
 =\varphi(\lambda)\one_{\{\lambda\in D_\Gamma\}}.
\]
Summing over the eigenvalues proves \eqref{eq:bulk-contour-formula-package}.
\end{proof}

\begin{remark}[Real and complex test functions]
The main CLTs are stated for analytic test functions that are real-valued on
the real axis.  This makes every LSS and every covariance in the theorem
real-valued.  For a complex-valued analytic test function, shrink to a
conjugation-invariant neighborhood of the relevant real spectral set and put
\[
 \varphi_{\rm R}(z):=\frac{\varphi(z)+\overline{\varphi(\overline z)}}2,
 \qquad
 \varphi_{\rm I}(z):=\frac{\varphi(z)-\overline{\varphi(\overline z)}}{2\ii}.
\]
Both functions are holomorphic and real-valued on the real axis, and
\(\varphi=\varphi_{\rm R}+\ii\varphi_{\rm I}\) there.  Applying the joint real
theorem to these two functions gives the complex statement; equivalently, all
formulas below extend complex bilinearly.
\end{remark}

\subsection{Semicircle and homogeneous Wigner conventions}

The semicircle density and measure are
\begin{equation}\label{eq:semicircle-density}
 \rho_{\scal}(t):=\frac{1}{2\pi}\sqrt{4-t^2}\,
 \one_{[-2,2]}(t),
 \qquad
 \dd\rho_{\scal}(t):=\rho_{\scal}(t)\,\dd t.
\end{equation}
Its Stieltjes transform, in the resolvent convention
\eqref{eq:resolvent-convention-package}, is
\begin{equation}\label{eq:semicircle-m-package}
 m(z):=\int_{-2}^2\frac{\rho_{\scal}(t)}{t-z}\,\dd t,
 \qquad
 m(z)^2+zm(z)+1=0,
 \qquad
 m(z)\sim-z^{-1}\quad(|z|\to\infty).
\end{equation}

The homogeneous zero-diagonal reference convention is the following.  Write
\begin{equation}\label{eq:base-Wigner-normalization}
 W_N(i,j)=N^{-1/2}\xi_{ij}\quad(i<j),
 \qquad W_N(i,i)=0,
\end{equation}
where the variables above the diagonal are independent and centered, with
\begin{align}
 \E\xi_{ij}^2&=1\qquad (i<j),
 \label{eq:base-Wigner-variance}\\
 \max_{i<j}\abs{\cum_4(\xi_{ij})-\kappa}&\longrightarrow0,
 \label{eq:base-Wigner-kappa}\\
 \lim_{R\to\infty}\limsup_{N\to\infty}\max_{i<j}
 \E\!\left[|\xi_{ij}|^4\one_{\{|\xi_{ij}|>R\}}\right]&=0.
 \label{eq:base-Wigner-Lindeberg}
\end{align}
Thus ``unit variance'' always refers to the normalized variables \(\xi_{ij}\),
whereas every off-diagonal matrix entry has variance exactly \(N^{-1}\).

For a function \(\varphi\) analytic near \(\overline{D_\Gamma}\), define the
explicit zero-diagonal real-Wigner bias by
\begin{align}
 m_{\varphi,\Gamma}^{\rm base}(\kappa)
 &:=-\frac{1}{2\pi\ii}\oint_\Gamma
 \varphi(z)m'(z)m(z)^3
 \left(\frac{1}{1-m(z)^2}+\kappa\right)\,\dd z
 \notag\\
 &\quad+\frac{1}{2\pi\ii}\oint_\Gamma
 \varphi(z)\frac{m(z)^3}{1-m(z)^2}\,\dd z.
 \label{eq:base-mean-definition}
\end{align}
The first line is the fluctuation bias of the stochastically normalized flat
array.  The second line is the finite-flat Dyson correction caused by the zero
diagonal, whose row sum is \(1-N^{-1}\).  Lemma~\ref{lem:R3-flat-endpoint}
derives both terms directly from the Li--Xu formulas.

The corresponding zero-diagonal covariance kernel is
\begin{align}
 \mathcal K_{\rm Wig}^{(\kappa,0)}(z,w)
 &:=2m'(z)m'(w)
 \left\{\frac{1}{(1-m(z)m(w))^2}-1\right\}
 \notag\\
 &\quad+2\kappa\,m(z)m'(z)m(w)m'(w),
 \label{eq:Wigner-kernel}\\
 \mathcal V_\kappa^\Gamma(\varphi,\psi)
 &:=\frac{1}{(2\pi\ii)^2}\oint_\Gamma\oint_\Gamma
 \varphi(z)\psi(w)\mathcal K_{\rm Wig}^{(\kappa,0)}(z,w)
 \,\dd z\,\dd w.
 \label{eq:Wigner-covariance}
\end{align}
The subtraction of one in \eqref{eq:Wigner-kernel} is the non-Perron
contribution of the flat zero-diagonal variance profile.  It forces
\(\mathcal V_\kappa^\Gamma(t,\psi)=0\), consistently with
\(\Tr W_N=0\), while for \(\varphi(t)=t^2\) the fourth-cumulant contribution
to the variance is \(2\kappa\).

We use the classical homogeneous theorem of
Lytova--Pastur~\cite{LytovaPastur2009} and the generalized-Wigner extension of
Li--Xu~\cite{LiXu2021GeneralizedWignerLES} in the forms isolated below.

Gaussian microscopic noise does not generally set \(\kappa=0\) after
transformation.  Unless the normalized variable \(f(\xi)\) is itself Gaussian,
the relevant fourth cumulant is \(\kappa_f=\E f(\xi)^4-3\), as used in
\eqref{eq:Gaussian-covariance-main}.  The adjective ``Gaussian'' in
Corollary~\ref{cor:gaussian-LSS-main} refers to the pre-transform variable
\(\xi\), not to the distribution of the transformed entries.

\subsection{Admissible bulk contours}

Fix a contour \(\Gamma\) such that
\begin{enumerate}[label=(G\arabic*)]
\item \(\overline{D_\Gamma}\) contains \([-2,2]\) in its interior;
\item \(\Gamma\) has positive distance from \([-2,2]\);
\item
\begin{equation}\label{eq:pole-avoidance-package}
 \inf_{z\in\Gamma}|1+\theta_{f,\nu}m(z)|>0;
\end{equation}
\item if \(|\theta_{f,\nu}|>1\), then the deterministic outlier location
\begin{equation}\label{eq:outlier-location-package}
 \rho_{\theta}:=\theta_{f,\nu}+\theta_{f,\nu}^{-1}
\end{equation}
lies outside \(\overline{D_\Gamma}\).
\end{enumerate}
Condition (G4) makes \(\mathcal L_{M_N^f}^\Gamma\) a bulk statistic in the
supercritical regime.  It is not the full trace in that regime.

The rank-one response is best defined without selecting a logarithm branch:
\begin{equation}\label{eq:rank-one-direct-package}
 m_{\varphi,\Gamma}^{\rm rank1}(\theta)
 :=\frac{1}{2\pi\ii}
 \oint_\Gamma
 \varphi(z)\frac{\theta m'(z)}{1+\theta m(z)}\,\dd z.
\end{equation}
If \(1+\theta m\) admits a continuous logarithm along \(\Gamma\), integration
by parts gives the equivalent expression
\begin{equation}\label{eq:rank-one-log-package}
 m_{\varphi,\Gamma}^{\rm rank1}(\theta)
 =-\frac{1}{2\pi\ii}
 \oint_\Gamma
 \varphi'(z)\log(1+\theta m(z))\,\dd z.
\end{equation}
The direct formula \eqref{eq:rank-one-direct-package} remains the primary
definition and avoids a hidden winding-number condition.

Retain the zero-diagonal and quadratic variance-profile terms in the form
\begin{align}
 c_{\varphi,\Gamma}^{\rm diag0}
 &:=\frac{\theta_{f,\nu}}{2\pi\ii}
 \oint_\Gamma\varphi'(z)m(z)\,\dd z,
 \label{eq:diag-package}\\
 c_{\varphi,\Gamma}^{\rm quad-var}
 &:=-\frac{\lambda\beta_2^{f,\nu}}{2\pi\ii}
 \oint_\Gamma
 \varphi(z)\frac{m(z)^3}{1-m(z)^2}\,\dd z.
 \label{eq:quad-package}
\end{align}
Let \(m_{\varphi,\Gamma}^{\rm base}(\kappa_4^{f,\nu})\) and
\(\mathcal V_{\kappa_4^{f,\nu}}^\Gamma\) be the explicit homogeneous
zero-diagonal real-Wigner mean and covariance from
\eqref{eq:base-mean-definition} and \eqref{eq:Wigner-covariance}.
Set
\begin{equation}\label{eq:total-mean-package}
 m_{\varphi,\Gamma}^{f,\nu}
 :=m_{\varphi,\Gamma}^{\rm base}(\kappa_4^{f,\nu})
 +m_{\varphi,\Gamma}^{\rm rank1}(\theta_{f,\nu})
 +c_{\varphi,\Gamma}^{\rm diag0}
 +c_{\varphi,\Gamma}^{\rm quad-var}.
\end{equation}

\subsection{Bulk theorem and full-trace consequences}
\label{subsec:bulk-full-results}

The following is the principal theorem.

\begin{theorem}[Bulk LSS for profile-admissible transformed Wigner noise]
\label{thm:bulk-main-package}\label{thm:main}
Assume Condition~\ref{cond:profile-admissibility} and
Assumption~\ref{ass:spike}.  Let \(\Gamma\) satisfy (G1)--(G4), and
let \(\varphi_1,\ldots,\varphi_k\) be analytic on a neighborhood of
\(\overline{D_\Gamma}\) and real-valued on its intersection with
\(\mathbb R\).  Then
\begin{equation}\label{eq:bulk-clt-package}
 \left(
 \mathcal L_{M_N^f}^\Gamma(\varphi_\ell)
 -N\int_{-2}^2\varphi_\ell(t)\,\rho_{\scal}(t)\,\dd t
 -m_{\varphi_\ell,\Gamma}^{f,\nu}
 \right)_{\ell=1}^k
 \Longrightarrow
 (G_{\varphi_\ell})_{\ell=1}^k,
\end{equation}
where the limiting vector is centered Gaussian and
\begin{equation}\label{eq:bulk-cov-package}
 \Cov(G_\varphi,G_\psi)
 =\mathcal V_{\kappa_4^{f,\nu}}^\Gamma(\varphi,\psi).
\end{equation}
In the supercritical regime, \eqref{eq:bulk-clt-package} concerns only the
eigenvalues inside \(D_\Gamma\).
\end{theorem}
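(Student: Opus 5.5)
The proof follows the architecture announced in the abstract: replace each rough entry by a bounded surrogate with the same first four centered moments, prove the CLT for the resulting bounded generalized-Wigner array, and transfer back by a four-moment comparison.

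\emph{Decomposition.} Write $M_N^f=\mathbb E M_N^f+X_N$. The centered part has entries $X_N(i,j)=N^{-1/2}Y_{s_{ij}}$ with $Y_s=f(\zeta+s)-m_f(s)$. By W2 and \eqref{eq:matrix-mean-expansion},
\[
\mathbb E M_N^f=\theta_{f,\nu}\bigl(xx^{\top}-\diag(x_i^2)\bigr)+E_N .
\]
Here $E_N$ has entries $o(x_ix_j)$, so $\|E_N\|_{\HS}=o(1)$. The removed diagonal $\theta_{f,\nu}\diag(x_i^2)$ has trace $\theta_{f,\nu}$. Its first-order resolvent contribution is $-\theta_{f,\nu}\Tr(\diag(x_i^2)G^2)\approx-\theta_{f,\nu}m'(z)$, and integrating by parts in $z$ produces exactly $c^{\rm diag0}_{\varphi,\Gamma}$. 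The error $E_N$ changes $\Tr G$ by at most $\|E_N\|_{\HS}\,\|G^2\|_{\HS}$-type quantities on $\Gamma$. Because of the delocalized structure (an $o(1)$ HS perturbation against $\|G^2\|_{\op}=O(1)$ gives only $o(\sqrt N)$ in general), this has to be improved. I would instead expand $\Tr G$ to second order in $E_N$ and use that $\langle u,G u\rangle$ concentrates for deterministic $u$. The first-order term is $\Tr(E_N G^2)\approx m'(z)\Tr E_N$-type plus fluctuations, and $\Tr E_N=0$.

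\emph{Bounded surrogate.} For each shift $s_{ij}$, invoke the exact three-point completion from Appendix~\ref{app:proof-notation}. This gives a uniformly bounded centered variable $\tilde Y_{ij}$ whose first four moments equal those of $Y_{s_{ij}}$. Let $\tilde X_N$ be the corresponding matrix. It is a bounded generalized-Wigner array with variance profile given by \eqref{eq:matrix-variance-profile} and fourth cumulants tending uniformly to $\kappa_4^{f,\nu}$ by W4. The Li--Xu theorem then yields a CLT for $\Tr\varphi(\tilde X_N+\theta_{f,\nu}xx^\top)$. The covariance is the flat kernel $\mathcal V_{\kappa_4^{f,\nu}}^\Gamma$, since the profile perturbation is $o(1)$ in every macroscopic norm. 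The mean is computed in four pieces:
\begin{enumerate}[label=(\alpha\arabic*),leftmargin=3em]
\item the flat part, giving $m^{\rm base}$ via Lemma~\ref{lem:R3-flat-endpoint};
\item the linear profile $b_1\sqrt{\lambda N}x_ix_j$, whose response involves the row sums $\sqrt N x_i\sum_j x_j=o(1)$ after contour integration, by the weak balance condition;
\item the quadratic profile $\lambda\beta_2 N x_i^2x_j^2$, whose first-order Dyson response in the Perron direction, summed against $\sum_{i,j}x_i^2x_j^2=1$, gives $c^{\rm quad-var}$;
\item the rank-one part $\theta xx^\top$, handled by Woodbury: $\Tr G_{A+\theta xx^\top}-\Tr G_A=-\frac{\theta\langle x,G_A^2x\rangle}{1+\theta\langle x,G_Ax\rangle}$. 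The isotropic law for bounded arrays gives $\langle x,G_Ax\rangle\to m$ and $\langle x,G_A^2x\rangle\to m'$ uniformly on $\Gamma$, which yields $m^{\rm rank1}$. Here (G3) guarantees the denominator stays away from zero.
\end{enumerate}
Spectral confinement then ensures that no eigenvalue lies on $\Gamma$ with high probability: the bounded entries give a norm bound, and condition (G4) together with the BBP outlier location keeps the outlier outside $\overline{D_\Gamma}$.

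\emph{Replacement, the main obstacle.} It remains to show that
\[
\oint_\Gamma\varphi\,\Tr G\,\dd z
\]
has the same limiting characteristic function for the true array and for the surrogate. No local law is available for the rough array, and the entries are unbounded. The plan is a Lindeberg swap of one entry at a time, applied to the smoothed functional $\exp(\ii t\,\mathcal L)$ with $\mathcal L$ regularized by a cutoff in $\|G\|$ on $\Gamma$. Since $\Gamma$ stays at distance $\gtrsim1$ from the spectrum on the event of confinement, each resolvent derivative is deterministic-bounded, with no local law needed.

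Taylor-expand each swap to fourth order. The first four moments match exactly, so the expansion leaves only a fifth-order remainder. On $\{|Y|\le R\}$ this remainder is $O(R\,N^{-5/2})$ per entry. Off $\{|Y|\le R\}$, use the fourth-order remainder $O(N^{-2}\,\E|Y|^4\one_{|Y|>R})$, which W5 makes small uniformly. Summing over $N^2$ entries gives $o(1)$ after $N\to\infty$ and then $R\to\infty$. The Fourier--Duhamel representation handles the non-smoothness of the spectral cutoff globally.

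The delicate point is showing that the derivative bounds hold uniformly along the interpolation path. Confinement is known only for the endpoints, not for the intermediate mixed matrices. I would address this using Weyl's inequality and the fact that each swap is a perturbation of operator norm at most $|Y|N^{-1/2}$, combined with a high-probability norm bound for the mixed matrix from a truncation at level $N^{1/4-\epsilon}$, which W5 permits. Joint convergence for $\varphi_1,\ldots,\varphi_k$ follows by applying the same argument to linear combinations (Cramér--Wold).
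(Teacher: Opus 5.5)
\paragraph{Overall.} Your architecture is the paper's: three-point completion; Li--Xu for the bounded array, with the quadratic Dyson response and balance removing the linear profile (modulo the stochastic-normalization and zero-diagonal reductions needed before Li--Xu literally applies); Woodbury plus small-insertion estimates for the mean; and a four-moment Lindeberg telescope. Your fixed-\(R\) split of the Taylor remainder is fine once the fourth and fifth derivatives are bounded.

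\paragraph{The gap.} It lies in the replacement step, at exactly the point you call delicate, and the remedy you propose does not work.
\begin{enumerate}
\item W5 is only a uniform fourth-moment Lindeberg condition. It makes truncation free only at level \(\eta_N\sqrt N\) with \(\eta_N\downarrow0\) (Lemma~\ref{lem:R4-diagonal-truncation}). At level \(N^{1/4-\epsilon}\), entries with tails \(t^{-p}\), \(4<p<8\), are compatible with W5 yet have about \(N^{2-p(1/4-\epsilon)}\to\infty\) exceedances. So the truncated hybrids are not the hybrids you telescope.
\item Without truncation, fourth moments give no \(1-N^{-D}\) norm bound: an entry of size \(c\sqrt N\) occurs with only polynomially small probability. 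A per-hybrid confinement probability \(1-o(1)\) is useless in a telescope of \(\asymp N^2\) steps, each of which must contribute \(o(N^{-2})\).
\item Weyl's inequality only moves confinement from a hybrid to its open-edge segment; it does not produce confinement of the hybrids themselves.
\item A cutoff in \(\sup_{z\in\Gamma}\norm{G(z)}_{\op}\) is not a \(C^5\) function of the entries.
\end{enumerate}

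\paragraph{The missing idea.} No spectral control of intermediate hybrids is needed once the comparison observable is a globally smooth spectral function. Replace \(\mathcal L^\Gamma(\varphi_\ell)\) by \(\Tr\widetilde\varphi_\ell(M)\), with \(\widetilde\varphi_\ell\in C_c^\infty(\mathbb R)\) equal to \(\varphi_\ell\) on \([-2-\delta,2+\delta]\) and vanishing near \(\rho_\theta\) (Lemma~\ref{lem:R4-smooth-cutoff}). The Fourier--Duhamel bound \(|\partial_u^r\Tr h(A+uB)|\le C_{h,r}\norm{B}_1\norm{B}_{\op}^{r-1}\) holds for \emph{every} symmetric \(A\) (Lemma~\ref{lem:R5-Fourier-Duhamel}). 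Hence the edge observable satisfies \(\sup_{A,u}|\Phi_{e,A}^{(r)}(u)|\le CN^{-r/2}\) deterministically along the whole path. You invoke Fourier--Duhamel but do not draw this consequence.

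\paragraph{Rough-endpoint confinement.} Spectral information is then used only at the two endpoints, and there you still owe an argument for the rough matrix: ``bounded entries give a norm bound'' covers only the surrogate. For \(M_N^f\) you need two things:
\begin{itemize}
\item \(\norm{H_N^f}_{\op}\to2\), which comes from a fourth-moment triangular Bai--Yin theorem (Lemma~\ref{lem:R4-triangular-Bai-Yin});
\item the outlier location, which requires an isotropic law for the rough array.
\end{itemize}
The paper obtains that isotropic law by the same four-moment comparison at \(|\Im z|\ge\eta\), then passes to real \(z\) using hybrid-uniform (not simultaneous) confinement (Proposition~\ref{prop:R4-rough-isotropic}). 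For the bulk statement alone, comparing a smooth eigenvalue-counting functional supported near \(\Gamma\cap\mathbb R\) along the same path would also suffice.
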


\begin{corollary}[Full trace in the non-outlier regime]
\label{cor:subcritical-full-package}
Under the hypotheses of Theorem~\ref{thm:bulk-main-package}, suppose that
\(|\theta_{f,\nu}|\le1\).  Then, with probability tending to one,
\begin{equation}\label{eq:full-equals-bulk-package}
 \mathcal L_{M_N^f}^\Gamma(\varphi)
 =\Tr\varphi(M_N^f).
\end{equation}
Consequently \eqref{eq:bulk-clt-package} is the full analytic LSS CLT in this
regime.
\end{corollary}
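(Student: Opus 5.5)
The corollary will follow once we show that, when $|\theta_{f,\nu}|\le1$, with probability $1-o(1)$ every eigenvalue of $M_N^f$ lies in $D_\Gamma$. Then the indicator in \eqref{eq:bulk-stat-package} equals one for all $j$, so $\mathcal L_{M_N^f}^\Gamma(\varphi)=\Tr\varphi(M_N^f)$ on that event. The CLT statement then transfers from Theorem~\ref{thm:bulk-main-package}, since quantities that agree with probability tending to one share distributional limits. By (G1)--(G2), $\Gamma$ stays at distance $\eta>0$ from $[-2,2]$, so it suffices to show
\[
 \mathbb P\bigl(\spec(M_N^f)\subset[-2-\eta/2,\,2+\eta/2]\bigr)\to1 .
\]

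To get this confinement, decompose $M_N^f=\widetilde W_N+\mathbb E M_N^f$.

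\emph{Step 1 (noise part).} The centered part $\widetilde W_N(i,j)=N^{-1/2}Y_{s_{ij}}$ has a variance profile $1+o(1)$ uniformly by W3 and Assumption~\ref{ass:spike}. We will not use a local law. Instead we use the spectral confinement stated in the paper's appendix for the bounded three-point surrogate array, which is a generalized Wigner array with four matched moments. We transfer it to the true array through the four-moment replacement, which controls resolvent traces on a contour. An alternative route is truncation at level $R$ via W5, followed by the Bai--Yin-type norm bound $\|\widetilde W_N\|\le2+o(1)$ for arrays with a uniform fourth-moment Lindeberg condition.

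\emph{Step 2 (mean part).} By \eqref{eq:matrix-mean-expansion}, $\mathbb E M_N^f=\theta_{f,\nu}xx^T+E_N$. The term $E_N$ collects the zero-diagonal removal (operator norm $\le\sqrt\lambda|a_1|\|x\|_\infty^2=o(1)$) and the $o(x_ix_j)$ remainders. These remainders form a matrix with entries $o(1)|x_i||x_j|$, so its Hilbert--Schmidt norm is $o(1)$.

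\emph{Step 3 (outlier location).} For a rank-one perturbation $\theta xx^T$ of a matrix whose resolvent is close to $m(z)I$ in the isotropic sense along $x$, an outlier arises only near solutions of $1+\theta m(z)=0$. For $|\theta|\le1$ no such solution exists outside $[-2,2]$, because $|m(z)|<1$ there. Hence no eigenvalue leaves $[-2-\eta/2,2+\eta/2]$. To obtain the isotropic control without a local law, we will use the same contour-level argument as in the proof of Theorem~\ref{thm:bulk-main-package}. For each $z$ outside the $\eta/2$-neighbourhood of $[-2,2]$, the eigenvalue count outside is detected by $\det(1+\theta x^TG_{\widetilde W}(z)x)$. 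We will establish $x^TG_{\widetilde W}(z)x\to m(z)$, uniformly on compact sets away from $[-2,2]$, via a second-moment computation for the bounded surrogate followed by comparison. Combined with $|1+\theta m(z)|$ being bounded below, this gives the claim. The region far from the spectrum is handled by the crude bound $\|M_N^f\|=O(1)$ with high probability.

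\textbf{Main obstacle.} The hardest point is the borderline case $|\theta_{f,\nu}|=1$. There $1+\theta m(z)$ vanishes at $z=\pm2$, on the edge of $[-2,2]$ and hence inside $D_\Gamma$, not on $\Gamma$. The required margin then comes only from the fixed distance $\eta$ of $\Gamma$ from the edge, which (G3) already guarantees. I expect the remaining difficulty to be making the isotropic resolvent convergence uniform without a local law. I would try first to deduce it from the bounded-array spectral confinement already proved in the appendix together with the Fourier--Duhamel comparison.
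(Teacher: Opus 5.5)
Your proposal is correct and follows essentially the paper's route. The paper's proof just cites two results: centered confinement, via the triangular-array Bai--Yin bound, which is your alternative route in Step~1 and the one to rely on; and the one-outlier proposition, whose proof is exactly your matrix-determinant argument, with $x^{\mathsf T}G(z)x\to m(z)$ transferred from the bounded surrogate by comparison and Weyl's inequality absorbing $-\theta D_x+R_N^{(m)}$. The case $|\theta_{f,\nu}|=1$ is not a genuine obstacle, and the margin does not come from (G3): on any compact set at fixed distance from $[-2,2]$ one has $|m|\le 1-c$, so $|1+\theta m|\ge c$ automatically whenever $|\theta|\le1$.
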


\begin{proof}
Propositions~\ref{prop:R4-centered-confinement} and
\ref{prop:R4-outlier-separation} imply
\(\spec(M_N^f)\subset D_\Gamma\) with probability \(1-o(1)\).  On that event,
Definition~\ref{def:bulk-stat-package} sums over all eigenvalues.  Equality on
an event of probability \(1-o(1)\) transfers the limiting distribution.
\end{proof}

\begin{remark}[Bulk versus full trace]
When $|\theta_{f,\nu}|>1$, the statistic in
Theorem~\ref{thm:bulk-main-package} excludes the separated outlier.  The next
theorem restores the full trace by adding its deterministic contribution.
\end{remark}

\begin{theorem}[Full analytic LSS in the supercritical regime]
\label{thm:R6-supercritical-full}
Assume the hypotheses of Theorem~\ref{thm:bulk-main-package} and
$|\theta_{f,\nu}|>1$.  Let $\varphi_1,\ldots,\varphi_k$ be analytic on a
neighborhood of $\overline{D_\Gamma}$ and also on a neighborhood of
$\rho_\theta=\theta_{f,\nu}+\theta_{f,\nu}^{-1}$, and real-valued on the real
parts of those neighborhoods; the two neighborhoods need not be connected.
Then
\begin{equation}\label{eq:R6-full-CLT}
 \left(
 \Tr\varphi_\ell(M_N^f)
 -N\int_{-2}^2\varphi_\ell(t)\rho_{\scal}(t)\,\dd t
 -m_{\varphi_\ell,\Gamma}^{f,\nu}
 -\varphi_\ell(\rho_\theta)
 \right)_{\ell=1}^k
 \Longrightarrow (G_{\varphi_\ell})_{\ell=1}^k,
\end{equation}
with covariance $\mathcal V_{\kappa_4^{f,\nu}}^\Gamma$.
\end{theorem}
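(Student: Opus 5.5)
The plan is to reduce Theorem~\ref{thm:R6-supercritical-full} to Theorem~\ref{thm:bulk-main-package} by splitting the full trace into a bulk part and an outlier part:
\[
 \Tr\varphi(M_N^f)=\mathcal L_{M_N^f}^\Gamma(\varphi)+\sum_{j}\varphi(\lambda_j(M_N^f))\one_{\{\lambda_j\notin \overline{D_\Gamma}\}}.
\]
It then suffices to show three things.
\begin{enumerate}[label=(\alph*),leftmargin=2em]
\item With probability $1-o(1)$, exactly one eigenvalue lies outside $\overline{D_\Gamma}$.
\item That eigenvalue lies in a small neighborhood $U$ of $\rho_\theta$ on which every $\varphi_\ell$ is analytic.
\item That eigenvalue converges in probability to $\rho_\theta$.
\end{enumerate}
Given these, continuity of $\varphi_\ell$ at $\rho_\theta$ makes the outlier sum equal $\varphi_\ell(\rho_\theta)+o_{\mathbb P}(1)$, jointly in $\ell$. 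Slutsky's lemma applied to the vector CLT \eqref{eq:bulk-clt-package} then gives \eqref{eq:R6-full-CLT} with the same covariance $\mathcal V^\Gamma_{\kappa_4^{f,\nu}}$. The bulk centering $m^{f,\nu}_{\varphi,\Gamma}$ only involves $\varphi$ on $\Gamma$, so it is unchanged.

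For (a) and (b) I will decompose $M_N^f=\E M_N^f+H_N$. By \eqref{eq:matrix-mean-expansion},
\[
 \E M_N^f=\theta_{f,\nu}\,xx^\top-\theta_{f,\nu}\diag(x_i^2)+E_N,
\]
where $\norm{E_N}_{\op}\le \norm{E_N}_{\HS}=o(1)$, because $\sum_{i,j}x_i^2x_j^2=1$. The diagonal part has norm $O(\norm{x}_\infty^2)=o(1)$. The centered part $H_N$ has independent entries whose variance profile is $N^{-1}(1+o(1))$ uniformly, by \eqref{eq:matrix-variance-profile} and $\max|s_{ij}|=o(1)$, and whose fourth tails are uniformly controlled by W5. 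I would invoke the centered confinement of Proposition~\ref{prop:R4-centered-confinement}, which gives $\norm{H_N}_{\op}\le 2+o_{\mathbb P}(1)$, together with the outlier-separation statement Proposition~\ref{prop:R4-outlier-separation}. Combined with Weyl interlacing for rank-one perturbations, these show that at most one eigenvalue escapes a $\delta$-neighborhood of $[-2,2]$, and that it does so on the side of $\operatorname{sign}\theta_{f,\nu}$. Condition (G2) puts $\Gamma$ at positive distance from $[-2,2]$, so for $\delta$ small every non-outlier eigenvalue lies in $D_\Gamma$.

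For (c) I will use the standard rank-one master equation. Let $z\notin[-2-\delta,2+\delta]$ and write $\widetilde H=H_N+(\text{perturbations of norm }o(1))$. Then $z$ is an eigenvalue of $\widetilde H+\theta xx^\top$ if and only if
\[
 1+\theta\langle x,G_{\widetilde H}(z)x\rangle=0.
\]
The isotropic quadratic form $\langle x,G_{\widetilde H}(z)x\rangle$ should converge to $m(z)$ uniformly on compact subsets away from $[-2,2]$. Since no local law is available for the rough transform, I plan to obtain this the same way the paper obtains everything else. First prove it for the bounded three-point surrogate array of Appendix~\ref{app:proof-notation}, where isotropic laws for bounded generalized Wigner matrices apply. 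Then transfer it to $M_N^f$ by a second-moment comparison. Because only a law-of-large-numbers statement is needed, two-moment matching plus the W5 tail bound suffices; a four-moment comparison is not required. The limiting equation $1+\theta m(z)=0$ has its unique root outside $[-2,2]$ at $z=\rho_\theta$ when $|\theta|>1$. Hurwitz's theorem, applied on a small disk around $\rho_\theta$, then yields a unique zero of the random equation converging to $\rho_\theta$. By (G4) this disk can be taken disjoint from $\overline{D_\Gamma}$ and inside $U$.

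I expect the main obstacle to be the isotropic convergence $\langle x,G(z)x\rangle\to m(z)$ for the rough, heavy-ish transformed noise. If the cited propositions already provide the outlier location, this step is immediate; otherwise I will perform the surrogate transfer as described, or use the simpler resolvent-expansion bound
\[
 \E\bigl|\langle x,Gx\rangle-\tfrac1N\Tr G\bigr|^2=o(1),
\]
which follows from delocalization of $x$ and the bound $\norm{G}\le\delta^{-1}$ off the spectrum.
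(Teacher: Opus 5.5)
Your proposal is correct and is essentially the paper's proof: on the probability-$1-o(1)$ event of Proposition~\ref{prop:R4-outlier-separation}, $\Tr\varphi_\ell(M_N^f)=\mathcal L^\Gamma_{M_N^f}(\varphi_\ell)+\varphi_\ell(\lambda_{\rm out}(M_N^f))$; the bulk term is handled by Theorem~\ref{thm:R5-rough-bulk}, the outlier term is $\varphi_\ell(\rho_\theta)+o_{\mathbb P}(1)$ by continuity (Lemma~\ref{lem:R6-outlier-function}), and Slutsky concludes. Your step (c) is already \eqref{eq:R4-outlier-convergence}, which the paper proves for $M_N^f$ through the hybrid-uniform isotropic law of Proposition~\ref{prop:R4-rough-isotropic}, so your fallback is unnecessary --- and it would not work as stated: the ``simpler'' bound $\E|\langle x,Gx\rangle-\frac1N\Tr G|^2=o(1)$ does not follow from delocalization and $\norm{G}_{\op}\le\delta^{-1}$ alone (a delocalized deterministic eigenvector of $H$ defeats that reasoning), and a two-moment comparison would need finer resolvent-entry bounds than the crude $N^{-k/2}$ edge-derivative estimates.
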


\begin{remark}[Full-trace extension convention]
Whenever a full trace is written for a test function specified only on
neighborhoods of the bulk and, in the supercritical case, the outlier, fix an
arbitrary bounded Borel extension to the rest of \(\mathbb R\).  Spectral
confinement shows that two such extensions give the same statistic with
probability \(1-o(1)\), so the limiting statements are independent of this
choice.  This convention makes the full-trace random variables defined on
every outcome.
\end{remark}

\subsection{Gaussian profile admissibility}
\label{subsec:gaussian-profile-main}

Let $\gamma$ denote standard Gaussian measure and let $\xi\sim\gamma$.  The
translation likelihood ratio is
\begin{equation}\label{eq:Gaussian-LR-main}
 \E h(\xi+s)
 =\E\!\left[h(\xi)L_s(\xi)\right],
 \qquad
 L_s(\xi):=e^{s\xi-s^2/2}.
\end{equation}
Thus the abstract shifted profiles can be verified directly from the
integrability of $f$ under one fixed measure.

\begin{lemma}[Gaussian profile admissibility]
\label{lem:gaussian-profile-main}
Suppose that, for some $\epsilon>0$,
\begin{equation}\label{eq:Gaussian-f-assumptions-main}
 f\in L^{4+\epsilon}(\gamma),
 \qquad
 \E f(\xi)=0,
 \qquad
 \E f(\xi)^2=1.
\end{equation}
Then $(f,\gamma)$ satisfies W1--W5.  The coefficients are those in
\eqref{eq:Gaussian-coefficients-summary}, and the effective cumulant and spike
strength are those in \eqref{eq:Gaussian-kappa-theta-summary}.  No pointwise
differentiability of $f$ is required.
\end{lemma}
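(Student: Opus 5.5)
The whole verification rests on the likelihood-ratio identity \eqref{eq:Gaussian-LR-main}, applied with $h=f,f^2,f^4$ and with tail-truncated powers of $Y_s$, followed by H\"older's inequality with exponents matched to $L^{4+\epsilon}(\gamma)$. Since $\gamma_s\ll\gamma$ with density $L_s$, every shifted expectation $\E|f(\xi+s)|^p$ equals $\E[|f(\xi)|^pL_s(\xi)]$. This is finite for $p\le4+\epsilon$, because $L_s\in L^r(\gamma)$ for every $r<\infty$ with $\E L_s^r=e^{r(r-1)s^2/2}$. W1 is immediate from \eqref{eq:Gaussian-f-assumptions-main}.

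For W2 and W3 I will expand $L_s$ in $s$. Write $L_s(\xi)=1+s\xi+\tfrac{s^2}{2}(\xi^2-1)+R_s(\xi)$, where the remainder satisfies $|R_s(\xi)|\le C|s|^3(1+|\xi|^3)e^{|s||\xi|}$ for $|s|\le1$, by Taylor's formula with the integral remainder in $s$. Then
\[
m_f(s)=s\E[\xi f]+O(s^2)\qquad\text{and}\qquad q_f(s)=1+s\E[\xi f^2]+\tfrac{s^2}{2}\E[(\xi^2-1)f^2]+O(|s|^3).
\]
Here I have used $\E f=0$ and $\E f^2=1$. The error terms are controlled by H\"older: for $f^2\in L^{(4+\epsilon)/2}$, the conjugate exponent of $(4+\epsilon)/2$ is finite, and $(1+|\xi|^3)e^{|\xi|}$ lies in every $L^r$. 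This gives the coefficients in \eqref{eq:Gaussian-coefficients-summary} with uniform $o(s)$ and $o(s^2)$ remainders, and $\beta_2^{f,\gamma}=b_2-a_1^2$ follows.

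For W4, $\cum_4(Y_s)$ is a polynomial in the raw moments $\E f(\xi+s)^k$, $k\le4$. It therefore suffices to show $\E f(\xi+s)^k=\E[f^kL_s]\to\E f^k$. This follows from H\"older with exponents $p=(4+\epsilon)/4$ and its conjugate $p'$, together with $\|L_s-1\|_{L^{p'}(\gamma)}\to0$, which holds by explicit Gaussian moments or dominated convergence. Because $\E f=0$ and $\E f^2=1$, the limit cumulant is $\E f^4-3$.

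For W5, note that $|m_f(s)|\le1$ for small $s$. Hence $|Y_s|>R$ forces $|f(\xi+s)|>R-1$, and $|Y_s|^4\le8(|f(\xi+s)|^4+1)$. Applying Markov's inequality with the extra $\epsilon$ moment,
\[
\E\bigl[|Y_s|^4\one_{\{|Y_s|>R\}}\bigr]\le C(R-1)^{-\epsilon}\,\E\bigl[(|f(\xi)|^{4+\epsilon}+1)L_s(\xi)\bigr].
\]
The last factor is bounded uniformly for $|s|\le1$, once more by H\"older, this time using that $L_s$ lies in every $L^r(\gamma)$ uniformly in such $s$. Letting $R\to\infty$ gives W5.

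The main obstacle is the uniform remainder control in W3. There the H\"older pairing must place $f^2$ in $L^{(4+\epsilon)/2}$ while all the $\xi$-growth, including the exponential factor from the Taylor remainder, goes into the conjugate exponent. Since Gaussian moments of $e^{c|\xi|}$ are finite for every $c$, this works. Nothing in the argument differentiates $f$, which explains the final sentence of the lemma.
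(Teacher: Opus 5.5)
Your overall route is the paper's: change of measure by $L_s$, a second-order expansion of $L_s$ paired with $f$ and $f^2$ for W2--W3, and H\"older against the $L^{4+\epsilon}(\gamma)$ norm for W4--W5. Your W2--W4 steps are correct. The gap is in W5. It comes from your opening claim that $\E|f(\xi+s)|^p=\E[|f(\xi)|^pL_s(\xi)]$ is finite for all $p\le4+\epsilon$.

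At the endpoint $p=4+\epsilon$ this claim is false. There $|f|^{4+\epsilon}$ is only in $L^1(\gamma)$, so H\"older leaves no room for the unbounded weight $L_s$. For example, with $g(t)=e^{t^2/(2(4+\epsilon))}(1+t^2)^{-1/(4+\epsilon)}$,
\[
\E g(\xi)^{4+\epsilon}=\frac{1}{\sqrt{2\pi}}\int_{\R}\frac{\dd t}{1+t^2}<\infty,
\qquad
\E g(\xi+s)^{4+\epsilon}=\frac{e^{-s^2/2}}{\sqrt{2\pi}}\int_{\R}\frac{e^{st}}{1+t^2}\,\dd t=\infty\quad(s\ne0).
\]
This is the paper's shifted-moment obstruction, moved from exponent $4$ to $4+\epsilon$. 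Centering and normalizing $g$ gives an $f$ that satisfies every hypothesis of the lemma. For this $f$, the factor $\E[(|f(\xi)|^{4+\epsilon}+1)L_s(\xi)]$ in your W5 bound is $+\infty$ for every $s\ne0$, so the bound proves nothing.

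The repair is to use only exponents strictly below $4+\epsilon$. One option is to run your Markov step with $4+\epsilon/2$ in place of $4+\epsilon$. Then
\[
\E\bigl[|f|^{4+\epsilon/2}L_s\bigr]\le\norm{f}_{L^{4+\epsilon}(\gamma)}^{4+\epsilon/2}\norm{L_s}_{L^{r}(\gamma)},
\]
where $r$ is the conjugate exponent of $(4+\epsilon)/(4+\epsilon/2)$, and this is bounded uniformly for $|s|\le1$. The other option is what the paper does in Lemma~\ref{lem:Gaussian-L4epsilon-W}: apply H\"older with $p=(4+\epsilon)/4$ directly to the truncated fourth moment,
\[
\E\bigl[|f(\xi+s)|^4\one_{\{|f(\xi+s)|>R\}}\bigr]
\le\Bigl(\E\bigl[|f(\xi)|^{4+\epsilon}\one_{\{|f(\xi)|>R\}}\bigr]\Bigr)^{4/(4+\epsilon)}
\sup_{|s|\le s_0}\bigl(\E L_s(\xi)^q\bigr)^{1/q}.
\]
Here the first factor tends to $0$ as $R\to\infty$ by dominated convergence.

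Either version, combined with your reduction from $\{|Y_s|>R\}$ to $\{|f(\xi+s)|>R-1\}$, yields W5. The constant term in that reduction is absorbed because $\one_{\{|f|>R-1\}}\le|f|^4$ for $R\ge2$.
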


\begin{proof}
The expansion of $L_s$ through second order, paired respectively with $f$ and
$f^2$, proves W2 and W3.  H\"older's inequality and the Gaussian exponential
moments give uniform shifted fourth-tail integrability, which yields W4 and
W5.  The complete weighted expansion and tail estimate are proved in
Lemma~\ref{lem:Gaussian-L4epsilon-W}.
\end{proof}

\begin{remark}[Polynomial-growth transforms]
Every measurable polynomial-growth transform belongs to $L^r(\gamma)$ for
all finite $r$.  Consequently every such transform that is centered and
variance-normalized satisfies Lemma~\ref{lem:gaussian-profile-main}.
\end{remark}

\subsection{Gaussian-noise LSS corollary}
\label{subsec:gaussian-LSS-main}

We now state the spectral consequence in Gaussian notation so that no
reconstruction from the general theorem is needed.  Let
$\{\xi_{ij}:1\le i<j\le N\}$ be independent standard Gaussian variables and
define the real symmetric zero-diagonal matrix
\begin{equation}\label{eq:Gaussian-matrix-main}
 M_{N,\gamma}^f(i,j)
 :=N^{-1/2}f\!\left(\xi_{ij}+\sqrt{\lambda N}\,x_ix_j\right),
 \qquad i<j,
 \qquad
 M_{N,\gamma}^f(i,i):=0.
\end{equation}
For later use, define the specialized order-one centering
\begin{align}
 m_{\varphi,\Gamma}^{\gamma,f}
 &:={m}_{\varphi,\Gamma}^{\rm base}(\kappa_f)
 +\frac{1}{2\pi\ii}\oint_\Gamma
 \varphi(z)\frac{\theta_fm'(z)}{1+\theta_fm(z)}\,\dd z
 \notag\\
 &\quad
 +\frac{\theta_f}{2\pi\ii}\oint_\Gamma
 \varphi'(z)m(z)\,\dd z
 -\frac{\lambda\beta_2^{f,\gamma}}{2\pi\ii}\oint_\Gamma
 \varphi(z)\frac{m(z)^3}{1-m(z)^2}\,\dd z.
 \label{eq:Gaussian-centering-main}
\end{align}

\begin{corollary}[Gaussian-noise bulk and full-trace LSS]
\label{cor:gaussian-LSS-main}
Let $f$ satisfy \eqref{eq:Gaussian-f-assumptions-main}, let $\lambda\ge0$ be
fixed, and suppose that the deterministic spike obeys
\begin{equation}\label{eq:Gaussian-spike-main}
 \norm{x}_2=1,
 \qquad
 \norm{x}_\infty=o(N^{-1/4}),
 \qquad
 \sum_{i=1}^N x_i=O(N^{o(1)}).
\end{equation}
Let $\Gamma$ satisfy (G1)--(G4) with $\theta_{f,\nu}$ replaced by $\theta_f$,
and let $\varphi_1,\ldots,\varphi_k$ be analytic on a neighborhood of
$\overline{D_\Gamma}$ and real-valued on its intersection with $\mathbb R$.
Then
\begin{equation}\label{eq:Gaussian-bulk-CLT-main}
 \left(
 \mathcal L_{M_{N,\gamma}^f}^{\Gamma}(\varphi_\ell)
 -N\int_{-2}^2\varphi_\ell(t)\rho_{\scal}(t)\,\dd t
 -m_{\varphi_\ell,\Gamma}^{\gamma,f}
 \right)_{\ell=1}^k
 \Longrightarrow
 (G_{\varphi_\ell})_{\ell=1}^k,
\end{equation}
where the limiting vector is centered Gaussian with
\begin{equation}\label{eq:Gaussian-covariance-main}
 \Cov(G_\varphi,G_\psi)
 =\mathcal V_{\kappa_f}^{\Gamma}(\varphi,\psi).
\end{equation}
If $|\theta_f|\le1$, then
$\mathcal L_{M_{N,\gamma}^f}^{\Gamma}(\varphi)
=\Tr\varphi(M_{N,\gamma}^f)$ with probability tending to one, so
\eqref{eq:Gaussian-bulk-CLT-main} is the full-trace CLT.  If
$|\theta_f|>1$ and each $\varphi_\ell$ is also analytic and real-valued on the
real axis near
\begin{equation}\label{eq:Gaussian-outlier-main}
 \rho_f:=\theta_f+\theta_f^{-1},
\end{equation}
then the full-trace CLT is obtained from
\eqref{eq:Gaussian-bulk-CLT-main} by replacing the statistic there with
\begin{equation}\label{eq:Gaussian-full-supercritical-main}
 \left(
 \Tr\varphi_\ell(M_{N,\gamma}^f)
 -N\int_{-2}^2\varphi_\ell(t)\rho_{\scal}(t)\,\dd t
 -m_{\varphi_\ell,\Gamma}^{\gamma,f}
 -\varphi_\ell(\rho_f)
 \right)_{\ell=1}^k
 \Longrightarrow
 (G_{\varphi_\ell})_{\ell=1}^k,
\end{equation}
with the covariance \eqref{eq:Gaussian-covariance-main}.
In particular, all these conclusions hold for every centered and
variance-normalized polynomial-growth transform.
\end{corollary}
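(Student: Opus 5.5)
The statement is Corollary~\ref{cor:gaussian-LSS-main}. The plan is a direct specialization: verify the hypotheses of Theorem~\ref{thm:bulk-main-package}, Corollary~\ref{cor:subcritical-full-package} and Theorem~\ref{thm:R6-supercritical-full} with $\nu=\gamma$, then check that every parameter and centering term reduces to the stated Gaussian expression.

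\textbf{Step 1 (matching the model and hypotheses).} With $\nu=\gamma$ and $\zeta_{ij}=\xi_{ij}$, the matrix $M_{N,\gamma}^f$ in \eqref{eq:Gaussian-matrix-main} is literally $M_N^f$ from \eqref{eq:model}, since the normalization and zero-diagonal convention coincide. The Gaussian noise law itself satisfies $\E\xi=0$ and $\E\xi^2=1$. The spike condition \eqref{eq:Gaussian-spike-main} is verbatim Assumption~\ref{ass:spike}. By Lemma~\ref{lem:gaussian-profile-main}, \eqref{eq:Gaussian-f-assumptions-main} implies that $(f,\gamma)$ satisfies W1--W5, with coefficients \eqref{eq:Gaussian-coefficients-summary}. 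In particular
\[
 a_1=\E[\xi f(\xi)],\qquad \theta_{f,\gamma}=\sqrt\lambda\,a_1=\theta_f,\qquad \kappa_4^{f,\gamma}=\E f(\xi)^4-3=\kappa_f .
\]
Hence (G1)--(G4) with $\theta_f$ in place of $\theta_{f,\nu}$ are exactly the contour hypotheses of the general theorem.

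\textbf{Step 2 (matching the centering and covariance).} Substitute into \eqref{eq:total-mean-package} term by term:
\begin{itemize}
\item the term $m^{\rm base}_{\varphi,\Gamma}(\kappa_4^{f,\gamma})$ becomes $m^{\rm base}_{\varphi,\Gamma}(\kappa_f)$;
\item the term \eqref{eq:rank-one-direct-package} at $\theta=\theta_f$ is the second integral in \eqref{eq:Gaussian-centering-main};
\item the terms \eqref{eq:diag-package} and \eqref{eq:quad-package} are the last two integrals, with $\beta_2^{f,\gamma}$.
\end{itemize}
So $m^{f,\gamma}_{\varphi,\Gamma}=m^{\gamma,f}_{\varphi,\Gamma}$. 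The covariance \eqref{eq:bulk-cov-package} becomes $\mathcal V^\Gamma_{\kappa_f}$, which is \eqref{eq:Gaussian-covariance-main}. Theorem~\ref{thm:bulk-main-package} then gives \eqref{eq:Gaussian-bulk-CLT-main} jointly in $\ell$.

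\textbf{Step 3 (full-trace statements).} If $|\theta_f|\le1$, Corollary~\ref{cor:subcritical-full-package} gives $\mathcal L^\Gamma=\Tr\varphi$ with probability $1-o(1)$. If $|\theta_f|>1$, then $\rho_{\theta_{f,\gamma}}=\rho_f$. The added analyticity of $\varphi_\ell$ near $\rho_f$ is precisely the hypothesis of Theorem~\ref{thm:R6-supercritical-full}, which yields \eqref{eq:Gaussian-full-supercritical-main}.

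\textbf{Step 4 (polynomial-growth transforms).} A measurable $f$ with $|f(x)|\le C(1+|x|)^p$ lies in $L^{r}(\gamma)$ for every finite $r$, in particular in $L^{5}(\gamma)$, because Gaussian moments of all orders are finite. If it is also centered and variance-normalized, it satisfies \eqref{eq:Gaussian-f-assumptions-main}, and the preceding steps apply.

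\textbf{Main obstacle.} No step is deep; all the real work sits in Lemma~\ref{lem:gaussian-profile-main} and the general theorems. The one point needing care is bookkeeping: the sign conventions and parameters, and especially that $\beta_2^{f,\gamma}=b_2-a_1^2$ with $b_2=\tfrac12\E[(\xi^2-1)f^2]$, must match exactly so the centerings coincide. I would verify this by direct substitution rather than any new estimate.
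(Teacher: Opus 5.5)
Your proposal is correct and matches the paper's proof: Lemma~\ref{lem:gaussian-profile-main} supplies W1--W5 with the Gaussian coefficients, and the spike hypothesis is Assumption~\ref{ass:spike} verbatim. The bulk and the two full-trace claims then follow from Theorem~\ref{thm:bulk-main-package}, Corollary~\ref{cor:subcritical-full-package}, and Theorem~\ref{thm:R6-supercritical-full}, and your term-by-term centering check and polynomial-growth step (the latter recorded in the remark following Lemma~\ref{lem:gaussian-profile-main}) are exactly the bookkeeping the paper leaves implicit.
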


\begin{proof}
Lemma~\ref{lem:gaussian-profile-main} verifies W1--W5, and
\eqref{eq:Gaussian-spike-main} is Assumption~\ref{ass:spike}.  The bulk claim
is therefore Theorem~\ref{thm:bulk-main-package} with the coefficients
\eqref{eq:Gaussian-coefficients-summary}--\eqref{eq:Gaussian-kappa-theta-summary}.
The two full-trace claims are respectively
Corollary~\ref{cor:subcritical-full-package} and
Theorem~\ref{thm:R6-supercritical-full}.
\end{proof}

\begin{remark}[Scope at the fourth-moment threshold]
The condition $L^{4+\epsilon}(\gamma)$ is a concrete sufficient condition,
not a claimed necessity.  Bare $L^4(\gamma)$ does not guarantee the uniform
shifted fourth-tail condition W5; Subsection~\ref{app:bare-L4-obstructions}
gives an explicit obstruction.  A matrix-averaged Gaussian shifted-tail
condition can be weaker than W5, but it is not a consequence of the present
uniform profile theorem.
\end{remark}

\begin{remark}[Smooth consistency]
If differentiation may be passed under expectation, then
$a_1=\E f'(\zeta)$,
$b_1=2\E[f(\zeta)f'(\zeta)]$, and
$b_2=\E[f'(\zeta)^2+f(\zeta)f''(\zeta)]$.  For Gaussian noise,
$a_1=\E[\xi f(\xi)]$ by integration by parts.
\end{remark}
\subsection{Checks for low-degree test functions}

For $\varphi=1$, every correction vanishes in the non-outlier full-trace
regime.  In the supercritical regime the bulk rank-one response is $-1$ and
the outlier contributes $1$, restoring the full trace $N$.

For $\varphi(t)=t$, the zero-diagonal convention gives
$\Tr M_N^f=0$ identically.  In the non-outlier regime the rank-one response is
$\theta$ and the diagonal-removal response is $-\theta$.  In the
supercritical regime the bulk rank-one response is $\theta-\rho_\theta$, the
diagonal response is $-\theta$, and the outlier contributes $\rho_\theta$.
The corrected kernel gives, for every analytic \(\psi\),
\begin{equation}\label{eq:linear-covariance-check}
 \mathcal V_\kappa^\Gamma(1,\psi)=0,
 \qquad
 \mathcal V_\kappa^\Gamma(t,\psi)=0,
\end{equation}
as required by the two deterministic trace identities.

Finally,
\[
 \E\Tr(M_N^f)^2
 =\frac2N\sum_{i<j}\E f(\zeta+s_{ij})^2
 =N-1+\lambda b_2+o(1).
\]
The theorem gives the same constant: the zero-diagonal base contribution is
$-1$, the rank-one contribution, including any separated outlier, is
$\theta^2=\lambda a_1^2$, and the quadratic variance response is
$\lambda\beta_2^{f,\nu}=\lambda(b_2-a_1^2)$.
For the quadratic fluctuation, expansion of \eqref{eq:Wigner-kernel} at
infinity gives
\begin{equation}\label{eq:quadratic-covariance-check}
 \mathcal V_\kappa^\Gamma(t^2,t^2)=4+2\kappa.
\end{equation}
This agrees with the direct homogeneous calculation
\[
 \Var\!\left(\Tr W_N^2\right)
 =4\sum_{i<j}\Var(W_N(i,j)^2)
 \longrightarrow 2(\kappa+2)=4+2\kappa.
\]

For the Gaussian model \eqref{eq:Gaussian-matrix-main}, this identity becomes
\[
 \E\Tr(M_{N,\gamma}^f)^2
 =N-1+\frac{\lambda}{2}
 \E\!\left[(\xi^2-1)f(\xi)^2\right]+o(1).
\]
Indeed, the rank-one and quadratic terms in
\eqref{eq:Gaussian-centering-main} sum to
\[
 \lambda\bigl(\E[\xi f(\xi)]\bigr)^2
 +\lambda\beta_2^{f,\gamma}
 =\frac{\lambda}{2}\E\!\left[(\xi^2-1)f(\xi)^2\right],
\]
which checks the specialized centering directly.
\section{Proof Architecture}\label{sec:proof-overview}

Write each transformed entry as its shifted mean plus the centered variable
$Y_s=f(\zeta+s)-\E f(\zeta+s)$.  W3 and W5 imply uniform nondegeneracy and
fourth-tail control for all microscopic shifts.  The three-point construction
in Subsection~\ref{sec:R2-package} replaces $Y_s$ by a uniformly bounded variable
with exactly the same first four centered moments.  The resulting bounded
triangular array has the same deterministic mean, variance profile, and third
and fourth cumulants as the target.

The bounded centered array is treated by the generalized-Wigner analytic LSS
input in Proposition~\ref{prop:R3-external-input}.  Its weak variance profile
is reduced to the flat profile through the vector Dyson equation; the balanced
linear profile has zero order-one response, while the quadratic profile gives
$c_{\varphi,\Gamma}^{\rm quad-var}$.  Fixed-contour local laws then identify
the rank-one Woodbury and zero-diagonal responses and yield bulk confinement
and the one-outlier alternative.

Finally, a smooth compactly supported statistic agreeing with the analytic
bulk statistic on both endpoint spectra is compared edge by edge.  The global
Fourier--Duhamel estimate bounds its fifth derivative by $O(N^{-5/2})$ for
every hybrid matrix.  Exact matching through order four and W5 therefore make
the telescoping error vanish without a hybrid spectral event or local law.
This proves the bulk theorem.  In the supercritical regime, the separated
outlier converges to $\theta+\theta^{-1}$ and contributes only the deterministic
term recorded in Theorem~\ref{thm:R6-supercritical-full}.

For Gaussian noise there is no second proof path.  The logical chain is
\[
 f\in L^{4+\epsilon}(\gamma)
 \ \Longrightarrow\ 
 \text{Lemma~\ref{lem:Gaussian-L4epsilon-W}}
 \ \Longrightarrow\ 
 \text{W1--W5}
 \ \Longrightarrow\ 
 \text{Theorem~\ref{thm:bulk-main-package}}
 \ \Longrightarrow\ 
 \text{Corollary~\ref{cor:gaussian-LSS-main}}.
\]
The Gaussian likelihood-ratio calculation verifies the input assumptions;
the atomic completion, generalized-Wigner reduction, global replacement, and
outlier argument are exactly those used for the general noise law.

\section{Discussion and Further Directions}\label{sec:discussion}

The proof separates four modules: shifted-profile identification, exact
four-moment completion, generalized-Wigner LSS analysis, and deterministic
finite-rank response.  This separation indicates which extensions can reuse
the present comparison argument and which require genuinely local spectral
input.  We record the most natural directions in that order.

\subsection{Relaxing profile and spike assumptions}
\label{subsec:discussion-assumptions}

\paragraph*{A matrix-averaged replacement for W5.}
Uniform shifted fourth-tail control is used twice: to obtain a uniformly
bounded exact four-moment surrogate and to make the fifth-order edgewise
replacement error summable.  A natural weaker target is the triangular-array
condition
\begin{equation}\label{eq:discussion-averaged-W5}
 \lim_{R\to\infty}\limsup_{N\to\infty}
 \frac{2}{N(N-1)}\sum_{i<j}
 \E\left[|Y_{s_{ij}}|^4
 \one_{\{|Y_{s_{ij}}|>R\}}\right]=0,
\end{equation}
together with W2--W4 along the actual shifts.  This condition permits a small
set of poorly controlled edges and is therefore strictly closer to what the
matrix replacement uses than the uniform one-parameter condition W5.  To
prove the same LSS theorem from \eqref{eq:discussion-averaged-W5}, one would
first separate good and exceptional edges, construct bounded moment-matched
surrogates only on the good set, and show that the total contribution of the
exceptional set is negligible in the characteristic-function telescope.  The
missing estimate is an array-level replacement bound weighted by the local
fourth-tail moduli.  No change to the generalized-Wigner or Woodbury parts is
expected.  This is the most direct assumption-improvement problem suggested
by the present proof, but it is not a consequence of the uniform completion
argument proved here.

\paragraph*{Retaining the linear variance-profile channel.}
The balance condition suppresses the order-one response of
\(p_{ij}^{(1)}=b_1\sqrt{\lambda N}\,x_ix_j\).  Without it, the corresponding
row-sum defect is
\begin{equation}\label{eq:discussion-linear-row-defect}
 q_i^{(1)}
 =\frac1N\sum_{j\ne i}p_{ij}^{(1)}
 =b_1\sqrt{\frac{\lambda}{N}}\,x_i
   \left(\sum_jx_j-x_i\right),
\end{equation}
and its contribution must be retained in the vector-Dyson centering.  One
possible extension is to state the CLT with an exact, $N$-dependent
deterministic centering defined by the finite-profile Dyson equation.  A
closed limiting formula would additionally require convergence of the spike
functionals entering the linear response; without such convergence there is
no single universal scalar correction.  The probabilistic four-moment
comparison survives, but the balancing reduction and the first-order Dyson
expansion in Appendix~\ref{sec:R3-package} must be replaced by a computation
that keeps \eqref{eq:discussion-linear-row-defect}.  This extension is
technically substantial but appears feasible within the same global
framework.

\paragraph*{Less delocalized spike vectors.}
The condition \(\norm{x}_\infty=o(N^{-1/4})\) is exactly what forces
\(\max_{i<j}|s_{ij}|=o(1)\), so that a single small-shift profile controls every
entry.  It cannot simply be deleted while retaining W2--W5 only near zero.
A possible mixed theory would isolate finitely or sparsely many
nonmicroscopic edges, treat them as an additional finite-rank or sparse
deformation, and apply the present completion to the remaining microscopic
array.  Such a result would need explicit hypotheses on the number and
magnitude of exceptional shifts and is less immediate than removing balance.

\subsection{Broader models and spectral observables}
\label{subsec:discussion-models}

\paragraph*{Finite-rank signals.}
For fixed $r$, consider shifts of the form
\[
 s_{ij}=\sqrt N\sum_{a=1}^r\sqrt{\lambda_a}\,
 x_i^{(a)}x_j^{(a)}.
\]
If these shifts are uniformly microscopic and the spike vectors satisfy
appropriate mixed balance and profile-limit conditions, the exact atomic
completion and edgewise replacement remain entrywise statements and should
extend without conceptual change.  The deterministic mean becomes finite
rank, the scalar Woodbury denominator becomes an $r\times r$ determinant,
and the quadratic variance profile contains mixed $(a,b)$ channels.  The main
new tasks are to identify the limits of those mixed profiles and to prove
uniform separation of all supercritical outliers.  Among model extensions,
this appears the most accessible.

\paragraph*{Nonanalytic spectral test functions.}
The roughness of the entry transform $f$ should not be confused with the
regularity of the LSS test function $\varphi$.  A first extension beyond the
present analytic class would use a Helffer--Sj\"ostrand or Fourier
representation for sufficiently smooth compactly supported $\varphi$, a
generalized-Wigner LSS input in the same regularity class, and deterministic
resolvent estimates uniform down to the smoothing scale.  The exact
four-moment cancellation remains available for smooth matrix functionals,
but the fixed-contour formulas for the bias and the spike response must be
replaced by almost-analytic integrals.  A realistic first target is a
high-regularity $C_c^k$ theorem; reaching the nearly optimal bulk Sobolev
regularity known for homogeneous Wigner matrices would require sharper
variance estimates and is a separate problem.

The complex Hermitian symmetry class and a nonzero diagonal can also be
treated in principle.  They change the homogeneous bias, covariance
constants, and diagonal cumulant terms but not the shifted-profile or exact
completion mechanism.  These are useful variants, although mathematically
less central than the two extensions above.

\subsection{Edge, sparse, and heavy-tailed regimes}
\label{subsec:discussion-frontiers}

\paragraph*{Critical BBP and edge universality.}
The present theorem records only the deterministic contribution of an outlier
separated by a fixed distance from the bulk.  Critical tuning
\(\theta_{f,\nu}=\pm1+wN^{-1/3}\) and edge spectral parameters
\(z=\pm2+O(N^{-2/3})\) lie outside the fixed-contour argument.  Proving edge
universality would require an optimal edge local law for the bounded
surrogate, Green-function comparison at the $N^{-2/3}$ scale, and uniform
control of the spike-dependent characteristic determinant through the
critical window.  The exact four-moment completion may still provide the
comparison array, but the global fifth-derivative estimate used here does not
by itself supply these local inputs.  Edge universality is therefore
plausible, but it is a new proof program rather than a formal extension of the
current LSS theorem.

\paragraph*{Sparse transformed noise.}
If only a vanishing proportion of entries is active, the deterministic
self-consistent density, local law, and LSS bias generally depend on the
sparsity scale.  In a sufficiently dense sparse regime, one may hope to
combine shifted-profile completion with an existing sparse-matrix LSS input.
Near the sparse spectral threshold, high cumulants and localized eigenvectors
produce additional fluctuation channels, so the present flat
generalized-Wigner backend is no longer appropriate.  A sparse extension must
therefore specify the sparsity parameter and its relation to the microscopic
spike shifts; there is no single sparsity-free analogue of the theorem.

\paragraph*{Below the fourth-moment threshold.}
For regularly varying transformed entries with tail index
\(2<\alpha<4\), the fourth cumulant is infinite and exact matching through
order four is unavailable.  The fluctuation scale and covariance are then
tail-index dependent, as in the half-heavy-tailed LSS theory
\cite{BenaychGeorgesMaltsev2016,LodhiaMaltsev2023}.  One should expect a
different normalization and a spike-dependent perturbation of that
heavy-tailed fluctuation field, rather than the covariance in
Theorem~\ref{thm:bulk-main-package}.  This direction is mathematically
natural, but it changes the limiting universality class and cannot be reached
by merely weakening W5.

The near-term directions most compatible with the present machinery are the
matrix-averaged version of W5, retention of the unbalanced linear profile,
finite-rank spikes, and smooth nonanalytic test functions.  Critical edge,
genuinely sparse, and infinite-fourth-moment models require new local laws or
new fluctuation inputs and should be regarded as separate programs.

\appendix

\section{Shifted Data and Exact Four-Moment Completion}
\label{app:proof-notation}
\subsection{Shifted data and fourth-tail consequences}
\label{sec:R1-package}
For the remainder of the proof, abbreviate
\begin{equation}\label{eq:shifted-data-package}
 m(s):=\E f(\zeta+s),\qquad
 Y_s:=f(\zeta+s)-m(s),\qquad
 v(s):=\E Y_s^2.
\end{equation}
Then
\begin{align}
 m(s)&=a_1s+o(s),\label{eq:mean-profile-package}\\
 \E f(\zeta+s)^2&=1+b_1s+b_2s^2+o(s^2),
 \label{eq:second-profile-package}\\
 \beta_2^{f,\nu}&:=b_2-a_1^2,\qquad
 \theta_{f,\nu}:=\sqrt\lambda\,a_1,\label{eq:beta-package}\\
 \kappa_4^{f,\nu}&:=\E f(\zeta)^4-3.\label{eq:kappa-package}
\end{align}
The assumptions used below may be cited in the equivalent forms
\begin{align}
 v(s)&\longrightarrow1,\label{eq:W3-package}\\
 \cum_4(Y_s)&\longrightarrow\kappa_4^{f,\nu},\label{eq:W4-package}\\
 \lim_{R\to\infty}\limsup_{\delta\downarrow0}
 \sup_{|s|\le\delta}
 \E\bigl[|Y_s|^4\one_{\{|Y_s|>R\}}\bigr]&=0,
 \label{eq:W5-package}
\end{align}
and
\begin{equation}\label{eq:spike-package}
 \norm{x}_2=1,\qquad
 \norm{x}_\infty=o(N^{-1/4}),\qquad
 \sum_{i=1}^N x_i=O(N^{o(1)}).
\end{equation}
For \(R,\delta>0\), define
\begin{equation}\label{eq:tail-modulus-package}
 \omega_4(R,\delta)
 :=\sup_{|s|\le\delta}
 \E\bigl[|Y_s|^4\one_{\{|Y_s|>R\}}\bigr].
\end{equation}
For fixed \(R\), the function \(\delta\mapsto\omega_4(R,\delta)\) is
nondecreasing.  Thus the inner limsup in W5 is an ordinary decreasing-domain
limit as \(\delta\downarrow0\).

\begin{lemma}[Uniform fourth moments near the zero shift]
\label{lem:uniform-fourth-package}
Assume \eqref{eq:W5-package}.  There exist \(\delta_0>0\) and
\(C_4<\infty\) such that
\begin{equation}\label{eq:uniform-fourth-package}
 \sup_{|s|\le\delta_0}\E|Y_s|^4\le C_4.
\end{equation}
If \eqref{eq:W3-package} also holds, then \(\delta_0\) may be chosen so that
\begin{equation}\label{eq:uniform-variance-package}
 \frac12\le v(s)\le\frac32,
 \qquad |s|\le\delta_0.
\end{equation}
\end{lemma}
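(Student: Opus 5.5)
The plan is to split the fourth moment at a fixed truncation level and let W5 control the tail, while \eqref{eq:W3-package} supplies the variance bounds directly.

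\emph{Step 1: choose the truncation level and the shift radius.} By \eqref{eq:W5-package} there is $R_0<\infty$ with
\[
 \limsup_{\delta\downarrow0}\omega_4(R_0,\delta)<1 .
\]
Since $\delta\mapsto\omega_4(R_0,\delta)$ is nondecreasing, this limsup is an infimum over $\delta>0$. Hence there is $\delta_1>0$ with $\omega_4(R_0,\delta_1)\le1$.

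\emph{Step 2: bound the fourth moment uniformly.} For $|s|\le\delta_1$ split $\E|Y_s|^4$ on the events $\{|Y_s|\le R_0\}$ and $\{|Y_s|>R_0\}$. This gives
\[
 \E|Y_s|^4\le R_0^4+\E\bigl[|Y_s|^4\one_{\{|Y_s|>R_0\}}\bigr]\le R_0^4+1=:C_4 .
\]
This proves \eqref{eq:uniform-fourth-package} with $\delta_0=\delta_1$. The bound is finite and independent of $s$; no finiteness of $\E|Y_s|^4$ needs to be assumed in advance, because the tail integral being at most $1$ already shows it is finite.

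\emph{Step 3: the variance bounds.} Assume also \eqref{eq:W3-package}, i.e.\ $v(s)\to1$ as $s\to0$; this is the consequence of W2 and W3 that $v(s)=q_f(s)-m_f(s)^2=1+O(s)$. Then there is $\delta_2>0$ with $|v(s)-1|\le\tfrac12$ for $|s|\le\delta_2$. One small point is the shift $s=0$, which is excluded from the suprema in W2 and W3. There $v(0)=1$ directly by W1, since $\E f(\zeta)=0$ and $\E f(\zeta)^2=1$. Taking $\delta_0=\min(\delta_1,\delta_2)$ gives both \eqref{eq:uniform-fourth-package} and \eqref{eq:uniform-variance-package}, because shrinking $\delta_0$ preserves the first bound.

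No step is a genuine obstacle. The only point needing care is the order of quantifiers in W5: the limsup in $\delta$ must be converted into an actual $\delta_1$, which monotonicity of $\omega_4(R_0,\cdot)$ makes immediate.
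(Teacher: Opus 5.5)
Your proof is correct and follows the paper's argument: fix $R_0$ from W5, use monotonicity of $\omega_4(R_0,\cdot)$ to find a shift window on which the tail term is bounded, split $\E|Y_s|^4$ at $R_0$, and then shrink $\delta_0$ using $v(s)\to1$. The differences are cosmetic: the paper bounds the tail term by $2$ rather than $1$, and your check that $v(0)=1$ by W1 makes explicit a point the paper leaves implicit.
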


\begin{proof}
Choose \(R_0\) so large that
\(
 \lim_{\delta\downarrow0}\omega_4(R_0,\delta)<1.
\)
Then choose \(\delta_0>0\) with
\(\omega_4(R_0,\delta_0)\le2\).  For \(|s|\le\delta_0\),
\[
 \E|Y_s|^4
 \le R_0^4+
 \E\bigl[|Y_s|^4\one_{\{|Y_s|>R_0\}}\bigr]
 \le R_0^4+2.
\]
This proves \eqref{eq:uniform-fourth-package}.  The variance bounds follow
from \(v(s)\to1\) after decreasing \(\delta_0\).
\end{proof}

\begin{lemma}[W5 implies the triangular-array fourth-moment Lindeberg bound]
\label{lem:exact-W5-Lindeberg-package}
Assume \eqref{eq:W5-package}.  Let \(s_{e,N}\), \(1\le e\le M_N\), be any
triangular array satisfying
\begin{equation}\label{eq:max-shift-package}
 \max_{1\le e\le M_N}|s_{e,N}|\longrightarrow0.
\end{equation}
Then, for every \(\varepsilon>0\),
\begin{equation}\label{eq:sup-Lindeberg-package}
 \max_{1\le e\le M_N}
 \E\left[
 |Y_{s_{e,N}}|^4
 \one_{\{|Y_{s_{e,N}}|>\varepsilon\sqrt N\}}
 \right]
 \longrightarrow0.
\end{equation}
In particular,
\begin{equation}\label{eq:average-Lindeberg-package}
 \frac1{M_N}\sum_{e=1}^{M_N}
 \E\left[
 |Y_{s_{e,N}}|^4
 \one_{\{|Y_{s_{e,N}}|>\varepsilon\sqrt N\}}
 \right]
 \longrightarrow0.
\end{equation}
\end{lemma}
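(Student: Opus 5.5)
The plan is a direct $\varepsilon$--$R$ argument using the tail modulus $\omega_4(R,\delta)$ from \eqref{eq:tail-modulus-package}.

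Fix $\varepsilon>0$ and a target $\eta>0$. By \eqref{eq:W5-package}, first choose $R_\eta$ so large that $\lim_{\delta\downarrow0}\omega_4(R_\eta,\delta)<\eta$. The limit exists because $\omega_4(R,\cdot)$ is monotone. Then choose $\delta_\eta>0$ with $\omega_4(R_\eta,\delta_\eta)\le\eta$.

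Next use \eqref{eq:max-shift-package} to pick $N_1$ such that $\max_e|s_{e,N}|\le\delta_\eta$ for $N\ge N_1$. Also pick $N_2$ with $\varepsilon\sqrt N\ge R_\eta$ for $N\ge N_2$.

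For $N\ge\max(N_1,N_2)$ and every edge $e$, the inclusion of events $\{|Y_s|>\varepsilon\sqrt N\}\subset\{|Y_s|>R_\eta\}$ and positivity of $|Y_s|^4$ give
\[
 \E\bigl[|Y_{s_{e,N}}|^4\one_{\{|Y_{s_{e,N}}|>\varepsilon\sqrt N\}}\bigr]
 \le \E\bigl[|Y_{s_{e,N}}|^4\one_{\{|Y_{s_{e,N}}|>R_\eta\}}\bigr]
 \le \omega_4(R_\eta,\delta_\eta)\le\eta .
\]
Taking the maximum over $e$ and then letting $\eta\downarrow0$ proves \eqref{eq:sup-Lindeberg-package}.

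The averaged statement \eqref{eq:average-Lindeberg-package} follows at once, since an average is bounded by the maximum.

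There is no real obstacle. The only points needing care are these:
\begin{itemize}
\item The order of quantifiers: $R$ must be chosen before $\delta$, and both before $N$.
\item The truncation level $\varepsilon\sqrt N$ must eventually exceed the fixed $R_\eta$.
\item The supremum over shifts in $\omega_4$ is what makes the bound uniform in $e$, even though the shifts vary along the triangular array.
\end{itemize}

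Note that $\omega_4$ may be $+\infty$ for large $\delta$. This causes no difficulty, because only $\delta\le\delta_\eta$ is used.
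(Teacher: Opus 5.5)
Your proof is correct and follows the paper's argument essentially line for line. You fix $\eta$, choose $R$ from W5, then a shift window $\delta$ with $\omega_4(R,\delta)$ small, and finally take $N$ large enough that all shifts lie in the window and $\varepsilon\sqrt N\ge R$. The only cosmetic difference is that you get $\omega_4(R_\eta,\delta_\eta)\le\eta$ directly, which is valid because the monotone limit as $\delta\downarrow0$ is an infimum, whereas the paper settles for the bound $2\eta$.
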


\begin{proof}
Fix \(\eta>0\).  By W5, choose \(R<\infty\) such that
\[
 \lim_{\delta\downarrow0}\omega_4(R,\delta)<\eta.
\]
Choose \(\delta>0\) so that \(\omega_4(R,\delta)<2\eta\).  For all
sufficiently large \(N\), \eqref{eq:max-shift-package} gives
\(\max_e|s_{e,N}|\le\delta\), while
\(\varepsilon\sqrt N\ge R\).  Hence
\[
 \max_e\E\left[
 |Y_{s_{e,N}}|^4
 \one_{\{|Y_{s_{e,N}}|>\varepsilon\sqrt N\}}
 \right]
 \le\omega_4(R,\delta)<2\eta.
\]
Since \(\eta\) is arbitrary, \eqref{eq:sup-Lindeberg-package} follows, and
\eqref{eq:average-Lindeberg-package} is immediate.
\end{proof}

\begin{remark}[Quantifier reconciliation]
Lemma~\ref{lem:exact-W5-Lindeberg-package} uses W5 exactly as stated.  It does
not assume that the family \(\{|Y_s|^4:|s|\le\delta\}\) is uniformly
integrable for one fixed \(\delta>0\).  The order is: choose a fixed tail level
\(R\), then a sufficiently small shift window \(\delta\), and only afterward
take \(N\) large.
\end{remark}

\subsection{Spike-generated profile estimates}

Put
\begin{equation}\label{eq:profiles-package}
 p_{ij}^{(1)}:=b_1\sqrt{\lambda N}\,x_ix_j,
 \qquad
 p_{ij}^{(2)}:=\lambda\beta_2^{f,\nu}N x_i^2x_j^2.
\end{equation}

\begin{proposition}[Consequences of delocalization and balance]
\label{prop:spike-rates-package}
Under \eqref{eq:spike-package},
\begin{align}
 \max_i x_i^2&=o(N^{-1/2}),
 \label{eq:max-square-package}\\
 \sum_i x_i^4&=o(N^{-1/2}),
 \label{eq:fourth-spike-package}\\
 \max_{i<j}|s_{ij}|&=o(1).
 \label{eq:max-microshift-package}
\end{align}
The linear profile satisfies
\begin{align}
 \max_{i,j}|p_{ij}^{(1)}|&=o(1),
 \label{eq:p1-max-package}\\
 \frac1{N^2}\sum_{i,j}|p_{ij}^{(1)}|&=O(N^{-1/2}),
 \label{eq:p1-L1-package}\\
 \frac1{N^2}\sum_{i,j}(p_{ij}^{(1)})^2&=O(N^{-1}),
 \label{eq:p1-L2-package}\\
 N\left|\frac1{N^2}\sum_{i,j}p_{ij}^{(1)}\right|&=o(1),
 \label{eq:p1-signed-package}\\
 \max_i\frac1N\sum_j|p_{ij}^{(1)}|&=o(N^{-1/4}).
 \label{eq:p1-row-package}
\end{align}
The quadratic profile satisfies
\begin{align}
 \max_{i,j}|p_{ij}^{(2)}|&=o(1),
 \label{eq:p2-max-package}\\
 \frac1{N^2}\sum_{i,j}|p_{ij}^{(2)}|&=O(N^{-1}),
 \label{eq:p2-L1-package}\\
 \frac1{N^2}\sum_{i,j}(p_{ij}^{(2)})^2&=o(N^{-1}),
 \label{eq:p2-L2-package}\\
 \max_i\frac1N\sum_j|p_{ij}^{(2)}|&=o(N^{-1/2}).
 \label{eq:p2-row-package}
\end{align}
No stronger polynomial rate follows from \eqref{eq:spike-package} alone.
\end{proposition}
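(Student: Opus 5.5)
The plan is a direct computation from the three hypotheses in \eqref{eq:spike-package}. Write \(\mu_N:=N^{1/4}\norm{x}_\infty\to0\) and \(S_N:=\sum_i x_i\).

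\textbf{Delocalization bounds.} The estimate \eqref{eq:max-square-package} is just \(\max_i x_i^2=\mu_N^2N^{-1/2}\). For \eqref{eq:fourth-spike-package} I would use
\[
 \sum_i x_i^4\le \max_i x_i^2\sum_i x_i^2=\mu_N^2N^{-1/2}.
\]
For \eqref{eq:max-microshift-package},
\[
 |s_{ij}|\le\sqrt{\lambda N}\,\norm{x}_\infty^2=\sqrt\lambda\,\mu_N^2=o(1).
\]
Since \(|p^{(1)}_{ij}|=|b_1|\,|s_{ij}|\) and \(|p^{(2)}_{ij}|=|\beta_2^{f,\nu}|\,s_{ij}^2\), this already gives \eqref{eq:p1-max-package} and \eqref{eq:p2-max-package}.

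\textbf{Averaged linear-profile bounds.} Cauchy--Schwarz gives \(\sum_i|x_i|\le\sqrt N\). Hence
\[
 N^{-2}\sum_{i,j}|p^{(1)}_{ij}|\le|b_1|\sqrt\lambda\,N^{-3/2}\Bigl(\sum_i|x_i|\Bigr)^2\le |b_1|\sqrt\lambda\,N^{-1/2},
\]
which is \eqref{eq:p1-L1-package}. For \eqref{eq:p1-L2-package},
\[
 \sum_{i,j}(p^{(1)}_{ij})^2=b_1^2\lambda N\norm{x}_2^4=b_1^2\lambda N,
\]
and dividing by \(N^2\) gives \(O(N^{-1})\). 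The signed sum is where balance enters: \(\sum_{i,j}p^{(1)}_{ij}=b_1\sqrt{\lambda N}\,S_N^2\), so
\[
 N\Bigl|N^{-2}\sum_{i,j}p^{(1)}_{ij}\Bigr|=|b_1|\sqrt\lambda\,N^{-1/2}S_N^2\le C_\epsilon N^{2\epsilon-1/2}.
\]
Taking \(\epsilon<1/4\) yields \eqref{eq:p1-signed-package}. If the statement is read with the full sum over \(i,j\), including the diagonal, the same computation applies; restricting to \(i\ne j\) changes the sum by \(b_1\sqrt{\lambda N}\sum_i x_i^2=O(\sqrt N)\), which I will handle as follows. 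After multiplying by \(N\cdot N^{-2}\) it contributes \(O(N^{-1/2})\), so it is harmless either way.

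\textbf{Row bounds.} These are the main points to check. For the linear profile,
\[
 \frac1N\sum_j|p^{(1)}_{ij}|\le|b_1|\sqrt{\lambda}\,N^{-1/2}|x_i|\sum_j|x_j|\le|b_1|\sqrt\lambda\,\norm{x}_\infty=o(N^{-1/4}),
\]
which is \eqref{eq:p1-row-package}. For the quadratic profile,
\[
 \frac1N\sum_j|p^{(2)}_{ij}|=\lambda|\beta_2^{f,\nu}|\,x_i^2\sum_jx_j^2\le\lambda|\beta_2^{f,\nu}|\,\mu_N^2N^{-1/2},
\]
which is \eqref{eq:p2-row-package}. Similarly, \(N^{-2}\sum_{i,j}|p^{(2)}_{ij}|=\lambda|\beta_2^{f,\nu}|N^{-1}\), giving \eqref{eq:p2-L1-package}. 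For \eqref{eq:p2-L2-package},
\[
 \sum_{i,j}(p^{(2)}_{ij})^2=\lambda^2(\beta_2^{f,\nu})^2N^2\Bigl(\sum_ix_i^4\Bigr)^2=o(N),
\]
using \eqref{eq:fourth-spike-package}.

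\textbf{Sharpness.} The final sentence, that no stronger polynomial rate follows, I would justify by example rather than proof. Take \(x\) with \(N^{1/2}/\log N\) coordinates equal to \(\pm(\log N)^{1/2}N^{-1/4}\), signs balanced. This satisfies \eqref{eq:spike-package} and saturates \(\mu_N\) at \((\log N)^{1/2}\), so \(\mu_N\) decays slower than any power; a flat delocalized vector saturates \eqref{eq:p1-L1-package} and \eqref{eq:p1-L2-package}. I expect no genuine obstacle. The only care needed is the quantifier order in the \(O(N^{o(1)})\) balance condition, and choosing \(\epsilon\) small enough to beat \(N^{-1/2}\).
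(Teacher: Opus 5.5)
Your verification of every displayed estimate is correct and follows the paper's proof. It uses the same ingredients: $\sum_i x_i^4\le\norm{x}_\infty^2$, Cauchy--Schwarz $\norm{x}_1\le\sqrt N$ for the $\ell^1$ and row bounds, exact evaluation of the squared and quadratic sums, and the balance condition only in \eqref{eq:p1-signed-package}.

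The one error is in your sharpness example; the paper does not argue the final sentence at all, so it gives no guidance here. As written, taking $N^{1/2}/\log N$ coordinates of modulus $(\log N)^{1/2}N^{-1/4}$ gives $\mu_N=(\log N)^{1/2}\to\infty$. That violates $\norm{x}_\infty=o(N^{-1/4})$, so the example does not satisfy \eqref{eq:spike-package} and proves nothing.

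Invert the logarithms. Take about $N^{1/2}\log N$ coordinates, an even number with balanced signs, of modulus $(\log N)^{-1/2}N^{-1/4}$, normalized so that $\norm{x}_2=1$. Then $\sum_ix_i=0$ and $\mu_N\asymp(\log N)^{-1/2}\to0$. Also $\max_ix_i^2\asymp\sum_ix_i^4\asymp N^{-1/2}/\log N$ and $\max_{i<j}|s_{ij}|\asymp\sqrt\lambda/\log N$. Hence none of the $\norm{x}_\infty$-driven $o(\cdot)$ bounds can be improved by a power of $N$.

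This thin vector gives only $N^{-1/2}$ in \eqref{eq:p1-row-package}. To saturate that bound, use a flat balanced background plus a single coordinate of size $(\log N)^{-1/2}N^{-1/4}$.

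Finally, the last sentence of the statement cannot be read literally for \eqref{eq:p1-signed-package}. There your own computation already gives the polynomially stronger rate $N^{-1/2+o(1)}$.
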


\begin{proof}
The first two assertions follow from
\[
 \max_i x_i^2=\norm{x}_\infty^2=o(N^{-1/2}),
 \qquad
 \sum_i x_i^4
 \le\norm{x}_\infty^2\sum_i x_i^2=o(N^{-1/2}).
\]
Also,
\[
 \max_{i<j}|s_{ij}|
 \le\sqrt{\lambda N}\,\norm{x}_\infty^2=o(1).
\]
For the linear profile, \(\norm{x}_1\le\sqrt N\norm{x}_2=\sqrt N\)
gives
\[
 \frac1{N^2}\sum_{i,j}|p_{ij}^{(1)}|
 \le C\frac{\sqrt N}{N^2}\norm{x}_1^2
 \le CN^{-1/2},
\]
while
\[
 \frac1{N^2}\sum_{i,j}(p_{ij}^{(1)})^2
 =O\left(\frac{N}{N^2}\sum_{i,j}x_i^2x_j^2\right)
 =O(N^{-1}).
\]
The signed average is
\[
 N\left|\frac1{N^2}\sum_{i,j}p_{ij}^{(1)}\right|
 \le C\frac{(\sum_i x_i)^2}{\sqrt N}
 =N^{-1/2+o(1)}=o(1).
\]
Finally,
\[
 \frac1N\sum_j|p_{ij}^{(1)}|
 \le C\frac{\sqrt N}{N}|x_i|\norm{x}_1
 \le C|x_i|=o(N^{-1/4}).
\]
The maximum bound follows directly from
\(\sqrt N\norm{x}_\infty^2=o(1)\).

For the quadratic profile,
\[
 \frac1{N^2}\sum_{i,j}|p_{ij}^{(2)}|
 =O\left(\frac{N}{N^2}
 \sum_{i,j}x_i^2x_j^2\right)=O(N^{-1}),
\]
and
\[
 \frac1{N^2}\sum_{i,j}(p_{ij}^{(2)})^2
 =O\left(\sum_i x_i^4\right)^2=o(N^{-1}).
\]
Moreover,
\[
 \frac1N\sum_j|p_{ij}^{(2)}|
 =O\left(x_i^2\sum_jx_j^2\right)
 =o(N^{-1/2}),
\]
and \(N\norm{x}_\infty^4=o(1)\) gives the maximum bound.
\end{proof}

\begin{corollary}[Profile remainders]
\label{cor:profile-remainder-package}
Suppose
\begin{equation}\label{eq:profile-rem-package}
 |r_{ij,N}|\le\varepsilon_Ns_{ij}^2,
 \qquad \varepsilon_N\to0.
\end{equation}
Then
\begin{align}
 \max_{i,j}|r_{ij,N}|&=o(1),\\
 N\frac1{N^2}\sum_{i,j}|r_{ij,N}|&=o(1),\\
 \max_i\frac1N\sum_j|r_{ij,N}|&=o(N^{-1/2}).
\end{align}
\end{corollary}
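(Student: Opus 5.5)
The plan is to bound $|r_{ij,N}|$ entrywise by $\varepsilon_N s_{ij}^2=\varepsilon_N\lambda N x_i^2x_j^2$ and then reuse the quadratic-profile estimates of Proposition~\ref{prop:spike-rates-package}. Up to the factor $\varepsilon_N$ and the constant $\beta_2^{f,\nu}$, this bound is exactly $|p_{ij}^{(2)}|$. If $\beta_2^{f,\nu}=0$, I will not route through $p^{(2)}$ but will repeat the same elementary computations directly with $\lambda Nx_i^2x_j^2$. So all three claims reduce to bounds on $N x_i^2x_j^2$ multiplied by $\varepsilon_N\to0$.

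The steps are as follows.

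\textbf{Maximum.} Using \eqref{eq:max-square-package},
\[
 \max_{i,j}|r_{ij,N}|\le\varepsilon_N\lambda N\norm{x}_\infty^4=\varepsilon_N\, o(1)=o(1).
\]
In fact $\max_{i,j}s_{ij}^2=o(1)$ already by \eqref{eq:max-microshift-package}, so the factor $\varepsilon_N$ is not even needed here.

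\textbf{Normalized sum.} Using $\norm{x}_2=1$,
\[
 N\cdot\frac1{N^2}\sum_{i,j}|r_{ij,N}|\le\varepsilon_N\frac{\lambda N}{N}\Bigl(\sum_i x_i^2\Bigr)^2=\lambda\varepsilon_N=o(1).
\]
Here $\varepsilon_N\to0$ is essential: without it the quantity is only $O(1)$, matching \eqref{eq:p2-L1-package}.

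\textbf{Row sums.} For each $i$,
\[
 \frac1N\sum_j|r_{ij,N}|\le\varepsilon_N\lambda x_i^2\sum_j x_j^2\le\varepsilon_N\lambda\norm{x}_\infty^2=o(N^{-1/2}),
\]
by \eqref{eq:max-square-package}.

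There is no real obstacle. The only points needing care are these. First, the indices $i=j$, or unordered pairs, are covered because the bound is symmetric and the diagonal terms satisfy the same estimate. Second, the $o(\cdot)$ statements are uniform in $i,j$, since $\varepsilon_N$ is independent of the edge and $\norm{x}_\infty$ is a global quantity.
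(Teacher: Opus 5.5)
Your proposal is correct and follows essentially the same route as the paper. The paper also bounds $|r_{ij,N}|$ by $\varepsilon_N\lambda N x_i^2x_j^2$ and then reuses the quadratic-profile calculations of Proposition~\ref{prop:spike-rates-package} with the extra factor $\varepsilon_N$. You have simply written out those computations directly, which also makes the case $\beta_2^{f,\nu}=0$ explicit.
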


\begin{proof}
Use \(s_{ij}^2=\lambda N x_i^2x_j^2\),
\(\max_{ij}s_{ij}^2=o(1)\), and the quadratic-profile calculations in
Proposition~\ref{prop:spike-rates-package}, with the additional factor
\(\varepsilon_N\).
\end{proof}

\subsection{Three-point completion and uniform shifted surrogate}
\label{sec:R2-package}

The proposed four-moment completion is not merely feasible: for centered real
variables it has an explicit three-point solution.

\begin{proposition}[Three-point representation of four centered moments]
\label{prop:three-point-package}
Let \(X\) be a real random variable satisfying
\begin{equation}\label{eq:X-moments-package}
 \E X=0,
 \qquad
 \E X^2=v>0,
 \qquad
 \E X^4<\infty.
\end{equation}
Put
\begin{equation}\label{eq:standard-moments-package}
 a:=\frac{\E X^3}{v^{3/2}},
 \qquad
 b:=\frac{\E X^4}{v^2},
 \qquad
 c:=b-a^2.
\end{equation}
Then \(c\ge1\).  Define
\begin{equation}\label{eq:alpha-beta-package}
 \alpha:=\frac{\sqrt{a^2+4c}-a}{2},
 \qquad
 \beta:=\frac{\sqrt{a^2+4c}+a}{2},
\end{equation}
and
\begin{equation}\label{eq:three-probs-package}
 p_-:=\frac{1}{\alpha(\alpha+\beta)},
 \qquad
 p_+:=\frac{1}{\beta(\alpha+\beta)},
 \qquad
 p_0:=1-\frac1c.
\end{equation}
These numbers are nonnegative and sum to one.  If \(Q\) has the three-point
law
\begin{equation}\label{eq:Q-law-package}
 \mathbb P(Q=-\alpha)=p_-,
 \qquad
 \mathbb P(Q=0)=p_0,
 \qquad
 \mathbb P(Q=\beta)=p_+,
\end{equation}
then \(\widehat X:=\sqrt v\,Q\) satisfies
\begin{equation}\label{eq:four-exact-package}
 \E\widehat X^q=\E X^q,
 \qquad q=1,2,3,4.
\end{equation}
Moreover,
\begin{equation}\label{eq:support-three-package}
 |\widehat X|
 \le2\sqrt{\frac{\E X^4}{v}}.
\end{equation}
\end{proposition}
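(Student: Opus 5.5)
The proof reduces by scaling to the standardized case. Put \(Z:=X/\sqrt v\), so \(\E Z=0\), \(\E Z^2=1\), \(\E Z^3=a\) and \(\E Z^4=b\). It then suffices to show that \(Q\) has moments \(0,1,a,b\) of orders one through four, since \(\widehat X=\sqrt v\,Q\) rescales them back. The inequality \(c\ge1\) is the classical Pearson bound. Apply Cauchy--Schwarz to \(\E[Z(Z^2-1)]=a\):
\[
 a^2\le \E Z^2\,\E(Z^2-1)^2=b-1 .
\]
This gives \(c=b-a^2\ge1\), so \(p_0=1-1/c\in[0,1)\).

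Next I would record the algebraic identities for \(\alpha,\beta\). By construction \(\alpha,\beta>0\) (since \(c>0\)), and
\[
 \beta-\alpha=a,\qquad \alpha\beta=c,\qquad \alpha+\beta=\sqrt{a^2+4c}.
\]
Then \(p_\pm>0\), and
\[
 p_-+p_+=\frac{\alpha+\beta}{\alpha\beta(\alpha+\beta)}=\frac1c,
\]
so the three probabilities sum to one. For the moments of \(Q\):
\begin{itemize}
\item \(\E Q=-\alpha p_-+\beta p_+=\frac{-1+1}{\alpha+\beta}=0\).
\item \(\E Q^2=\frac{\alpha+\beta}{\alpha+\beta}=1\).
\item \(\E Q^3=\frac{\beta^2-\alpha^2}{\alpha+\beta}=\beta-\alpha=a\).
\item \(\E Q^4=\frac{\alpha^3+\beta^3}{\alpha+\beta}=\alpha^2-\alpha\beta+\beta^2=(\beta-\alpha)^2+\alpha\beta=a^2+c=b\).
\end{itemize}
Multiplying by \(v^{q/2}\) gives \eqref{eq:four-exact-package}.

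For the support bound \eqref{eq:support-three-package}, note that \(\max(\alpha,\beta)\le\alpha+\beta=\sqrt{a^2+4c}\). Since \(a^2+4c=4b-3a^2\le4b\), this gives \(|Q|\le2\sqrt b\). Therefore
\[
 |\widehat X|\le2\sqrt{v}\sqrt{\E X^4/v^2}=2\sqrt{\E X^4/v}.
\]

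No step is a real obstacle; the only non-mechanical input is the Cauchy--Schwarz bound \(c\ge1\). That bound is what guarantees \(p_0\ge0\) and the correct normalization, so I would verify it first.
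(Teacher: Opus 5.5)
Your proposal is correct and follows the paper's proof essentially step by step: you standardize, prove $c\ge1$, and use $\beta-\alpha=a$ and $\alpha\beta=c$ to verify the normalization and all four moments directly. The differences are cosmetic: you obtain $c\ge1$ from Cauchy--Schwarz applied to $\E[Z(Z^2-1)]$ rather than from $\E(Z^2-aZ-1)^2\ge0$ (these are the same inequality), and you bound $\max\{\alpha,\beta\}$ by $\alpha+\beta=\sqrt{4b-3a^2}\le2\sqrt b$ instead of by $|a|+\sqrt c$.
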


\begin{proof}
For the standardized variable \(Z=X/\sqrt v\),
\[
 0\le\E(Z^2-aZ-1)^2=b-a^2-1,
\]
so \(c=b-a^2\ge1\).  The definitions give
\begin{equation}\label{eq:alpha-identities-package}
 \alpha>0,
 \qquad
 \beta>0,
 \qquad
 \beta-\alpha=a,
 \qquad
 \alpha\beta=c.
\end{equation}
Consequently
\[
 p_-+p_+
 =\frac{1}{\alpha\beta}=\frac1c,
 \qquad
 p_-+p_0+p_+=1,
\]
and all three probabilities are nonnegative.  Direct calculation yields
\begin{align*}
 \E Q
 &=-\frac{1}{\alpha+\beta}
   +\frac{1}{\alpha+\beta}=0,\\
 \E Q^2
 &=\frac{\alpha}{\alpha+\beta}
   +\frac{\beta}{\alpha+\beta}=1,\\
 \E Q^3
 &=\frac{-\alpha^2+\beta^2}{\alpha+\beta}
   =\beta-\alpha=a,\\
 \E Q^4
 &=\frac{\alpha^3+\beta^3}{\alpha+\beta}
   =\alpha^2-\alpha\beta+\beta^2
   =(\beta-\alpha)^2+\alpha\beta=b.
\end{align*}
Multiplication by \(\sqrt v\) proves \eqref{eq:four-exact-package}.  Finally,
\[
 \max\{\alpha,\beta\}
 \le |a|+\sqrt c
 \le2\sqrt b.
\]
Thus
\[
 |\widehat X|
 \le2\sqrt v\sqrt b
 =2\sqrt{\frac{\E X^4}{v}},
\]
which is \eqref{eq:support-three-package}.
\end{proof}

\begin{remark}[Boundary cases]
Equality \(c=1\) is allowed.  Then \(p_0=0\) and the surrogate is a two-point
law.  This is exactly the equality case of
\(\E(Z^2-aZ-1)^2\ge0\); no nondegeneracy margin beyond \(v>0\) is needed.
\end{remark}

For \(|s|\le\delta_0\), set
\begin{equation}\label{eq:shifted-moments-package}
 \tau_3(s):=\E Y_s^3,
 \qquad
 \mu_4(s):=\E Y_s^4,
\end{equation}
and apply Proposition~\ref{prop:three-point-package} with \(X=Y_s\).  Denote
the standardized moments by \(a(s),b(s),c(s)\), the two nonzero support
parameters by \(\alpha(s),\beta(s)\), and the probabilities by
\(p_-(s),p_0(s),p_+(s)\).

Let \(U\) be uniform on \((0,1)\), independent of the target noise, and define
\begin{equation}\label{eq:Q-s-U-package}
 Q_s(U):=
 \begin{cases}
 -\alpha(s),&0<U\le p_-(s),\\
 0,&p_-(s)<U\le p_-(s)+p_0(s),\\
 \beta(s),&p_-(s)+p_0(s)<U<1,
 \end{cases}
\end{equation}
and
\begin{equation}\label{eq:Yhat-s-package}
 \widehat Y_s:=\sqrt{v(s)}\,Q_s(U),
 \qquad
 Z^{(4)}(s):=m(s)+\widehat Y_s.
\end{equation}

\begin{theorem}[Uniform bounded four-moment surrogate]
\label{thm:uniform-four-surrogate-package}
Assume W3 and W5.  After decreasing \(\delta_0\) as in
Lemma~\ref{lem:uniform-fourth-package}, the variables
\(\widehat Y_s\), \(|s|\le\delta_0\), satisfy
\begin{equation}\label{eq:uniform-support-surrogate-package}
 \sup_{|s|\le\delta_0}|\widehat Y_s|
 \le 2\sqrt{2C_4}
 \quad\text{almost surely},
\end{equation}
and
\begin{equation}\label{eq:uniform-four-match-package}
 \E\widehat Y_s^q=\E Y_s^q,
 \qquad q=1,2,3,4,
 \qquad |s|\le\delta_0.
\end{equation}
Consequently
\begin{align}
 \E Z^{(4)}(s)&=m(s),\\
 \Var(Z^{(4)}(s))&=v(s),\\
 \cum_r(\widehat Y_s)&=\cum_r(Y_s),
 \qquad r=3,4.
\end{align}
If \(f\) is Borel measurable, the parameters in
\eqref{eq:Q-s-U-package} may be chosen Borel measurably in \(s\).
\end{theorem}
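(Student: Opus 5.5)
The plan is to apply Proposition~\ref{prop:three-point-package} pointwise in \(s\), with \(X=Y_s\), for each \(|s|\le\delta_0\). Then Lemma~\ref{lem:uniform-fourth-package} converts the pointwise support bound \eqref{eq:support-three-package} into a uniform one.

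First, shrink \(\delta_0\) so that both \eqref{eq:uniform-fourth-package} and \eqref{eq:uniform-variance-package} hold. For \(|s|\le\delta_0\), \(Y_s\) is then centered, has \(v(s)\ge\tfrac12>0\), and satisfies \(\E Y_s^4\le C_4<\infty\). This is exactly hypothesis \eqref{eq:X-moments-package}, so the numbers \(a(s),b(s),c(s),\alpha(s),\beta(s),p_\pm(s),p_0(s)\) are well defined. The probabilities are nonnegative and sum to one, so \(Q_s(U)\) in \eqref{eq:Q-s-U-package} has the law \eqref{eq:Q-law-package}. (The event \(U=1\) is null, and the half-open intervals partition \((0,1)\) up to null sets.)

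Next, \eqref{eq:four-exact-package} gives \eqref{eq:uniform-four-match-package} directly. The support bound \eqref{eq:support-three-package} gives
\[
 |\widehat Y_s|\le 2\sqrt{\E Y_s^4/v(s)}\le 2\sqrt{2C_4}.
\]
This bound holds for every realization of \(U\), since each of the three values \(-\alpha\sqrt v\), \(0\), \(\beta\sqrt v\) satisfies it. Hence the supremum over \(s\) is bounded surely, which yields \eqref{eq:uniform-support-surrogate-package}.

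The consequences follow from the moment identities alone:
\begin{itemize}
\item \(\E Z^{(4)}(s)=m(s)+\E\widehat Y_s=m(s)\);
\item \(\Var Z^{(4)}(s)=\E\widehat Y_s^2=v(s)\);
\item for centered variables, \(\cum_3=\E(\cdot)^3\) and \(\cum_4=\E(\cdot)^4-3(\E(\cdot)^2)^2\) are functions of the first four moments, so the cumulants agree.
\end{itemize}

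The only point needing care is measurability in \(s\). The plan is as follows. If \(f\) is Borel, then \((s,z)\mapsto f(z+s)\) is jointly Borel, so Tonelli's theorem makes \(s\mapsto\E f(\zeta+s)^k\) Borel for \(k\le4\). These functions are finite on \(|s|\le\delta_0\) because \(\E|Y_s|^4\le C_4\) and \(m(s)\) is finite by W2. By the binomial expansion, \(m(s)\), \(v(s)\), \(\tau_3(s)\) and \(\mu_4(s)\) are Borel. The quantities \(a,b,c,\alpha,\beta,p_\pm,p_0\) are continuous functions of \((v,\tau_3,\mu_4)\) on the region \(v>0\), \(c\ge1\), since \(\alpha,\beta>0\) there; hence they are Borel in \(s\). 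Finally, \((s,u)\mapsto Q_s(u)\) is a Borel function of the parameters and \(u\).

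I expect this measurability bookkeeping to be the main obstacle, mild as it is, because everything else is a direct specialization of the earlier results.
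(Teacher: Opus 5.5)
Your proposal is correct and follows the paper's proof essentially step for step. You apply Proposition~\ref{prop:three-point-package} pointwise with $X=Y_s$, use the bounds $v(s)\ge\frac12$ and $\mu_4(s)\le C_4$ from Lemma~\ref{lem:uniform-fourth-package} to make the support bound uniform, get cumulant equality from centering plus matching second and fourth moments, and obtain Borel measurability from parameter integrals of jointly Borel functions composed with the explicit algebraic formulas. Your measurability bookkeeping is just a more detailed version of the paper's one-line remark. One small point: finiteness of $m(s)$ already follows from W3, which forces $q_f(s)<\infty$ for small $|s|$, so you need not invoke W2 there.
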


\begin{proof}
Exact matching follows from Proposition~\ref{prop:three-point-package}.  By
Lemma~\ref{lem:uniform-fourth-package},
\(v(s)\ge1/2\) and \(\mu_4(s)\le C_4\).  Hence
\[
 |\widehat Y_s|
 \le2\sqrt{\frac{\mu_4(s)}{v(s)}}
 \le2\sqrt{2C_4},
\]
uniformly in the shift.  Equality of the third cumulants follows from
centering, and equality of the fourth cumulants follows from equality of the
second and fourth centered moments.  Finally, the maps
\(s\mapsto\E Y_s^q\), \(q\le4\), are Borel parameter integrals of jointly
Borel functions; the displayed algebraic formulas preserve measurability.
\end{proof}

\subsection{Atomic array and replacement hybrids}

Let \(U_{ij}\), \(i<j\), be independent uniform variables, independent of the
target noises \(\zeta_{ij}\).  Define
\begin{equation}\label{eq:atomic-array-package}
 M_N^{(4)}(i,j)
 :=N^{-1/2}Z^{(4)}_{ij}(s_{ij}),
 \qquad
 Z^{(4)}_{ij}(s_{ij})
 :=m(s_{ij})+\widehat Y_{s_{ij}}(U_{ij}),
\end{equation}
and set \(M_N^{(4)}(i,i)=0\).

\begin{corollary}[Exact matrix-scale data]
\label{cor:atomic-array-data-package}
Under W3, W4, W5, and the spike assumption, the array
\(M_N^{(4)}\) has the following properties for all sufficiently large \(N\):
\begin{enumerate}[label=(A\arabic*)]
\item its upper-triangular entries are independent and uniformly bounded after
      multiplication by \(\sqrt N\);
\item its deterministic mean matrix is exactly that of \(M_N^f\);
\item its centered variance profile is exactly that of \(M_N^f\);
\item its edgewise third and fourth cumulants are exactly those of the target;
\item
\begin{equation}\label{eq:uniform-kappa-package}
 \max_{i<j}
 \left|
 \cum_4(\widehat Y_{s_{ij}})-\kappa_4^{f,\nu}
 \right|\longrightarrow0;
\end{equation}
\item the fourth-moment Lindeberg condition is eventually identically zero.
\end{enumerate}
\end{corollary}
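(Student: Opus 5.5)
The plan is to reduce each item (A1)--(A6) to the single-shift statements already established, using that the spike assumption makes every shift microscopic. By \eqref{eq:max-microshift-package}, $\max_{i<j}|s_{ij}|=o(1)$. Hence for all sufficiently large $N$ every $s_{ij}$ lies in $[-\delta_0,\delta_0]$, where $\delta_0$ is the radius from Lemma~\ref{lem:uniform-fourth-package}, reduced further as in Theorem~\ref{thm:uniform-four-surrogate-package}. From that index on, the surrogate $Z^{(4)}_{ij}(s_{ij})$ is well defined with the parameters of Proposition~\ref{prop:three-point-package}. Measurability in $s$ is not an issue, because the shifts are deterministic and only finitely many values are used for each $N$.

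(A1) follows from independence of the families $\{U_{ij}\}$ and $\{\zeta_{ij}\}$: each $Z^{(4)}_{ij}$ is a deterministic function of $U_{ij}$ alone. For the bound, note that $|m(s)|=|a_1 s|+o(|s|)$ is bounded on $|s|\le\delta_0$ by W2. Combining this with \eqref{eq:uniform-support-surrogate-package} gives $\sqrt N|M_N^{(4)}(i,j)|\le \sup_{|s|\le\delta_0}|m(s)|+2\sqrt{2C_4}$ uniformly in $i<j$.

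(A2)--(A4) are edgewise consequences of Theorem~\ref{thm:uniform-four-surrogate-package}:
\begin{itemize}
\item the mean is $N^{-1/2}m(s_{ij})=N^{-1/2}\E f(\zeta+s_{ij})$;
\item the variance is $N^{-1}v(s_{ij})=N^{-1}\Var f(\zeta_{ij}+s_{ij})$;
\item the third and fourth cumulants agree, because cumulants of order $\ge2$ are shift invariant and $\cum_r(\widehat Y_s)=\cum_r(Y_s)$ for $r=3,4$.
\end{itemize}
The zero diagonal is common to both matrices.

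(A5) is the one step that needs care, since W4 is a pointwise limit as $s\to0$ rather than a uniform statement along an array. The fix is to use its $\varepsilon$--$\delta$ form. Given $\eta>0$, choose $\delta$ so that $|\cum_4(Y_s)-\kappa_4^{f,\nu}|<\eta$ for $|s|\le\delta$. Then take $N$ large enough that $\max|s_{ij}|\le\delta$. Since $\cum_4(\widehat Y_{s_{ij}})=\cum_4(Y_{s_{ij}})$, the maximum in \eqref{eq:uniform-kappa-package} is below $\eta$.

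(A6) follows from the uniform bound $K:=2\sqrt{2C_4}$. Once $\varepsilon\sqrt N>K$, the indicator $\one_{\{|\widehat Y_{s_{ij}}|>\varepsilon\sqrt N\}}$ vanishes identically, so the Lindeberg sum is exactly zero for all large $N$.

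No step is a real obstacle. The only points needing attention are the order of quantifiers in (A5), choosing $N$ after $\delta$ as in Lemma~\ref{lem:exact-W5-Lindeberg-package}, and making sure the threshold for "sufficiently large $N$" is set so that all shifts fall in $[-\delta_0,\delta_0]$.
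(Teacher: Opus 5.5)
Your proposal is correct and follows the paper's own proof essentially step by step: boundedness from the support bound of Theorem~\ref{thm:uniform-four-surrogate-package}, the exact mean, variance and cumulant data from \eqref{eq:uniform-four-match-package}, (A5) from W4 combined with $\max_{i<j}|s_{ij}|\to0$, and (A6) from the uniform support bound. Your explicit handling of the mean part $m(s_{ij})$ in (A1) fills a step the paper leaves implicit, with two small corrections: W2 is not among the corollary's listed hypotheses, but W3 already suffices because $|m(s)|\le q_f(s)^{1/2}$ is bounded for small $|s|$; and independence of the entries comes from the mutual independence of the $U_{ij}$, not from their independence of the $\zeta_{ij}$.
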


\begin{proof}
Independence and boundedness follow from the construction and
Theorem~\ref{thm:uniform-four-surrogate-package}.  The exact mean, variance,
third-cumulant, and fourth-cumulant statements follow from
\eqref{eq:uniform-four-match-package}.  Since
\(\max_{i<j}|s_{ij}|\to0\), W4 and exact fourth-cumulant matching give
\eqref{eq:uniform-kappa-package}.  Uniform boundedness makes the surrogate
Lindeberg sum zero once \(\varepsilon\sqrt N\) exceeds its support bound.
\end{proof}

\begin{remark}[Uniform backend data]
Corollary~\ref{cor:atomic-array-data-package} gives uniform fourth-cumulant
convergence directly, so neither a subsequence argument nor an auxiliary
order of limits is needed.  The bounded-array argument below permits arbitrary
uniformly bounded edge-dependent third cumulants; exact matching ensures that
no third-cumulant discrepancy appears in the replacement step.
\end{remark}

For every edge $e=(i,j)$, put $Y_e=Y_{s_{ij}}$ and let
$\widehat Y_e=\widehat Y_{s_{ij}}(U_{ij})$ be the surrogate just constructed.
Pairs belonging to distinct edges are independent, and
\begin{equation}\label{eq:edge-four-match-package}
 \E Y_e^q=\E\widehat Y_e^q,
 \qquad q=1,2,3,4.
\end{equation}
Let
\begin{equation}\label{eq:mean-matrix-package}
 A_N(i,j):=N^{-1/2}m(s_{ij}),
 \qquad A_N(i,i):=0.
\end{equation}
Order the off-diagonal edges as $e_1,\ldots,e_{M_N}$, where
$M_N=N(N-1)/2$.

\begin{definition}[Four-moment replacement path]
\label{def:hybrid-package}
For $0\le r\le M_N$, define the centered hybrid $H_N^{(r)}$ by
\begin{equation}\label{eq:hybrid-entry-package}
 H_N^{(r)}(e_q)
 :=N^{-1/2}
 \begin{cases}
 Y_{e_q},&q\le r,\\
 \widehat Y_{e_q},&q>r,
 \end{cases}
\end{equation}
and set
\begin{equation}\label{eq:full-hybrid-package}
 M_N^{(r)}:=A_N+H_N^{(r)}.
\end{equation}
For the replacement of edge $e_r=(a,b)$, let
$\mathring H_N^{(r,e_r)}$ be the centered hybrid with the $(a,b)$ and
$(b,a)$ entries set to zero.  Put
\begin{equation}\label{eq:open-edge-package}
 M_N^{(r,e_r)}(u)
 :=A_N+\mathring H_N^{(r,e_r)}+N^{-1/2}uV_{e_r},
 \qquad
 V_{e_r}:=E_{ab}+E_{ba}.
\end{equation}
The open-edge matrix is independent of both candidate variables
$Y_{e_r}$ and $\widehat Y_{e_r}$.
\end{definition}

\begin{proposition}[Uniform data along the hybrid path]
\label{prop:hybrid-data-package}
Every hybrid has the same deterministic mean matrix, the same variance
profile, and the same edgewise third and fourth cumulants.  Under W5 and
$\max_e|s_{e,N}|\to0$,
\begin{equation}\label{eq:uniform-hybrid-Lindeberg-package}
 \sup_{0\le r\le M_N}
 \frac1{M_N}\sum_{e=1}^{M_N}
 \E\left[
 |X_e^{(r)}|^4
 \one_{\{|X_e^{(r)}|>\varepsilon\sqrt N\}}
 \right]\longrightarrow0,
\end{equation}
where $X_e^{(r)}$ denotes the centered scalar used at edge $e$ in
$H_N^{(r)}$.
\end{proposition}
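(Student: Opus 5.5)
The proof splits into two parts: the algebraic statements along the hybrid path, and the uniform Lindeberg bound.

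For the first part, fix $r$ and an edge $e_q$. The centered scalar $X_{e_q}^{(r)}$ equals either $Y_{e_q}$ (if $q\le r$) or $\widehat Y_{e_q}$ (if $q>r$). By \eqref{eq:edge-four-match-package}, these two candidates have identical moments of orders one through four. Every hybrid $M_N^{(r)}=A_N+H_N^{(r)}$ shares the deterministic matrix $A_N$ from \eqref{eq:mean-matrix-package}, and each centered entry has mean zero. So the mean matrix is $A_N$ for every $r$. The variance at edge $e$ is $N^{-1}\E X_e^{(r)2}=N^{-1}v(s_e)$, independent of $r$. The third and fourth cumulants of a centered variable are polynomials in its second, third and fourth moments, namely $\cum_3=\E X^3$ and $\cum_4=\E X^4-3(\E X^2)^2$, so they are also independent of $r$. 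This is the same reasoning as in Theorem~\ref{thm:uniform-four-surrogate-package} and Corollary~\ref{cor:atomic-array-data-package}, applied edge by edge.

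For the Lindeberg bound \eqref{eq:uniform-hybrid-Lindeberg-package}, I would avoid any averaging argument that depends on $r$. Instead, bound each edge by the larger of its two candidates:
\[
 \E\bigl[|X_e^{(r)}|^4\one_{\{|X_e^{(r)}|>\varepsilon\sqrt N\}}\bigr]
 \le \E\bigl[|Y_e|^4\one_{\{|Y_e|>\varepsilon\sqrt N\}}\bigr]
 +\E\bigl[|\widehat Y_e|^4\one_{\{|\widehat Y_e|>\varepsilon\sqrt N\}}\bigr].
\]
The right side does not depend on $r$, so taking the supremum over $r$ and averaging over $e$ costs nothing. The first term goes to zero uniformly in $e$ by Lemma~\ref{lem:exact-W5-Lindeberg-package}, whose hypothesis is exactly $\max_e|s_{e,N}|\to0$; averaging then preserves this. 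For the second term, once $N$ is large enough that $\max_e|s_{e,N}|\le\delta_0$, Theorem~\ref{thm:uniform-four-surrogate-package} gives $|\widehat Y_e|\le 2\sqrt{2C_4}$ almost surely. So the indicator vanishes as soon as $\varepsilon\sqrt N>2\sqrt{2C_4}$, and the second term is eventually identically zero.

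The only delicate point is the quantifier order in the first term: fix $\varepsilon$, then let $N\to\infty$. This is already handled in Lemma~\ref{lem:exact-W5-Lindeberg-package}. Note also that the three-point surrogate is only defined for $|s|\le\delta_0$, which is why the argument needs $N$ large; the statement is asymptotic, so this is harmless. I see no substantive obstacle beyond these points.
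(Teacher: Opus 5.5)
Your proposal is correct and matches the paper's proof: the shared mean, variance, and third and fourth cumulants come edge by edge from \eqref{eq:edge-four-match-package}, and the Lindeberg sum splits into a bounded-surrogate part that eventually vanishes and a target part controlled uniformly in $r$ by Lemma~\ref{lem:exact-W5-Lindeberg-package}. Your $r$-independent edgewise bound, the sum of the two candidates' tail terms, simply writes out explicitly the paper's ``uniformly in the replacement index'' step.
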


\begin{proof}
The first assertion follows edge by edge from
\eqref{eq:edge-four-match-package}.  For fixed $\varepsilon>0$, the bounded
surrogate contribution is eventually zero.  The remaining edges are bounded
by the target supremum in Lemma~\ref{lem:exact-W5-Lindeberg-package}, uniformly
in the replacement index $r$.
\end{proof}

\section{Bounded-Array LSS and Deterministic Spectral Analysis}
\label{sec:R3-package}

\subsection{Generalized-Wigner input and balancing reduction}

Write
\begin{equation}\label{eq:R3-centered-array}
 H_N^{(4)}(i,j):=N^{-1/2}\widehat Y_{s_{ij}}(U_{ij}),
 \qquad H_N^{(4)}(i,i):=0,
\end{equation}
and let \(S_N=(S_{ij})\) be its variance matrix:
\begin{equation}\label{eq:R3-variance-matrix}
 S_{ij}:=\E|H_N^{(4)}(i,j)|^2
 =\frac{v(s_{ij})}{N}\quad(i\ne j),
 \qquad S_{ii}:=0.
\end{equation}
By W3 and the profile expansion,
\begin{equation}\label{eq:R3-p-decomposition}
 v(s_{ij})=1+p_{ij}^{(1)}+p_{ij}^{(2)}+r_{ij,N},
\end{equation}
where \(p^{(1)}\) and \(p^{(2)}\) are defined in
\eqref{eq:profiles-package} and
\begin{equation}\label{eq:R3-rem-bound}
 |r_{ij,N}|\le\varepsilon_Ns_{ij}^2,\qquad \varepsilon_N\to0.
\end{equation}
Let \(S_N^0\) denote the zero-diagonal flat matrix
\begin{equation}\label{eq:R3-flat-S}
 S_{ij}^0=N^{-1}\one_{\{i\ne j\}}.
\end{equation}
The order-\(N^{-1}\) difference between \(S_N^0\) and an exactly stochastic
zero-diagonal profile is part of the homogeneous zero-diagonal Wigner
centering \(m_{\varphi,\Gamma}^{\rm base}(\kappa_4^{f,\nu})\).  It must not be counted again as
a transformed-profile correction.

The only stochastic CLT input required below is the following
fixed-scale specialization of the generalized-Wigner LSS theorem.  It is
recorded explicitly so that the article does not silently apply a homogeneous
Wigner theorem to an edge-dependent array.

We first isolate the two reductions needed to match the hypotheses of the
imported theorem.  For a symmetric variance matrix \(S\), write
\[
 q_i(S):=\sum_jS_{ij}-1,
 \qquad
 q(S):=(q_1(S),\ldots,q_N(S)).
\]

\begin{lemma}[Stochastic normalization of a weakly flat profile]
\label{lem:R3-stochastic-normalization}
Suppose that \(S\) is symmetric, \(S_{ii}=0\), and
\[
 \frac{c}{N}\le S_{ij}\le\frac{C}{N}\quad(i\ne j),
 \qquad \max_i|q_i(S)|=o(1).
\]
For all sufficiently large \(N\), there is a positive diagonal matrix
\(D=\diag(d_1,\ldots,d_N)\) such that
\begin{equation}\label{eq:R3-balanced-profile}
 \widetilde S:=DSD,
 \qquad
 \sum_j\widetilde S_{ij}=1\quad(1\le i\le N).
\end{equation}
Moreover,
\begin{align}
 \norm{D-I}_{\infty}
 &\le C\max_i|q_i(S)|=o(1),\label{eq:R3-D-infty}\\
 \sum_i|d_i-1|^2
 &\le C\sum_i|q_i(S)|^2,\label{eq:R3-D-l2}\\
 \norm{\widetilde S-S}_{\HS}
 &\le C N^{-1/2}\norm{q(S)}_2=o(1).
 \label{eq:R3-balanced-HS}
\end{align}
If \(H_{ij}\) has variance \(S_{ij}\), then
\(\widetilde H_{ij}:=\sqrt{d_id_j}\,H_{ij}\) has variance
\(\widetilde S_{ij}\).  Uniform boundedness of \(\sqrt N H_{ij}\) and
uniform convergence of its fourth cumulants are preserved by this rescaling.

For the spike-generated profile \eqref{eq:R3-variance-matrix}--
\eqref{eq:R3-rem-bound}, the following stronger estimates hold:
\begin{align}
 \max_i|q_i(S_N)|&=o(N^{-1/2}),
 \label{eq:R3-strong-q}\\
 \norm{D-I}_\infty&=o(N^{-1/2}),
 \label{eq:R3-strong-D-infty}\\
 \norm{D-I}_{\HS}^2&=o(1),
 \label{eq:R3-strong-D-HS}\\
 \norm{S_N-DS_ND}_{\HS}&=o(N^{-1/2}),
 \label{eq:R3-strong-profile-HS}\\
 \max_{i\ne j}\frac{|S_{ij}-d_id_jS_{ij}|}{S_{ij}}
 &=o(N^{-1/2}).
 \label{eq:R3-strong-relative}
\end{align}
\end{lemma}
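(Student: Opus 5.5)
We plan to construct $D$ by a perturbative fixed-point argument around the identity, and then specialize the resulting bounds to the spike profile. Write $d_i=1+\eta_i$. The balancing equations $\sum_j d_id_jS_{ij}=1$ become
\[
 \eta_i\sum_jS_{ij}+\sum_jS_{ij}\eta_j
 =-q_i(S)-\eta_i\sum_jS_{ij}\eta_j ,
\]
that is, $(I+S+\diag(q))\eta=-q-\diag(\eta)S\eta$.

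The linear operator $L:=I+\diag(1+q)^{-1}S$ is the key object. Since $S$ is entrywise nonnegative with row sums $1+q_i$, the operator $\diag(1+q)^{-1}S$ is row-stochastic. It therefore has $\ell^\infty\to\ell^\infty$ norm one, and its spectrum lies in the unit disc. The only dangerous point would be the eigenvalue $-1$. We exclude it using the entrywise lower bound $S_{ij}\ge c/N$: this gives a Doeblin-type contraction, so that on $\ell^\infty$ one has $\|L^{-1}\|\le C(c,C)$. Concretely, we write $P=\diag(1+q)^{-1}S=\tfrac{c'}{N}J+(1-c')P'$ with $P'$ stochastic, and invert $I+P$ by splitting off the rank-one part.

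With this inverse, the map $\eta\mapsto -L^{-1}\diag(1+q)^{-1}\bigl(q+\diag(\eta)S\eta\bigr)$ is a contraction on an $\ell^\infty$ ball of radius $2C\max|q_i|$. The quadratic term is $O(\|\eta\|_\infty^2)$ because $\|S\|_{\infty\to\infty}\le 2$. The fixed point gives \eqref{eq:R3-D-infty} and positivity of $d_i$. For the $\ell^2$ bound \eqref{eq:R3-D-l2}, we repeat the estimate in $\ell^2$. Since $S$ is symmetric with $\ell^1$ and $\ell^\infty$ norms at most $1+o(1)$, its spectrum lies in $[-1-o(1),1+o(1)]$, so $\|(I+S)^{-1}\|_{2\to2}\le C$. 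Here we again exclude eigenvalues near $-1$ by the Perron--Frobenius/Doeblin argument: the entrywise lower bound forces all non-Perron eigenvalues of $S$ to be bounded by $1-c$ in modulus. The quadratic term is bounded in $\ell^2$ by $\|\eta\|_\infty\|S\eta\|_2$, which is absorbed.

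The Hilbert--Schmidt bound then follows from
\[
 \widetilde S_{ij}-S_{ij}=(\eta_i+\eta_j+\eta_i\eta_j)S_{ij}
\]
and $S_{ij}\le C/N$, which give $\norm{\widetilde S-S}_{\HS}^2\le CN^{-2}\cdot N\sum_i\eta_i^2$. The statements about $\widetilde H$ are immediate: $\sqrt{d_id_j}\to1$ uniformly, so boundedness is preserved, and $\cum_4$ scales by $d_i^2d_j^2=1+o(1)$.

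For the spike profile, we compute
\[
 q_i(S_N)=\frac1N\sum_{j\neq i}\bigl(p^{(1)}_{ij}+p^{(2)}_{ij}+r_{ij,N}\bigr)-\frac1N .
\]
The linear part equals $b_1\sqrt{\lambda/N}\,x_i(\sum_jx_j-x_i)$. Using $\sum_jx_j=O(N^{\epsilon})$ and $|x_i|=o(N^{-1/4})$, this is $o(N^{-1/2})$ only after choosing $\epsilon<1/4$ and noting $N^{-1/2}\cdot N^{\epsilon}\cdot N^{-1/4}=o(N^{-1/2})$. The quadratic part and the remainder are $o(N^{-1/2})$ by \eqref{eq:p2-row-package} and Corollary~\ref{cor:profile-remainder-package}, and $1/N$ is trivially $o(N^{-1/2})$. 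This gives \eqref{eq:R3-strong-q}, and then \eqref{eq:R3-strong-D-infty} and \eqref{eq:R3-strong-relative} follow from the general bound, since $d_id_j-1=O(\|\eta\|_\infty)$.

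For \eqref{eq:R3-strong-D-HS}, we need $\|q\|_2^2=o(1)$. The linear part contributes $\frac{\lambda}{N}\sum_ix_i^2(\sum x)^2=O(N^{-1+2\epsilon})$. The quadratic part contributes $\sum_i(x_i^2)^2\cdot O(1)=o(N^{-1/2})$. The $1/N$ term contributes $N\cdot N^{-2}$. The HS bound \eqref{eq:R3-strong-profile-HS} then follows from the general inequality with $N^{-1/2}\|q\|_2$.

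The main obstacle is the uniform invertibility of $I+S$ (equivalently, excluding $-1$ from the spectrum of the normalized profile). We intend to handle it through the entrywise lower bound $c/N$, which yields a spectral gap for the non-Perron part.
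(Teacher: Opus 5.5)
Your proof is correct. It shares the paper's overall strategy: a perturbative fixed point around $D=I$, with the lower bound $S_{ij}\ge c/N$ used to keep $-1$ uniformly away from the spectrum of the normalized profile. The difference is the parametrization.

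\textbf{Comparison with the paper.} The paper writes $d_i=e^{u_i}$ and solves $F_i(u)=u_i+\log\sum_jS_{ij}e^{u_j}=0$ by Newton iteration. Its Jacobian $I+P(u)$ is a stochastic kernel at every $u$, so positivity of $D$ is automatic. However, $P(u)$ is not symmetric. The paper's $\ell^2$ inverse therefore needs reversibility with respect to $\pi(u)$ and a two-step minorization $P(u)^2_{ij}\ge a/N$, and the nonlinearity is controlled through a Hessian (covariance) estimate.

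Your additive choice $d_i=1+\eta_i$ makes the equation exactly quadratic, $(I+S+\diag(q))\eta=-q-\diag(\eta)S\eta$, with a symmetric linear part. Three things become simpler:
\begin{enumerate}[label=(\roman*)]
\item The $\ell^2$ inverse reduces to a lower bound on the smallest eigenvalue. Concretely, $S-\tfrac cN(J-I)$ is nonnegative and symmetric with row sums $1-c+o(1)$, and $J\succeq0$. Hence $S\succeq-(1-c+o(1))I$ and $I+S+\diag(q)\succeq(c-o(1))I$. This is the precise form of your ``Perron--Frobenius/Doeblin'' sentence.
\item The $\ell^\infty$ inverse follows from Sherman--Morrison.
\item The Lipschitz bound for the bilinear term $\diag(\eta)S\eta$ is immediate.
\end{enumerate}
Positivity of $d_i$ then comes from smallness of $\eta$ rather than from the parametrization, which is all the lemma requires.

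\textbf{Minor repairs.}
\begin{itemize}
\item Since $P_{ii}=0$, the matrix $P'$ in $P=\tfrac{c'}{N}J+(1-c')P'$ has diagonal entries $-c'/(N(1-c'))$, so it is not stochastic. Either split off $\tfrac{c'}{N}(J-I)$ instead, or observe that still $P'\one=\one$ and $\norm{(1-c')P'}_{\infty\to\infty}\le1-c'+2c'/N$. These two facts are all the Sherman--Morrison step uses.
\item Your computation of $\norm{q}_2^2$ for the spike profile omits the remainder $r_{ij,N}$. It contributes only $O(\varepsilon_N^2\sum_ix_i^4)$. More simply, $\norm{q}_2^2\le N\max_i|q_i|^2=o(1)$ follows directly from \eqref{eq:R3-strong-q}, which is how the paper obtains \eqref{eq:R3-strong-D-HS}.
\end{itemize}
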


\begin{proof}
The equations in \eqref{eq:R3-balanced-profile} are
\[
 d_i\sum_jS_{ij}d_j=1.
\]
Write \(d_i=e^{u_i}\) and define
\begin{equation}\label{eq:R3-balancing-map}
 F_i(u):=u_i+\log\left(\sum_jS_{ij}e^{u_j}\right).
\end{equation}
Then \(F(u)=0\) is the balancing equation,
\[
 F_i(0)=\log(1+q_i(S)),
 \qquad
 DF(u)=I+P(u),
\]
where
\begin{equation}\label{eq:R3-balancing-kernel}
 P(u)_{ij}:=
 \frac{S_{ij}e^{u_j}}{\sum_kS_{ik}e^{u_k}}.
\end{equation}
For \(\norm u_\infty\le u_0\), with \(u_0>0\) fixed and small,
\[
 \frac{c_0}{N}\le P(u)_{ij}\le\frac{C_0}{N}\quad(i\ne j).
\]
Consequently, for \(N\ge4\),
\begin{equation}\label{eq:R3-two-step-minorization}
 P(u)^2_{ij}
 \ge\sum_{k\notin\{i,j\}}\frac{c_0^2}{N^2}
 \ge\frac aN
\end{equation}
with \(a>0\) independent of \(N\).  Thus
\[
 \operatorname{osc}(P(u)^2v)
 \le(1-a)\operatorname{osc}(v).
\]

The kernel \(P(u)\) is reversible with respect to
\[
 \pi_i(u):=e^{u_i}\sum_kS_{ik}e^{u_k},
\]
because
\(\pi_iP(u)_{ij}=S_{ij}e^{u_i+u_j}=\pi_jP(u)_{ji}\).
After normalization, \(c_1/N\le\pi_i\le C_1/N\).
Equation \eqref{eq:R3-two-step-minorization} implies that every nonconstant
eigenvalue \(\lambda\) of \(P(u)\) satisfies
\(|\lambda|\le\sqrt{1-a}\).  The constant direction has eigenvalue one.
It follows that
\begin{equation}\label{eq:R3-balancing-inverse}
 \norm{(I+P(u))^{-1}}_{\infty\to\infty}
 +\norm{(I+P(u))^{-1}}_{2\to2}\le C.
\end{equation}
For the sup-norm estimate, decompose into the stationary mean and the
mean-zero part and use
\((I+P)^{-1}=(I-P)(I-P^2)^{-1}\) on the latter; the series for
\((I-P^2)^{-1}\) converges by the oscillation contraction.  Equivalence of
the stationary weighted norm and the normalized Euclidean norm gives the
\(\ell^2\) estimate.

The Hessian of the logarithm in \eqref{eq:R3-balancing-map} is the covariance
matrix of the probability vector in \eqref{eq:R3-balancing-kernel}.  Hence
\begin{align*}
 \norm{F(u)-F(v)-DF(v)(u-v)}_\infty
 &\le C\norm{u-v}_\infty^2,\\
 \norm{F(u)-F(v)-DF(v)(u-v)}_2
 &\le C(\norm u_\infty+\norm v_\infty)\norm{u-v}_2.
\end{align*}
Newton iteration, started at zero, is therefore a contraction in
\(\norm u_\infty\le2C\norm{q(S)}_\infty\) for all sufficiently large \(N\).
It gives a unique small solution satisfying
\[
 \norm u_\infty\le C\norm{q(S)}_\infty,
 \qquad
 \norm u_2\le C\norm{q(S)}_2.
\]
Since \(e^{u_i}-1=u_i+O(u_i^2)\), this proves
\eqref{eq:R3-D-infty}--\eqref{eq:R3-D-l2} and positivity.

Put \(U=D-I\).  Since \(S_{ij}\le C/N\),
\[
 \norm{DSD-S}_{\HS}^2
 \le \frac{C}{N^2}\sum_{i,j}
 (|U_{ii}|+|U_{jj}|+|U_{ii}U_{jj}|)^2
 \le \frac{C}{N}\norm U_2^2,
\]
which proves \eqref{eq:R3-balanced-HS}.  The final assertion follows from
\(\sqrt N\widetilde H_{ij}=\sqrt{d_id_j}\sqrt N H_{ij}\) and
\eqref{eq:R3-D-infty}.

It remains to prove the model-specific rates.  The zero diagonal contributes
$-N^{-1}$ to $q_i(S_N)$.  For the linear profile, the signed row sum satisfies
\[
 \left|\frac1N\sum_{j\ne i}p_{ij}^{(1)}\right|
 \leq C\frac{|x_i|}{\sqrt N}
 \left(\left|\sum_jx_j\right|+|x_i|\right)
 =o(N^{-1/2})
\]
by \eqref{eq:spike-package}.  For the quadratic profile and the remainder,
\begin{align*}
 \left|\frac1N\sum_{j\ne i}p_{ij}^{(2)}\right|
 &\leq Cx_i^2\sum_jx_j^2=o(N^{-1/2}),\\
 \frac1N\sum_{j\ne i}|r_{ij,N}|
 &\leq C\varepsilon_Nx_i^2\sum_jx_j^2=o(N^{-1/2}).
\end{align*}
This proves \eqref{eq:R3-strong-q}.  Equations
\eqref{eq:R3-D-infty} and \eqref{eq:R3-D-l2} then give
\eqref{eq:R3-strong-D-infty} and
\[
 \norm{D-I}_{\HS}^2
 \leq C\sum_i|q_i(S_N)|^2=o(1),
\]
which is \eqref{eq:R3-strong-D-HS}.  Substitution in
\eqref{eq:R3-balanced-HS} proves \eqref{eq:R3-strong-profile-HS}.
Finally, uniform flatness and $d_i=1+o(N^{-1/2})$ imply
\[
 \frac{|S_{ij}-d_id_jS_{ij}|}{S_{ij}}
 =|1-d_id_j|=o(N^{-1/2}),
\]
proving \eqref{eq:R3-strong-relative}.
\end{proof}

The zero diagonal still falls outside the literal lower-variance hypothesis
of the generalized-Wigner theorem.  The next lemma records the limiting
procedure used for this point.

\begin{lemma}[Vanishing diagonal regularization]
\label{lem:R3-diagonal-regularization}
Let \(\widetilde H\) be a bounded centered array with zero diagonal and an
exactly stochastic, uniformly flat off-diagonal variance matrix
\(\widetilde S\).  Let \(\chi_1,\ldots,\chi_N\) be independent, centered,
variance-one, uniformly bounded random variables, independent of
\(\widetilde H\).  For fixed \(\eta\in(0,1)\), set
\begin{equation}\label{eq:R3-eta-matrix}
 \widetilde H^{(\eta)}_{ij}
 :=\sqrt{1-\eta/N}\,\widetilde H_{ij}\quad(i\ne j),
 \qquad
 \widetilde H^{(\eta)}_{ii}:=\sqrt{\eta/N}\,\chi_i.
\end{equation}
Then the variance matrix of \(\widetilde H^{(\eta)}\) is exactly stochastic
and satisfies the uniform lower and upper bounds required by the
generalized-Wigner theorem, with constants allowed to depend on \(\eta\).
For every finite family of functions analytic near \(\Gamma\),
\begin{align}
 \limsup_{N\to\infty}\left|
 \E\Tr\varphi(\widetilde H^{(\eta)})
 -\E\Tr\varphi(\widetilde H)
 \right|&\le C_{\varphi,\Gamma}\eta,
 \label{eq:R3-eta-mean}\\
 \limsup_{N\to\infty}\Var\!\left(
 \Tr\varphi(\widetilde H^{(\eta)})
 -\Tr\varphi(\widetilde H)
 \right)&\le C_{\varphi,\Gamma}\eta.
 \label{eq:R3-eta-L2}
\end{align}
Moreover, the Li--Xu expectation and covariance formulas converge to their flat
zero-diagonal versions in the order \(N\to\infty\), then
\(\eta\downarrow0\).  Consequently the joint CLT for
\(\widetilde H^{(\eta)}\) transfers to \(\widetilde H\).
\end{lemma}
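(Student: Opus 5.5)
The plan has three parts: check the variance-profile hypotheses directly, compare $\widetilde H^{(\eta)}$ and $\widetilde H$ through a deterministic perturbation bound on a good event, and then pass to the limit in the order $N\to\infty$, $\eta\downarrow0$.

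\emph{Variance profile.} For $i\ne j$ the variance of $\widetilde H^{(\eta)}_{ij}$ is $(1-\eta/N)\widetilde S_{ij}$, and the diagonal variance is $\eta/N$. The row sum is therefore $(1-\eta/N)\cdot1+\eta/N=1$, so the profile is exactly stochastic. Uniform flatness gives $c/N\le(1-\eta/N)\widetilde S_{ij}\le C/N$ off the diagonal. On the diagonal, $\eta/N\ge c'/N$ with $c'=\min(c,\eta)$. Boundedness of $\sqrt N$ times the entries is inherited from $\widetilde H$ and the $\chi_i$.

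\emph{Perturbation bounds \eqref{eq:R3-eta-mean}--\eqref{eq:R3-eta-L2}.} Write $\widetilde H^{(\eta)}-\widetilde H=\Delta_1+\Delta_2$, with $\Delta_1=(\sqrt{1-\eta/N}-1)\widetilde H$ and $\Delta_2=\sqrt{\eta/N}\diag(\chi_i)$. Then $\norm{\Delta_1}_\op=O(\eta/N)\norm{\widetilde H}_\op$ and $\norm{\Delta_2}_\op\le C\sqrt{\eta/N}$.
\begin{enumerate}[label=(\roman*),leftmargin=2.5em]
\item \textbf{Good event.} Work on the event $\Omega_N=\{\norm{\widetilde H}_\op\le 2+\delta\}$, where $\delta$ is chosen so that $\Gamma$ stays at distance at least $d>0$ from $[-2-2\delta,2+2\delta]$. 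Bounded generalized Wigner matrices satisfy $\mathbb P(\Omega_N^c)=O(N^{-D})$ by standard moment bounds, and the perturbation does not move the spectrum across $\Gamma$ on this event. Off $\Omega_N$ the traces are $O(N)$ deterministically, by the full-trace extension convention, so the complement contributes $o(1)$ to both the mean and the variance.
\item \textbf{Resolvent expansion.} On $\Omega_N$, $\Tr\varphi(A+\Delta)-\Tr\varphi(A)=-\frac1{2\pi\ii}\oint\varphi(z)\Tr(G_{A+\Delta}-G_A)\,\dd z$. The $\Delta_1$ part is simply $\Tr\varphi((1-\eta/N)^{1/2}\widetilde H)-\Tr\varphi(\widetilde H)$. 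By the mean-value theorem this equals $O(\eta/N)\Tr(\widetilde H\varphi'(\widetilde H))=O(\eta)$ deterministically.
\item \textbf{Diagonal part.} The first-order term is $-\sqrt{\eta/N}\sum_i\chi_i\,\frac1{2\pi\ii}\oint\varphi(z)(G^2)_{ii}\,\dd z$. Conditionally on $\widetilde H$, this has mean zero, since the $\chi_i$ are centered and independent of $\widetilde H$. Its conditional variance is $(\eta/N)\sum_i|(\varphi'(\widetilde H))_{ii}|^2\le C\eta$. The second-order term is bounded by $\norm{\Delta_2}_{\HS}^2\sup\norm{G}^3\le C\eta\,N^{-1}\sum_i\chi_i^2=O(\eta)$ deterministically.
\end{enumerate}
Combining (i)--(iii) gives a mean shift of $O(\eta)$ and a variance of $O(\eta)$ (the variance bound is $O(\eta+\eta^2)$), which are \eqref{eq:R3-eta-mean} and \eqref{eq:R3-eta-L2}.

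\emph{Convergence of the formulas and transfer of the CLT.} The Li--Xu mean and covariance depend continuously on the profile through its deviation from flat and on the diagonal cumulants. At level $\eta$ they differ from the flat zero-diagonal versions by terms of order $\eta$, namely the diagonal variance weight $\eta$ and the row reweighting. Their limits as $\eta\downarrow0$ are therefore the flat zero-diagonal formulas. The transfer is a standard approximation argument for characteristic functions. Fix $t$ and bound
\[
\bigl|\E e^{\ii tX_N}-\E e^{\ii tX_N^{(\eta)}}\bigr|\le |t|\bigl(\E|X_N-X_N^{(\eta)}|^2\bigr)^{1/2},
\]
which is $O(|t|\sqrt\eta)$ in the limsup by the two bounds above. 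Then let $N\to\infty$ followed by $\eta\downarrow0$. Joint statements follow by Cram\'er--Wold.

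The main obstacle is verifying the continuity of the Li--Xu formulas in $\eta$ uniformly in $N$. I would handle it by expressing those formulas through the vector Dyson equation solution. For an exactly stochastic profile that solution is the constant $m(z)$, so only the explicit dependence on the diagonal variance and on $S$ enters. That dependence is polynomial in quantities perturbed by $O(\eta)$ and is bounded on $\Gamma$ by stability of $1-m^2S$ away from $[-2,2]$.
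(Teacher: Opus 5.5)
Your proposal is correct, and your variance-profile check and characteristic-function transfer match the paper's. The difference is in how you prove the two coupling bounds.

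\emph{The paper's route.} It conditions on $A=c_\eta\widetilde H$ and expands the resolvent in $D_\eta$ to all orders. For the mean it counts diagonal index patterns: $p\le r/2$ distinct indices, so each class contributes $C N^p(\eta/N)^{r/2}\le C\eta^{r/2}$. For the variance it uses Efron--Stein. This requires bounding $N$ diagonal influences of size $\sqrt{\eta/N}$ and $O(N^2)$ off-diagonal influences $|\partial_{\xi_{ab}}\Delta_\eta|\lesssim\sqrt\eta/N$, through a smooth confinement cutoff.

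\emph{Your route.} You stop at second order in the diagonal perturbation. The linear term $\sqrt{\eta/N}\sum_i\chi_i\varphi'(A)_{ii}$ is exactly conditionally centered, with conditional variance $(\eta/N)\sum_i|\varphi'(A)_{ii}|^2\le C\eta$. (Its sign is $+$, and it should be evaluated at $c_\eta\widetilde H$; neither affects the argument.) The entire remainder is deterministically $O(\norm{\Delta_2}_{\HS}^2)=O(\eta)$. Your good event depends on $\widetilde H$ alone, so restricting to it preserves the conditional centering. You therefore need neither off-diagonal influence estimates nor cutoff-derivative bookkeeping. For this lemma your argument is shorter and gives the same $O(\eta)$ bounds.

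\emph{Continuity of the Li--Xu formulas.} Your sketch should be sharpened. The profile entries move only by $O(\eta/N)$, so the uniform-in-$N$ control has to come from the trace norm: $\norm{\widetilde S^{(\eta)}-\widetilde S}_1=(\eta/N)\norm{I-\widetilde S}_1\le2\eta$. Combine this with Lipschitz continuity on $[-1,1]$ of $t\mapsto t^2/(1-m(z)^2t)$ and $t\mapsto 2t(1-m(z)m(w)t)^{-2}-t$, which holds because $|m|\le1-c_\Gamma$ on $\Gamma$. The paper reaches the same conclusion from the explicit eigenvalues $(1-\eta/N)\lambda_a+\eta/N$. That computation also shows the linear covariance term moves by exactly $\Tr\widetilde S^{(\eta)}=\eta$, which is why the limits must be taken as $N\to\infty$ first, then $\eta\downarrow0$.

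\emph{Identification with the flat formulas.} To identify the $\eta=0$ formulas with $m_{\varphi,\Gamma}^{\rm base}$ and $\mathcal K_{\rm Wig}^{(\kappa,0)}$, you need $\norm{\widetilde S-S_N^0}_{\HS}=o(1)$ from the surrounding context. The paper uses this through the non-Perron eigenvalues. Uniform flatness alone does not suffice, because a flat stochastic profile can have order-one non-Perron eigenvalues.
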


\begin{proof}
The variance matrix is
\begin{equation}\label{eq:R3-eta-profile}
 \widetilde S^{(\eta)}
 =(1-\eta/N)\widetilde S+(\eta/N)I.
\end{equation}
Its row sums equal one, its off-diagonal entries remain uniformly comparable
to \(N^{-1}\), and
\(N\Var(\widetilde H^{(\eta)}_{ii})=\eta\).  Hence, for each fixed
\(\eta>0\), it satisfies the literal variance hypothesis of Li--Xu's theorem.

We first prove the coupling estimates.  Put
\(c_\eta=\sqrt{1-\eta/N}\), \(A=c_\eta\widetilde H\), and
\(D_\eta=\sqrt{\eta/N}\diag(\chi)\).  Along the off-diagonal scaling path,
analytic functional calculus gives
\[
 \frac{\dd}{\dd t}\Tr\varphi(t\widetilde H)
 =\Tr\bigl(\varphi'(t\widetilde H)\widetilde H\bigr).
\]
On the fixed-contour confinement event this is \(O_{\varphi,\Gamma}(N)\).
Since \(|c_\eta-1|\le C\eta/N\), the rescaling changes the mean by
\(O_{\varphi,\Gamma}(\eta)\).

Condition on \(A\) and use the resolvent series in \(D_\eta\).  The linear
term has zero conditional expectation.  In a term of total diagonal degree
\(r\ge2\), let \(p\) be the number of distinct diagonal indices.  A nonzero
expectation requires every index to occur at least twice, so \(p\le r/2\).
The complete contribution of all index patterns with this value of \(p\) is
bounded by
\[
 C_{r,\Gamma}N^p(\eta/N)^{r/2}
 \le C_{r,\Gamma}\eta^{r/2}.
\]
This includes the pair partitions omitted by a single-index cumulant count.
The \(r=2\) terms are \(O(\eta)\).  Every odd-degree nonzero pattern has
\(p<r/2\), hence an additional factor \(N^{-1/2}\), while the first
remaining even degree is four.  Thus the absolutely convergent remainder is
\(O(\eta^2)+o_N(1)\) for \(\eta\) small.  This proves
\eqref{eq:R3-eta-mean}.

For the variance, apply Efron--Stein to
\[
 \Delta_\eta
 :=\Tr\varphi(A+D_\eta)-\Tr\varphi(\widetilde H).
\]
Changing one \(\chi_i\) changes \(\Delta_\eta\) by at most
\(C_{\varphi,\Gamma}\sqrt{\eta/N}\), so the sum of its \(N\) diagonal
influences is \(O(\eta)\).  Write an off-diagonal entry as
\(N^{-1/2}\xi_{ab}\), where the \(\xi_{ab}\) are uniformly bounded.  The
first resolvent identity and the contour formula for \(\varphi'\) give
\[
 |\partial_{\xi_{ab}}\Delta_\eta|
 \le C_{\varphi,\Gamma}N^{-1/2}
 \bigl(|c_\eta-1|+\norm{D_\eta}_{\op}\bigr)
 \le C_{\varphi,\Gamma}\frac{\sqrt\eta}{N}
    +O(\eta N^{-3/2}).
\]
Summing the squared influences over the \(O(N^2)\) edges gives \(O(\eta)\),
and proves \eqref{eq:R3-eta-L2}.  A smooth confinement cutoff is identically
one on the spectral confinement event.  Its derivatives are supported on an
exceptional annulus of superpolynomially small probability, so all cutoff
derivative terms above are \(o_N(1)\).  Polarization gives the bounds for a
finite family of test functions.

It remains to identify the endpoint formulas.  For a symmetric stochastic
variance matrix \(T\), set
\begin{equation}\label{eq:R3-LiXu-k4}
 K_4(H):=\sum_{i,j=1}^N
 \left(\E H_{ij}^4-3(\E H_{ij}^2)^2\right).
\end{equation}
In the real-symmetric case, Li--Xu's bias and covariance kernels, in the
resolvent convention of this article, are
\begin{align}
 \mathfrak B_T(z)
 &:=m'(z)m(z)^3
 \left\{\Tr\!\left[T^2(I-m(z)^2T)^{-1}\right]+K_4(H)\right\},
 \label{eq:R3-LiXu-bias-kernel}\\
 \mathfrak K_T(z,w)
 &:=2m'(z)m'(w)
 \Tr\!\left[T(I-m(z)m(w)T)^{-2}\right]
 -m'(z)m'(w)\Tr T\nonumber\\
 &\quad+2K_4(H)m(z)m'(z)m(w)m'(w).
 \label{eq:R3-LiXu-cov-kernel}
\end{align}
The analytic bias is
\(-\frac1{2\pi\ii}\oint_\Gamma\varphi(z)\mathfrak B_T(z)\,\dd z\),
and the covariance is the double contour integral of
\(\mathfrak K_T(z,w)\) against
\((2\pi\ii)^{-2}\varphi(z)\psi(w)\,\dd z\dd w\); these are the
specializations of equations (2.7), (2.9), and (4.21) in
\cite{LiXu2021GeneralizedWignerLES}.

Let \(P=N^{-1}\one\one^{\mathsf T}\) and \(Q=I-P\).  If
\(1=\lambda_1,\lambda_2,\ldots,\lambda_N\) are the eigenvalues of
\(\widetilde S\), then stochasticity and symmetry give
\(\widetilde S=P+Q\widetilde S Q\).  Since
\(S_N^0=P-N^{-1}I\) and
\(\norm{\widetilde S-S_N^0}_{\HS}=o(1)\),
\begin{equation}\label{eq:R3-nonPerron-HS}
 \sum_{a=2}^N\lambda_a^2=o(1),
 \qquad \sum_{a=1}^N\lambda_a=\Tr\widetilde S=0.
\end{equation}
On \(\Gamma\), the bias scalar
\(b_z(t)=t^2/(1-m(z)^2t)\) is \(O_\Gamma(t^2)\).  The covariance scalar,
after the common factors are removed, is
\[
 k_{z,w}(t)=\frac{2t}{(1-m(z)m(w)t)^2}-t
 =t+O_\Gamma(t^2).
\]
Thus \eqref{eq:R3-nonPerron-HS} controls every nonlinear non-Perron term,
while the linear covariance term cancels because both profiles have trace
zero.  The Perron eigenvalues of \(\widetilde S\) and \(S_N^0\) are
respectively \(1\) and \(1-N^{-1}\), and hence contribute only \(O(N^{-1})\)
to their difference.  This proves that the formulas for \(\widetilde S\)
and \(S_N^0\) differ by \(o(1)\).

Because \(I\) commutes with \(\widetilde S\), the eigenvalues in
\eqref{eq:R3-eta-profile} are
\[
 \lambda_a^{(\eta)}=(1-\eta/N)\lambda_a+\eta/N.
\]
The Perron eigenvalue remains one.  The bias change is \(o_N(1)\), while the
linear covariance contribution changes by
\(\Tr\widetilde S^{(\eta)}-\Tr\widetilde S=\eta\); the quadratic remainder
is \(o_N(1)+O(\eta^2/N)\).  Hence the profile formulas change by at most
\(O_\Gamma(\eta)+o_N(1)\).  Finally, diagonal congruence multiplies each
off-diagonal fourth cumulant by \((d_id_j)^2=1+o(N^{-1/2})\), regularization
changes their sum by \(O(\eta/N)\), and the new diagonal cumulants contribute
only \(O(\eta^2/N)\).  Therefore \(K_4(H)\) has the same endpoint limit.

For every centered real linear combination \(X_N^{(\eta)}\) of the analytic
statistics, \eqref{eq:R3-eta-mean}--\eqref{eq:R3-eta-L2} imply
\[
 \lim_{\eta\downarrow0}\limsup_{N\to\infty}
 \mathbb P\bigl(|X_N^{(\eta)}-X_N^{(0)}|>\varepsilon\bigr)=0.
\]
Cram\'er--Wold and the convergence-together theorem transfer the joint CLT
to the zero-diagonal array and complete the proof.
\end{proof}

\begin{lemma}[Explicit flat zero-diagonal endpoint]
\label{lem:R3-flat-endpoint}
Let
\(
 S_N^0=N^{-1}(\one\one^{\mathsf T}-I)
\)
and \(\sigma_N=1-N^{-1}\).  Let \(m_N^0(z)\) be the scalar Dyson solution
\begin{equation}\label{eq:R3-flat-endpoint-QVE}
 -\frac{1}{m_N^0(z)}=z+\sigma_Nm_N^0(z),
 \qquad m_N^0(z)\sim-z^{-1}.
\end{equation}
Uniformly for \(z,w\in\Gamma\),
\begin{align}
 N\bigl(m_N^0(z)-m(z)\bigr)
 &\longrightarrow-\frac{m(z)^3}{1-m(z)^2},
 \label{eq:R3-flat-Dyson-limit}\\
 \Tr\!\left[(S_N^0)^2(I-m(z)^2S_N^0)^{-1}\right]
 &\longrightarrow\frac{1}{1-m(z)^2},
 \label{eq:R3-flat-bias-trace}\\
 \Tr\!\left[S_N^0(I-m(z)m(w)S_N^0)^{-2}\right]
 &\longrightarrow
 \frac{1}{(1-m(z)m(w))^2}-1.
 \label{eq:R3-flat-cov-trace}
\end{align}
Consequently, if the summed fourth cumulants converge to \(\kappa\), the
Li--Xu bias together with the finite-flat Dyson correction converges to
\(m_{\varphi,\Gamma}^{\rm base}(\kappa)\) in
\eqref{eq:base-mean-definition}, and the covariance kernel converges to
\(\mathcal K_{\rm Wig}^{(\kappa,0)}\) in
\eqref{eq:Wigner-kernel}.
\end{lemma}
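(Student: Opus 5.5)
The approach is exact diagonalization of $S_N^0$ followed by explicit expansion in $1/N$. Since $S_N^0=P-N^{-1}I$ with $P=N^{-1}\one\one^{\mathsf T}$, its spectrum consists of the Perron eigenvalue $\sigma_N=1-N^{-1}$ with eigenvector $\one/\sqrt N$, and the eigenvalue $-N^{-1}$ with multiplicity $N-1$. Every trace in \eqref{eq:R3-flat-bias-trace}--\eqref{eq:R3-flat-cov-trace} is therefore a sum of one Perron term and $N-1$ identical non-Perron terms. I will evaluate both pieces exactly and let $N\to\infty$ uniformly on $\Gamma$. The uniformity needs two facts: $|m(z)|<1$ on $\Gamma$, which holds because $\Gamma$ stays a positive distance from $[-2,2]$ by (G2), and $\inf_\Gamma|1-m(z)^2|>0$ together with $\inf_{\Gamma\times\Gamma}|1-m(z)m(w)|>0$, which follow from the same fact.

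\textbf{Dyson limit.} Subtracting the equation $-1/m=z+m$ from \eqref{eq:R3-flat-endpoint-QVE} gives
\[
 \frac{m_N^0-m}{m_N^0m}=(m_N^0-m)-N^{-1}m_N^0 .
\]
Hence
\[
 (m_N^0-m)\bigl(1-m_N^0m\bigr)=-N^{-1}(m_N^0)^2m .
\]
The quadratic \eqref{eq:R3-flat-endpoint-QVE} has coefficients analytic in $\sigma_N$. Its branch selected by $m_N^0\sim-z^{-1}$ is continuous in $\sigma_N$ near $1$, uniformly on compact sets away from $[-2,2]$, so $m_N^0\to m$ uniformly on $\Gamma$. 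Substituting this back yields $N(m_N^0-m)\to -m^3/(1-m^2)$, which is \eqref{eq:R3-flat-Dyson-limit}.

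\textbf{Traces.} For the bias trace, the non-Perron part equals
\[
 (N-1)\,N^{-2}\bigl(1+m^2/N\bigr)^{-1}=O(N^{-1}),
\]
while the Perron part equals $\sigma_N^2/(1-m^2\sigma_N)\to 1/(1-m^2)$. For the covariance trace, the Perron term gives $\sigma_N(1-mm'\sigma_N)^{-2}\to(1-m(z)m(w))^{-2}$, where $m'$ here denotes $m(w)$. The non-Perron part gives
\[
 (N-1)(-N^{-1})\bigl(1+m(z)m(w)/N\bigr)^{-2}\to-1 .
\]
This produces exactly the subtracted $-1$ in \eqref{eq:Wigner-kernel}. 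In the Li--Xu kernel \eqref{eq:R3-LiXu-cov-kernel}, the extra term $-m'(z)m'(w)\Tr T$ vanishes because $\Tr S_N^0=0$.

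\textbf{Assembly.} Inserting these limits into \eqref{eq:R3-LiXu-bias-kernel}, together with $K_4\to\kappa$, gives the first line of \eqref{eq:base-mean-definition}; uniform convergence on $\Gamma$ justifies passing the limit under the contour integral. The Li--Xu centering is $N\int\varphi\,\dd\rho_N^0$, where $\rho_N^0$ is the measure whose Stieltjes transform is $m_N^0$. Replacing it by the semicircle centering costs
\[
 -\frac{1}{2\pi\ii}\oint_\Gamma\varphi(z)\,N\bigl(m_N^0-m\bigr)\,\dd z ,
\]
which converges by \eqref{eq:R3-flat-Dyson-limit} to the second line of \eqref{eq:base-mean-definition}. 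The sign must be checked against the convention of Definition~\ref{def:bulk-stat-package}. Combining the covariance-trace limits with \eqref{eq:R3-LiXu-cov-kernel} gives $\mathcal K^{(\kappa,0)}_{\rm Wig}$.

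\textbf{Main obstacle.} The delicate step is the branch argument in the Dyson limit: I must show that the selected root of the perturbed quadratic converges to $m$ uniformly on $\Gamma$, so that the factor $1-m_N^0m$ stays bounded away from zero. The remaining steps are routine bookkeeping of signs under the resolvent convention \eqref{eq:resolvent-convention-package}.
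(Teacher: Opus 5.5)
Your proposal is correct and follows essentially the same route as the paper. Both arguments diagonalize $S_N^0$ into the Perron eigenvalue $\sigma_N$ and the $(N-1)$-fold eigenvalue $-N^{-1}$, subtract (equivalently, differentiate in $\sigma$) the two Dyson equations, and substitute into the Li--Xu kernels using $\Tr S_N^0=0$.

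The branch issue you flag is settled by the exact scaling $m_N^0(z)=\sigma_N^{-1/2}\,m\bigl(z/\sqrt{\sigma_N}\bigr)$. This identity gives uniform convergence $m_N^0\to m$ on $\Gamma$ at once.
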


\begin{proof}
The eigenvalues of \(S_N^0\) are
\begin{equation}\label{eq:R3-flat-spectrum}
 \sigma_N=1-N^{-1}
 \quad\text{on }\operatorname{span}\{\one\},
 \qquad
 -N^{-1}
 \quad\text{with multiplicity }N-1.
\end{equation}
Subtracting the scalar semicircle equation from
\eqref{eq:R3-flat-endpoint-QVE}, or differentiating the stable equation
\(1+zm+\sigma m^2=0\) at \(\sigma=1\), gives
\[
 \left.\frac{\partial m_\sigma(z)}{\partial\sigma}\right|_{\sigma=1}
 =\frac{m(z)^3}{1-m(z)^2}.
\]
Since \(\sigma_N-1=-N^{-1}\), fixed-contour stability gives
\eqref{eq:R3-flat-Dyson-limit}.  Hence
\begin{equation}\label{eq:R3-flat-Dyson-bias}
 -\frac{N}{2\pi\ii}\oint_\Gamma
 \varphi(z)(m_N^0(z)-m(z))\,\dd z
 \longrightarrow
 \frac{1}{2\pi\ii}\oint_\Gamma
 \varphi(z)\frac{m(z)^3}{1-m(z)^2}\,\dd z.
\end{equation}

Functional calculus applied to \eqref{eq:R3-flat-spectrum} gives
\begin{align*}
 &\Tr\!\left[(S_N^0)^2(I-m(z)^2S_N^0)^{-1}\right]\\
 &\quad=
 \frac{\sigma_N^2}{1-m(z)^2\sigma_N}
 +\frac{N-1}{N^2}\frac{1}{1+m(z)^2/N}
 \longrightarrow\frac{1}{1-m(z)^2},
\end{align*}
which proves \eqref{eq:R3-flat-bias-trace}.  Similarly, with
\(a=m(z)m(w)\),
\begin{align*}
 &\Tr\!\left[S_N^0(I-aS_N^0)^{-2}\right]\\
 &\quad=
 \frac{\sigma_N}{(1-a\sigma_N)^2}
 -\frac{\sigma_N}{(1+a/N)^2}
 \longrightarrow\frac{1}{(1-a)^2}-1,
\end{align*}
which proves \eqref{eq:R3-flat-cov-trace}.  Since \(\Tr S_N^0=0\), there is
no additional trace term in \eqref{eq:R3-LiXu-cov-kernel}.  Substitution of
\eqref{eq:R3-flat-bias-trace} into \eqref{eq:R3-LiXu-bias-kernel}, followed by
addition of \eqref{eq:R3-flat-Dyson-bias}, gives
\eqref{eq:base-mean-definition}.  Substitution of
\eqref{eq:R3-flat-cov-trace} into \eqref{eq:R3-LiXu-cov-kernel} gives
\eqref{eq:Wigner-kernel}.
\end{proof}

It remains to compare the exactly stochastic profile with the original weak
profile.  This is the step for which the order-one Dyson-density correction
must be retained.

\begin{lemma}[Averaged off-diagonal gain on a fixed contour]
\label{lem:R3-averaged-offdiag}
Let \(A=A^{\mathsf T}\) and assume
\(\operatorname{dist}(\Gamma,\spec(A))\ge c_\Gamma>0\).  For every fixed
\(r\ge1\) and \(z_1,\ldots,z_r\in\Gamma\),
\begin{equation}\label{eq:R3-deterministic-offdiag}
 \frac1{N^2}\sum_{a\ne b}
 \left|
 \bigl((A-z_1I)^{-1}\cdots(A-z_rI)^{-1}\bigr)_{ab}
 \right|^2
 \le\frac{C_{\Gamma,r}}{N}.
\end{equation}
The same estimate holds in expectation when the spectral separation event has
probability \(1-o(N^{-D})\) for every fixed \(D\), with the complement
controlled by a standard resolvent cutoff.
\end{lemma}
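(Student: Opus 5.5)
The plan is a Hilbert--Schmidt bound.  Put
\(B:=(A-z_1I)^{-1}\cdots(A-z_rI)^{-1}\).  Since \(A\) is real symmetric, each factor is a normal matrix.  By the spectral theorem, the operator norm of each factor is \(\operatorname{dist}(z_\ell,\spec(A))^{-1}\le c_\Gamma^{-1}\).  Submultiplicativity then gives \(\norm{B}_{\op}\le c_\Gamma^{-r}\).

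Next, drop the restriction \(a\ne b\), which can only increase the left side of \eqref{eq:R3-deterministic-offdiag}, and bound the full sum of squared entries by the Hilbert--Schmidt norm:
\[
 \sum_{a\ne b}|B_{ab}|^2\le\sum_{a,b}|B_{ab}|^2=\norm{B}_{\HS}^2
 =\Tr(B^*B)\le N\norm{B}_{\op}^2\le N c_\Gamma^{-2r}.
\]
Dividing by \(N^2\) gives \eqref{eq:R3-deterministic-offdiag} with \(C_{\Gamma,r}=c_\Gamma^{-2r}\).  This constant is uniform in \(z_1,\ldots,z_r\in\Gamma\) and in \(A\); no structure of the off-diagonal entries is used.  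The name ``gain'' refers only to the factor \(N^{-1}\) relative to the trivial entrywise count \(N^2\).

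For the expectation version, let \(\mathcal E\) be the spectral separation event, so \(\mathbb P(\mathcal E^c)\le N^{-D}\) for every fixed \(D\).  On \(\mathcal E\) the deterministic bound applies pointwise.  On \(\mathcal E^c\) I would use the standard resolvent cutoff.  One option is to multiply by a smooth indicator \(\chi\) of the separation event.  Another is to replace \(z_\ell\) by a point with \(|\operatorname{Im} z_\ell|\ge N^{-C}\), or to use the crude bound \(\norm{(A-z)^{-1}}_{\op}\le|\operatorname{Im} z|^{-1}\), which applies where \(\Gamma\) meets the real axis only after the cutoff.  Either way the cut-off quantity is bounded by a polynomial \(N^{K}\).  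Its contribution is then at most \(N^{K}\mathbb P(\mathcal E^c)\le N^{K-D}\), and choosing \(D>K+1\) makes this \(o(N^{-1})\).

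The only mildly delicate point is this complement step, since \(\Gamma\) crosses the real axis and the resolvent is not a priori bounded there.  That is exactly why the statement phrases the expectation bound with a cutoff; granted the cutoff, the step is routine.
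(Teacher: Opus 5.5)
Your proof is correct and is essentially the paper's. Both arguments bound the full sum by $\Tr(B^*B)\le N\norm{B}_{\op}^2\le N c_\Gamma^{-2r}$ using the spectral theorem, then discard the diagonal, and the paper likewise obtains the expectation version simply from a smooth cutoff of the distance to the spectrum. In fact, a cutoff that vanishes once that distance drops below $c_\Gamma/2$ makes the cut-off quantity deterministically $O(N^{-1})$ on every outcome. Your polynomial bound on $\mathcal E^c$ and the alternative of moving $z_\ell$ off the real axis are therefore unnecessary.
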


\begin{proof}
All resolvents are functions of the same matrix and hence commute.  If
\[
 R=(A-z_1I)^{-1}\cdots(A-z_rI)^{-1},
\]
then the spectral theorem gives
\[
 \sum_{a,b}|R_{ab}|^2=\Tr RR^*
 \le N\prod_{\ell=1}^r
 \operatorname{dist}(z_\ell,\spec(A))^{-2}
 \le C_{\Gamma,r}N.
\]
Discarding the diagonal terms and dividing by \(N^2\) proves
\eqref{eq:R3-deterministic-offdiag}.  A smooth cutoff of the distance to the
spectrum gives the expectation version.  In the cumulant expansion below,
each differentiated edge term is a finite product of resolvent entries with
indices \(a,b\); after the edge labels are summed, Cauchy--Schwarz reduces it
to \eqref{eq:R3-deterministic-offdiag}.
\end{proof}

\begin{lemma}[Weak-profile transfer by diagonal congruence]
\label{lem:R3-profile-interpolation}
Let \(H\), \(S\), \(D\), and \(\widetilde H\) be as in
Lemma~\ref{lem:R3-stochastic-normalization}.  Suppose in addition that
\begin{equation}\label{eq:R3-congruence-rate}
 \norm{D-I}_\infty=o(N^{-1/2})
\end{equation}
and that the variables \(\sqrt N H_{ij}\) are uniformly bounded.  Let
\(m^S(z)=(m_i^S(z))\) solve
\begin{equation}\label{eq:R3-congruence-QVE}
 -\frac1{m_i^S(z)}=z+\sum_jS_{ij}m_j^S(z),
 \qquad
 \overline m^S(z):=\frac1N\sum_i m_i^S(z),
\end{equation}
and let \(m(z)=m_{\rm sc}(z)\).  On every fixed analytic LSS contour
\(\Gamma\) at positive distance from \([-2,2]\), with the standard spectral
confinement cutoff,
\begin{equation}\label{eq:R3-congruence-L2}
 \sup_{z\in\Gamma}\E\left|
 \left[\Tr(H-zI)^{-1}-N\overline m^S(z)\right]
 -\left[\Tr(\widetilde H-zI)^{-1}-Nm(z)\right]
 \right|^2=o(1).
\end{equation}
Consequently, uniformly for \(z,w\in\Gamma\),
\begin{align}
 &\left[\E\Tr(H-zI)^{-1}-N\overline m^S(z)\right]
 -\left[\E\Tr(\widetilde H-zI)^{-1}-Nm(z)\right]=o(1),
 \label{eq:R3-interpolation-bias}\\
 &\Cov\!\left(\Tr(H-zI)^{-1},\Tr(H-wI)^{-1}\right)
 \nonumber\\
 &\qquad-
 \Cov\!\left(\Tr(\widetilde H-zI)^{-1},
              \Tr(\widetilde H-wI)^{-1}\right)=o(1).
 \label{eq:R3-interpolation-cov}
\end{align}
The same comparison holds for joint characteristic functions of finitely many
analytic LSS after subtracting the exact Dyson centering
\(-N(2\pi\ii)^{-1}\oint\varphi\overline m^S\) at the \(H\) endpoint and the
scalar semicircle centering at the \(\widetilde H\) endpoint.
\end{lemma}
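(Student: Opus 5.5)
The plan is to split the difference in \eqref{eq:R3-congruence-L2} into a deterministic Dyson part and a random resolvent part, and to show that each is $o(1)$ in $L^2$.

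\textbf{Deterministic part.} Let $\widetilde S=DSD$ be stochastic and write $m^{\widetilde S}$ for its vector Dyson solution. Stochasticity forces $m^{\widetilde S}_i\equiv m_N^{\widetilde S}$, a scalar. One checks that this scalar equals $m(z)$ exactly: the zero diagonal is absorbed because the row sums of $\widetilde S$ are exactly one. The $N^{-1}$ Perron discrepancy against $S_N^0$ is handled separately in Lemma~\ref{lem:R3-flat-endpoint} and is not counted here.

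I will then compare $N\overline m^S$ with $Nm$ through the vector Dyson equation. Linearising gives
\[
 \delta m=(1-m^2S)^{-1}m^2(S-\widetilde S)m\one+O(\|\delta m\|^2).
\]
The row defect satisfies $\|(S-\widetilde S)\one\|_\infty\le C\|D-I\|_\infty=o(N^{-1/2})$, and stability of $(1-m^2S)^{-1}$ holds on $\Gamma$ because $\Gamma$ is at fixed distance from $[-2,2]$. Together these give $\|\delta m\|_\infty=o(N^{-1/2})$.

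The difficulty is that a sum of $N$ such terms is only $o(N^{1/2})$. So I will \emph{not} discard this term. Instead I will show that it is matched by the corresponding first-order change in $\E\Tr(H-z)^{-1}$, i.e.\ the comparison is between centered quantities.

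\textbf{Random part: interpolation.} Let $H^{t}$ have entries $((1-t)+t\sqrt{d_id_j})H_{ij}$ for $t\in[0,1]$, and set $R^t=(H^t-z)^{-1}$. Then
\[
 \frac{d}{dt}\Tr R^t=-\sum_{a,b}(\sqrt{d_ad_b}-1)H_{ab}(R^t)^2_{ba}.
\]
I will expand $\E[H_{ab}F(H)]$ by the cumulant expansion in the single edge. The second-cumulant term produces
\[
 -\sum_{ab}(\sqrt{d_ad_b}-1)S_{ab}\,\E\bigl[\partial_{ab}(R^2)_{ba}\bigr],
\]
whose leading diagonal part is $\sum_{ab}(\sqrt{d_ad_b}-1)S_{ab}R_{aa}(R^2)_{bb}$. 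This is exactly the derivative of $N\overline m^{S^t}$ along the congruence path, up to the replacement $R_{aa}\approx m$. The centered difference is therefore controlled by
\[
 \|D-I\|_\infty\cdot\sum_{ab}S_{ab}\bigl|\E[(R_{aa}-m)(\cdot)]\bigr|,
\]
together with off-diagonal products $R_{ab}(R^2)_{ab}$. By Lemma~\ref{lem:R3-averaged-offdiag}, the off-diagonal terms sum to $O(N\cdot N^{-1})=O(1)$ without weights, hence to $o(N^{-1/2})$ with the weight $\|D-I\|_\infty$. For the diagonal fluctuations, the fixed-contour averaged bound $\E|\frac1N\Tr(R-m)|^2=O(N^{-2})$ together with Cauchy--Schwarz should give $o(1)$.

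The third- and fourth-cumulant terms carry factors $N^{-3/2}$ and $N^{-2}$ per edge, with $O(N^2)$ edges, bounded resolvent entries and the weight $o(N^{-1/2})$. The third-order term contains at least one off-diagonal entry, so Lemma~\ref{lem:R3-averaged-offdiag} gains $N^{-1/2}$ and the total is $o(1)$. The fourth-order term is $o(N^{-1/2})\cdot O(1)$.

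\textbf{Second moment.} For the $L^2$ statement \eqref{eq:R3-congruence-L2}, I will bound the fluctuation of $\int_0^1\frac{d}{dt}\Tr R^t\,dt$ by Efron--Stein. The influence of edge $(a,b)$ on $\Tr R^1-\Tr R^0$ is $O(N^{-1/2}\|D-I\|_\infty)$. The total variance is then $N^2\cdot N^{-1}\cdot o(N^{-1})=o(1)$, and the mean part is treated above.

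Taking $z=w$ and conjugates gives \eqref{eq:R3-interpolation-bias}. For \eqref{eq:R3-interpolation-cov}, I will use $\Cov(X,Y)-\Cov(X',Y')$ with $\|X-X'\|_2=o(1)$ (centered), together with the $O(1)$ variances from the LSS theory. The characteristic-function statement follows by contour integration and $|e^{ix}-e^{iy}|\le|x-y|$.

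\textbf{Main obstacle.} I expect the hardest step to be the first-order cancellation between the Dyson shift $N(\overline m^S-m)$ and $\E\Tr R$. Naive bounds give only $o(N^{1/2})$. The cancellation requires replacing $\E R_{aa}(R^2)_{bb}$ by its deterministic counterpart with an error $O(N^{-1})$ on average, i.e.\ a second-order averaged law on the fixed contour. I would try to obtain this through the same cumulant expansion applied to $\E\Tr R$ itself, which is a standard global argument because $\Gamma$ is macroscopically away from the spectrum.
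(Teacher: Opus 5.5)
Your argument is correct, but it reaches the estimate by a different mechanism from the paper's.

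\textbf{The paper's route.} The paper never interpolates and never uses a cumulant expansion. It writes $(H-zI)^{-1}=D^{1/2}(\widetilde H-zD)^{-1}D^{1/2}$, so that $\Tr G=\Tr DR_D$ with $R_D=(\widetilde H-zD)^{-1}$. Two resolvent identities in $K=D-I$ then give, on the cutoff event,
\[
\Tr G=\Tr\widetilde G+\Tr K\bigl(\widetilde G+z\widetilde G^2\bigr)+O(\norm{K}_{\HS}^2).
\]
On the deterministic side, $m^S_i=d_ir_i$, where $r$ solves a diagonally deformed Dyson equation for the stochastic profile $\widetilde S$. Linearizing that equation gives $(m+zm')\Tr K+O(N\delta_N^2)$. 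The two $o(N^{1/2})$ first-order terms, which you rightly single out, therefore cancel up to $\Tr K[(\widetilde G-m)+z(\widetilde G^2-m')]$. Cauchy--Schwarz and the entrywise bound $\sum_i\E|\widetilde G_{ii}-m|^2\le C$ control this remainder in $L^2$ by $C\norm{K}_{\HS}^2=o(1)$. This handles mean and fluctuation in one estimate. It needs local laws only at the endpoint $\widetilde H$, whose Dyson solution is exactly $m\one$.

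\textbf{Your route, and what each buys.} You match $\frac{d}{dt}\E\Tr R^t$ against $\frac{d}{dt}N\overline m^{S^t}=\sum_{a,b}\dot S^t_{ab}\,m^{S^t}_a\,\partial_z m^{S^t}_b$ through the second-cumulant diagonal terms, and add Efron--Stein for the variance. This requires entrywise laws uniformly along a whole path of nonstochastic profiles $S^t$, plus the usual truncation and cutoff bookkeeping. In exchange, your mean comparison does not rely on the perturbation being a diagonal congruence. Your variance step is also very cheap: the influence bound $O(N^{-1/2}\delta_N)$ follows from $\norm{\widetilde H-H}_{\op}\le C\delta_N\norm{H}_{\op}$ alone, with no local law.

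\textbf{Two corrections to your accounting.}
\begin{enumerate}
\item The averaged bound $\E|N^{-1}\Tr(R-m)|^2=O(N^{-2})$ cannot control $\sum_{a,b}(\sqrt{d_ad_b}-1)S_{ab}\E[(R_{aa}-m)(R^2)_{bb}]$, because the weights vary with $(a,b)$. The input you actually need is the entrywise estimate $\E|R_{aa}-m^{S^t}_a|^2\le C/N$, uniformly in $a$ and $t$.
\item The obstacle you name does not arise. The weights are $O(\delta_N/N)$ over $N^2$ terms, so per-entry accuracy $O(N^{-1/2})$ already gives a total error $O(N^{1/2}\delta_N)=o(1)$. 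No second-order $O(N^{-1})$ averaged law is needed. Likewise, the third-cumulant terms are $o(1)$ by the crude count $N^2\cdot N^{-3/2}\cdot\delta_N$, without any off-diagonal gain.
\end{enumerate}
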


\begin{proof}
Put
\begin{equation}\label{eq:R3-K-definition}
 K:=D-I=\diag(k_1,\ldots,k_N),
 \qquad \delta_N:=\norm{K}_\infty.
\end{equation}
By \eqref{eq:R3-congruence-rate},
\begin{equation}\label{eq:R3-K-rates}
 \delta_N=o(N^{-1/2}),
 \qquad
 \norm{K}_{\HS}^2=\sum_i k_i^2\le N\delta_N^2=o(1).
\end{equation}
Choose a compact neighborhood \(\Gamma_+\) of \(\Gamma\) that remains a
fixed positive distance from \([-2,2]\).  We use the usual smooth confinement
cutoff on which
\(\sup_{z\in\Gamma_+}\norm{(\widetilde H-zI)^{-1}}_{\op}\le C_\Gamma\).
The bounded generalized-Wigner local law makes the complementary event
superpolynomially unlikely, so the cutoff may be removed at the end.

\paragraph{Step 1: exact random congruence.}
Write
\[
 G(z):=(H-zI)^{-1},\qquad
 \widetilde G(z):=(\widetilde H-zI)^{-1},\qquad
 R_D(z):=(\widetilde H-zD)^{-1}.
\]
The exact relation \(\widetilde H=D^{1/2}HD^{1/2}\) gives
\begin{equation}\label{eq:R3-exact-congruence}
 G(z)=D^{1/2}R_D(z)D^{1/2},
 \qquad \Tr G(z)=\Tr DR_D(z).
\end{equation}
Since \(\widetilde H-zD=(\widetilde H-zI)-zK\), two applications of the
resolvent identity give
\begin{equation}\label{eq:R3-RD-expansion}
 R_D=\widetilde G+z\widetilde G K\widetilde G
 +z^2\widetilde G K\widetilde G K R_D.
\end{equation}
On the cutoff event, \(\norm{z\widetilde G K}_{\op}=o(1)\) and hence
\(\norm{R_D}_{\op}\le C_\Gamma\).  Multiplying
\eqref{eq:R3-RD-expansion} by \(D=I+K\), taking the trace, and using cyclicity
yields
\begin{equation}\label{eq:R3-random-congruence-expansion}
 \Tr G(z)=\Tr\widetilde G(z)
 +\Tr K\bigl(\widetilde G(z)+z\widetilde G(z)^2\bigr)
 +\mathcal R_N(z),
\end{equation}
where
\begin{equation}\label{eq:R3-random-congruence-rem}
 \sup_{z\in\Gamma}|\mathcal R_N(z)|
 \le C_\Gamma\norm{K}_{\HS}^2=o(1).
\end{equation}
Indeed, the omitted terms are
\(z\Tr K\widetilde G K\widetilde G\) and
\(z^2\Tr D\widetilde G K\widetilde G K R_D\); both are bounded by
\(C_\Gamma\norm{K}_{\HS}^2\).  The same estimate holds in \(L^2\) after the
cutoff convention.

\paragraph{Step 2: exact deterministic Dyson response.}
Let \(r=(r_i)\) solve the diagonally deformed Dyson equation for the exactly
stochastic profile \(\widetilde S\):
\begin{equation}\label{eq:R3-deformed-Dyson}
 -\frac1{r_i(z)}=zd_i+\sum_j\widetilde S_{ij}r_j(z).
\end{equation}
Since \(\widetilde S_{ij}=d_iS_{ij}d_j\), direct substitution shows that
\begin{equation}\label{eq:R3-m-r-relation}
 m_i^S(z)=d_i r_i(z).
\end{equation}
Write \(r=m\one+u\) and \(k=(k_i)\).  Subtracting the scalar semicircle
equation from \eqref{eq:R3-deformed-Dyson} gives
\begin{equation}\label{eq:R3-u-equation}
 (I-m^2\widetilde S)u
 =zm^2k+\left(\frac{u_i^2}{m+u_i}\right)_{i=1}^N.
\end{equation}
On \(\Gamma_+\), \(\sup|m|\le1-c_\Gamma\).  Since \(\widetilde S\) is a
nonnegative stochastic matrix,
\begin{equation}\label{eq:R3-scalar-stability}
 \norm{(I-m^2\widetilde S)^{-1}}_{\infty\to\infty}
 \le\frac1{1-\sup_{\Gamma_+}|m|^2}\le C_\Gamma.
\end{equation}
A contraction argument in \(\norm{u}_\infty\le C_\Gamma\delta_N\) therefore
gives
\begin{equation}\label{eq:R3-u-linearization}
 u=(I-m^2\widetilde S)^{-1}zm^2k+O_\Gamma(\delta_N^2)
 \quad\text{in }\ell^\infty.
\end{equation}
Symmetry and stochasticity imply
\[
 \one^{\mathsf T}(I-m^2\widetilde S)^{-1}
 =\frac1{1-m^2}\one^{\mathsf T}.
\]
Using \(m'=m^2/(1-m^2)\), summing
\eqref{eq:R3-u-linearization}, and then using
\eqref{eq:R3-m-r-relation}, we obtain
\begin{equation}\label{eq:R3-deterministic-congruence-expansion}
 N\overline m^S(z)
 =Nm(z)+\bigl(m(z)+zm'(z)\bigr)\Tr K
 +O_\Gamma(N\delta_N^2).
\end{equation}
The last remainder is \(o(1)\) by \eqref{eq:R3-K-rates}; explicitly,
the additional term \(\sum_i k_iu_i\) is
\(O_\Gamma(N\delta_N^2)\).

\paragraph{Step 3: weighted diagonal local law.}
Because \(\widetilde S\one=\one\), its vector Dyson solution is exactly
\(m(z)\one\).  The entrywise and averaged fixed-contour local laws, followed
by Cauchy's formula for \(\partial_z\widetilde G=\widetilde G^2\), give
\begin{align}
 \sup_{z\in\Gamma}\sum_i
 \E|\widetilde G_{ii}(z)-m(z)|^2&\le C_\Gamma,
 \label{eq:R3-diagonal-L2-G}\\
 \sup_{z\in\Gamma}\sum_i
 \E|[\widetilde G(z)^2]_{ii}-m'(z)|^2&\le C_\Gamma,
 \label{eq:R3-diagonal-L2-G2}\\
 \sup_{z\in\Gamma}\E|\Tr\widetilde G(z)
 -\E\Tr\widetilde G(z)|^2&\le C_\Gamma.
 \label{eq:R3-trace-L2}
\end{align}
The moment versions follow from the standard stochastic-domination statements
because the normalized entries are uniformly bounded and every resolvent is
cut off on the exceptional event.  Thus Cauchy--Schwarz and
\eqref{eq:R3-K-rates} imply
\begin{equation}\label{eq:R3-weighted-diagonal-L2}
 \sup_{z\in\Gamma}\E\left|
 \Tr K\left[(\widetilde G-mI)
 +z(\widetilde G^2-m'I)\right]\right|^2
 \le C_\Gamma\norm{K}_{\HS}^2=o(1).
\end{equation}

\paragraph{Step 4: completion.}
Subtracting \eqref{eq:R3-deterministic-congruence-expansion} from
\eqref{eq:R3-random-congruence-expansion} gives
\begin{align}
 &\left[\Tr G(z)-N\overline m^S(z)\right]
 -\left[\Tr\widetilde G(z)-Nm(z)\right]\nonumber\\
 &\quad=\Tr K\left[(\widetilde G-mI)
 +z(\widetilde G^2-m'I)\right]
 +O_{L^2,\Gamma}(\norm{K}_{\HS}^2+N\delta_N^2).
 \label{eq:R3-master-congruence-transfer}
\end{align}
Equations \eqref{eq:R3-K-rates} and
\eqref{eq:R3-weighted-diagonal-L2} prove
\eqref{eq:R3-congruence-L2}; taking expectations proves
\eqref{eq:R3-interpolation-bias}.  The uniform variance bound
\eqref{eq:R3-trace-L2}, the \(L^2\) comparison, and Cauchy--Schwarz prove
\eqref{eq:R3-interpolation-cov}.  Applying
$|e^{\ii x}-e^{\ii y}|\le|x-y|$ to real linear combinations of the real and
imaginary parts proves the joint characteristic-function comparison.
Uniformity on \(\Gamma\) permits contour integration and proves the analytic
LSS statement.  No cumulant comparison is used in this transfer; the fourth
cumulant enters only in identifying the endpoint Li--Xu bias and covariance.
\end{proof}

\begin{proposition}[Spike-profile bounded analytic LSS input]
\label{prop:R3-external-input}
Assume W3--W5 and the spike assumption \eqref{eq:spike-package}.  Let
\(H_N=H_N^{(4)}\) be the centered atomic array
\eqref{eq:R3-centered-array}, put
\(S_{ij}:=\E H_N(i,j)^2\), and let \(S_N^0\) be
\eqref{eq:R3-flat-S}.  Then
\begin{align}
 |\sqrt N H_N(i,j)|&\le C,\label{eq:R3-external-bounded}\\
 \max_{i\ne j}|NS_{ij}-1|&\to0,\label{eq:R3-external-flat}\\
 \max_i\left|\sum_jS_{ij}-1\right|&=o(N^{-1/2}),
 \label{eq:R3-external-strong-row}\\
 \max_i\sum_j|S_{ij}-S_{ij}^0|&\to0,\label{eq:R3-external-row}\\
 \norm{S_N-S_N^0}_{\HS}&\to0.\label{eq:R3-external-HS}
\end{align}
Let \(m_{i,N}(z)\) be the vector Dyson-equation solution
\begin{equation}\label{eq:R3-QVE}
 -\frac1{m_{i,N}(z)}
 =z+\sum_jS_{ij}m_{j,N}(z),
 \qquad
 \overline m_N(z):=\frac1N\sum_i m_{i,N}(z),
\end{equation}
and let \(\overline m_N^0\) denote the corresponding scalar solution for
\(S_N^0\).  Moreover,
\begin{equation}\label{eq:R3-kappa-input}
 \max_{i<j}\left|
 \cum_4(\sqrt N H_N(i,j))-\kappa_4^{f,\nu}
 \right|\to0.
\end{equation}
For a function \(\varphi\) analytic near \(\Gamma\), define the
zero-diagonal Dyson-density correction
\begin{equation}\label{eq:R3-zero-Dyson-definition}
 d_{\varphi,\Gamma,N}^{0,\rm Dyson}
 :=-\frac{N}{2\pi\ii}\oint_\Gamma
 \varphi(z)\bigl(\overline m_N^0(z)-m(z)\bigr)\,\dd z.
\end{equation}
Then, for every finite family of such functions, there are deterministic
bounded-endpoint biases \(m_{\varphi,\Gamma,N}^{\rm fluc}\) such that
\begin{equation}\label{eq:R3-external-CLT}
 \left(
 \Tr\varphi_\ell(H_N)
 +\frac{N}{2\pi\ii}\oint_\Gamma
       \varphi_\ell(z)\overline m_N(z)\,\dd z
 -m_{\varphi_\ell,\Gamma,N}^{\rm fluc}
 \right)_\ell
 \Longrightarrow (G_{\varphi_\ell})_\ell .
\end{equation}
The bias not contained in the exact Dyson density satisfies
\begin{equation}\label{eq:R3-fluc-bias-limit}
 m_{\varphi,\Gamma,N}^{\rm fluc}
 +d_{\varphi,\Gamma,N}^{0,\rm Dyson}
 \longrightarrow m_{\varphi,\Gamma}^{\rm base}(\kappa_4^{f,\nu}),
\end{equation}
and the limiting covariance is
\(\mathcal V_{\kappa_4^{f,\nu}}^\Gamma\).  No convergence or homogeneity of
the edgewise third cumulants is assumed.
\end{proposition}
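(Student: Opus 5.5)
The proof splits into the deterministic profile estimates \eqref{eq:R3-external-bounded}--\eqref{eq:R3-kappa-input}, which are bookkeeping, and the CLT \eqref{eq:R3-external-CLT}--\eqref{eq:R3-fluc-bias-limit}, which is assembled from the three preceding lemmas. For the bookkeeping, boundedness \eqref{eq:R3-external-bounded} is \eqref{eq:uniform-support-surrogate-package} of Theorem~\ref{thm:uniform-four-surrogate-package}, since $\max|s_{ij}|\to0$ by Proposition~\ref{prop:spike-rates-package}. Flatness \eqref{eq:R3-external-flat} follows from $NS_{ij}=v(s_{ij})$ and \eqref{eq:W3-package}. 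The strong row bound \eqref{eq:R3-external-strong-row} is \eqref{eq:R3-strong-q} of Lemma~\ref{lem:R3-stochastic-normalization}. For \eqref{eq:R3-external-row} and \eqref{eq:R3-external-HS}, use the decomposition \eqref{eq:R3-p-decomposition}: then $N(S_{ij}-S^0_{ij})=p^{(1)}_{ij}+p^{(2)}_{ij}+r_{ij,N}$. The row bounds \eqref{eq:p1-row-package}, \eqref{eq:p2-row-package} and Corollary~\ref{cor:profile-remainder-package} give the $\ell^\infty$ row statement, and the $L^2$ bounds \eqref{eq:p1-L2-package}, \eqref{eq:p2-L2-package} give the Hilbert--Schmidt statement, since $\norm{S-S^0}_{\HS}^2=N^{-2}\sum(p^{(1)}+p^{(2)}+r)^2=O(N^{-1})$. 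Finally, \eqref{eq:R3-kappa-input} is \eqref{eq:uniform-kappa-package} of Corollary~\ref{cor:atomic-array-data-package}.

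For the CLT I would chain three transfers. First, apply Lemma~\ref{lem:R3-stochastic-normalization} to $S=S_N$ to get $D$ with $\norm{D-I}_\infty=o(N^{-1/2})$, and set $\widetilde H=D^{1/2}H_ND^{1/2}$. This array is bounded, exactly stochastic, uniformly flat, has zero diagonal, and its fourth cumulants still converge uniformly to $\kappa_4^{f,\nu}$. Second, regularize $\widetilde H$ as in Lemma~\ref{lem:R3-diagonal-regularization}, and apply Li--Xu's theorem to $\widetilde H^{(\eta)}$. That lemma transfers the joint CLT back to $\widetilde H$ with centering $-N(2\pi\ii)^{-1}\oint\varphi\, m$ plus the Li--Xu bias for $\widetilde S$, call it $B_N(\varphi)$. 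In the order $N\to\infty$, $\eta\downarrow0$, both $B_N$ and the covariance converge to the flat zero-diagonal Li--Xu formulas. Third, Lemma~\ref{lem:R3-profile-interpolation}, whose hypothesis \eqref{eq:R3-congruence-rate} is \eqref{eq:R3-strong-D-infty}, compares joint characteristic functions. It gives that $\Tr\varphi(H_N)+N(2\pi\ii)^{-1}\oint\varphi\,\overline m_N$ and $\Tr\varphi(\widetilde H)+N(2\pi\ii)^{-1}\oint\varphi\, m$ differ by $o_{L^2}(1)$. This requires the contour representation \eqref{eq:bulk-contour-formula-package} on the confinement event, which has superpolynomially high probability by the bounded local law. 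Setting $m^{\rm fluc}_{\varphi,\Gamma,N}:=B_N(\varphi)$ yields \eqref{eq:R3-external-CLT} with the covariance of the $\widetilde H$ endpoint.

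It remains to identify the limits. By Lemma~\ref{lem:R3-diagonal-regularization}, the Li--Xu formulas for $\widetilde S$ differ from those for $S_N^0$ by $o(1)$. Here one needs \eqref{eq:R3-balanced-HS} combined with \eqref{eq:R3-external-HS} to get $\norm{\widetilde S-S_N^0}_{\HS}=o(1)$, and one needs $K_4(\widetilde H)\to\kappa_4^{f,\nu}$. The latter holds because off-diagonal cumulants are $N^{-2}(\kappa_4^{f,\nu}+o(1))$ over $N(N-1)$ ordered pairs. Lemma~\ref{lem:R3-flat-endpoint} then says that the $S_N^0$ Li--Xu bias plus $d^{0,\rm Dyson}_{\varphi,\Gamma,N}$ converges to $m^{\rm base}_{\varphi,\Gamma}(\kappa_4^{f,\nu})$, and that the kernel converges to $\mathcal K^{(\kappa_4^{f,\nu},0)}_{\rm Wig}$. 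Combining these gives \eqref{eq:R3-fluc-bias-limit} and the covariance $\mathcal V^\Gamma_{\kappa_4^{f,\nu}}$. The third cumulants never enter, because Li--Xu's real-symmetric analytic formulas at this order involve only $T$ and $K_4$. I would check that claim against their statement rather than assert it.

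The main obstacle is the bookkeeping of centerings. One has to make sure the exact Dyson term $\overline m_N$ for the weak profile, the Dyson correction of the flat zero-diagonal profile $d^{0,\rm Dyson}$, and the Li--Xu fluctuation bias are each counted exactly once. One also has to confirm that the order-$N^{-1/2}$ row defects do not leak into $B_N$. For the latter, I would rely on the strong rate \eqref{eq:R3-strong-q}, so that $\Tr K=o(1)$ in \eqref{eq:R3-deterministic-congruence-expansion}, and put any residual difference into the definition of $m^{\rm fluc}$ rather than into the limit.
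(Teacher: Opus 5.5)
Your proposal is correct and follows the paper's own proof essentially step for step. The profile estimates come from Corollary~\ref{cor:atomic-array-data-package} and Lemma~\ref{lem:R3-flatness}. The CLT comes from four steps: stochastic normalization, diagonal regularization with Li--Xu, the triangle-inequality bound $\norm{\widetilde S-S_N^0}_{\HS}=o(1)$ fed into Lemma~\ref{lem:R3-flat-endpoint}, and the congruence transfer of Lemma~\ref{lem:R3-profile-interpolation}. As in the paper, $m^{\rm fluc}_{\varphi,\Gamma,N}$ is the Li--Xu bias of the stochastic profile.

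You should discard the closing remark that \eqref{eq:R3-strong-q} makes $\Tr K=o(1)$. That claim is false. The strong rate gives only $\norm{K}_{\HS}=o(1)$ and $N\delta_N^2=o(1)$. The signed sum $\sum_i q_i(S_N)=\one^{\mathsf T}S_N\one-N\to-1+\lambda\beta_2^{f,\nu}$ is of order one. Comparing \eqref{eq:R3-deterministic-congruence-expansion} with Lemmas~\ref{lem:R3-flat-endpoint} and~\ref{lem:R3-Dyson-response} gives $\Tr K\to\frac12(1-\lambda\beta_2^{f,\nu})$. So $\Tr K$ is exactly what carries the zero-diagonal and quadratic Dyson corrections into $N\overline m_N$.

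Nor could any order-one residual be moved into $m^{\rm fluc}$, because \eqref{eq:R3-fluc-bias-limit} fixes its limit. Fortunately your main argument never uses the remark. The $\Tr K$ terms cancel exactly between \eqref{eq:R3-random-congruence-expansion} and \eqref{eq:R3-deterministic-congruence-expansion}. The worry about row defects leaking into $B_N$ is already settled by the Hilbert--Schmidt estimate you invoke.
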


\begin{proof}
Corollary~\ref{cor:atomic-array-data-package} gives independence, uniform
boundedness, and \eqref{eq:R3-kappa-input}.  Lemma~\ref{lem:R3-flatness}
gives \eqref{eq:R3-external-flat}--\eqref{eq:R3-external-HS}, including the
strong signed row estimate.  For an exactly stochastic, uniformly flat
variance matrix with uniformly bounded moments, the global statement centered
by its expectation, together with the explicit expectation and covariance
formulas, is the global case of Li--Xu's generalized-Wigner LSS theorem
\cite[Theorem~2.2]{LiXu2021GeneralizedWignerLES}.  Their
characteristic-function expansion permits arbitrary edge-dependent third
cumulants; the limiting formulas contain the variance matrix and the averaged
fourth cumulants, but no third-cumulant parameter.

Lemma~\ref{lem:R3-stochastic-normalization} produces an exactly stochastic
profile \(\widetilde S=DSD\) and a coupled bounded array \(\widetilde H\).
Lemma~\ref{lem:R3-diagonal-regularization} then permits application of
Li--Xu's theorem to \(\widetilde H^{(\eta)}\) for fixed \(\eta>0\), followed by
\(N\to\infty\) and \(\eta\downarrow0\).  This gives the zero-diagonal
generalized-Wigner CLT for \(\widetilde H\).

For the present spike profile,
\[
 \norm{\widetilde S-S_N^0}_{\HS}
 \le\norm{\widetilde S-S_N}_{\HS}
   +\norm{S_N-S_N^0}_{\HS}=o(1).
\]
The exact Li--Xu kernels are
\eqref{eq:R3-LiXu-bias-kernel}--\eqref{eq:R3-LiXu-cov-kernel}.  The
non-Perron trace-series calculation in
Lemma~\ref{lem:R3-diagonal-regularization} shows that the displayed
Hilbert--Schmidt estimate makes those kernels differ from the flat
zero-diagonal kernels by \(o(1)\).  Lemma~\ref{lem:R3-flat-endpoint} identifies
the flat bias and covariance, including the non-Perron subtraction in
\eqref{eq:Wigner-kernel}.  Lemma~\ref{lem:R3-diagonal-regularization} also
proves continuity through the positive-diagonal regularization and the
convergence-together step.  Uniform fourth-cumulant convergence identifies
\(\kappa_4^{f,\nu}\).

Lemma~\ref{lem:R3-profile-interpolation} transfers the centered field from
\(\widetilde H\) to \(H_N\).  The signed row estimate
\eqref{eq:R3-external-strong-row} and
Lemma~\ref{lem:R3-stochastic-normalization} imply
\(\lVert D-I\rVert_\infty=o(N^{-1/2})\), so the exact diagonal-congruence
comparison applies.  It replaces the scalar stochastic Dyson density by the
exact vector-Dyson density \(\overline m_N\), while the centered resolvent
fields differ by \(o_{L^2}(1)\).  Hence all remaining changes in the bias and
covariance are \(o(1)\).

At the flat zero-diagonal profile, Lemma~\ref{lem:R3-flat-endpoint} identifies
the total order-one mean relative to scalar semicircle centering as
\(m_{\varphi,\Gamma}^{\rm base}(\kappa_4^{f,\nu})\).  Splitting that mean into its
Dyson-density part \eqref{eq:R3-zero-Dyson-definition} and its fluctuation
part gives the sign-explicit relation \eqref{eq:R3-fluc-bias-limit}.  This
proves \eqref{eq:R3-external-CLT}--\eqref{eq:R3-fluc-bias-limit}.
\end{proof}

\begin{remark}[What is imported]
The Gaussian fluctuation mechanism and the formulas for expectation and
covariance
for the stochastically normalized bounded array are imported.  The reduction
from the model's non-stochastic weak profile, including the order-one
quadratic response, is proved below.  The proposition is not an assumption on
\(f\), \(\nu\), or \(x\).
\end{remark}

\subsection{Atomic-array verification and bounded LSS theorem}

\begin{lemma}[Flatness, signed row control, and Hilbert--Schmidt smallness]
\label{lem:R3-flatness}
The variance matrix \eqref{eq:R3-variance-matrix} satisfies
\eqref{eq:R3-external-flat}--\eqref{eq:R3-external-HS}.  More precisely,
\begin{align}
 \max_{i\ne j}|NS_{ij}-1|&=o(1),\label{eq:R3-flat-a}\\
 \max_i\left|\sum_jS_{ij}-1\right|&=o(N^{-1/2}),
 \label{eq:R3-flat-strong-row}\\
 \max_i\sum_j|S_{ij}-S_{ij}^0|&=o(1),\label{eq:R3-flat-b}\\
 \norm{S_N-S_N^0}_{\HS}^2&=O(N^{-1})+o(N^{-1}).
 \label{eq:R3-flat-c}
\end{align}
\end{lemma}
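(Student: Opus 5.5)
The plan is to substitute the decomposition \eqref{eq:R3-p-decomposition} into \(S_{ij}=v(s_{ij})/N\) for \(i\ne j\). This gives
\[
 NS_{ij}-1=p_{ij}^{(1)}+p_{ij}^{(2)}+r_{ij,N},\qquad
 S_{ij}-S_{ij}^0=N^{-1}\bigl(p_{ij}^{(1)}+p_{ij}^{(2)}+r_{ij,N}\bigr)\quad(i\ne j),
\]
with both sides vanishing on the diagonal. Every item then reduces to a bound on the three profile pieces that has already been recorded in Proposition~\ref{prop:spike-rates-package} and Corollary~\ref{cor:profile-remainder-package]}. The remainder bound \eqref{eq:R3-rem-bound} is justified by W3 together with \(\max|s_{ij}|=o(1)\). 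Here W3 gives \(v(s)=1+b_1s+\beta_2^{f,\nu}s^2+o(s^2)\) uniformly on shrinking windows, since \(m(s)^2=a_1^2s^2+o(s^2)\) by W2. So \(\varepsilon_N\) can be taken as the modulus of W3 evaluated at \(\delta=\max|s_{ij}|\).

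For \eqref{eq:R3-flat-a}, take the maximum of \(|p^{(1)}|+|p^{(2)}|+|r|\) and use \eqref{eq:p1-max-package}, \eqref{eq:p2-max-package}, and the first remainder bound. For \eqref{eq:R3-flat-b}, bound the absolute row sums \(N^{-1}\sum_j|\cdot|\) by \eqref{eq:p1-row-package}, \eqref{eq:p2-row-package}, and the row bound of Corollary~\ref{cor:profile-remainder-package}; each of these is \(o(1)\). For \eqref{eq:R3-flat-c}, expand
\[
 \norm{S_N-S_N^0}_{\HS}^2=N^{-2}\sum_{i\ne j}\bigl(p^{(1)}+p^{(2)}+r\bigr)_{ij}^2
 \le 3N^{-2}\sum\bigl[(p^{(1)})^2+(p^{(2)})^2+r^2\bigr].
\]
The first piece is \(O(N^{-1})\) by \eqref{eq:p1-L2-package}. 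The second is \(o(N^{-1})\) by \eqref{eq:p2-L2-package}. For the third, \(r_{ij}^2\le\varepsilon_N^2s_{ij}^4=\varepsilon_N^2\lambda^2N^2x_i^4x_j^4\), so \(N^{-2}\sum r^2\le C\varepsilon_N^2(\sum_ix_i^4)^2=o(N^{-1})\).

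The strong signed row estimate \eqref{eq:R3-flat-strong-row} is the step that needs care, because absolute row sums of \(p^{(1)}\) are only \(o(N^{-1/4})\) and a signed computation is required. Write
\[
 \sum_jS_{ij}-1=-\frac1N+\frac1N\sum_{j\ne i}\bigl(p_{ij}^{(1)}+p_{ij}^{(2)}+r_{ij,N}\bigr).
\]
The term \(-1/N\) comes from the zero diagonal and is \(o(N^{-1/2})\). The linear term equals \(b_1\sqrt{\lambda/N}\,x_i(\sum_jx_j-x_i)\). Its size is at most \(C N^{-1/2}\norm{x}_\infty(N^{o(1)}+\norm{x}_\infty)\), and \(\norm{x}_\infty=o(N^{-1/4})\) absorbs the \(N^{\epsilon}\) factor for \(\epsilon<1/4\), which yields \(o(N^{-1/2})\). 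This is the only place the balance condition enters. The quadratic term and the remainder are bounded by \(Cx_i^2\sum_jx_j^2\le C\norm{x}_\infty^2=o(N^{-1/2})\). The whole computation repeats the one at the end of the proof of Lemma~\ref{lem:R3-stochastic-normalization}, and it can simply be cited from there.

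The HS statement \eqref{eq:R3-external-HS} then follows from \eqref{eq:R3-flat-c}, and the remaining items of Proposition~\ref{prop:R3-external-input} follow from the four displayed bounds.
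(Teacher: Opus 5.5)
Your proposal is correct and follows the paper's proof essentially line by line. The maximum and absolute-row bounds come from Proposition~\ref{prop:spike-rates-package} and Corollary~\ref{cor:profile-remainder-package}, the signed row estimate is cited from \eqref{eq:R3-strong-q} in Lemma~\ref{lem:R3-stochastic-normalization}, and the Hilbert--Schmidt bound splits into an $O(N^{-1})$ linear part and $o(N^{-1})$ quadratic and remainder parts; your explicit derivation of \eqref{eq:R3-rem-bound} from W2--W3 via $v=q_f-m_f^2$ is only a small addition that the paper takes for granted in its setup.
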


\begin{proof}
The maximum estimate follows from
Proposition~\ref{prop:spike-rates-package} and
Corollary~\ref{cor:profile-remainder-package}.  The signed row estimate is
exactly \eqref{eq:R3-strong-q}.  For the absolute row estimate,
\[
 \frac1N\sum_j|p_{ij}^{(1)}|
 \le C|x_i|=o(N^{-1/4}),\qquad
 \frac1N\sum_j|p_{ij}^{(2)}|=O(x_i^2)=o(N^{-1/2}),
\]
uniformly in \(i\), and the remainder is smaller than the quadratic term.
Finally,
\[
 \norm{S_N-S_N^0}_{\HS}^2
 =\frac1{N^2}\sum_{i\ne j}
 |p_{ij}^{(1)}+p_{ij}^{(2)}+r_{ij,N}|^2.
\]
The squared linear contribution is \(O(N^{-1})\), while the squared
quadratic and remainder contributions are \(o(N^{-1})\), by
Proposition~\ref{prop:spike-rates-package}.
\end{proof}

\begin{lemma}[First variation of the model Dyson equation]
\label{lem:R3-Dyson-response}
Let \(\overline m_N\) and \(\overline m_N^0\) be as in
Proposition~\ref{prop:R3-external-input}, for \(S_N\) and \(S_N^0\),
respectively.  Uniformly on \(\Gamma\),
\begin{equation}\label{eq:R3-Dyson-limit}
 N\bigl(\overline m_N(z)-\overline m_N^0(z)\bigr)
 \longrightarrow
 \lambda\beta_2^{f,\nu}
 \frac{m(z)^3}{1-m(z)^2}.
\end{equation}
The linear profile and the remainder have zero order-one response.
\end{lemma}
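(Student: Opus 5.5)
We compare the vector Dyson equation for \(S_N\) with the scalar equation for the flat profile \(S_N^0\) by a first-order perturbation around the constant solution \(\overline m_N^0\one\). Write \(S_N=S_N^0+\Delta\), where
\[
 \Delta_{ij}=N^{-1}\bigl(p_{ij}^{(1)}+p_{ij}^{(2)}+r_{ij,N}\bigr)\one_{\{i\ne j\}}.
\]
Set \(m_N=\overline m_N^0\one+u\). Since \(S_N^0\one=\sigma_N\one\), the constant vector \(\overline m_N^0\one\) solves the flat equation exactly. Subtracting the two equations gives
\[
 (I-(\overline m_N^0)^2S_N)u=(\overline m_N^0)^2\,\Delta\one\,\overline m_N^0+\Bigl(\frac{u_i^2}{\overline m_N^0+u_i}\Bigr)_i .
\]
Here \((\Delta\one)_i=q_i(S_N)-q_i(S_N^0)\) is the signed row defect. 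By \eqref{eq:R3-strong-q} (equivalently \eqref{eq:R3-flat-strong-row}), it is \(o(N^{-1/2})\) uniformly in \(i\).

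\textbf{Stability.} On \(\Gamma_+\) we have \(|\overline m_N^0|\le1-c\) for large \(N\), since \(\overline m_N^0\to m\) uniformly. Also \(S_N\) is nonnegative with row sums \(1+o(1)\), so \(\norm{(I-(\overline m_N^0)^2S_N)^{-1}}_{\infty\to\infty}\le C_\Gamma\), exactly as in \eqref{eq:R3-scalar-stability}. A contraction argument in the ball \(\norm u_\infty\le C\max_i|(\Delta\one)_i|\) then gives \(\norm u_\infty=o(N^{-1/2})\). The quadratic term is \(O(\norm u_\infty^2)=o(N^{-1})\) entrywise, so after summing over \(i\) it contributes \(o(1)\) to \(N(\overline m_N-\overline m_N^0)=\one^{\mathsf T}u\).

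\textbf{Summing the linear term.} Let \(\mu=\overline m_N^0\). Then
\[
 \one^{\mathsf T}u=\mu^3\,\one^{\mathsf T}(I-\mu^2S_N)^{-1}\Delta\one+o(1).
\]
Write \((I-\mu^2S_N)^{-1}=(I-\mu^2S_N^0)^{-1}+E\). By symmetry of \(S_N^0\), \(\one^{\mathsf T}(I-\mu^2S_N^0)^{-1}=(1-\mu^2\sigma_N)^{-1}\one^{\mathsf T}\). Hence the main term is
\[
 \frac{\mu^3}{1-\mu^2\sigma_N}\,\one^{\mathsf T}\Delta\one=\frac{\mu^3}{1-\mu^2\sigma_N}\cdot\frac1N\sum_{i\ne j}\bigl(p^{(1)}_{ij}+p^{(2)}_{ij}+r_{ij,N}\bigr).
\]
The three contributions are handled separately:
\begin{itemize}
 \item The \(p^{(1)}\) contribution is \(o(1)\) by \eqref{eq:p1-signed-package}; this is where balance enters.
 \item The remainder contribution is \(o(1)\) by Corollary~\ref{cor:profile-remainder-package}.
 \item The \(p^{(2)}\) contribution is
 \[
 \lambda\beta_2^{f,\nu}\Bigl(\bigl(\textstyle\sum_ix_i^2\bigr)^2-\sum_ix_i^4\Bigr)\to\lambda\beta_2^{f,\nu},
 \]
 using \eqref{eq:fourth-spike-package}.
\end{itemize}
Since \(\mu\to m\) and \(\sigma_N\to1\) uniformly on \(\Gamma\), the limit is \(\lambda\beta_2^{f,\nu}m^3/(1-m^2)\), which is \eqref{eq:R3-Dyson-limit}.

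\textbf{Main obstacle: the error \(E\).} We expand \(E=\mu^2(I-\mu^2S_N^0)^{-1}\Delta(I-\mu^2S_N)^{-1}\), and the dangerous term is \(\one^{\mathsf T}E\Delta\one\). The left vector is again \(c\one^{\mathsf T}\), so this term reduces to \(c\,\one^{\mathsf T}\Delta w\) with \(w=(I-\mu^2S_N)^{-1}\Delta\one\), and \(\norm w_\infty=o(N^{-1/2})\) by stability.

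A crude bound uses the absolute row sums \(\max_i\sum_j|\Delta_{ij}|=o(N^{-1/4})\) from \eqref{eq:p1-row-package}. This gives only \(|\one^{\mathsf T}\Delta w|\le N\cdot o(N^{-1/4})\cdot o(N^{-1/2})=o(N^{1/4})\), which is not enough.

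Instead I would use the column structure of \(\Delta\). Write \(\one^{\mathsf T}\Delta w=\sum_j(\Delta\one)_jw_j\), using the symmetry of \(\Delta\). The signed defect \((\Delta\one)_j\) consists of the linear part \(b_1\sqrt{\lambda/N}\,x_j(\sum_kx_k-x_j)\) plus quadratic pieces of size \(O(x_j^2)\). Therefore
\[
 \sum_j|(\Delta\one)_j|\le N^{-1/2+o(1)}\norm x_1+O(1)\le N^{o(1)}+O(1),
\]
and multiplying by \(\norm w_\infty\) gives \(o(N^{-1/2+o(1)})=o(1)\).

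Combining the stability bound, the exact summation of the linear term, and this bound on \(E\) gives \eqref{eq:R3-Dyson-limit}, uniformly on \(\Gamma\). It also shows that the linear profile and the remainder have zero order-one response.
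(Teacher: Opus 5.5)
Your proof is correct and follows the paper's architecture. Like the paper, you linearize the vector Dyson equation around the flat solution, sum against the left eigenvector $\one$ of the flat stability operator to isolate $\frac{m^3}{1-m^2}\one^{\mathsf T}\Delta\one$, and show that the cross and quadratic terms are $o(1)$. The only difference is the norm bookkeeping. The paper keeps $I-(m_N^0)^2S_N^0$ on the left and controls the cross term $h^{\mathsf T}\delta$ by Cauchy--Schwarz in $\ell^2$, using $\norm{\Delta}_{\op}=O(N^{-1/2})$ and $\norm{h}_2^2=o(N^{-1/2})$. You instead invert $I-\mu^2S_N$ in $\ell^\infty$ via the sup-norm row estimate \eqref{eq:R3-strong-q}, and bound the cross term by $\norm{\Delta\one}_1\norm{w}_\infty$.
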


\begin{proof}
Put \(\Delta=S_N-S_N^0\),
\(\sigma_N=(N-1)/N\), and write
\(m_{i,N}=m_N^0+\delta_i\), where \(m_N^0=\overline m_N^0\) solves
\begin{equation}\label{eq:R3-finite-flat-QVE}
 -\frac1{m_N^0(z)}=z+\sigma_Nm_N^0(z).
\end{equation}
Let \(h=\Delta\one\) and
\(\overline\delta=N^{-1}\one^{\mathsf T}\delta\).  We first record the
profile estimates needed below.  The linear and quadratic parts of
\(\Delta\) are
\begin{align*}
 \Delta^{(1)}
 &=\frac{b_1\sqrt\lambda}{\sqrt N}
 \left(xx^{\mathsf T}-\diag(x_i^2)\right),\\
 \Delta^{(2)}
 &=\lambda\beta_2^{f,\nu}
 \left(x^{\circ2}(x^{\circ2})^{\mathsf T}-\diag(x_i^4)\right).
\end{align*}
Therefore
\[
 \norm{\Delta^{(1)}}_{\op}=O(N^{-1/2}),
 \qquad
 \norm{\Delta^{(2)}}_{\op}
 \le C\sum_i x_i^4=o(N^{-1/2}).
\]
The remainder obeys
\[
 \max_i\sum_j|\Delta^{(r)}_{ij}|
 \le C\varepsilon_N\max_i x_i^2\sum_jx_j^2
 =o(N^{-1/2}).
\]
Consequently
\begin{equation}\label{eq:R3-Delta-op}
 \norm\Delta_{\op}=O(N^{-1/2}).
\end{equation}
For the row-sum vector, the spike assumptions give
\begin{align*}
 \norm{h^{(1)}}_2^2
 &\le\frac C N
 \left\{\left(\sum_i x_i\right)^2+\sum_i x_i^4\right\}
 =N^{-1+o(1)},\\
 \norm{h^{(2)}}_2^2
 &\le C\sum_i x_i^4=o(N^{-1/2}),\\
 \norm{h^{(r)}}_2^2
 &\le C\varepsilon_N^2\sum_i x_i^4=o(N^{-1/2}).
\end{align*}
Thus
\begin{equation}\label{eq:R3-Delta-one}
 \norm{\Delta\one}_2^2=o(N^{-1/2}).
\end{equation}
Finally,
\begin{align}
 \one^{\mathsf T}\Delta^{(1)}\one
 &=\frac{b_1\sqrt\lambda}{\sqrt N}
 \left[\left(\sum_i x_i\right)^2-1\right]=o(1),
 \nonumber\\
 \one^{\mathsf T}\Delta^{(2)}\one
 &=\lambda\beta_2^{f,\nu}
 \left(1-\sum_i x_i^4\right)
 \longrightarrow\lambda\beta_2^{f,\nu},
 \nonumber\\
 |\one^{\mathsf T}\Delta^{(r)}\one|
 &\le C\varepsilon_N\left(\sum_i x_i^2\right)^2=o(1).
 \label{eq:R3-Delta-mass}
\end{align}

Subtracting \eqref{eq:R3-finite-flat-QVE} from the vector Dyson equation
gives the exact identity
\begin{equation}\label{eq:R3-exact-Dyson-delta}
 \bigl(I-(m_N^0)^2S_N^0\bigr)\delta
 =(m_N^0)^3h+(m_N^0)^2\Delta\delta
 +\frac{\delta^{\circ2}}{m_N^0+\delta}.
\end{equation}
All estimates are uniform on \(\Gamma\).  Fixed-contour QVE stability and
\eqref{eq:R3-flat-a} first give \(\norm\delta_\infty=o(1)\), while
\[
 \norm{\bigl(I-(m_N^0)^2S_N^0\bigr)^{-1}}_{\op}
 \le C_\Gamma.
\]
Taking Euclidean norms in \eqref{eq:R3-exact-Dyson-delta}, using
\eqref{eq:R3-Delta-op}, and absorbing
\(C_\Gamma(\norm\Delta_{\op}+\norm\delta_\infty)\norm\delta_2\), yields
\begin{equation}\label{eq:R3-delta-two}
 \norm\delta_2\le C_\Gamma\norm h_2,
 \qquad \norm\delta_2^2=o(N^{-1/2}).
\end{equation}

Since \(S_N^0\one=\sigma_N\one\), averaging the exact identity gives
\begin{align}
 \bigl(1-\sigma_N(m_N^0)^2\bigr)\overline\delta
 &=(m_N^0)^3\frac1N\one^{\mathsf T}h
 +(m_N^0)^2\frac1N h^{\mathsf T}\delta\nonumber\\
 &\quad+\frac1N\sum_i\frac{\delta_i^2}{m_N^0+\delta_i}.
 \label{eq:R3-averaged-Dyson-delta}
\end{align}
The last two terms are \(o(N^{-1})\), because
\[
 |h^{\mathsf T}\delta|
 \le\norm h_2\norm\delta_2=o(N^{-1/2}),
 \qquad
 \sum_i|\delta_i|^2=o(N^{-1/2}).
\]
Moreover, \(N^{-1}\one^{\mathsf T}h=N^{-1}\one^{\mathsf T}\Delta\one\).
Multiplying \eqref{eq:R3-averaged-Dyson-delta} by \(N\), using
\eqref{eq:R3-Delta-mass}, \(m_N^0\to m\), and
\(1-\sigma_N(m_N^0)^2\to1-m^2\), proves
\eqref{eq:R3-Dyson-limit}.  The three separate limits in
\eqref{eq:R3-Delta-mass} show that only the quadratic profile contributes at
order one.
\end{proof}

\begin{theorem}[Bounded atomic-array LSS CLT]
\label{thm:R3-bounded-CLT}
Assume W2--W5 and the spike assumption
\eqref{eq:spike-package}.  For analytic test functions
\(\varphi_1,\ldots,\varphi_k\) that are real-valued on the real axis,
\begin{equation}\label{eq:R3-final-CLT}
 \left(
 \Tr\varphi_\ell(H_N^{(4)})
 -N\int_{-2}^2\varphi_\ell(t)\rho_{\scal}(t)\,\dd t
 -m_{\varphi_\ell,\Gamma}^{\rm base}(\kappa_4^{f,\nu})
 -c_{\varphi_\ell,\Gamma}^{\rm quad-var}
 \right)_\ell
 \Longrightarrow (G_{\varphi_\ell})_\ell ,
\end{equation}
where
\[
 \Cov(G_\varphi,G_\psi)
 =\mathcal V_{\kappa_4^{f,\nu}}^\Gamma(\varphi,\psi).
\]
The edgewise third cumulants may vary arbitrarily subject only to the uniform boundedness supplied by
Theorem~\ref{thm:uniform-four-surrogate-package}.
\end{theorem}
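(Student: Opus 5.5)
The proof assembles Proposition~\ref{prop:R3-external-input}, Lemma~\ref{lem:R3-Dyson-response}, and Lemma~\ref{lem:R3-flat-endpoint}. The idea is to rewrite the Dyson centering in \eqref{eq:R3-external-CLT} as a scalar semicircle centering plus explicit deterministic corrections.

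First I would invoke Proposition~\ref{prop:R3-external-input}. For the centered atomic array $H_N=H_N^{(4)}$ it gives the joint convergence
\[
 \Tr\varphi_\ell(H_N)+\frac{N}{2\pi\ii}\oint_\Gamma\varphi_\ell(z)\overline m_N(z)\,\dd z-m^{\rm fluc}_{\varphi_\ell,\Gamma,N}\Longrightarrow G_{\varphi_\ell},
\]
with covariance $\mathcal V^\Gamma_{\kappa_4^{f,\nu}}$. Since the covariance is already the target one, only the deterministic centering has to be rewritten. Note that $H_N$ has no mean part, so the rank-one and diag0 channels are absent here; this is consistent with the statement.

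Next I would split $\overline m_N=m+(\overline m_N^0-m)+(\overline m_N-\overline m_N^0)$ in the contour integral.

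\begin{itemize}
\item \textbf{Semicircle term.} Since $\Gamma$ encloses $[-2,2]$, Cauchy's formula and Fubini give
\[
 -\frac{N}{2\pi\ii}\oint_\Gamma\varphi(z)m(z)\,\dd z=N\int\varphi\,\dd\rho_{\scal}.
\]
The sign is fixed by the resolvent convention \eqref{eq:resolvent-convention-package}, and the orientation check is the residue computation in Definition~\ref{def:bulk-stat-package}.
\item \textbf{Flat zero-diagonal Dyson term.} The second piece is exactly $d^{0,\rm Dyson}_{\varphi,\Gamma,N}$ from \eqref{eq:R3-zero-Dyson-definition}. By \eqref{eq:R3-fluc-bias-limit}, adding it to $m^{\rm fluc}_{\varphi,\Gamma,N}$ gives $m^{\rm base}_{\varphi,\Gamma}(\kappa_4^{f,\nu})+o(1)$.
\item \textbf{Profile term.} By Lemma~\ref{lem:R3-Dyson-response}, uniformly on $\Gamma$, $N(\overline m_N-\overline m_N^0)\to\lambda\beta_2^{f,\nu}m^3/(1-m^2)$. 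Integrating against $-\varphi/(2\pi\ii)$ therefore gives exactly $c^{\rm quad-var}_{\varphi,\Gamma}$ from \eqref{eq:quad-package}. Uniform convergence on the compact contour justifies passing the limit inside the integral.
\end{itemize}

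Collecting the three pieces, the centering in \eqref{eq:R3-external-CLT} equals
\[
 N\int\varphi\,\dd\rho_{\scal}+m^{\rm base}_{\varphi,\Gamma}(\kappa_4^{f,\nu})+c^{\rm quad-var}_{\varphi,\Gamma}+o(1).
\]
Slutsky's lemma, applied jointly in $\ell$ through Cram\'er--Wold, then gives \eqref{eq:R3-final-CLT}.

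The statement is phrased with $\Tr\varphi_\ell$, whereas the proposition is stated for analytic functions near $\Gamma$. I would reconcile the two via the confinement cutoff already used in Lemma~\ref{lem:R3-profile-interpolation}. The bounded generalized-Wigner local law puts $\spec(H_N)$ inside $D_\Gamma$ with probability $1-o(1)$, so $\Tr\varphi(H_N)=\mathcal L^\Gamma_{H_N}(\varphi)$ there, and the full-trace extension convention handles the complement. The third-cumulant remark is inherited from Proposition~\ref{prop:R3-external-input}, where no third-cumulant parameter enters.

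The main obstacle is bookkeeping rather than analysis: the signs and orientation conventions in the three contour pieces must match. In particular, I must not double count the $-N^{-1}$ zero-diagonal Perron shift. It is contained in $\overline m^0_N$ and hence in $m^{\rm base}$, while Lemma~\ref{lem:R3-Dyson-response} measures only the deviation from $S_N^0$. I would verify this consistency with the check $\varphi=t^2$ from the low-degree checks: the base term should give $-1$ and the quad-var term $\lambda\beta_2^{f,\nu}$, matching $\E\Tr H_N^2=\frac2N\sum_{i<j}v(s_{ij})=N-1+\lambda\beta_2^{f,\nu}+o(1)$.
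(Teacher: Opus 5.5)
Your proposal is correct and is essentially the paper's proof. The paper likewise starts from the joint CLT of Proposition~\ref{prop:R3-external-input}, writes the total bias as $d^{S,\rm Dyson}_{\varphi,\Gamma,N}+m^{\rm fluc}_{\varphi,\Gamma,N}$, splits off $d^{0,\rm Dyson}_{\varphi,\Gamma,N}$, and identifies the two resulting pieces via \eqref{eq:R3-fluc-bias-limit} and Lemma~\ref{lem:R3-Dyson-response} as $m^{\rm base}_{\varphi,\Gamma}(\kappa_4^{f,\nu})$ and $c^{\rm quad-var}_{\varphi,\Gamma}$; your semicircle sign computation and the $\varphi(t)=t^2$ check (base $-1$, quadratic channel $\lambda\beta_2^{f,\nu}$) are consistent with the paper's conventions.
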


\begin{proof}
Proposition~\ref{prop:R3-external-input} gives the joint CLT after exact
vector-Dyson centering.  Put
\[
 d_{\varphi,\Gamma,N}^{S,\rm Dyson}
 :=-\frac{N}{2\pi\ii}\oint_\Gamma
 \varphi(z)\bigl(\overline m_N(z)-m(z)\bigr)\,\dd z.
\]
With the resolvent convention \eqref{eq:bulk-contour-formula-package}, the
total order-one bias in that proposition is
\(d_{\varphi,\Gamma,N}^{S,\rm Dyson}
+m_{\varphi,\Gamma,N}^{\rm fluc}\).  Lemma~\ref{lem:R3-Dyson-response}
gives
\[
 d_{\varphi,\Gamma,N}^{S,\rm Dyson}
 -d_{\varphi,\Gamma,N}^{0,\rm Dyson}
 \longrightarrow
 -\frac1{2\pi\ii}\oint_\Gamma\varphi(z)
 \lambda\beta_2^{f,\nu}\frac{m(z)^3}{1-m(z)^2}\,\dd z
 =c_{\varphi,\Gamma}^{\rm quad-var}.
\]
Equation \eqref{eq:R3-fluc-bias-limit} gives
\(d_{\varphi,\Gamma,N}^{0,\rm Dyson}
+m_{\varphi,\Gamma,N}^{\rm fluc}
\to m_{\varphi,\Gamma}^{\rm base}(\kappa_4^{f,\nu})\).  Adding these two limits proves the
centering in \eqref{eq:R3-final-CLT}; the covariance and joint convergence
are those of Proposition~\ref{prop:R3-external-input}.
\end{proof}

\subsection{Fixed-contour estimates and deterministic mean response}
\label{sec:R4-package}

Let \(K_\Gamma\) be a compact neighborhood of \(\Gamma\) with positive
distance from \([-2,2]\) and, in the supercritical case, from
\(\rho_\theta\).  The variance flatness established in
Lemma~\ref{lem:R3-flatness} places the centered surrogate in the standard
mean-field Wigner-type class.

\begin{proposition}[Bounded fixed-contour local law]
\label{prop:R4-bounded-local-law}
Let \(G_N^{(4)}(z)=(H_N^{(4)}-zI)^{-1}\).  Uniformly for \(z\in K_\Gamma\)
and deterministic unit vectors \(u,v\),
\begin{align}
 u^{\mathsf T}G_N^{(4)}(z)v
 &=
 \sum_i\overline u_i\,m_{i,N}(z)v_i+o_{\mathbb P}(1),
 \label{eq:R4-anisotropic}\\
 \frac1N\Tr G_N^{(4)}(z)&=\overline m_N(z)+O_{\mathbb P}(N^{-1}),
 \label{eq:R4-averaged}\\
 \max_i|G_N^{(4)}(z)_{ii}-m_{i,N}(z)|
 +\max_{i\ne j}|G_N^{(4)}(z)_{ij}|
 &=o_{\mathbb P}(1).
 \label{eq:R4-entrywise}
\end{align}
The same assertions hold after one \(z\)-derivative.  In particular,
\begin{equation}\label{eq:R4-x-law}
 x^{\mathsf T}G_N^{(4)}(z)x=m(z)+o_{\mathbb P}(1),
 \qquad
 x^{\mathsf T}G_N^{(4)}(z)^2x=m'(z)+o_{\mathbb P}(1).
\end{equation}
\end{proposition}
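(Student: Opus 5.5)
The plan is to import the local law from the general Wigner-type theory rather than prove it from scratch. The input needed is already available: by Corollary~\ref{cor:atomic-array-data-package} and Lemma~\ref{lem:R3-flatness}, the centered surrogate $H_N^{(4)}$ has independent upper-triangular entries with $|\sqrt N H_N^{(4)}(i,j)|\le C$, a zero diagonal, and a variance profile with $\max_{i\ne j}|NS_{ij}-1|\to0$. Flatness holds off the diagonal, and the zero diagonal is a harmless rank-$N$ diagonal defect of size $N^{-1}$. These are the hypotheses of the anisotropic local law for Wigner-type matrices (Ajanki--Erd\H{o}s--Kr\"uger, and the MDE stability theory~\cite{AEK2017WignerType,AEK2019MDE,Erdos2019MDE}). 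The zero diagonal is allowed there, since only upper bounds $S_{ij}\le C/N$ together with a two-step lower bound on $S^2$ are needed, and (R3-two-step-minorization) supplies the latter. On $K_\Gamma$, at fixed positive distance from $[-2,2]$, this law gives, with overwhelming probability, $|u^{\mathsf T}(G-\diag(m_N))v|\prec N^{-1/2}$ and $|\frac1N\Tr G-\overline m_N|\prec N^{-1}$. Here $m_N=(m_{i,N})$ solves (R3-QVE). A preliminary step is confinement: $\norm{H_N^{(4)}}_{\op}\le 2+o(1)$ with overwhelming probability, by the same theory or by a moment method for bounded entries. This confinement guarantees that $K_\Gamma$ stays a fixed distance from the spectrum, so all resolvents are uniformly bounded.

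The key steps, in order, are as follows.
\begin{enumerate}[label=(\roman*)]
\item Verify the MDE stability on $K_\Gamma$. I will use $\norm{(I-m^2 S)^{-1}}_{\infty\to\infty}\le C$ as in (R3-scalar-stability), together with the bound $\norm{m_N-m\one}_\infty=o(1)$ from the proof of Lemma~\ref{lem:R3-Dyson-response}. This makes the imported law uniform over $K_\Gamma$ with $N$-independent constants.
\item Upgrade the pointwise bounds to hold uniformly in $z\in K_\Gamma$. The resolvent is $C_\Gamma$-Lipschitz on the confinement event, so a union bound over an $N^{-3}$-net suffices.
\item Obtain the $z$-derivative statements. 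Since $\partial_zG=G^2$ and the error terms are analytic in $z$ on a slightly larger neighbourhood, Cauchy's integral formula on small circles inside $K_\Gamma$ transfers the uniform bounds from $G$ to $G^2$, and likewise for $m_N'$.
\item Derive \eqref{eq:R4-x-law}. Take $u=v=x$. Then $\sum_ix_i^2m_{i,N}=m_N^0+\sum_ix_i^2\delta_i$, and $|\sum_ix_i^2\delta_i|\le\norm\delta_\infty=o(1)$. Also $m_N^0\to m$ by Lemma~\ref{lem:R3-flat-endpoint}. The same argument applied to derivatives gives $x^{\mathsf T}G^2x=m'+o_{\mathbb P}(1)$.
\end{enumerate}

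The entrywise bound \eqref{eq:R4-entrywise} is the special case $u,v\in\{e_i,e_j\}$ with a union bound over $N^2$ pairs, which is affordable under overwhelming probability. The averaged bound \eqref{eq:R4-averaged} at the sharp rate $N^{-1}$ follows from the fluctuation averaging part of the cited theory. Alternatively, on a fixed contour away from the spectrum, one can combine the variance bound (R3-trace-L2) (Efron--Stein with bounded entries) with a bias bound $\E\Tr G-N\overline m_N=O(1)$ obtained from a second-order cumulant expansion.

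I expect the main obstacle to be the precise applicability of the imported theorem. Most literature statements assume $S_{ij}\ge c/N$ for all $i,j$, including the diagonal, and are phrased near the spectrum rather than for $z$ outside it. If needed, I would handle the diagonal via the $\eta$-regularization of Lemma~\ref{lem:R3-diagonal-regularization}: a diagonal of size $\sqrt{\eta/N}$ changes $G$ by $O(\sqrt\eta)$ in operator norm on $K_\Gamma$, and the regularized solution $m_N$ moves by $O(\eta/N)$. I would also fall back on the elementary observation that away from the spectrum the Schur-complement self-consistent equation can be closed directly. Large deviation bounds for bounded entries give $G_{ii}^{-1}=-z-\sum_jS_{ij}G_{jj}+O_\prec(N^{-1/2})$, and stability (i) then yields \eqref{eq:R4-entrywise}. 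The isotropic statement follows by the standard polynomialization argument or, more simply for $o_{\mathbb P}(1)$ precision, by a second-moment computation of $u^{\mathsf T}Gv$ via the cumulant expansion.
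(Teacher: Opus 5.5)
Your proposal is correct and matches the paper's proof. Both import the Ajanki--Erd\H{o}s--Kr\"uger anisotropic Wigner-type local law after checking boundedness (Corollary~\ref{cor:atomic-array-data-package}) and flatness (Lemma~\ref{lem:R3-flatness}), deduce \eqref{eq:R4-x-law} from $\max_i|m_{i,N}(z)-m(z)|=o(1)$ and $\norm{x}_2=1$, and get the derivative statements from Cauchy's formula on small circles. The paper leaves implicit in its citation the extra points you address: the zero diagonal via uniform primitivity, uniformity in $z$ by a net, and the variance-plus-bias route to a genuine $O_{\mathbb P}(N^{-1})$ averaged rate (stochastic domination alone gives only $N^{-1+\epsilon}$).
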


\begin{proof}
The entries are independent, centered, and uniformly bounded after
multiplication by \(\sqrt N\), by
Corollary~\ref{cor:atomic-array-data-package}.  Lemma~\ref{lem:R3-flatness}
gives uniform flatness and stability of the vector Dyson equation.
The isotropic generalized-Wigner law was established in
\cite{BloemendalErdosKnowlesYauYin2014}; an isotropic self-consistent
formulation for broader mean-field ensembles appears in
\cite{HeKnowlesRosenthal2018}.
The Wigner-type anisotropic local law therefore gives
\eqref{eq:R4-anisotropic}--\eqref{eq:R4-entrywise}
\cite{AEK2017WignerType}.  Since
\(\max_i|m_{i,N}(z)-m(z)|=o(1)\) on \(K_\Gamma\), the first identity in
\eqref{eq:R4-x-law} follows from \(\norm{x}_2=1\).  Apply Cauchy's integral
formula on a small circle contained in \(K_\Gamma\) to obtain the
\(z\)-derivative estimates and the second identity.
\end{proof}

\begin{lemma}[Small deterministic insertions]
\label{lem:R4-small-insertion}
Suppose \(B_N\) is deterministic,
\(\Tr B_N=O(1)\), \(\norm{B_N}_{\op}=o(1)\), and
\(\norm{B_N}_{\HS}=o(1)\).  Uniformly on \(K_\Gamma\),
\begin{align}
 \Tr B_NG_N^{(4)}(z)&=m(z)\Tr B_N+o_{\mathbb P}(1),\label{eq:R4-BG}\\
 \Tr B_NG_N^{(4)}(z)^2&=m'(z)\Tr B_N+o_{\mathbb P}(1).
 \label{eq:R4-BG2}
\end{align}
Moreover, every fixed-contour trace containing at least two factors \(B_N\)
is \(o_{\mathbb P}(1)\).
\end{lemma}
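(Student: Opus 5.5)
The plan is to write $\Tr B_NG_N^{(4)}(z)=\sum_{a,b}(B_N)_{ab}G_{ba}$ and compare it with the deterministic quantity $\sum_{a,b}(B_N)_{ab}\,\delta_{ab}m_{a,N}(z)$. The diagonal-profile error is $\sum_a (B_N)_{aa}(m_{a,N}-m)$. This is small because $\max_a|m_{a,N}-m|=o(1)$ on $K_\Gamma$, and because $\sum_a|(B_N)_{aa}|\le\sqrt N\norm{B_N}_{\HS}$ is not obviously bounded. To avoid that, I would rather use the spectral decomposition of $B_N$.

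Since $B_N$ is deterministic, write $B_N=\sum_k\mu_k u_ku_k^{\mathsf T}$, with the symmetric part handled by polarization if needed. Then
\[
 \Tr B_NG=\sum_k\mu_k\,u_k^{\mathsf T}Gu_k .
\]
The anisotropic law \eqref{eq:R4-anisotropic} gives $u_k^{\mathsf T}Gu_k=\sum_i u_{k,i}^2m_{i,N}+\epsilon_k$. Summing the main term over $k$ yields $\sum_i(B_N)_{ii}m_{i,N}=m\Tr B_N+\sum_i(B_N)_{ii}(m_{i,N}-m)$.

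The error sum $\sum_k\mu_k\epsilon_k$ is where an $o_{\mathbb P}(1)$ per-vector bound does not suffice. The number of eigenvalues may be $N$, and we only know $\sum_k\mu_k^2=o(1)$, so $\sum|\mu_k|\le\sqrt N\,o(1)$. I would therefore use the quantitative form of the isotropic law for bounded generalized-Wigner arrays on a fixed contour away from the spectrum. That form gives $|u^{\mathsf T}Gv-\sum_i u_iv_im_{i,N}|\prec N^{-1/2}$ uniformly, and even averaged fluctuation bounds $\E|\Tr B(G-\E G)|^2\le C\norm B_{\HS}^2$. The latter follows from Efron--Stein or the cumulant expansion together with Lemma~\ref{lem:R3-averaged-offdiag}: changing one entry changes $\Tr BG$ by $N^{-1/2}|(GBG)_{ab}|$, and $\sum_{a,b}|(GBG)_{ab}|^2\le C\norm B_{\HS}^2$.

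So the fluctuation part is $O_{L^2}(\norm{B_N}_{\HS})=o(1)$. For the expectation part, a second-order cumulant expansion gives
\[
 \E\Tr BG=\sum_i B_{ii}m_{i,N}+O\bigl(N^{-1/2}\norm B_{\HS}+N^{-1}\sqrt N\norm B_{\HS}\bigr).
\]
This is the step I expect to be the main obstacle: one must check that the third-cumulant terms, which are arbitrary but bounded, sum to $o(1)$. The plan is to handle them by Cauchy--Schwarz against Lemma~\ref{lem:R3-averaged-offdiag}.

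It remains to bound $\sum_i B_{ii}(m_{i,N}-m)$. By Cauchy--Schwarz it is at most $\norm B_{\HS}\,\norm{m_{\cdot,N}-m}_2$. The proof of Lemma~\ref{lem:R3-Dyson-response} gives $\norm\delta_2^2=o(N^{-1/2})$, and the flat correction contributes $|m^0_N-m|\sqrt N=O(N^{-1/2})$. Hence this term is $o(1)$, and the hypothesis $\Tr B_N=O(1)$ ensures that $m\Tr B_N$ is the correct order-one main term.

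For \eqref{eq:R4-BG2}, apply Cauchy's formula on a small circle inside $K_\Gamma$, using uniformity in $z$. For traces containing at least two factors $B_N$, say $\Tr B_NG X B_NG'$ with $X$ a bounded product of resolvents, the estimate is
\[
 |\Tr B_NG X B_NG'|\le\norm{B_N}_{\HS}^2\norm{G}_{\op}\norm X_{\op}\norm{G'}_{\op}=o(1)
\]
on the confinement event, and the cutoff removes the complement.
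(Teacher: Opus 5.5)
Your argument is correct in outline but takes a different route from the paper's.

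\textbf{The paper's route.} It writes $B_N=\sum_k s_kp_kq_k^{\mathsf T}$ as a singular-value decomposition. It then applies the isotropic law in the form $|p_k^{\mathsf T}(G_N^{(4)}-mI)q_k|=O_\prec(N^{-1/2})$ to all $N$ pairs at once by a union bound, and sums using $\sum_ks_k=\norm{B_N}_{*}\le\sqrt N\norm{B_N}_{\HS}$. The gap between $\diag(m_{i,N})$ and $m$ is absorbed because it is $o(N^{-1/2})$ in sup norm. The two-factor claim uses the same Hilbert--Schmidt bound you give.

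\textbf{Your route.} You split the error into three parts: a fluctuation part (Efron--Stein, $O(\norm{B_N}_{\HS})$ in $L^2$), a bias part (cumulant expansion), and the deterministic term $\sum_iB_{ii}(m_{i,N}-m)$, which you control with the $\ell^2$ estimate from Lemma~\ref{lem:R3-Dyson-response}.

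\textbf{What each buys.} The paper's proof is shorter and gets uniformity in $z$ directly from the stochastic-domination law. Yours needs more computation but avoids an $N^{\epsilon}$ loss. As written, the paper's bound $O_\prec(\norm{B_N}_{\HS})$ is $o_{\mathbb P}(1)$ only when $\norm{B_N}_{\HS}\le N^{-c}$. That holds for $-\theta D_x$, but it is not evident for $R_N^{(m)}$, whose Hilbert--Schmidt norm is $o(1)$ only at the unquantified W2 rate. Your fluctuation bound carries no $N^{\epsilon}$, and every bias error comes with an extra factor $N^{-1/2}$. So your route proves the lemma under exactly its stated hypotheses.

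\textbf{Step to write out: the bias.} The third-cumulant terms you flag are harmless. They are $N^{-3/2}$ times sums over $a,b$ of products $G_{b\alpha}G_{\beta\gamma}(GBM)_{\delta a}$ with $\alpha,\beta,\gamma,\delta\in\{a,b\}$. Cauchy--Schwarz makes them $O(N^{-1/2}\norm{B_N}_{\HS})$; when $\delta=a$, use the off-diagonal factor $G_{ab}$ that is then always present.

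The real work is closing the Gaussian term. Put $M=\diag(m_{i,N})$ and $\mathcal S[X]:=\diag\bigl(\sum_bS_{ab}X_{bb}\bigr)_a$. The exact identity $G-M=-M(HG+\mathcal S[G]G)+M\mathcal S[G-M]G$ reduces the bias to two pieces: the cumulant expansion of the first term, and $\E\sum_a(GBM)_{aa}\epsilon_a$ with $\epsilon_a:=\sum_bS_{ab}(G_{bb}-m_{b,N})$. The second piece needs a local-law input, e.g.\ the fluctuation-averaged bound $\max_a|\epsilon_a|\prec N^{-1}$. That bounds it by $N^{-1/2+\epsilon}\norm{B_N}_{\HS}$. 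Lemma~\ref{lem:R3-averaged-offdiag} does not supply this input.

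\textbf{Step to write out: uniformity in $z$.} Your bounds are pointwise in $z$. A net argument cannot upgrade pointwise $L^2$ bounds here, because the a priori Lipschitz constant $|\Tr B_NG^2|$ is only $O(\sqrt N\norm{B_N}_{\HS})$. Instead, use analyticity. On the confinement event, $z\mapsto\Tr B_N(G_N^{(4)}(z)-m(z))$ is analytic on a compact neighborhood $K_\Gamma^+$ of $K_\Gamma$. The sub-mean-value property then bounds its supremum over $K_\Gamma$ by a constant times its area integral over $K_\Gamma^+$. Taking expectations reduces uniformity to your pointwise bounds on $K_\Gamma^+$, and Cauchy's formula then gives \eqref{eq:R4-BG2}.
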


\begin{proof}
Take a singular-value decomposition
\(B_N=\sum_{k=1}^N s_kp_kq_k^{\mathsf T}\).  The isotropic local law on a
slightly larger fixed compact set gives, simultaneously for these
deterministic pairs,
\[
 \left|p_k^{\mathsf T}(G_N^{(4)}(z)-m(z)I)q_k\right|
 =O_\prec(N^{-1/2}).
\]
The high-probability estimate is uniform in \(k\) by a union bound.  Since
\(\sum_ks_k=\norm{B_N}_*\le\sqrt N\norm{B_N}_{\HS}\),
\begin{equation}\label{eq:R4-deterministic-observable-law}
 \sup_{z\in K_\Gamma}
 \left|\Tr B_N(G_N^{(4)}(z)-m(z)I)\right|
 =O_\prec(\norm{B_N}_{\HS})=o_{\mathbb P}(1).
\end{equation}
This proves \eqref{eq:R4-BG}.  Cauchy's formula applied on small circles
contained in the larger compact set differentiates
\eqref{eq:R4-deterministic-observable-law} and gives
\eqref{eq:R4-BG2}.  For the higher terms, if the intervening resolvents have
uniformly bounded operator norm,
\[
 |\Tr(R_1B_NR_2B_NR_3)|
 \le C_\Gamma\norm{B_N}_{\HS}^2=o(1).
\]
Additional \(B_N\) factors are handled identically.
\end{proof}

Recall \(A_N(i,j)=N^{-1/2}m(s_{ij})\) for \(i\ne j\), with zero diagonal.
Put \(D_x:=\diag(x_1^2,\ldots,x_N^2)\) and
\(\theta=\theta_{f,\nu}\).  W2 gives the exact decomposition
\begin{equation}\label{eq:R4-mean-decomp}
 A_N=\theta xx^{\mathsf T}-\theta D_x+R_N^{(m)},
\end{equation}
where \(R_N^{(m)}\) is zero diagonal and
\begin{equation}\label{eq:R4-mean-rem}
 \Tr R_N^{(m)}=0,\qquad
 \norm{R_N^{(m)}}_{\HS}=o(1),\qquad
 \norm{R_N^{(m)}}_{\op}=o(1).
\end{equation}
Indeed, the off-diagonal entries of \(R_N^{(m)}\) are
\(\sqrt\lambda x_ix_j\varepsilon_{ij}\), with
\(\max_{i\ne j}|\varepsilon_{ij}|\to0\).

\begin{proposition}[Rank-one, diagonal, and mean-remainder response]
\label{prop:R4-deterministic-response}
Let \(B_N^{(4)}=H_N^{(4)}+\theta xx^{\mathsf T}\).  Under pole avoidance,
uniformly on \(\Gamma\),
\begin{equation}\label{eq:R4-Woodbury-trace}
 \Tr(B_N^{(4)}-zI)^{-1}-\Tr G_N^{(4)}(z)
 =
 -\frac{\theta m'(z)}{1+\theta m(z)}
 +o_{\mathbb P}(1).
\end{equation}
Adding \(-\theta D_x+R_N^{(m)}\) changes the analytic LSS by
\begin{equation}\label{eq:R4-diag-response}
 c_{\varphi,\Gamma}^{\rm diag0}+o_{\mathbb P}(1),
\end{equation}
and \(R_N^{(m)}\) has zero response.
\end{proposition}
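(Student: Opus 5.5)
The plan has two parts: a Woodbury computation for the rank-one term, and a first-order perturbation expansion, driven by Lemma~\ref{lem:R4-small-insertion}, for the diagonal and remainder terms.

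\textbf{Rank-one term.} Write $G=G_N^{(4)}(z)$. On the fixed-contour confinement event, $\norm{G}_{\op}\le C_\Gamma$ on $K_\Gamma$. The Sherman--Morrison formula gives
\[
 (B_N^{(4)}-zI)^{-1}=G-\frac{\theta\,Gxx^{\mathsf T}G}{1+\theta\,x^{\mathsf T}Gx}.
\]
Taking traces and using $\Tr(Gxx^{\mathsf T}G)=x^{\mathsf T}G^2x$ yields
\[
 \Tr(B_N^{(4)}-zI)^{-1}-\Tr G=-\frac{\theta\,x^{\mathsf T}G^2x}{1+\theta\,x^{\mathsf T}Gx}.
\]
By \eqref{eq:R4-x-law}, the numerator is $\theta m'(z)+o_{\mathbb P}(1)$ and the denominator is $1+\theta m(z)+o_{\mathbb P}(1)$, uniformly on $\Gamma$. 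Pole avoidance (G3) keeps $|1+\theta m|$ bounded below, so the quotient converges uniformly and \eqref{eq:R4-Woodbury-trace} follows. Uniformity on $\Gamma$ needs two ingredients: the local laws hold on the compact set $K_\Gamma$, and a standard net argument combined with Lipschitz continuity of resolvents in $z$ upgrades pointwise statements to suprema.

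\textbf{Diagonal and remainder terms.} Set $E_N:=-\theta D_x+R_N^{(m)}$ and $B'=B_N^{(4)}+E_N$. Then $\Tr E_N=-\theta\sum_ix_i^2=-\theta$. Delocalization gives $\norm{D_x}_{\op}=\norm{x}_\infty^2=o(1)$ and $\norm{D_x}_{\HS}^2=\sum_ix_i^4=o(N^{-1/2})$, and \eqref{eq:R4-mean-rem} controls $R_N^{(m)}$. Hence $E_N$ satisfies the hypotheses of Lemma~\ref{lem:R4-small-insertion}.

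I would use the resolvent expansion
\[
 (B'-z)^{-1}=(B-z)^{-1}-(B-z)^{-1}E_N(B-z)^{-1}+(B-z)^{-1}E_N(B'-z)^{-1}E_N(B-z)^{-1},
\]
with $B=B_N^{(4)}$. The last term has two factors of $E_N$, so its trace is $O(\norm{E_N}_{\HS}^2)=o(1)$ once all intervening resolvents are bounded. The linear term has trace $-\Tr E_N(B-z)^{-2}$. I would substitute Sherman--Morrison for $(B-z)^{-1}$. The pure $G$ part gives $-\Tr E_NG^2=-m'(z)\Tr E_N+o_{\mathbb P}(1)=\theta m'(z)+o_{\mathbb P}(1)$ by \eqref{eq:R4-BG2}. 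The rank-one correction terms are of the form $x^{\mathsf T}G^aE_NG^bx$, and these are $O(\norm{E_N}_{\op})=o(1)$.

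Inserting this into the contour formula \eqref{eq:bulk-contour-formula-package} gives the change
\[
 -\frac{1}{2\pi\ii}\oint_\Gamma\varphi(z)\,\theta m'(z)\,\dd z=\frac{\theta}{2\pi\ii}\oint_\Gamma\varphi'(z)m(z)\,\dd z=c^{\rm diag0}_{\varphi,\Gamma},
\]
after integrating by parts on the closed contour. For $R_N^{(m)}$ alone the same computation gives a response proportional to $\Tr R_N^{(m)}=0$, which is the ``zero response'' claim.

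\textbf{Main obstacle.} The hardest step is ensuring $\Gamma$ avoids the spectra of both $B$ and $B'$ with probability $1-o(1)$, since otherwise the resolvents in the expansion are not uniformly bounded. In the supercritical case this means the outlier must stay outside $D_\Gamma$. I would handle this by noting $\norm{E_N}_{\op}=o(1)$, so Weyl's inequality transfers any spectral separation for $B$ to $B'$. Separation for $B$ itself I would obtain from the determinant criterion: $z\in\spec(B)\setminus\spec(H_N^{(4)})$ if and only if $1+\theta\,x^{\mathsf T}G(z)x=0$. On $K_\Gamma$ this quantity is uniformly close to $1+\theta m(z)$, which is bounded away from zero by (G3). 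With that event fixed, all the $o_{\mathbb P}(1)$ bounds above hold uniformly on $\Gamma$.
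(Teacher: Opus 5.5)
Your proposal is correct and takes the same approach as the paper. The rank-one term comes from Sherman--Morrison together with \eqref{eq:R4-x-law}. The insertion uses a second-order resolvent expansion around $R_\theta=(B_N^{(4)}-zI)^{-1}$: the quadratic term is $O(\norm{E_N}_{\HS}^2)$, the rank-one part of $R_\theta-G_N^{(4)}$ contributes $O(\norm{E_N}_{\op})$, and \eqref{eq:R4-BG2} yields $-m'(z)\Tr E_N$. The only difference is cosmetic: the paper derives one master formula and applies it separately to $-\theta D_x$ and $R_N^{(m)}$, whereas you insert their sum at once, which is equally valid since the sum satisfies the same norm hypotheses.
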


\begin{proof}
Woodbury's identity gives
\[
 (B_N^{(4)}-zI)^{-1}
 =G_N^{(4)}(z)
 -\frac{\theta G_N^{(4)}(z)xx^{\mathsf T}G_N^{(4)}(z)}
 {1+\theta x^{\mathsf T}G_N^{(4)}(z)x}.
\]
Taking the trace and using \eqref{eq:R4-x-law} proves
\eqref{eq:R4-Woodbury-trace}.  The contour convention
\eqref{eq:bulk-contour-formula-package} then gives the direct rank-one
response \eqref{eq:rank-one-direct-package}.

We next prove the insertion formula after, rather than before, the rank-one
deformation.  Write
\(R_\theta(z)=(B_N^{(4)}-zI)^{-1}\).  Pole avoidance, Woodbury's formula,
and \eqref{eq:R4-x-law} imply
\(\sup_{z\in\Gamma}\norm{R_\theta(z)}_{\op}=O_{\mathbb P}(1)\).
For either \(B=-\theta D_x\) or \(B=R_N^{(m)}\), two resolvent identities
give
\begin{align}
 &(B_N^{(4)}+B-zI)^{-1}-R_\theta(z)\nonumber\\
 &\qquad=-R_\theta BR_\theta
 +R_\theta BR_\theta B(B_N^{(4)}+B-zI)^{-1}.
 \label{eq:R4-post-spike-resolvent}
\end{align}
Since \(\norm B_{\op}=o(1)\), the Neumann series based at \(R_\theta\)
also gives
\(\sup_{z\in\Gamma}\norm{(B_N^{(4)}+B-zI)^{-1}}_{\op}
=O_{\mathbb P}(1)\).
The trace of the second term is
\(O_{\mathbb P}(\norm B_{\HS}^2)=o_{\mathbb P}(1)\).

Put \(Q_\theta=R_\theta-G_N^{(4)}\).  Woodbury's formula shows that it has
rank one and \(\norm{Q_\theta}_{\op}=O_{\mathbb P}(1)\).  Hence
\begin{align*}
 &\left|\Tr(R_\theta BR_\theta)
          -\Tr(G_N^{(4)}BG_N^{(4)})\right|\\
 &\quad\le
 |\Tr(Q_\theta BG_N^{(4)})|
 +|\Tr(G_N^{(4)}BQ_\theta)|
 +|\Tr(Q_\theta BQ_\theta)|
 \le O_{\mathbb P}(\norm B_{\op})=o_{\mathbb P}(1).
\end{align*}
By cyclicity and \eqref{eq:R4-BG2},
\[
 \Tr(G_N^{(4)}BG_N^{(4)})
 =\Tr(B(G_N^{(4)})^2)
 =m'(z)\Tr B+o_{\mathbb P}(1).
\]
Thus \eqref{eq:R4-post-spike-resolvent} yields the uniform master formula
\begin{equation}\label{eq:R4-post-spike-master}
 \Tr(B_N^{(4)}+B-zI)^{-1}-\Tr R_\theta(z)
 =-m'(z)\Tr B+o_{\mathbb P}(1).
\end{equation}

For \(B=-\theta D_x\),
\[
 \Tr B=-\theta,\qquad
 \norm B_{\op}=|\theta|\norm{x}_\infty^2=o(1),\qquad
 \norm B_{\HS}^2=\theta^2\sum_i x_i^4=o(1).
\]
Equation \eqref{eq:R4-post-spike-master} gives the resolvent response
\(\theta m'(z)+o_{\mathbb P}(1)\).  Contour integration gives
\[
 -\frac{\theta}{2\pi\ii}\oint_\Gamma\varphi(z)m'(z)\,\dd z
 =\frac{\theta}{2\pi\ii}\oint_\Gamma\varphi'(z)m(z)\,\dd z,
\]
which is \eqref{eq:diag-package}.  For \(B=R_N^{(m)}\),
\eqref{eq:R4-mean-rem} verifies the same norm hypotheses and
\(\Tr B=0\); \eqref{eq:R4-post-spike-master} therefore proves zero response
for the mean remainder.  This also shows explicitly that no mixed
rank-one--remainder term survives.
\end{proof}

\begin{corollary}[Bounded transformed-spike bulk CLT]
\label{cor:R4-bounded-full-CLT}
The conclusion of Theorem~\ref{thm:bulk-main-package} holds with
\(M_N^f\) replaced by \(M_N^{(4)}=A_N+H_N^{(4)}\).
\end{corollary}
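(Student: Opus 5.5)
The plan is to assemble Corollary~\ref{cor:R4-bounded-full-CLT} from three ingredients: Theorem~\ref{thm:R3-bounded-CLT} for the centered array $H_N^{(4)}$, the exact decomposition \eqref{eq:R4-mean-decomp} of $A_N$, and Proposition~\ref{prop:R4-deterministic-response}. Write
\[
M_N^{(4)}=B_N^{(4)}+B,\qquad B_N^{(4)}=H_N^{(4)}+\theta xx^{\mathsf T},\qquad B=-\theta D_x+R_N^{(m)} .
\]
Work on the event $\mathcal E_N$ on which $\Gamma$ (and a compact neighborhood $K_\Gamma$) stays at fixed positive distance from the spectra of $H_N^{(4)}$, $B_N^{(4)}$ and $M_N^{(4)}$. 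On $\mathcal E_N$ the bulk statistic of each matrix is given by the contour formula \eqref{eq:bulk-contour-formula-package}. For $H_N^{(4)}$, whose spectrum is confined near $[-2,2]\subset D_\Gamma$ by the fixed-contour local law, this contour statistic coincides with $\Tr\varphi(H_N^{(4)})$ with probability $1-o(1)$. The first step is therefore to show $\mathbb P(\mathcal E_N)\to1$.

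\textbf{Step 1 (confinement).} For $B_N^{(4)}$, the secular equation $1+\theta x^{\mathsf T}G_N^{(4)}(z)x=0$ together with \eqref{eq:R4-x-law}, uniformly on $K_\Gamma$, and pole avoidance (G3) shows that no eigenvalue of $B_N^{(4)}$ lies near $\Gamma$. For $M_N^{(4)}$, use $\norm{B}_{\op}=o(1)$ (from $\norm{x}_\infty^2=o(1)$ and \eqref{eq:R4-mean-rem}) and Weyl's inequality. In the supercritical case, (G4) keeps the single outlier near $\rho_\theta$ outside $\overline{D_\Gamma}$.

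\textbf{Step 2 (resolvent comparison on $\Gamma$).} On $\mathcal E_N$ we combine \eqref{eq:R4-Woodbury-trace} with the master formula \eqref{eq:R4-post-spike-master}, applied once for $B=-\theta D_x$ and once for $B=R_N^{(m)}$. Rather than applying them sequentially, apply \eqref{eq:R4-post-spike-master} to the sum $B$; it satisfies the same norm hypotheses, and $\Tr B=-\theta$. Together with the Woodbury term this gives, uniformly in $z\in\Gamma$,
\[
\Tr(M_N^{(4)}-zI)^{-1}-\Tr G_N^{(4)}(z)=-\frac{\theta m'(z)}{1+\theta m(z)}+\theta m'(z)+o_{\mathbb P}(1).
\]

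\textbf{Step 3 (contour integration).} Integrate against $-\frac{1}{2\pi\ii}\varphi_\ell(z)\,\dd z$. Since $\Gamma$ has finite length and $\varphi_\ell$ is bounded on it, uniform $o_{\mathbb P}(1)$ errors integrate to $o_{\mathbb P}(1)$. This yields
\[
\mathcal L^\Gamma_{M_N^{(4)}}(\varphi_\ell)-\Tr\varphi_\ell(H_N^{(4)})=m^{\rm rank1}_{\varphi_\ell,\Gamma}(\theta)+c^{\rm diag0}_{\varphi_\ell,\Gamma}+o_{\mathbb P}(1)
\]
with probability $1-o(1)$, using the integration by parts already recorded in the proof of Proposition~\ref{prop:R4-deterministic-response}. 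The deterministic shift is then added to the centering in \eqref{eq:R3-final-CLT}. Slutsky's lemma, applied jointly in $\ell$ through Cram\'er--Wold, gives \eqref{eq:bulk-clt-package} with centering \eqref{eq:total-mean-package} and covariance $\mathcal V^\Gamma_{\kappa_4^{f,\nu}}$, since deterministic shifts do not change the limiting covariance.

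The main obstacle is the uniformity in $z$ in Step 2. The local-law statements in Proposition~\ref{prop:R4-bounded-local-law} and Lemma~\ref{lem:R4-small-insertion} give $o_{\mathbb P}(1)$ pointwise or via stochastic domination. To upgrade this to a supremum over $\Gamma$, I would use a $N^{-10}$-net on $\Gamma$ together with Lipschitz continuity of resolvent traces on $\mathcal E_N$; the Lipschitz constant is $O(N)$ because the resolvents are bounded on this event. The union bound over the net is harmless for the high-probability bounds $O_\prec$, and these are exactly what the isotropic law supplies.
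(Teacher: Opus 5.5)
Your proposal is correct and follows essentially the same route as the paper, whose proof simply combines Theorem~\ref{thm:R3-bounded-CLT} with the Woodbury and diagonal-insertion responses of Proposition~\ref{prop:R4-deterministic-response} and concludes by Slutsky's theorem. Your extra details are sound and tidy up points the paper leaves implicit:
\begin{itemize}
\item Applying \eqref{eq:R4-post-spike-master} once to the combined perturbation $B=-\theta D_x+R_N^{(m)}$ is legitimate, since it satisfies $\Tr B=-\theta$, $\norm{B}_{\op}=o(1)$ and $\norm{B}_{\HS}=o(1)$.
\item The confinement of $H_N^{(4)}$ inside $D_\Gamma$ is more precisely supplied by the norm bound of Proposition~\ref{prop:R4-centered-confinement} than by the local law on $K_\Gamma$.
\item Uniformity on $\Gamma$ is already asserted in Propositions~\ref{prop:R4-bounded-local-law} and \ref{prop:R4-deterministic-response}, so your net argument is optional.
\end{itemize}
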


\begin{proof}
Combine Theorem~\ref{thm:R3-bounded-CLT} with
Proposition~\ref{prop:R4-deterministic-response}.  The rank-one and diagonal
responses add to the homogeneous and quadratic terms identified in
Theorem~\ref{thm:R3-bounded-CLT}.
All random errors are \(o_{\mathbb P}(1)\), so joint convergence follows by
Slutsky's theorem.
\end{proof}

\subsection{Confinement, separation, and smooth bulk cutoff}

\begin{lemma}[Diagonal truncation extracted from W5]
\label{lem:R4-diagonal-truncation}
There is a deterministic sequence \(\eta_N\downarrow0\) such that, if
\[
 Y_{ij}=Y_{s_{ij}},\qquad
 Y_{ij}^{\rm tr}
 :=Y_{ij}\one_{\{|Y_{ij}|\le\eta_N\sqrt N\}}
 -\E\!\left[Y_{ij}\one_{\{|Y_{ij}|\le\eta_N\sqrt N\}}\right],
\]
then
\begin{equation}\label{eq:R4-uniform-tail-choice}
 T_N:=\max_{i<j}
 \E\!\left[|Y_{ij}|^4
 \one_{\{|Y_{ij}|>\eta_N\sqrt N\}}\right]
 =o(\eta_N^4).
\end{equation}
The target centered matrix and the matrix formed from
\(N^{-1/2}Y_{ij}^{\rm tr}\) differ by \(o_{\mathbb P}(1)\) in operator norm,
and, for all sufficiently large \(N\),
\begin{align}
 \max_{i<j}|Y_{ij}^{\rm tr}|
 &\le2\eta_N\sqrt N,\label{eq:R4-trunc-max}\\
 \max_{i<j}\left|
 \E Y_{ij}^2-\E(Y_{ij}^{\rm tr})^2
 \right|&=o(N^{-1}).\label{eq:R4-trunc-variance}
\end{align}
\end{lemma}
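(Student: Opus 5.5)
The plan is to extract the truncation level \(\eta_N\) from Lemma~\ref{lem:exact-W5-Lindeberg-package} by a diagonal argument, and then to verify the three assertions in turn.

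\textbf{Choice of \(\eta_N\).} Put
\[
 g_N(\varepsilon):=\max_{i<j}\E\bigl[|Y_{ij}|^4\one_{\{|Y_{ij}|>\varepsilon\sqrt N\}}\bigr].
\]
By that lemma, applied with \(\max|s_{ij}|\to0\), we have \(g_N(\varepsilon)\to0\) for every fixed \(\varepsilon>0\). Take \(\varepsilon_k=2^{-k}\). Choose increasing integers \(N_k\) such that \(g_N(\varepsilon_k)\le\varepsilon_k^{8}\) for all \(N\ge N_k\), and set \(\eta_N:=\varepsilon_k\) for \(N_k\le N<N_{k+1}\). Then \(\eta_N\downarrow0\) and \(T_N\le\eta_N^8=o(\eta_N^4)\), which gives \eqref{eq:R4-uniform-tail-choice}.

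\textbf{Support and variance bounds.} Since \(|Y\one_{\{|Y|\le\eta_N\sqrt N\}}|\le\eta_N\sqrt N\), it suffices to show that the subtracted mean is small. Because \(\E Y_{ij}=0\), the subtracted mean equals \(-\E[Y_{ij}\one_{\{|Y_{ij}|>\eta_N\sqrt N\}}]\). Its absolute value is at most
\[
 \frac{T_N}{(\eta_N\sqrt N)^3}=o\bigl(\eta_N N^{-3/2}\bigr),
\]
which is eventually below \(\eta_N\sqrt N\); this proves \eqref{eq:R4-trunc-max}. For the variance, write \(\mu_N\) for the subtracted mean. Then
\[
 \E Y^2-\E(Y^{\rm tr})^2=\E\bigl[Y^2\one_{\{|Y|>\eta_N\sqrt N\}}\bigr]+\mu_N^2 .
\]
The first term is at most \(T_N/(\eta_N^2N)=o(\eta_N^2/N)=o(N^{-1})\), and the second is \(o(N^{-3})\). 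This gives \eqref{eq:R4-trunc-variance}.

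\textbf{Operator-norm comparison.} Split the difference matrix \(N^{-1/2}(Y_{ij}-Y_{ij}^{\rm tr})\) into two pieces. The first is a deterministic piece with entries \(N^{-1/2}\mu_{ij}\). Its operator norm is at most its Hilbert--Schmidt norm, which is \(O(N\cdot N^{-1/2}\cdot o(N^{-3/2}))=o(N^{-1})\). The second is the random piece with entries \(N^{-1/2}Y_{ij}\one_{\{|Y_{ij}|>\eta_N\sqrt N\}}\). On the event that no entry exceeds the threshold, this piece vanishes identically. By a union bound and Markov's inequality,
\[
 \mathbb P\Bigl(\exists\,i<j:\ |Y_{ij}|>\eta_N\sqrt N\Bigr)
 \le N^2\,\frac{T_N}{\eta_N^4N^2}=\frac{T_N}{\eta_N^4}\to0 .
\]
Hence the random piece is zero with probability \(1-o(1)\), so it is \(o_{\mathbb P}(1)\) in operator norm. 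Together with the deterministic piece, this proves the claimed operator-norm comparison.

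\textbf{Main point.} The only delicate step is the diagonal choice of \(\eta_N\). It has to respect the quantifier order fixed in W5 and Lemma~\ref{lem:exact-W5-Lindeberg-package}: fix the tail level first, then the shift window, and only then let \(N\to\infty\). The rate \(o(\eta_N^4)\) is precisely what makes the union bound over the \(N^2\) entries close.
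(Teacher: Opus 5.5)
Your proposal is correct and follows the paper's proof essentially step for step. Like the paper, you use a diagonal choice of \(\eta_N\) from Lemma~\ref{lem:exact-W5-Lindeberg-package}, a union bound with Markov's inequality showing that no entry exceeds \(\eta_N\sqrt N\) with probability \(1-o(1)\), and control of the deterministic recentering matrix and the variance defect through \(T_N\); the differences are cosmetic: you make the diagonal choice explicit (so that \(T_N\le\eta_N^8\)), and you bound the recentering by \(|\E[Y\one_{\{|Y|>a\}}]|\le a^{-3}\E[|Y|^4\one_{\{|Y|>a\}}]\), which is slightly sharper than the paper's bound \(d_{ij}^2\le\E[Y_{ij}^2\one_{\{|Y_{ij}|>\eta_N\sqrt N\}}]\) but serves the same purpose.
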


\begin{proof}
For every fixed \(\eta>0\),
Lemma~\ref{lem:exact-W5-Lindeberg-package} gives
\[
 \max_{i<j}
 \E\!\left[|Y_{ij}|^4
 \one_{\{|Y_{ij}|>\eta\sqrt N\}}\right]\longrightarrow0.
\]
A diagonal choice gives \(\eta_N\downarrow0\) satisfying
\eqref{eq:R4-uniform-tail-choice}.  Therefore
\[
 \mathbb P\!\left(\max_{i<j}|Y_{ij}|>\eta_N\sqrt N\right)
 \le \frac{T_N}{\eta_N^4}=o(1).
\]
On the event in the preceding display, the only difference is the
deterministic recentering matrix.  Put
\(d_{ij}:=\E[Y_{ij}\one_{\{|Y_{ij}|>\eta_N\sqrt N\}}]\).  Since
\[
 d_{ij}^2
 \le \E\!\left[Y_{ij}^2
       \one_{\{|Y_{ij}|>\eta_N\sqrt N\}}\right]
 \le \frac{1}{\eta_N^2N}
 \E\!\left[|Y_{ij}|^4
       \one_{\{|Y_{ij}|>\eta_N\sqrt N\}}\right],
\]
we have, uniformly in the edges,
\begin{equation}\label{eq:R4-tail-second}
 \E\!\left[Y_{ij}^2
 \one_{\{|Y_{ij}|>\eta_N\sqrt N\}}\right]
 +d_{ij}^2
 \le \frac{2T_N}{\eta_N^2N}
 =o(N^{-1}).
\end{equation}
The squared Hilbert--Schmidt norm of the matrix with entries
\(N^{-1/2}d_{ij}\) is at most
\(C T_N/\eta_N^2=o(\eta_N^2)\).  Its operator norm is therefore \(o(1)\).
The centering term satisfies
\(|d_{ij}|=o(\eta_NN^{-1/2})\), which gives
\eqref{eq:R4-trunc-max}.  Finally,
\[
 \E(Y_{ij}^{\rm tr})^2
 =\E\!\left[Y_{ij}^2
       \one_{\{|Y_{ij}|\le\eta_N\sqrt N\}}\right]-d_{ij}^2,
\]
and \eqref{eq:R4-tail-second} proves
\eqref{eq:R4-trunc-variance}.
\end{proof}

\begin{lemma}[Triangular-array Bai--Yin bound]
\label{lem:R4-triangular-Bai-Yin}
Let \(Z_{ij,N}\), \(1\le i<j\le N\), be independent centered variables and
let \(W_N(i,j)=N^{-1/2}Z_{ij,N}\), \(W_N(i,i)=0\).  Suppose
\begin{align}
 \max_{i<j}\left|\E Z_{ij,N}^2-1\right|&\longrightarrow0,
 \label{eq:R4-BY-variance}\\
 \lim_{R\to\infty}\limsup_{N\to\infty}
 \max_{i<j}
 \E\!\left[|Z_{ij,N}|^4
 \one_{\{|Z_{ij,N}|>R\}}\right]&=0.
 \label{eq:R4-BY-tail}
\end{align}
Then
\begin{equation}\label{eq:R4-BY-conclusion}
 \norm{W_N}_{\op}\longrightarrow2
 \qquad\text{in probability}.
\end{equation}
The conclusion remains valid uniformly over a family of triangular arrays
for which the two moduli in
\eqref{eq:R4-BY-variance}--\eqref{eq:R4-BY-tail} are uniform.
\end{lemma}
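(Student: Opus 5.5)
The plan is truncation followed by a moment (trace) method for the truncated matrix, with the lower bound $\liminf\norm{W_N}_{\op}\ge2$ supplied by the semicircle law.

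\textbf{Step 1: truncation.} Repeat the diagonal truncation of Lemma~\ref{lem:R4-diagonal-truncation}, which uses only \eqref{eq:R4-BY-tail}, with $Z_{ij,N}$ in place of $Y_{ij}$.
\begin{itemize}[leftmargin=2em]
\item Choose $\eta_N\downarrow0$ with $\max_{i<j}\E[|Z_{ij,N}|^4\one_{\{|Z_{ij,N}|>\eta_N\sqrt N\}}]=o(\eta_N^4)$; this is possible since \eqref{eq:R4-BY-tail} implies the fixed-$\eta$ statement, as in Lemma~\ref{lem:exact-W5-Lindeberg-package}.
\item On an event of probability $1-o(1)$ no entry exceeds $\eta_N\sqrt N$.
\item The recentering matrix $N^{-1/2}(d_{ij})$ has Hilbert--Schmidt norm $o(\eta_N)$, hence operator norm $o(1)$.
\end{itemize}
So $W_N$ differs in operator norm by $o_{\mathbb P}(1)$ from $\widetilde W_N$, whose entries are $N^{-1/2}\widetilde Z_{ij}$. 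These satisfy $\widetilde Z_{ij}$ centered, $|\widetilde Z_{ij}|\le2\eta_N\sqrt N$, and $\max|\E\widetilde Z_{ij}^2-1|\to0$ by \eqref{eq:R4-BY-variance} and \eqref{eq:R4-trunc-variance}. Rescaling each entry by $(\E\widetilde Z_{ij}^2)^{-1/2}=1+o(1)$ changes the matrix by a Hadamard multiplier $1+o(1)$. I plan to absorb this into the moment bound below, where variances at most $1+o(1)$ suffice, rather than bound the Hadamard change directly in operator norm.

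\textbf{Step 2: upper bound.} For $k=k_N$ with $k_N\to\infty$ slowly, namely $k_N\eta_N^{c}\to0$ and $k_N\gg\log N$ (possible after slowing $\eta_N$ if necessary), I apply the Füredi--Komlós/Vu combinatorial estimate for bounded entries:
\[
 \E\Tr\widetilde W_N^{2k}\le N(2\sqrt{\sigma_{\max}^2})^{2k}\bigl(1+o(1)\bigr)^{2k},
\]
where $\sigma_{\max}^2=1+o(1)$. In the path expansion, edges traversed more than twice each contribute a factor $\E|\widetilde Z|^{p}\le(2\eta_N\sqrt N)^{p-2}\sigma^2$. The Vu-type counting shows these terms are negligible when $k^{C}\eta_N\to0$. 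Markov's inequality then gives $\mathbb P(\norm{\widetilde W_N}_{\op}\ge2+\epsilon)\le N(1+o(1))^{2k}(2+\epsilon)^{-2k}(2)^{2k}\to0$. This is the main obstacle. Here I would either cite Vu's triangular-array version of the Füredi--Komlós theorem (bounded entries $K\ll N^{1/4}$ is too crude; I need $K=\eta_N\sqrt N$ with $\eta_N\to0$), or redo the counting in Bai--Yin's original proof. The latter handles exactly the truncation level $\eta_N\sqrt N$ with $\eta_N\to0$ and only finite fourth moments, and carries over verbatim to non-identical entries because only uniform bounds on variances and truncated higher moments enter.

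\textbf{Step 3: lower bound.} \eqref{eq:R4-BY-variance} together with the Lindeberg condition \eqref{eq:R4-BY-tail} gives the semicircle law for $W_N$ in probability. Since $\rho_{\scal}$ charges every neighborhood of $\pm2$, this yields $\norm{W_N}_{\op}\ge2-\epsilon$ with probability $1-o(1)$.

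\textbf{Uniformity.} Every quantitative ingredient depends on the array only through the two moduli: the choice of $\eta_N$, the Markov bound, the moment counting, and the semicircle convergence, which comes from a Stieltjes-transform argument whose error terms are controlled by the same moduli. Hence the convergence is uniform over any family with uniform moduli.
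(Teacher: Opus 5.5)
Your route is viable and differs from the paper's, but Step~2 is misstated as written and needs the correction below.

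\textbf{Comparison of routes.} The paper runs no moment computation. It normalizes the entries to exact unit variance and imports the non-identically distributed Bai--Yin theorem (Proposition~2.1 of O'Rourke--Renfrew--Soshnikov, which needs only the averaged fourth-moment Lindeberg condition and allows zero diagonal). It then removes the normalization with Lata\l a's inequality, $\E\norm{U_N}_{\op}\le C\delta_N$. Uniformity over families comes from a soft contradiction argument: a failure would select one offending member at each $N_k$, giving a single array that still satisfies both hypotheses. You instead reprove the imported input: truncation at $\eta_N\sqrt N$, a trace-moment upper bound that needs only variances $\le1+o(1)$ (so no Lata\l a step), and a lower bound from the semicircle law. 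This is more self-contained but heavier. Your uniformity claim also requires tracking every modulus through the path counting and the Stieltjes-transform argument. The paper's subsequence argument would give uniformity for free on top of any single-array proof.

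\textbf{The error in Step~2.} The regime you write down ($k_N\gg\log N$ together with $k_N\eta_N^{c}\to0$, or terms ``negligible when $k^C\eta_N\to0$'') is the F\"uredi--Koml\'os/Vu regime. It forces $\eta_N=o((\log N)^{-1/c})$, which a bare uniform fourth-moment tail modulus does not allow. Truncation requires $\max_{i<j}\E[|Z_{ij,N}|^4\one_{\{|Z_{ij,N}|>\eta_N\sqrt N\}}]=o(\eta_N^4)$. If, say, $\E[|Z|^4\one_{\{|Z|>t\}}]\asymp1/\log\log t$, this forces $\eta_N\gg(\log\log N)^{-1/4}$. ``Slowing $\eta_N$'' moves in the wrong direction.

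Consequently the Vu option fails outright, not just its $K\ll N^{1/4}$ variant. Even the sharp Vu bound gives only $\norm{\widetilde W_N}_{\op}\le2+O(\eta_N^{1/2}\log N)$, which does not tend to $2$ here. Step~2 must therefore rest entirely on Bai--Yin's finer path counting. Equivalently, use the truncated Bai--Yin estimate for independent, non-identically distributed entries bounded by $\eta_N\sqrt N$ with variances at most $1+o(1)$. There the truncation level costs only a factor $(1+o(1))^{2k}$, with a polynomial prefactor in $N$. Hence taking $k_N$ a large multiple of $\log N$ works for any $\eta_N\downarrow0$, and with this replacement your argument closes.
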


\begin{proof}
This is the triangular-array version of the classical Bai--Yin spectral-edge
bound \cite{BaiYin1988}.  Related finite-moment spectral-norm estimates are
developed in \cite{Vu2007SpectralNorm}; for the non-identically distributed
array needed here, the precise input is the result cited below.
Condition \eqref{eq:R4-BY-tail} implies, after discarding finitely many
\(N\),
\begin{equation}\label{eq:R4-BY-uniform-fourth}
 \sup_N\max_{i<j}\E|Z_{ij,N}|^4<\infty.
\end{equation}
Indeed, choose a fixed \(R\) for which the tail term is bounded, and add the
contribution at most \(R^4\) below the cutoff.

Put
\[
 \sigma_{ij,N}:=(\E Z_{ij,N}^2)^{1/2},
 \qquad X_{ij,N}:=Z_{ij,N}/\sigma_{ij,N}.
\]
For large \(N\), \(1/2\le\sigma_{ij,N}\le2\) uniformly.  The
\(X_{ij,N}\) are independent, centered, have variance one, uniformly
bounded fourth moments, and satisfy
\begin{equation}\label{eq:R4-ORS-Lindeberg}
 \frac1{N^2}\sum_{i<j}
 \E\!\left[|X_{ij,N}|^4
 \one_{\{|X_{ij,N}|>\varepsilon\sqrt N\}}\right]
 \longrightarrow0
\end{equation}
for every \(\varepsilon>0\).  To verify this, bound the average by its
maximum, replace the threshold by \(\varepsilon\sqrt N/2\), and use
\eqref{eq:R4-BY-tail}.

Let \(\overline W_N(i,j)=N^{-1/2}X_{ij,N}\), with zero diagonal.
Proposition~2.1 of O'Rourke--Renfrew--Soshnikov
\cite{ORS2013} is the non-identically distributed Bai--Yin theorem under
\eqref{eq:R4-ORS-Lindeberg}, and it explicitly permits zero diagonal.
Therefore
\begin{equation}\label{eq:R4-BY-unit-variance}
 \norm{\overline W_N}_{\op}\longrightarrow2
 \qquad\text{in probability}.
\end{equation}

It remains to remove the edgewise variance normalization.  Set
\(\delta_N=\max_{i<j}|\sigma_{ij,N}-1|=o(1)\), and let \(U_N\) be the
strictly upper-triangular matrix with
\[
 U_N(i,j)=N^{-1/2}(\sigma_{ij,N}-1)X_{ij,N},\qquad i<j.
\]
Then \(W_N-\overline W_N=U_N+U_N^{\mathsf T}\).  Lata\l a's expected-norm
inequality for matrices with independent centered entries \cite{Latala2005}
gives
\begin{align*}
 \E\norm{U_N}_{\op}
 &\le C\left\{
 \max_i\left(\sum_j\E|U_N(i,j)|^2\right)^{1/2}
 +\max_j\left(\sum_i\E|U_N(i,j)|^2\right)^{1/2}\right.\\
 &\hspace{29mm}\left.
 +\left(\sum_{i,j}\E|U_N(i,j)|^4\right)^{1/4}
 \right\}
 \le C\delta_N,
\end{align*}
where \eqref{eq:R4-BY-uniform-fourth} controls the last term.  Hence
\[
 \E\norm{W_N-\overline W_N}_{\op}
 \le2\E\norm{U_N}_{\op}=o(1).
\]
Together with \eqref{eq:R4-BY-unit-variance}, this proves
\eqref{eq:R4-BY-conclusion}.

For uniformity over a family, argue by contradiction.  Failure of uniform
convergence would select a subsequence \(N_k\) and one family member at each
\(N_k\) violating the conclusion with a fixed positive probability.  The
selected triangular array still satisfies
\eqref{eq:R4-BY-variance}--\eqref{eq:R4-BY-tail}, because their moduli are
uniform.  The preceding argument then gives a contradiction.
\end{proof}

\begin{proposition}[Centered spectral confinement]
\label{prop:R4-centered-confinement}
For both centered arrays,
\begin{equation}\label{eq:R4-norm-convergence}
 \norm{H_N^{(4)}}_{\op}\longrightarrow2,
 \qquad
 \norm{H_N^f}_{\op}\longrightarrow2
 \quad\text{in probability}.
\end{equation}
Here \(H_N^f(i,j)=N^{-1/2}Y_{s_{ij}}\) and \(H_N^f(i,i)=0\).
\end{proposition}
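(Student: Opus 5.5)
Both norm limits will follow from Lemma~\ref{lem:R4-triangular-Bai-Yin}. For each centered array we check the two hypotheses \eqref{eq:R4-BY-variance}--\eqref{eq:R4-BY-tail} for the normalized entries $Z_{ij,N}=\sqrt N\,H(i,j)$, $i<j$. Independence above the diagonal, centering, and the zero diagonal hold by construction in both cases.

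For the target array $H_N^f$ we take $Z_{ij,N}=Y_{s_{ij}}$, which is centered by definition.
\begin{itemize}
\item[(1)] \emph{Variance.} We have $\E Z_{ij,N}^2=v(s_{ij})$. Since $\max_{i<j}|s_{ij}|\to0$ by \eqref{eq:max-microshift-package} and $v(s)\to1$ by \eqref{eq:W3-package}, this gives $\max_{i<j}|v(s_{ij})-1|\to0$, which is \eqref{eq:R4-BY-variance}.
\item[(2)] \emph{Tail.} Fix $R$. For every $\delta>0$ and all $N$ large enough that $\max|s_{ij}|\le\delta$, we have $\max_{i<j}\E[|Y_{s_{ij}}|^4\one_{\{|Y_{s_{ij}}|>R\}}]\le\omega_4(R,\delta)$. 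Hence the $\limsup_N$ is at most $\lim_{\delta\downarrow0}\omega_4(R,\delta)$. By W5 in the form \eqref{eq:W5-package}, this tends to zero as $R\to\infty$, which is \eqref{eq:R4-BY-tail}.
\end{itemize}
This is the same order of quantifiers used in Lemma~\ref{lem:exact-W5-Lindeberg-package}: first $R$, then $\delta$, then $N$.

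For the surrogate array $H_N^{(4)}$ we take $Z_{ij,N}=\widehat Y_{s_{ij}}(U_{ij})$, which is centered.
\begin{itemize}
\item[(1)] \emph{Variance.} By Theorem~\ref{thm:uniform-four-surrogate-package}, $\E Z_{ij,N}^2=v(s_{ij})$ exactly, so \eqref{eq:R4-BY-variance} holds as before.
\item[(2)] \emph{Tail.} Once $\max|s_{ij}|\le\delta_0$, we have $|Z_{ij,N}|\le2\sqrt{2C_4}$ almost surely by \eqref{eq:uniform-support-surrogate-package}. The tail expectation in \eqref{eq:R4-BY-tail} therefore vanishes identically for $R>2\sqrt{2C_4}$. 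This is (A6) of Corollary~\ref{cor:atomic-array-data-package}.
\end{itemize}
Lemma~\ref{lem:R4-triangular-Bai-Yin} then gives both limits in \eqref{eq:R4-norm-convergence}.

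I expect no serious obstacle, since all the analytic content sits in the imported triangular-array Bai--Yin theorem. The one point that needs care is the tail verification for $H_N^f$. W5 gives uniformity only in the shifted window after $\delta\downarrow0$, not uniform integrability on a fixed window, so the $\limsup_N$ must be bounded through $\omega_4(R,\delta)$ for arbitrary $\delta$ before $R\to\infty$ is taken.

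Alternatively, $H_N^f$ can be handled through the truncated matrix of Lemma~\ref{lem:R4-diagonal-truncation}. That matrix differs from $H_N^f$ by $o_{\mathbb P}(1)$ in operator norm and has variances within $o(N^{-1})$, so the norm limit transfers. The direct route above is shorter.
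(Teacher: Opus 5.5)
Your proposal is correct and follows the paper's proof: both reduce the statement to Lemma~\ref{lem:R4-triangular-Bai-Yin}, using exact variance matching and uniform boundedness for the surrogate array, and W3 plus W5 (with the order $R$, then $\delta$, then $N$) for the target array. The paper likewise mentions Lemma~\ref{lem:R4-diagonal-truncation} only as an equivalent reformulation, not as a separate step.
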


\begin{proof}
For \(H_N^{(4)}\), Theorem~\ref{thm:uniform-four-surrogate-package} gives
uniform boundedness of the unscaled variables, and W3 gives
\[
 \max_{i<j}|v(s_{ij})-1|\longrightarrow0.
\]
Hence Lemma~\ref{lem:R4-triangular-Bai-Yin} applies.

For \(H_N^f\), the same variance convergence follows from W3.  To verify
\eqref{eq:R4-BY-tail}, fix \(R\) and use
\(\max_{i<j}|s_{ij}|\to0\).  W5 gives
\[
 \lim_{R\to\infty}\limsup_{N\to\infty}
 \max_{i<j}
 \E\!\left[|Y_{s_{ij}}|^4
 \one_{\{|Y_{s_{ij}}|>R\}}\right]=0.
\]
Lemma~\ref{lem:R4-triangular-Bai-Yin} therefore applies directly to the
target array.  Lemma~\ref{lem:R4-diagonal-truncation} gives the equivalent
explicit truncation formulation and shows that no unrecorded tail assumption
is being used.
\end{proof}

\begin{corollary}[Uniform confinement along the hybrid path]
\label{cor:R4-hybrid-confinement}
Let \(H_N^{(r)}\), \(0\le r\le M_N\), be the centered hybrid matrices in
Definition~\ref{def:hybrid-package}.  For every \(\varepsilon>0\),
\begin{equation}\label{eq:R4-uniform-hybrid-confinement}
 \max_{0\le r\le M_N}
 \mathbb P\!\left(\norm{H_N^{(r)}}_{\op}>2+\varepsilon\right)
 \longrightarrow0.
\end{equation}
This is a uniform-in-\(r\) probability estimate; simultaneous confinement of
all \(M_N+1\) matrices on one event is neither asserted nor needed.
\end{corollary}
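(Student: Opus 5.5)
The plan is to apply the uniform-family clause of Lemma~\ref{lem:R4-triangular-Bai-Yin} to the family of hybrid arrays. For each \(N\) and each \(0\le r\le M_N\), the centered hybrid \(H_N^{(r)}\) has independent centered upper-triangular entries \(N^{-1/2}X_e^{(r)}\). Here \(X_e^{(r)}\) is either \(Y_{e}\) or \(\widehat Y_e\), according to the position of \(e\) relative to \(r\). I will check that the two moduli in \eqref{eq:R4-BY-variance}--\eqref{eq:R4-BY-tail} can be bounded independently of \(r\), and then invoke the lemma.

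First, the variance modulus. By \eqref{eq:edge-four-match-package}, \(\E(X_e^{(r)})^2=v(s_e)\) for every \(r\). Hence \(\max_{r}\max_e|\E(X_e^{(r)})^2-1|=\max_{i<j}|v(s_{ij})-1|\to0\), by W3 and \(\max_{i<j}|s_{ij}|\to0\) from \eqref{eq:max-microshift-package}.

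Second, the tail modulus. For a fixed \(R\), I bound \(\E[|X_e^{(r)}|^4\one_{\{|X_e^{(r)}|>R\}}]\) by the larger of the target tail \(\E[|Y_{s_e}|^4\one_{\{|Y_{s_e}|>R\}}]\) and the surrogate tail. The surrogate tail vanishes once \(R>2\sqrt{2C_4}\), by \eqref{eq:uniform-support-surrogate-package}. The target tail is at most \(\omega_4(R,\max_e|s_e|)\), and its \(\limsup_N\) is at most \(\lim_{\delta\downarrow0}\omega_4(R,\delta)\). By W5 this tends to zero as \(R\to\infty\). Both bounds are free of \(r\), which gives \eqref{eq:R4-BY-tail} uniformly over the family.

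The uniform clause of Lemma~\ref{lem:R4-triangular-Bai-Yin} then gives \(\|H_N^{(r)}\|_{\op}\to2\) in probability, uniformly in \(r\), and this is exactly \eqref{eq:R4-uniform-hybrid-confinement}. The only delicate point is that the index set \(\{0,\dots,M_N\}\) grows with \(N\). The contradiction argument in the lemma's proof handles this: a violating sequence \((N_k,r_k)\) yields a single triangular array whose moduli are bounded by the uniform ones, and that array obeys the lemma. No union bound over \(r\) is needed, which is consistent with the remark that simultaneous confinement is not claimed.
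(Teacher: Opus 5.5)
Your proposal is correct and follows essentially the same route as the paper's proof: both apply the uniform-family clause of Lemma~\ref{lem:R4-triangular-Bai-Yin}. The variance modulus is independent of \(r\) by exact second-moment matching, and the tail modulus is controlled by the W5 target tail together with the eventually vanishing surrogate tail. Your explicit \(\omega_4(R,\max_e|s_e|)\) bound and your remark that the lemma's contradiction argument handles the growing index set only spell out what the paper leaves implicit.
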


\begin{proof}
Every hybrid edge is either \(Y_{s_{ij}}\) or its bounded four-moment
surrogate.  The two choices have exactly the same variance.  Thus
\eqref{eq:R4-BY-variance} is independent of \(r\).  The target edges satisfy
the uniform fourth-tail condition by W5, while the surrogate edges are
uniformly bounded.  Hence the modulus in \eqref{eq:R4-BY-tail} is uniform in
\(r\).  The uniform statement in
Lemma~\ref{lem:R4-triangular-Bai-Yin} gives
\eqref{eq:R4-uniform-hybrid-confinement}.
\end{proof}

\begin{proposition}[Exterior isotropic law for the rough centered array]
\label{prop:R4-rough-isotropic}
Let \(G_N^f(z)=(H_N^f-zI)^{-1}\).  For every compact
\(K\subset\mathbb C\setminus[-2,2]\) and deterministic unit vectors \(u,v\),
\begin{align}
 \sup_{z\in K}\left|
 u^{\mathsf T}G_N^f(z)v-m(z)u^{\mathsf T}v
 \right|&\longrightarrow0,\label{eq:R4-rough-isotropic}\\
 \sup_{z\in K}\left|
 u^{\mathsf T}G_N^f(z)^2v-m'(z)u^{\mathsf T}v
 \right|&\longrightarrow0
 \label{eq:R4-rough-isotropic-derivative}
\end{align}
in probability.  The estimates also hold for the bounded surrogate, and the
convergence is uniform over the four-moment hybrid matrices in
Definition~\ref{def:hybrid-package}.
\end{proposition}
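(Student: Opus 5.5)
The plan is to prove the exterior isotropic law for the rough array $H_N^f$ without any local law for $f$, by truncating to a bounded-type array, applying a resolvent-moment argument there, and then transferring back through operator-norm proximity. Confinement controls every resolvent on $K$.

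\textbf{Step 1 (confinement and truncation).} By Proposition~\ref{prop:R4-centered-confinement}, uniformly over the hybrids through Corollary~\ref{cor:R4-hybrid-confinement}, we have $\norm{H}_{\op}\le2+\varepsilon$ with probability $1-o(1)$. Choose $\varepsilon$ so that $\operatorname{dist}(K,[-2-\varepsilon,2+\varepsilon])\ge c>0$. On this event $\norm{G(z)}_{\op}\le c^{-1}$ and $z\mapsto G(z)$ is uniformly Lipschitz on $K$. Hence it suffices to prove \eqref{eq:R4-rough-isotropic} pointwise on a finite net, and \eqref{eq:R4-rough-isotropic-derivative} then follows from Cauchy's formula on small circles inside a slightly larger compact $K'$. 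Next apply Lemma~\ref{lem:R4-diagonal-truncation}: $H_N^f$ differs in operator norm by $o_{\mathbb P}(1)$ from $H^{\rm tr}$, whose entries are centered, bounded by $2\eta_N$, and have variances $N^{-1}(1+o(1))$. Because the resolvents are bounded on the confinement event, the resolvent identity gives $\norm{G^f-G^{\rm tr}}_{\op}=o_{\mathbb P}(1)$. It therefore suffices to treat $H^{\rm tr}$, whose normalized entries $\sqrt N H^{\rm tr}_{ij}$ are $o(\sqrt N)$ with uniformly bounded fourth moments. This is exactly the regime of classical global resolvent arguments.

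\textbf{Step 2 (isotropic law for the truncated array).} For fixed $z$ with $|z|$ large, expand $u^{\mathsf T}G v$ in a Neumann series. For general $z\in K$, use the following route instead. First, the averaged law $N^{-1}\Tr G\to m$ follows from the semicircle law for the array, which has flat variance and satisfies Lindeberg. Second, compute $\E u^{\mathsf T}Gv$ by the cumulant expansion (Stein/Lytova--Pastur) applied to $zG=HG-I$. The second-cumulant term produces $-\E[(N^{-1}\Tr G)\,u^{\mathsf T}Gv]$ together with $o(1)$ errors, since the variance profile is flat up to $o(1)$ and the zero diagonal costs $O(N^{-1})$. The third- and fourth-order terms are controlled by $\E|\sqrt NH_{ij}|^3\le C$ and by the fact that the truncation level $\eta_N\sqrt N$ gives each higher cumulant an extra $N^{-1/2}$ factor, combined with Lemma~\ref{lem:R3-averaged-offdiag}-type Cauchy--Schwarz bounds on the summed resolvent entries. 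The outcome is $\E u^{\mathsf T}Gv\,(z+m)=-u^{\mathsf T}v+o(1)$, that is, $\E u^{\mathsf T}Gv=m\,u^{\mathsf T}v+o(1)$. Third, obtain concentration from Efron--Stein. The derivative of $u^{\mathsf T}Gv$ in entry $(a,b)$ is $-N^{-1/2}\bigl[(Gu)_a(Gv)_b+(Gu)_b(Gv)_a\bigr]$, so the sum of squared influences is $O(N^{-1}\norm{Gu}^2\norm{Gv}^2)=O(N^{-1})$ on the confinement event. A smooth cutoff handles the complement of that event.

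\textbf{Step 3 (surrogate and hybrids).} The bounded surrogate and every hybrid satisfy the same hypotheses: identical variances, and a fourth-tail modulus that is uniform in $r$ by Proposition~\ref{prop:hybrid-data-package}. All constants in Steps 1--2 depend only on these moduli, so the same argument gives the uniform statement. If a violation occurred for some choice of $r=r_N$, the contradiction argument of Lemma~\ref{lem:R4-triangular-Bai-Yin} would rule it out. For the surrogate we can also cite Proposition~\ref{prop:R4-bounded-local-law} together with $\max_i|m_{i,N}-m|=o(1)$.

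\textbf{Main obstacle.} The hardest step is the cumulant expansion in Step 2 with only fourth-moment control. The third-order terms carry $N^{-3/2}$ times sums of products of three resolvent entries, and these must be bounded by $o(1)$ without a local law. The approach is to keep at least one off-diagonal factor $(Gu)_a$ or $G_{ab}$ in every term and to apply Cauchy--Schwarz with $\sum_a|(Gu)_a|^2\le C$ and $\sum_{a,b}|G_{ab}|^2\le CN$. Any residual terms are handled with the truncation bound $|\sqrt NH|\le2\eta_N\sqrt N$, which makes the remainder $o(1)$.
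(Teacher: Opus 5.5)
Your argument is sound in outline, but it follows a different route from the paper's.

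\textbf{How the paper proceeds.} The paper never writes a self-consistent equation for the rough array. It restricts to $|\Im z|\ge\eta$, where $Q_z(A)=u^{\mathsf T}(A-zI)^{-1}v$ has deterministic single-edge derivatives of size $O_\eta(N^{-k/2})$. It then runs the same exact four-moment Lindeberg telescoping used in the main replacement, with the fifth-order remainder controlled by the truncation sequence $\eta_N$ of Lemma~\ref{lem:R4-diagonal-truncation}. This transfers the imported anisotropic local law for the bounded surrogate (Proposition~\ref{prop:R4-bounded-local-law}) to every hybrid. Uniformity in the hybrid index is automatic, because the comparison error for any terminal segment of the path does not depend on its starting point. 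Real points of $K$ are reached only afterwards, via Corollary~\ref{cor:R4-hybrid-confinement} and the bound $|u^{\mathsf T}(G(z)-G(z+\ii\eta))v|\le\eta\norm{G(z)}_{\op}\norm{G(z+\ii\eta)}_{\op}$.

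\textbf{How your route differs.} You prove the law directly for the truncated array. A cumulant expansion gives $(z+m)\E u^{\mathsf T}Gv=-u^{\mathsf T}v+o(1)$, and Efron--Stein supplies concentration. This is more elementary and more general. It uses neither four-moment matching nor any local law; it needs only a flat variance profile, uniform fourth-tail control, the global semicircle law and Bai--Yin confinement. So the target, the surrogate and all hybrids are covered by one argument whose constants depend only on the common moduli. The paper's route buys economy instead: it reuses machinery it needs anyway for the main theorem.

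\textbf{Adjustments your Step 2 needs.}
\begin{enumerate}
\item Carry out the cumulant expansion and Efron--Stein only on $\{|\Im z|\ge\eta\}$. There every resolvent, including those at the intermediate and resampled matrices, is bounded by $\eta^{-1}$ deterministically. At real $z\in K$ the resolvent need not exist off the confinement event, and a smooth cutoff would itself have to be differentiated in the expansion. Extend to real $z$ afterwards exactly as the paper does; your Step~1 already contains the needed confinement and Lipschitz bounds.
\item Truncation at $\eta_N\sqrt N$ does not give each higher cumulant an extra factor $N^{-1/2}$. It gives only $\E|H_{ab}|^k\le C(2\eta_N)^{k-4}N^{-2}$ for $k\ge4$. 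So stop the expansion at fourth order and bound the remainder by $\E|H_{ab}|^5\sup|\partial_{ab}^4(u_a(Gv)_b)|\le C_\eta\eta_NN^{-2}|u_a|$, which sums to $O(\eta_NN^{-1/2})$.
\item In Efron--Stein the influences involve resolvents of the resampled matrix. Relating these back to $G$ produces diagonal-type products $|(Gu)_a|^2|(Gv)_a|^2$ carrying a factor $\eta_N^2$. The crude bound is therefore $O(N^{-1}+\eta_N^2)$ rather than $O(N^{-1})$. This is still $o(1)$, which is all you need.
\item The third-order terms you single out as the main obstacle are harmless. Each term of $\partial_{ab}^2(u_a(Gv)_b)$ is $u_a$ times a bounded product of $G_{aa},G_{ab},G_{bb}$, times $(Gv)_a$ or $(Gv)_b$. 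Using $\sum_a|u_a||(Gv)_a|\le C$ and $\norm{u}_1\norm{Gv}_1\le CN$, the whole third-order sum is $O(N^{-3/2}\cdot N)=O(N^{-1/2})$.
\end{enumerate}
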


\begin{proof}
We prove the stronger hybrid-uniform statement.  Fix \(\eta>0\) and first
restrict to a compact set
\(K_\eta\subset\{z:|\Im z|\ge\eta\}\).  For
\[
 Q_z(A):=u^{\mathsf T}(A-zI)^{-1}v
\]
and one edge variable entering as
\(A+N^{-1/2}yV_{ab}\), resolvent differentiation gives
\begin{equation}\label{eq:R4-isotropic-edge-derivatives}
 \left|\partial_y^k
 Q_z(A+N^{-1/2}yV_{ab})\right|
 \le C_{k,\eta}N^{-k/2},
 \qquad 1\le k\le5.
\end{equation}
Each derivative inserts one factor \(N^{-1/2}V_{ab}\), while every
resolvent has norm at most \(\eta^{-1}\).  The same bounds, with different
constants, hold after composing the real and imaginary parts of \(Q_z\) with
a bounded \(C^5\) test function.

We spell out the fifth-order remainder, since only fourth moments are
assumed.  Choose the sequence \(\eta_N\downarrow0\) from
Lemma~\ref{lem:R4-diagonal-truncation}, so that
\[
 T_N:=\max_{i<j}\E\left[|Y_{ij}|^4
 \one_{\{|Y_{ij}|>\eta_N\sqrt N\}}\right]
 =o(\eta_N^4).
\]
On \(|Y_{ij}|\le\eta_N\sqrt N\), the per-edge fifth-order Taylor remainder
is bounded by
\begin{equation}\label{eq:R4-isotropic-fifth-remainder}
 C_\eta N^{-5/2}
 \E\left[|Y_{ij}|^5
 \one_{\{|Y_{ij}|\le\eta_N\sqrt N\}}\right]
 \le C_\eta\eta_NN^{-2}\E|Y_{ij}|^4.
\end{equation}
Summing over \(O(N^2)\) edges gives \(O_\eta(\eta_N)=o(1)\).  On the
complement, for \(0\le q\le4\),
\[
 N^{-q/2}\E\left[|Y_{ij}|^q
 \one_{\{|Y_{ij}|>\eta_N\sqrt N\}}\right]
 \le \eta_N^{q-4}N^{-2}T_N.
\]
After summation this is \(o(\eta_N^q)\), including the bounded-test-function
term \(q=0\).  The bounded surrogate contributes
\(O(N^2N^{-5/2})=o(1)\) at fifth order.  The first four Taylor coefficients
cancel exactly by \eqref{eq:uniform-four-match-package}.  Hence replacing any
terminal segment of the hybrid path changes the expectation of the test
function by \(o(1)\), with the error independent of the initial hybrid
index.

The bounded endpoint satisfies Proposition~\ref{prop:R4-bounded-local-law}.
The preceding uniform comparison, applied to smooth approximations of the
indicator of the complement of a small disk around
\(m(z)u^{\mathsf T}v\), therefore proves
\[
 \sup_{0\le r\le M_N}
 \mathbb P\left(
 \left|u^{\mathsf T}(H_N^{(r)}-zI)^{-1}v
 -m(z)u^{\mathsf T}v\right|>\varepsilon
 \right)\longrightarrow0
\]
for each \(z\in K_\eta\).  A finite net and
\(\norm{\partial_zG(z)}_{\op}\le\eta^{-2}\) make the convergence uniform on
\(K_\eta\).

Now let \(K\Subset\mathbb C\setminus[-2,2]\) be arbitrary and put
\(\tau=\operatorname{dist}(K,[-2,2])>0\).
Corollary~\ref{cor:R4-hybrid-confinement}
implies, uniformly in the hybrid index, that the centered spectrum is in
\([-2-\tau/3,2+\tau/3]\) with probability tending to one.  On this event,
for real \(z\in K\) and \(0<\eta<\tau/3\),
\[
 \left|u^{\mathsf T}\bigl(G_r(z)-G_r(z+\ii\eta)\bigr)v\right|
 \le\eta\norm{G_r(z)}_{\op}\norm{G_r(z+\ii\eta)}_{\op}
 \le C\eta\tau^{-2}.
\]
Apply the nonreal result at \(z+\ii\eta\), take \(N\to\infty\), and then
let \(\eta\downarrow0\).  A finite net and the deterministic exterior
Lipschitz bound prove \eqref{eq:R4-rough-isotropic} on all of \(K\), uniformly
over the hybrids.

Finally choose a compact neighborhood
\(K_+\Subset\mathbb C\setminus[-2,2]\) of \(K\).  The first assertion holds
uniformly on \(K_+\), and Cauchy's formula on circles of a fixed radius in
\(K_+\) gives
\[
 \partial_z\bigl[u^{\mathsf T}G_r(z)v\bigr]
 =u^{\mathsf T}G_r(z)^2v.
\]
This proves \eqref{eq:R4-rough-isotropic-derivative} with the same hybrid
uniformity.
\end{proof}

\begin{proposition}[Bulk confinement and one-outlier alternative]
\label{prop:R4-outlier-separation}
Let \(M_N\) denote either \(M_N^{(4)}\) or \(M_N^f\).  For every fixed
\(\delta>0\), all but possibly one eigenvalue of \(M_N\) lie in
\([-2-\delta,2+\delta]\) with probability \(1-o(1)\).  More precisely:
\begin{enumerate}[label=(\roman*)]
\item if \(|\theta|\le1\), then
\(\spec(M_N)\subset[-2-\delta,2+\delta]\) with probability \(1-o(1)\);
\item if \(|\theta|>1\), then there is one eigenvalue on the side determined
by \(\theta\) such that
\begin{equation}\label{eq:R4-outlier-convergence}
 \lambda_{\rm out}(M_N)\longrightarrow
 \rho_\theta=\theta+\theta^{-1}.
\end{equation}
All remaining eigenvalues lie in \([-2-\delta,2+\delta]\).  In particular,
if \(0<\delta<|\rho_\theta|-2\), the outlier is the unique eigenvalue outside
that interval with probability \(1-o(1)\).
\end{enumerate}
\end{proposition}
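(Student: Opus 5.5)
The plan is to write $M_N=H_N+\theta xx^{\mathsf T}+E_N$, where $H_N$ is either $H_N^{(4)}$ or $H_N^f$ and $E_N:=-\theta D_x+R_N^{(m)}$ comes from the decomposition \eqref{eq:R4-mean-decomp}. Each part is then handled by results already established. By \eqref{eq:R4-mean-rem} and $\norm{D_x}_{\op}=\norm{x}_\infty^2=o(1)$, we have $\norm{E_N}_{\op}=o(1)$ deterministically. Weyl's inequality then shows that every eigenvalue of $M_N$ is within $o(1)$ of the corresponding eigenvalue of $B_N:=H_N+\theta xx^{\mathsf T}$. It therefore suffices to prove both assertions for $B_N$ with $\delta$ replaced by $\delta/2$.

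For $B_N$, the first assertion is immediate. Proposition~\ref{prop:R4-centered-confinement} gives $\norm{H_N}_{\op}\le2+\delta/2$ with probability $1-o(1)$. A rank-one perturbation moves eigenvalues by interlacing, so at most one eigenvalue of $B_N$ leaves $[-2-\delta/2,2+\delta/2]$. When $\theta>0$ it can only exit to the right, and when $\theta<0$ only to the left.

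To locate that eigenvalue I will use the secular equation. On the confinement event, a real $\mu$ with $|\mu|>\norm{H_N}_{\op}$ is an eigenvalue of $B_N$ if and only if
\[
 1+\theta\, x^{\mathsf T}(H_N-\mu I)^{-1}x=0.
\]
Let $K$ be the real segment $\{\mu:2+\delta/2\le|\mu|\le L\}$ on the relevant side, where $L>|\theta|+\norm{E_N}_{\op}+3$ is a fixed bound on the spectrum. Proposition~\ref{prop:R4-bounded-local-law} covers the surrogate and Proposition~\ref{prop:R4-rough-isotropic} covers the rough array; together they give $\sup_{\mu\in K}|x^{\mathsf T}(H_N-\mu)^{-1}x-m(\mu)|\to0$ in probability.

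\textbf{Case $|\theta|\le1$.} The function $1+\theta m(\mu)$ has no zero on $K$, because $|m(\mu)|<1$ strictly for real $|\mu|>2$. Its modulus is bounded below on the compact set $K$ by a constant depending on $\delta$, so with probability $1-o(1)$ the secular function has no zero on $K$. Hence no outlier of $B_N$ lies in $K$.

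\textbf{Case $|\theta|>1$.} Here $1+\theta m(\mu)$ has a unique simple zero at $\rho_\theta$ whenever $|\rho_\theta|>2+\delta/2$. Both functions are monotone in $\mu$ outside the spectrum. Combined with uniform convergence on $K$, this yields exactly one eigenvalue of $B_N$ in $K$, and it converges to $\rho_\theta$. This step can also be done by a Rouché/Hurwitz argument on a small complex disk, using the holomorphy in $z$ of both functions.

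If instead $|\rho_\theta|\le2+\delta/2$, the conclusion that all eigenvalues other than the outlier lie in $[-2-\delta,2+\delta]$ still follows from interlacing. We still need $\lambda_{\rm out}\to\rho_\theta$, so I will redo the argument with every $\delta'<|\rho_\theta|-2$, which is possible since $|\rho_\theta|>2$ strictly for $|\theta|>1$. This shows that the top eigenvalue on the relevant side converges to $\rho_\theta$. Transferring back to $M_N$ by Weyl costs only $o(1)$, and uniqueness outside the interval follows once $\delta<|\rho_\theta|-2$.

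The main obstacle is the uniform isotropic control for the rough array $H_N^f$ on real $\mu$ outside $[-2,2]$. No local law is available for $H_N^f$, so this step must rely on Proposition~\ref{prop:R4-rough-isotropic}. That proposition already provides uniform convergence on compact subsets of $\mathbb C\setminus[-2,2]$, including real points, through the hybrid comparison and the confinement of Corollary~\ref{cor:R4-hybrid-confinement}. A minor second point is uniformity in $\mu$ up to the large cutoff $L$; this follows from the deterministic Lipschitz bound $\norm{G(\mu)}_{\op}^2$ on the confinement event together with a finite net.
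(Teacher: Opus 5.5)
Your proposal is correct and follows essentially the paper's route: Weyl's inequality to discard $-\theta D_x+R_N^{(m)}$, the secular function $1+\theta x^{\mathsf T}(H_N-zI)^{-1}x$ converging to $1+\theta m(z)$ via Propositions~\ref{prop:R4-bounded-local-law} and \ref{prop:R4-rough-isotropic}, and the analysis of the simple zero of $1+\theta m$ at $\rho_\theta$. The only difference is emphasis. The paper counts zeros by Rouch\'e on a small complex circle together with a large-$|z|$ bound, and lists interlacing only as an alternative; you lead with interlacing and real-axis monotonicity, and you also treat the case $|\rho_\theta|\le 2+\delta$ explicitly, which the paper leaves implicit.
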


\begin{proof}
The diagonal term and mean remainder in \eqref{eq:R4-mean-decomp} have
operator norm \(o(1)\).  It is therefore enough to treat
\[
 \widehat M_N:=H_N+\theta xx^{\mathsf T},
\]
where \(H_N\) is either centered array.  By the matrix determinant lemma,
for \(z\notin\spec(H_N)\),
\begin{equation}\label{eq:R4-characteristic-determinant}
 \frac{\det(\widehat M_N-zI)}{\det(H_N-zI)}
 =F_N(z):=1+\theta x^{\mathsf T}(H_N-zI)^{-1}x.
\end{equation}
Propositions~\ref{prop:R4-bounded-local-law} and
\ref{prop:R4-rough-isotropic} give, uniformly on compact subsets of
\(\mathbb C\setminus[-2,2]\),
\begin{equation}\label{eq:R4-characteristic-limit}
 F_N(z)\longrightarrow F(z):=1+\theta m(z)
\end{equation}
in probability.

For real \(E>2\), \(m(E)\in(-1,0)\); for \(E<-2\),
\(m(E)\in(0,1)\).  Hence \(F\) has a real zero outside \([-2,2]\) exactly
when \(|\theta|>1\).  The zero is simple and equals
\[
 \rho_\theta=\theta+\theta^{-1}.
\]
If \(|\theta|>1\), choose a small circle \(\mathcal C\) around
\(\rho_\theta\), disjoint from \([-2,2]\).  Uniform convergence in
\eqref{eq:R4-characteristic-limit} and Rouch\'e's theorem show that \(F_N\)
has exactly one zero inside \(\mathcal C\).  By
\eqref{eq:R4-characteristic-determinant}, this zero is the separated
eigenvalue of \(\widehat M_N\).

To exclude further separated eigenvalues, fix \(\delta>0\) and a large
\(R\).  On the compact set
\[
 \{z:|z|\le R,\ \operatorname{dist}(z,[-2,2])\ge\delta\}
\]
with the circle \(\mathcal C\) removed in the supercritical case, \(F\) is
bounded away from zero.  Equation \eqref{eq:R4-characteristic-limit} therefore
excludes zeros of \(F_N\) there.  For \(|z|>R\), centered confinement and the
bound
\[
 \left|\theta x^{\mathsf T}(H_N-zI)^{-1}x\right|
 \le\frac{|\theta|}{|z|-\norm{H_N}_{\op}}
\]
is smaller than one for \(R\) sufficiently large.  Thus no further zeros
occur.  Equivalently, one may use rank-one interlacing to see directly that a
positive spike can create at most one eigenvalue above the upper edge and a
negative spike at most one below the lower edge
\cite{BenaychGeorgesNadakuditi2011}.  Finally, Weyl's inequality transfers the
conclusions from \(\widehat M_N\) to \(M_N\), because
\(\norm{-\theta D_x+R_N^{(m)}}_{\op}=o(1)\).
\end{proof}

\begin{remark}[No extra edge assumption]
\label{rem:R4-no-extra-edge-assumption}
Propositions~\ref{prop:R4-centered-confinement} and
\ref{prop:R4-rough-isotropic} use only W3, W5, exact four-moment matching, and
the spike assumptions already stated in Section~\ref{sec:model-main}.  No
subexponential tail, \(4+\varepsilon\) moment, or local law for the original
rough transform has been added.
\end{remark}

\begin{lemma}[A globally smooth representative of the bulk statistic]
\label{lem:R4-smooth-cutoff}
Fix \(\Gamma\) satisfying (G1)--(G4), and let
\(\varphi_1,\ldots,\varphi_k\) be analytic near
\(\overline{D_\Gamma}\).  There exist
\(\widetilde\varphi_\ell\in C_c^\infty(\mathbb R)\) such that, for both
\(M_N^{(4)}\) and \(M_N^f\),
\begin{equation}\label{eq:R4-cutoff-equality}
 \Tr\widetilde\varphi_\ell(M_N)
 =\mathcal L_{M_N}^\Gamma(\varphi_\ell)
\end{equation}
with probability \(1-o(1)\).  In the supercritical case,
\(\widetilde\varphi_\ell\) vanishes on a fixed neighborhood of
\(\rho_\theta\).
\end{lemma}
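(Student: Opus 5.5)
The plan is to take $\widetilde\varphi_\ell=\chi\varphi_\ell$ for a single smooth cutoff $\chi\in C_c^\infty(\mathbb R)$ adapted to the real trace of $D_\Gamma$, and then use spectral confinement to show that the cutoff never sees the region where $\chi\ne\one_{D_\Gamma}$.

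First I fix the geometry. By (G1)--(G2) there is $\delta_1>0$ such that $[-2-2\delta_1,2+2\delta_1]\subset D_\Gamma$. In the supercritical case (G4) puts $\rho_\theta$ outside $\overline{D_\Gamma}$, and $|\rho_\theta|>2$. Since $D_\Gamma$ is open and bounded, $D_\Gamma\cap\mathbb R$ is a bounded open set containing $[-2-2\delta_1,2+2\delta_1]$. Shrinking $\delta_1$ if necessary, the interval $I_2:=[-2-2\delta_1,2+2\delta_1]$ has distance at least $2\delta_1$ from $\rho_\theta$.

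Next I build the cutoff. Choose $\chi\in C_c^\infty(\mathbb R)$ with $\chi\equiv1$ on $I_1:=[-2-\delta_1,2+\delta_1]$ and $\operatorname{supp}\chi\subset I_2$. The function $\varphi_\ell$ is analytic, hence real-analytic and smooth, on a real neighborhood of $\overline{D_\Gamma}\cap\mathbb R\supset I_2$. Therefore $\widetilde\varphi_\ell:=\chi\varphi_\ell$, extended by zero, lies in $C_c^\infty(\mathbb R)$. It vanishes on the fixed neighborhood $\{t:\operatorname{dist}(t,I_2)>0\}$ of $\rho_\theta$, which gives the final assertion of the lemma.

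Then I verify the trace identity on a good event. Apply Proposition~\ref{prop:R4-outlier-separation} with $\delta=\delta_1$ to both $M_N^{(4)}$ and $M_N^f$. With probability $1-o(1)$ every eigenvalue lies in $I_1$, except, when $|\theta|>1$, the single outlier, which lies within $\delta_1$ of $\rho_\theta$ and therefore outside $I_2$. Eigenvalues in $I_1$ belong to $D_\Gamma$ and satisfy $\chi=1$, so they contribute $\varphi_\ell(\lambda)$ to both sides. The outlier has $\chi(\lambda_{\rm out})=0$, and it does not lie in $D_\Gamma$: it is within $\delta_1$ of $\rho_\theta\notin\overline{D_\Gamma}$, and choosing $\delta_1$ smaller than $\operatorname{dist}(\rho_\theta,\overline{D_\Gamma})$ places it outside $D_\Gamma$. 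Hence it contributes zero to both sides. This proves \eqref{eq:R4-cutoff-equality} on that event.

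The only delicate point is choosing $\delta_1$ uniformly so that $I_2$ is contained in both $D_\Gamma$ and the domain of analyticity, and so that the outlier disk avoids both $\operatorname{supp}\chi$ and $\overline{D_\Gamma}$. This is a finite set of strict inequalities between fixed positive distances, so no probabilistic obstacle arises beyond the confinement already supplied by Proposition~\ref{prop:R4-outlier-separation}. The same $\chi$ serves all $\ell$, and the finite family is handled by intersecting $O(1)$ events of probability $1-o(1)$.
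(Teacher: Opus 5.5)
Your proposal is correct and matches the paper's proof. Both fix a bulk interval $[-2-\delta,2+\delta]\subset D_\Gamma$ kept away from $\rho_\theta$, smoothly and compactly extend the real restriction of $\varphi_\ell$ so that it vanishes near the outlier, and conclude from Proposition~\ref{prop:R4-outlier-separation}. Your product $\chi\varphi_\ell$ is simply an explicit form of the paper's extension step.
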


\begin{proof}
Choose \(\delta>0\) so that
\([-2-\delta,2+\delta]\subset D_\Gamma\) and, in the supercritical case, a
small interval around \(\rho_\theta\) is disjoint from
\(\overline{D_\Gamma}\).  Extend the real-axis restriction of \(\varphi_\ell\)
smoothly and compactly so that it agrees with \(\varphi_\ell\) on the bulk
interval and vanishes near the outlier and outside a larger compact set.
Proposition~\ref{prop:R4-outlier-separation} then gives
\eqref{eq:R4-cutoff-equality}.
\end{proof}

\section{Four-Moment Replacement and Full-Trace Completion}
\label{sec:R5-package}

\subsection{Global edge derivatives and four-moment cancellation}

The replacement observable is built from the smooth functions in
Lemma~\ref{lem:R4-smooth-cutoff}, rather than directly from a resolvent
defined only on a spectral confinement event.  This makes every derivative
bound deterministic and global.

\begin{lemma}[Fourier--Duhamel trace derivative bound]
\label{lem:R5-Fourier-Duhamel}
Let \(h\in C_c^\infty(\mathbb R)\), let \(A=A^{\mathsf T}\), and let
\(B=B^{\mathsf T}\).  For every integer \(r\ge1\),
\begin{equation}\label{eq:R5-trace-derivative}
 \sup_{u\in\mathbb R}
 \left|\frac{\dd^r}{\dd u^r}\Tr h(A+uB)\right|
 \le
 C_{h,r}\norm{B}_1\norm{B}_{\op}^{r-1},
\end{equation}
where
\begin{equation}\label{eq:R5-Fourier-constant}
 C_{h,r}:=\frac1{2\pi}\int_{\mathbb R}
 |t|^r|\widehat h(t)|\,\dd t<\infty.
\end{equation}
The constant is independent of the dimension and of \(A\).
\end{lemma}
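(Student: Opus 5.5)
We will write $h$ through its Fourier transform, $h(x)=\frac1{2\pi}\int_{\mathbb R}\widehat h(t)e^{\ii tx}\,\dd t$. Since $h\in C_c^\infty$, $\widehat h$ is Schwartz, so $\int|t|^r|\widehat h(t)|\,\dd t<\infty$ for every $r$. This gives $C_{h,r}<\infty$ and justifies the representation
\[
 \Tr h(A+uB)=\frac1{2\pi}\int_{\mathbb R}\widehat h(t)\Tr e^{\ii t(A+uB)}\,\dd t .
\]
Differentiation under the integral sign is justified once we have a bound on $\partial_u^r\Tr e^{\ii t(A+uB)}$ that is polynomial in $|t|$, uniformly in $u$; the dominated convergence theorem then applies. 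The whole proof thus reduces to the matrix-exponential estimate
\begin{equation*}
 \left|\frac{\dd^r}{\dd u^r}\Tr e^{\ii t(A+uB)}\right|\le |t|^r\norm{B}_1\norm{B}_{\op}^{r-1},
\end{equation*}
which, integrated against $|\widehat h(t)|/(2\pi)$, yields \eqref{eq:R5-trace-derivative}. The resulting constant involves neither $N$ nor $A$.

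To prove the exponential estimate, write $U(s):=e^{\ii s(A+uB)}$. This matrix is unitary because $A+uB$ is real symmetric. Iterating Duhamel's formula $\partial_u U(t)=\ii\int_0^t U(t-s)BU(s)\,\dd s$ expresses the $r$-th derivative as $r!\,\ii^r$ times an integral over the ordered simplex $0\le s_1\le\cdots\le s_r\le t$. Equivalently, it is $\ii^r$ times a sum over all $r!$ orderings, integrated over the simplex. The integrand is a product of the form
\[
 U(t-s_r)\,B\,U(s_r-s_{r-1})\,B\cdots B\,U(s_1).
\]
To differentiate consistently, we will differentiate the Dyson series term by term. An alternative is to differentiate $U(s)$ inductively, which produces a factor $B$ at each step; this yields exactly $r!$ ordered-simplex terms.

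Now take the trace. H\"older's inequality for Schatten norms gives $|\Tr(X_1\cdots X_m)|\le \norm{X_1}_1\prod_{j\ge2}\norm{X_j}_{\op}$. We place one factor $B$ in trace norm and bound the remaining $r-1$ factors of $B$ in operator norm. Every unitary factor has operator norm $1$. The integrand is therefore bounded by $\norm B_1\norm B_{\op}^{r-1}$. The simplex has volume $|t|^r/r!$, which cancels the combinatorial factor $r!$, and the exponential estimate follows.

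The main obstacle is simply getting the combinatorics of the iterated Duhamel expansion right, so that the constant comes out as $|t|^r$ and not $r!\,|t|^r$. The cleanest route is to note that $\partial_u^r$ of the Dyson series $\sum_k \ii^k u^k\int_{\Delta_k}\cdots$ (expanded about $A+u_0B$) picks out $r!$ times the degree-$r$ term, whose simplex volume is $|t|^r/r!$. If that bookkeeping were off, the statement would still hold with a harmless factor depending only on $r$, but we aim for the exact constant \eqref{eq:R5-Fourier-constant}.
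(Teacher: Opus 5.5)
Your proposal is correct and follows the paper's argument. Both write $h$ by Fourier functional calculus, use the $r$-fold Duhamel expansion to get $r!$ times a simplex integral of unitaries with $r$ inserted factors $B$, bound each trace by $\norm{B}_1\norm{B}_{\op}^{r-1}$, and cancel the $r!$ against the simplex volume $|t|^r/r!$. Your dominated-convergence justification for differentiating under the Fourier integral is a small addition the paper leaves implicit.
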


\begin{proof}
Fourier functional calculus gives
\[
 h(A+uB)=\frac1{2\pi}\int_{\mathbb R}
 \widehat h(t)e^{\ii t(A+uB)}\,\dd t.
\]
The \(r\)-fold Duhamel formula expresses the \(u\)-derivative of the
exponential as \(r!\) times an integral over an \(r\)-simplex of products of
unitaries with \(r\) inserted factors \(B\).  The simplex has volume
\(|t|^r/r!\).  By cyclicity of trace and the trace-ideal inequality,
\[
 |\Tr(U_0BU_1B\cdots BU_r)|
 \le\norm{B}_1\norm{B}_{\op}^{r-1},
\]
because every \(U_j\) is unitary.  Integrating against
\(|\widehat h(t)|\) proves \eqref{eq:R5-trace-derivative}.
\end{proof}

\begin{corollary}[Uniform edge derivatives]
\label{cor:R5-edge-derivatives}
For \(V_e=E_{ab}+E_{ba}\), define
\[
 L_{\ell,e}(u)
 :=\Tr\widetilde\varphi_\ell
 \bigl(A+N^{-1/2}uV_e\bigr),
\]
where \(A=A^{\mathsf T}\) is arbitrary.  Then, for \(1\le r\le5\),
\begin{equation}\label{eq:R5-L-derivatives}
 \sup_{u\in\mathbb R}|L_{\ell,e}^{(r)}(u)|
 \le C_{\ell,r}N^{-r/2}.
\end{equation}
\end{corollary}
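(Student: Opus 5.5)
This is a direct specialization of Lemma~\ref{lem:R5-Fourier-Duhamel}, applied with \(h=\widetilde\varphi_\ell\) and perturbation \(B=N^{-1/2}V_e\). Writing \(A+N^{-1/2}uV_e=A+uB\), the lemma yields
\[
 \sup_{u\in\mathbb R}|L_{\ell,e}^{(r)}(u)|
 \le C_{\widetilde\varphi_\ell,r}\,\norm{B}_1\norm{B}_{\op}^{r-1}.
\]
Two facts then finish the argument. First, the constant \(C_{\widetilde\varphi_\ell,r}\) is finite. Second, the norms of \(V_e\) are explicit.

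For the constant: \(\widetilde\varphi_\ell\in C_c^\infty(\mathbb R)\) by Lemma~\ref{lem:R4-smooth-cutoff}, so \(\widehat{\widetilde\varphi}_\ell\) is a Schwartz function. Hence \(\int|t|^r|\widehat{\widetilde\varphi}_\ell(t)|\,\dd t<\infty\) for every \(r\le5\). We set \(C_{\ell,r}:=2\,C_{\widetilde\varphi_\ell,r}\). This choice does not depend on \(N\), on \(e\), or on \(A\).

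For the norms: \(V_e=E_{ab}+E_{ba}\) with \(a\ne b\) has eigenvalues \(\pm1\) on \(\operatorname{span}\{e_a,e_b\}\) and \(0\) elsewhere. Therefore \(\norm{V_e}_{\op}=1\) and \(\norm{V_e}_1=2\), which give
\[
 \norm{B}_1=2N^{-1/2},\qquad \norm{B}_{\op}=N^{-1/2}.
\]
Substituting, \(\norm{B}_1\norm{B}_{\op}^{r-1}=2N^{-r/2}\), which is \eqref{eq:R5-L-derivatives}.

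There is no real obstacle. The only care needed is to note that uniformity in \(u\), \(e\), and the arbitrary symmetric \(A\) is inherited from the dimension- and \(A\)-independence of the constant in Lemma~\ref{lem:R5-Fourier-Duhamel}. This uniformity is what the later telescoping over hybrids requires.
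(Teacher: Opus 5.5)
Your proposal is correct and is essentially the paper's proof: the paper also applies Lemma~\ref{lem:R5-Fourier-Duhamel} with $B=N^{-1/2}V_e$ and uses that the two nonzero singular values of $V_e$ equal one, so that $\norm{B}_1=2N^{-1/2}$ and $\norm{B}_{\op}=N^{-1/2}$. Your extra remarks, on the finiteness of the Fourier constant for $\widetilde\varphi_\ell\in C_c^\infty$ and on uniformity in $u$, $e$ and $A$, are correct and simply make explicit what the paper leaves implicit.
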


\begin{proof}
The two nonzero singular values of \(V_e\) equal one, so
\[
 \norm{N^{-1/2}V_e}_1=2N^{-1/2},
 \qquad
 \norm{N^{-1/2}V_e}_{\op}=N^{-1/2}.
\]
Apply Lemma~\ref{lem:R5-Fourier-Duhamel}.
\end{proof}

Fix \(t=(t_1,\ldots,t_k)\in\mathbb R^k\) and deterministic centerings
\(c_{\ell,N}\).  For an open-edge matrix \(A\), put
\begin{equation}\label{eq:R5-Phi}
 \Phi_{e,A}(u)
 :=
 \exp\left\{
 \ii\sum_{\ell=1}^kt_\ell
 \bigl(L_{\ell,e}(u)-c_{\ell,N}\bigr)
 \right\}.
\end{equation}

\begin{lemma}[Global fifth derivative of the edge observable]
\label{lem:R5-Phi-derivatives}
For \(0\le r\le5\),
\begin{equation}\label{eq:R5-Phi-bound}
 \sup_{A=A^{\mathsf T}}\sup_{u\in\mathbb R}
 |\Phi_{e,A}^{(r)}(u)|
 \le C_{t,\widetilde\varphi}N^{-r/2}.
\end{equation}
In particular, the hypotheses
\eqref{eq:derivative-input-package}--\eqref{eq:origin-derivatives-package}
of Lemma~\ref{lem:four-moment-cancel-package} hold uniformly for every
open-edge matrix and every hybrid.
\end{lemma}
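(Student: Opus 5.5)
The bound will follow from Faà di Bruno's formula applied to the composition $\Phi_{e,A}(u)=\exp\{\ii g(u)\}$, where
\[
 g(u):=\sum_{\ell=1}^k t_\ell\bigl(L_{\ell,e}(u)-c_{\ell,N}\bigr).
\]
Two facts are needed. First, $g$ is real-valued, because each $\widetilde\varphi_\ell$ is real and $A+N^{-1/2}uV_e$ is symmetric. Second, the deterministic centerings $c_{\ell,N}$ drop out after one derivative. Since $g$ is real, $|\Phi_{e,A}(u)|=1$, which settles the case $r=0$ for every $A$ and $u$ with no dependence on $N$.

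For $1\le r\le5$, Faà di Bruno gives $\Phi^{(r)}=\Phi\cdot B_r(\ii g',\ldots,\ii g^{(r)})$. Here $B_r$ is the complete Bell polynomial: a finite sum over partitions $\pi$ of $\{1,\ldots,r\}$ of products $\prod_{P\in\pi}\ii\,g^{(|P|)}$. By Corollary~\ref{cor:R5-edge-derivatives},
\[
 \sup_u|g^{(j)}(u)|\le\Bigl(\sum_\ell|t_\ell|\,C_{\ell,j}\Bigr)N^{-j/2},
\]
uniformly over all symmetric $A$. The key observation is that block sizes add up to $r$. Hence every partition term is bounded by $\prod_P C\,N^{-|P|/2}=C^{|\pi|}N^{-r/2}$. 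Summing over the finitely many partitions (at most the Bell number $B_5=52$) gives
\[
 |\Phi^{(r)}|\le C_{t,\widetilde\varphi}N^{-r/2}.
\]
The constant depends only on $t$, on the constants $C_{\ell,r}$ from \eqref{eq:R5-Fourier-constant} with $r\le5$, and on combinatorial factors. In particular it is independent of $A$, $u$, $e$ and $N$, which proves \eqref{eq:R5-Phi-bound}.

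For the final assertion, the hypotheses of Lemma~\ref{lem:four-moment-cancel-package} (a uniform fifth-derivative bound and bounds on derivatives at the origin) are pointwise specializations of \eqref{eq:R5-Phi-bound}. Take $A=A_N+\mathring H_N^{(r,e_r)}$, which is symmetric for every realization, and use $u\in\mathbb R$ together with $u=0$. Since the supremum in \eqref{eq:R5-Phi-bound} is over all symmetric $A$, the bounds hold for every outcome and every hybrid index. No confinement event is needed.

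There is no real obstacle here. The only point needing care is that uniformity rests entirely on the dimension-free, $A$-independent constant of Lemma~\ref{lem:R5-Fourier-Duhamel}, and this is why the compactly supported smooth $\widetilde\varphi_\ell$ from Lemma~\ref{lem:R4-smooth-cutoff} is used instead of the contour representation.
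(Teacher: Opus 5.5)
Your proposal is correct and follows the paper's proof essentially verbatim. The case $r=0$ uses $|\Phi_{e,A}|=1$, and for $1\le r\le5$ Fa\`a di Bruno's formula is combined with the $A$-independent bounds of Corollary~\ref{cor:R5-edge-derivatives}, whose orders sum to $r$ and so give $N^{-r/2}$. The paper likewise relies, only implicitly, on $t$ and the centerings $c_{\ell,N}$ being real, which is what gives $|\Phi_{e,A}|=1$.
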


\begin{proof}
The case \(r=0\) follows from \(|\Phi_{e,A}|=1\).  For \(r\ge1\),
Fa\`a di Bruno's formula writes \(\Phi_{e,A}^{(r)}\) as a finite sum of
terms
\[
 \Phi_{e,A}(u)
 \prod_{q=1}^r
 \left(\sum_{\ell=1}^kt_\ell
 L_{\ell,e}^{(q)}(u)\right)^{n_q},
 \qquad
 \sum_{q=1}^r qn_q=r.
\]
Corollary~\ref{cor:R5-edge-derivatives} bounds such a product by
\(C N^{-\sum qn_q/2}=CN^{-r/2}\), uniformly in \(A,u,e\).
\end{proof}

The following lemma combines exact algebraic moment cancellation with the
Lindeberg remainder estimate needed along the replacement path.

\begin{lemma}[Four-moment cancellation with a Lindeberg remainder]
\label{lem:four-moment-cancel-package}
Let $X_e$ and $\widehat X_e$ be centered variables satisfying
\begin{equation}\label{eq:four-match-lemma-package}
 \E X_e^q=\E\widehat X_e^q,
 \qquad q=1,2,3,4.
\end{equation}
Let $\Phi_e:\R\to\mathbb C$ satisfy $|\Phi_e(u)|\le1$.  Suppose that
there are $\varepsilon_0>0$ and a constant $C$, independent of
$e,N,\varepsilon$, such that, for $0<\varepsilon\le\varepsilon_0$,
\begin{equation}\label{eq:derivative-input-package}
 \sup_{|u|\le\varepsilon\sqrt N}
 |\Phi_e^{(5)}(u)|\le C N^{-5/2}
\end{equation}
and
\begin{equation}\label{eq:origin-derivatives-package}
 |\Phi_e^{(q)}(0)|\le C N^{-q/2},
 \qquad q=0,1,2,3,4.
\end{equation}
Assume that the target variables satisfy the averaged fourth-moment Lindeberg
condition and that $\sup_e|\widehat X_e|\le C_*$.  Then
\begin{equation}\label{eq:four-comparison-error-package}
 \lim_{\varepsilon\downarrow0}
 \limsup_{N\to\infty}
 \sum_e
 \left|
 \E\Phi_e(X_e)-\E\Phi_e(\widehat X_e)
 \right|=0,
\end{equation}
provided the functions $\Phi_e$ are the conditional edge observables along
one telescoping path and the derivative constants are uniform in the open-edge
matrices.
\end{lemma}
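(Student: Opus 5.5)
The approach is a fifth-order Taylor expansion of each edge observable about the origin, with a truncation at level $\varepsilon\sqrt N$ on the target variable. For a fixed edge $e$, write
\[
 \Phi_e(u)=\sum_{q=0}^4\frac{\Phi_e^{(q)}(0)}{q!}u^q+\mathcal R_e(u),
\]
where $|\mathcal R_e(u)|\le \frac{|u|^5}{120}\sup_{|w|\le|u|}|\Phi_e^{(5)}(w)|$. Since $\Phi_e$ is the conditional edge observable along the telescoping path, it is independent of both $X_e$ and $\widehat X_e$. Conditioning on the open-edge matrix and using \eqref{eq:four-match-lemma-package}, the polynomial parts of $\E\Phi_e(X_e)$ and $\E\Phi_e(\widehat X_e)$ coincide exactly. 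Hence
\[
 \bigl|\E\Phi_e(X_e)-\E\Phi_e(\widehat X_e)\bigr|\le \E|\mathcal R_e(X_e)|+\E|\mathcal R_e(\widehat X_e)|.
\]
The moments of $X_e$ are only assumed finite through order four, so the remainder must not be expanded to fifth order on the large values of $X_e$.

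For the surrogate, $|\widehat X_e|\le C_*\le\varepsilon\sqrt N$ for large $N$. Then \eqref{eq:derivative-input-package} gives $\E|\mathcal R_e(\widehat X_e)|\le CC_*^5N^{-5/2}$. Summing over the $M_N=O(N^2)$ edges yields $O(N^{-1/2})\to0$.

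For the target, split $\mathcal R_e(X_e)=\mathcal R_e(X_e)\one_{\{|X_e|\le\varepsilon\sqrt N\}}+\mathcal R_e(X_e)\one_{\{|X_e|>\varepsilon\sqrt N\}}$.
\begin{itemize}
\item \emph{Small values.} On $\{|X_e|\le\varepsilon\sqrt N\}$, \eqref{eq:derivative-input-package} bounds $|\mathcal R_e|$ by $CN^{-5/2}|X_e|^5\le C\varepsilon N^{-2}|X_e|^4$. Summing over edges and using a uniform fourth-moment bound ($\max_e\E X_e^4\le C$, as in Lemma~\ref{lem:uniform-fourth-package}, or $N^{-2}\sum_e\E X_e^4=O(1)$, which follows from the averaged Lindeberg condition together with the variance bounds) gives a contribution $O(\varepsilon)$.
\item \emph{Large values.} On $\{|X_e|>\varepsilon\sqrt N\}$, do not use the fifth derivative. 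Instead bound the remainder crudely by $|\mathcal R_e(u)|\le|\Phi_e(u)|+\sum_{q\le4}|\Phi_e^{(q)}(0)||u|^q/q!\le 1+C\sum_{q\le4}N^{-q/2}|u|^q$, using $|\Phi_e|\le1$ and \eqref{eq:origin-derivatives-package}. For $|u|>\varepsilon\sqrt N$, each term $N^{-q/2}|u|^q$ is at most $\varepsilon^{q-4}N^{-2}|u|^4$, and the constant term $1$ is likewise at most $\varepsilon^{-4}N^{-2}|u|^4$. The tail contribution summed over edges is therefore at most $C\varepsilon^{-4}N^{-2}\sum_e\E\bigl[X_e^4\one_{\{|X_e|>\varepsilon\sqrt N\}}\bigr]$. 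By the averaged fourth-moment Lindeberg condition (\eqref{eq:uniform-hybrid-Lindeberg-package}, or \eqref{eq:average-Lindeberg-package} for the target edges), this tends to zero for each fixed $\varepsilon$ as $N\to\infty$.
\end{itemize}

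Combining the three pieces, for each fixed $\varepsilon\le\varepsilon_0$,
\[
 \limsup_{N\to\infty}\sum_e\bigl|\E\Phi_e(X_e)-\E\Phi_e(\widehat X_e)\bigr|\le C\varepsilon .
\]
Letting $\varepsilon\downarrow0$ then gives \eqref{eq:four-comparison-error-package}.

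The main obstacle is keeping every constant uniform while conditioning. The derivative bounds must hold for every realization of the open-edge matrix, which is why the global Fourier--Duhamel estimate of Lemma~\ref{lem:R5-Phi-derivatives} is used rather than a local law. The Lindeberg sum, in turn, must be uniform along the hybrid path, which Proposition~\ref{prop:hybrid-data-package} supplies. A further point is that the constant term in the tail bound must be converted into the fourth-moment tail, rather than estimated through a probability bound that would only be averaged, since the Lindeberg condition is the only tail input available. With these in hand, Fubini over the independent open-edge matrix justifies the conditional cancellation.
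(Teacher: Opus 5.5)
Your proposal is correct and is essentially the paper's proof. You subtract the degree-four Taylor polynomial, whose expectations agree by exact moment matching after conditioning on the open-edge matrix. You bound the remainder by $CN^{-5/2}|u|^5\le C\varepsilon N^{-2}|u|^4$ on $|u|\le\varepsilon\sqrt N$, and on the tail by $|\Phi_e|\le1$ plus the origin derivatives, converted to $C\varepsilon^{-4}N^{-2}|u|^4$. You then let $N\to\infty$ before $\varepsilon\downarrow0$.

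One small correction concerns your alternative justification that $N^{-2}\sum_e\E X_e^4=O(1)$ follows from the $\varepsilon\sqrt N$-level Lindeberg condition plus variance bounds. That claim is false: a variable with mass $\tfrac12 N^{-1/2}$ at each of $\pm N^{1/4}$ has unit variance and an eventually vacuous Lindeberg sum, but fourth moment $N^{1/2}$. Rely instead on the uniform bound of Lemma~\ref{lem:uniform-fourth-package} derived from W5, which is what the paper's ``averaged fourth moments are uniformly bounded'' implicitly uses.
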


\begin{proof}
Let
\[
 P_{e,4}(u):=\sum_{q=0}^4\frac{\Phi_e^{(q)}(0)}{q!}u^q.
\]
Equation \eqref{eq:four-match-lemma-package} gives the exact identity
\[
 \E\Phi_e(X_e)-\E\Phi_e(\widehat X_e)
 =\E\bigl[\Phi_e(X_e)-P_{e,4}(X_e)\bigr]
 -\E\bigl[\Phi_e(\widehat X_e)-P_{e,4}(\widehat X_e)\bigr].
\]
On $|u|\le\varepsilon\sqrt N$, Taylor's theorem and
\eqref{eq:derivative-input-package} give
\begin{equation}\label{eq:small-remainder-package}
 |\Phi_e(u)-P_{e,4}(u)|
 \le C N^{-5/2}|u|^5
 \le C\varepsilon N^{-2}|u|^4.
\end{equation}
For $|u|>\varepsilon\sqrt N$, boundedness of $\Phi_e$ and
\eqref{eq:origin-derivatives-package} give
\begin{equation}\label{eq:large-remainder-package}
 |\Phi_e(u)-P_{e,4}(u)|
 \le C_\varepsilon N^{-2}|u|^4.
\end{equation}
Indeed, for $0\le q\le4$,
\[
 N^{-q/2}|u|^q
 \le\varepsilon^{q-4}N^{-2}|u|^4,
\]
and the constant term is controlled by
$1\le\varepsilon^{-4}N^{-2}|u|^4$.

After summing \eqref{eq:small-remainder-package}, the small-entry target and
surrogate contributions are $O(\varepsilon)$ because their averaged fourth
moments are uniformly bounded.  The large-entry target contribution tends to
zero for fixed $\varepsilon$ by the averaged fourth-moment Lindeberg
condition and \eqref{eq:large-remainder-package}.  The large-entry surrogate
contribution is eventually zero because the surrogate is uniformly bounded.
First take $N\to\infty$, and then $\varepsilon\downarrow0$.
\end{proof}

\begin{remark}[What this lemma removes]
Exact matching through order four removes the complete first-, second-, third-,
and fourth-order Taylor contributions edge by edge.  In particular, the later
replacement proof no longer needs a weighted third-derivative cancellation, a
fourth-cumulant subsequence argument, or an $L^4$-metric coupling between the
target and surrogate variables.  The replacement is reduced to proving a
globally valid fifth-derivative bound and controlling the associated hybrid
observable.
\end{remark}

\subsection{Telescoping replacement, rough bulk CLT, and outlier contribution}

\begin{theorem}[Exact four-moment replacement for smooth LSS]
\label{thm:R5-smooth-replacement}
Let \(M_N^{(r)}\), \(0\le r\le M_N\), be the hybrids of
Definition~\ref{def:hybrid-package}.  For every finite family
\(\widetilde\varphi_1,\ldots,\widetilde\varphi_k\in C_c^\infty(\mathbb R)\)
and \(t\in\mathbb R^k\),
\begin{align}
 &\E\exp\left\{\ii\sum_{\ell=1}^kt_\ell
 \Tr\widetilde\varphi_\ell(M_N^f)\right\}
 \notag\\
 &\qquad-
 \E\exp\left\{\ii\sum_{\ell=1}^kt_\ell
 \Tr\widetilde\varphi_\ell(M_N^{(4)})\right\}
 \longrightarrow0.
 \label{eq:R5-CF-replacement}
\end{align}
The same conclusion holds after subtracting any common deterministic
centering from the two vectors.
\end{theorem}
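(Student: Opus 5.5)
The plan is a standard Lindeberg telescoping along the hybrid path of Definition~\ref{def:hybrid-package}. Fix $t\in\mathbb R^k$ and write $F(M):=\exp\{\ii\sum_\ell t_\ell(\Tr\widetilde\varphi_\ell(M)-c_{\ell,N})\}$, so that $|F|=1$. Since $M_N^{(M_N)}=A_N+H_N^f=M_N^f$ and $M_N^{(0)}=A_N+H_N^{(4)}=M_N^{(4)}$, I will write
\[
 \E F(M_N^f)-\E F(M_N^{(4)})=\sum_{r=1}^{M_N}\bigl(\E F(M_N^{(r)})-\E F(M_N^{(r-1)})\bigr).
\]
The matrices $M_N^{(r)}$ and $M_N^{(r-1)}$ differ only at edge $e_r$, where they carry $Y_{e_r}$ and $\widehat Y_{e_r}$ respectively. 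Both equal the open-edge matrix $M_N^{(r,e_r)}(u)$ evaluated at these two values of $u$, and the open-edge matrix $A:=A_N+\mathring H_N^{(r,e_r)}$ is independent of the pair $(Y_{e_r},\widehat Y_{e_r})$. Conditioning on $A$ therefore gives
\[
 \E F(M_N^{(r)})-\E F(M_N^{(r-1)})=\E\bigl[\E_{Y}\Phi_{e_r,A}(Y_{e_r})-\E_{\widehat Y}\Phi_{e_r,A}(\widehat Y_{e_r})\bigr],
\]
with $\Phi_{e,A}$ as in \eqref{eq:R5-Phi}.

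Next I apply Lemma~\ref{lem:four-moment-cancel-package} conditionally on $A$, with $X_e=Y_e$ and $\widehat X_e=\widehat Y_e$. The four moments match exactly by \eqref{eq:edge-four-match-package}. The surrogates are uniformly bounded by Theorem~\ref{thm:uniform-four-surrogate-package}, and the target satisfies the fourth-moment Lindeberg condition by Lemma~\ref{lem:exact-W5-Lindeberg-package}. The derivative inputs \eqref{eq:derivative-input-package}--\eqref{eq:origin-derivatives-package} come from Lemma~\ref{lem:R5-Phi-derivatives}. The key point is that its bound is deterministic and holds over all symmetric $A$ and all $u\in\mathbb R$, so these constants do not depend on the realization of the open-edge matrix. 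Consequently the Taylor-remainder bounds \eqref{eq:small-remainder-package}--\eqref{eq:large-remainder-package} hold pointwise in $A$. Taking the outer expectation then yields, for each $r$,
\[
 \bigl|\E F(M_N^{(r)})-\E F(M_N^{(r-1)})\bigr|\le C\varepsilon N^{-2}\bigl(\E Y_{e_r}^4+\E\widehat Y_{e_r}^4\bigr)+C_\varepsilon N^{-2}\bigl(\E[Y_{e_r}^4\one_{\{|Y_{e_r}|>\varepsilon\sqrt N\}}]+\E[\widehat Y_{e_r}^4\one_{\{|\widehat Y_{e_r}|>\varepsilon\sqrt N\}}]\bigr).
\]

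Summing over the $M_N\asymp N^2$ edges, the first term is $O(\varepsilon)$, using the uniform fourth-moment bound of Lemma~\ref{lem:uniform-fourth-package} (valid once $\max|s_{ij}|\le\delta_0$). The second term tends to zero for each fixed $\varepsilon$: the target part vanishes by \eqref{eq:average-Lindeberg-package}, and the surrogate part is identically zero once $\varepsilon\sqrt N>2\sqrt{2C_4}$. Letting $N\to\infty$ and then $\varepsilon\downarrow0$ proves \eqref{eq:R5-CF-replacement}. Because $F$ has modulus one and the centering $c_{\ell,N}$ enters only through the common phase factor, the conclusion holds for any common deterministic centering.

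The one delicate point is the conditioning step: I must check that the derivative constants in Lemma~\ref{lem:four-moment-cancel-package} are uniform in the random matrix $A$. This is exactly why the observable is built from $C_c^\infty$ functions together with the Fourier--Duhamel bound (Lemma~\ref{lem:R5-Fourier-Duhamel}), and not from resolvents. With that choice, no confinement event on the hybrids is needed at all.
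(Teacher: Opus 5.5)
Your proposal is correct and follows the paper's proof: telescope along the hybrid path, condition on the open-edge matrix $M_N^{(r,e_r)}(0)$, and apply Lemma~\ref{lem:four-moment-cancel-package}. The inputs are the same as the paper's: the $A$-uniform Fourier--Duhamel derivative bounds of Lemma~\ref{lem:R5-Phi-derivatives}, exact four-moment matching, the W5 Lindeberg bound for the target, and uniform boundedness of the surrogate. The only difference is that you write out the per-edge remainder bound and the limits ($N\to\infty$, then $\varepsilon\downarrow0$) explicitly, whereas the paper leaves these to the proof of that lemma.
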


\begin{proof}
Order the edges as in Definition~\ref{def:hybrid-package} and telescope the
left side of \eqref{eq:R5-CF-replacement}.  At edge \(e_r\), condition on
the open-edge matrix \(M_N^{(r,e_r)}(0)\).  The two candidate scalar
variables are \(Y_{e_r}\) and \(\widehat Y_{e_r}\), and the conditional
observable is \eqref{eq:R5-Phi}.  Their moments through order four agree
exactly by Theorem~\ref{thm:uniform-four-surrogate-package}.
Lemma~\ref{lem:R5-Phi-derivatives} supplies the global derivative hypotheses
of Lemma~\ref{lem:four-moment-cancel-package}.  The target Lindeberg input is
Lemma~\ref{lem:exact-W5-Lindeberg-package}, and the surrogate is uniformly
bounded by Theorem~\ref{thm:uniform-four-surrogate-package}.  Therefore
Lemma~\ref{lem:four-moment-cancel-package} makes the sum of all telescoping
errors tend to zero.
\end{proof}

\begin{remark}[Why no hybrid local law appears]
The Fourier--Duhamel bound is valid for every finite self-adjoint matrix
\(A\), regardless of its spectrum.  Therefore the comparison proof never
conditions on a common confinement event and never differentiates a stopped
resolvent.  Spectral confinement is used only at the two endpoints, through
Lemma~\ref{lem:R4-smooth-cutoff}.
\end{remark}

\begin{theorem}[Completion of the rough bulk LSS theorem]
\label{thm:R5-rough-bulk}
Under the hypotheses of Theorem~\ref{thm:bulk-main-package}, the bulk CLT
\eqref{eq:bulk-clt-package} holds for \(M_N^f\).
\end{theorem}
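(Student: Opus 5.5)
The proof assembles three earlier results through characteristic functions. First, fix \(\Gamma\) satisfying (G1)--(G4) and analytic \(\varphi_1,\ldots,\varphi_k\). Apply Lemma~\ref{lem:R4-smooth-cutoff} to obtain \(\widetilde\varphi_\ell\in C_c^\infty(\mathbb R)\) such that, for both \(M_N^{(4)}\) and \(M_N^f\),
\[
\Tr\widetilde\varphi_\ell(M_N)=\mathcal L_{M_N}^\Gamma(\varphi_\ell)
\]
on an event of probability \(1-o(1)\). This step uses Proposition~\ref{prop:R4-outlier-separation} for both arrays, which in turn relies only on W3, W5, and the exterior isotropic law of Proposition~\ref{prop:R4-rough-isotropic}. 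Consequently, the centered vectors
\[
\Bigl(\mathcal L^\Gamma_{M_N}(\varphi_\ell)-N\int\varphi_\ell\,\dd\rho_{\scal}-m^{f,\nu}_{\varphi_\ell,\Gamma}\Bigr)_\ell
\]
and their smooth analogues with \(\Tr\widetilde\varphi_\ell\) differ only on events of vanishing probability. Their characteristic functions therefore differ by \(o(1)\) at each fixed \(t\in\mathbb R^k\).

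Second, Corollary~\ref{cor:R4-bounded-full-CLT} gives the bulk CLT for the bounded surrogate \(M_N^{(4)}=A_N+H_N^{(4)}\), with centering \(m^{f,\nu}_{\varphi,\Gamma}\) and covariance \(\mathcal V^\Gamma_{\kappa_4^{f,\nu}}\). Combined with the first step, the characteristic function of the smooth statistic \(\bigl(\Tr\widetilde\varphi_\ell(M_N^{(4)})-c_{\ell,N}\bigr)_\ell\) converges to that of \((G_{\varphi_\ell})_\ell\). Here
\[
c_{\ell,N}:=N\int\varphi_\ell\,\dd\rho_{\scal}+m^{f,\nu}_{\varphi_\ell,\Gamma}.
\]

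Third, apply Theorem~\ref{thm:R5-smooth-replacement} with the common deterministic centering \(c_{\ell,N}\). It yields
\[
\E e^{\ii\sum t_\ell(\Tr\widetilde\varphi_\ell(M_N^f)-c_{\ell,N})}
-\E e^{\ii\sum t_\ell(\Tr\widetilde\varphi_\ell(M_N^{(4)})-c_{\ell,N})}\to0 .
\]
Note that the target and surrogate hybrids share the same mean matrix \(A_N\), by Corollary~\ref{cor:atomic-array-data-package}(A2), so the hybrid path of Definition~\ref{def:hybrid-package} indeed connects \(M_N^{(4)}\) to \(M_N^f\). Chaining the three \(o(1)\) comparisons (surrogate bulk statistic \(\to\) smooth surrogate \(\to\) smooth target \(\to\) target bulk statistic) and applying Lévy's continuity theorem gives \eqref{eq:bulk-clt-package} for \(M_N^f\), with covariance \eqref{eq:bulk-cov-package}.

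The main point to check is the first step for the rough array \(M_N^f\): the cutoff equality must hold with probability \(1-o(1)\) without any local law for \(f\). This is exactly what Proposition~\ref{prop:R4-outlier-separation} supplies, through the triangular Bai--Yin bound (Lemma~\ref{lem:R4-triangular-Bai-Yin}) together with the hybrid-uniform isotropic law, so I expect no new estimate is needed. One also needs to confirm that the same \(\widetilde\varphi_\ell\) works at both endpoints. This holds because the outlier location \(\rho_\theta\) and the bulk interval are deterministic and common to the two arrays.
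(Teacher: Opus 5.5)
Your proposal is correct and follows essentially the same route as the paper. Both arguments use the smooth representatives from Lemma~\ref{lem:R4-smooth-cutoff}, valid at both endpoints, together with the bounded-surrogate CLT of Corollary~\ref{cor:R4-bounded-full-CLT}, the characteristic-function replacement of Theorem~\ref{thm:R5-smooth-replacement} under a common deterministic centering, and L\'evy's continuity theorem. The paper leaves implicit two things you spell out: the chain of $o(1)$ characteristic-function comparisons, and the identification of the hybrid endpoints with $M_N^{(4)}$ and $M_N^f=A_N+H_N^f$.
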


\begin{proof}
Choose the smooth representatives
\(\widetilde\varphi_\ell\) from
Lemma~\ref{lem:R4-smooth-cutoff}.  The bounded endpoint has the desired joint
CLT by Corollary~\ref{cor:R4-bounded-full-CLT}.  Theorem
\ref{thm:R5-smooth-replacement} identifies the joint characteristic
functions of the bounded and target smooth LSS vectors after subtraction of
the same centering.  L\'evy's continuity theorem transfers the joint Gaussian
limit.  Finally, \eqref{eq:R4-cutoff-equality} replaces the smooth statistic
by the bulk contour statistic at both endpoints with probability \(1-o(1)\).
\end{proof}

\paragraph{Separated outlier and full trace.}
\label{sec:R6-package}

\begin{lemma}[Outlier test-function convergence]
\label{lem:R6-outlier-function}
Assume \(|\theta|>1\), and let \(\varphi\) be analytic on a neighborhood of
\(\rho_\theta\).  Then
\begin{equation}\label{eq:R6-phi-outlier}
 \varphi\bigl(\lambda_{\rm out}(M_N^f)\bigr)
 =\varphi(\rho_\theta)+o_{\mathbb P}(1).
\end{equation}
\end{lemma}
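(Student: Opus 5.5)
The plan is to combine the outlier convergence already established in Proposition~\ref{prop:R4-outlier-separation}(ii), applied with \(M_N=M_N^f\), with continuity of \(\varphi\) at \(\rho_\theta\). The only subtlety is that \(\lambda_{\rm out}(M_N^f)\) has to be a well-defined random variable on every outcome. I will define it as the largest eigenvalue when \(\theta>1\) and the smallest eigenvalue when \(\theta<-1\). This choice is measurable, and by Proposition~\ref{prop:R4-outlier-separation}(ii) it coincides, with probability \(1-o(1)\), with the unique eigenvalue outside \([-2-\delta,2+\delta]\).

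\textbf{Step 1.} Fix \(r>0\) so small that \(\varphi\) is analytic on the closed disk \(\overline{B(\rho_\theta,2r)}\) and \(|\rho_\theta|-2r>2\). For the real point \(\rho_\theta\), the condition \(|\theta|>1\) gives \(|\rho_\theta|>2\), so such an \(r\) exists. Given \(\varepsilon>0\), continuity of \(\varphi\) on this disk yields \(r'\in(0,r]\) with \(|\varphi(y)-\varphi(\rho_\theta)|<\varepsilon\) whenever \(|y-\rho_\theta|<r'\).

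\textbf{Step 2.} By \eqref{eq:R4-outlier-convergence}, \(\mathbb P(|\lambda_{\rm out}(M_N^f)-\rho_\theta|\ge r')\to0\). On the complementary event, \(\lambda_{\rm out}\) lies in the domain of analyticity, so \(\varphi(\lambda_{\rm out})\) is defined by the given analytic function rather than by the arbitrary bounded Borel extension of the full-trace convention. On that event \(|\varphi(\lambda_{\rm out})-\varphi(\rho_\theta)|<\varepsilon\), and since \(\varepsilon\) is arbitrary this gives \eqref{eq:R6-phi-outlier}.

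\textbf{Main obstacle.} The only step needing care is confirming that \eqref{eq:R4-outlier-convergence} holds for the rough matrix \(M_N^f\), and not merely for the bounded surrogate. This is exactly what Proposition~\ref{prop:R4-outlier-separation} asserts. Its proof uses the rough exterior isotropic law, Proposition~\ref{prop:R4-rough-isotropic}, to get \(F_N\to1+\theta m\) uniformly on a small circle around \(\rho_\theta\), and then Rouch\'e's theorem; Weyl's inequality absorbs \(-\theta D_x+R_N^{(m)}\). I would simply cite it, noting that convergence in probability of the outlier is precisely what that Rouch\'e argument delivers, since the zero of \(F_N\) lies within any prescribed small circle with probability \(1-o(1)\).
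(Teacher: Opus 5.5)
Your proposal is correct and follows the paper's argument. The paper likewise cites Proposition~\ref{prop:R4-outlier-separation} for $\lambda_{\rm out}(M_N^f)\to\rho_\theta$ in probability and concludes by continuity of $\varphi$ near $\rho_\theta$ via the continuous mapping theorem, which your $\varepsilon$--$r'$ argument simply writes out; your explicit measurable choice of $\lambda_{\rm out}$ as the extreme eigenvalue on the side of $\theta$ is a harmless clarification that the paper leaves implicit.
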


\begin{proof}
Proposition~\ref{prop:R4-outlier-separation} gives
\(\lambda_{\rm out}(M_N^f)\to\rho_\theta\) in probability.  Analyticity implies
continuity on a compact neighborhood of \(\rho_\theta\), so the continuous
mapping theorem gives \eqref{eq:R6-phi-outlier}.  No outlier fluctuation
theorem is required at the order-one LSS scale.
\end{proof}

\begin{proof}[Proof of Theorem~\ref{thm:R6-supercritical-full}]

By Proposition~\ref{prop:R4-outlier-separation}, with probability \(1-o(1)\)
the spectrum consists of the bulk eigenvalues in \(D_\Gamma\) and one
separated outlier.  Hence
\[
 \Tr\varphi_\ell(M_N^f)
 =
 \mathcal L_{M_N^f}^\Gamma(\varphi_\ell)
 +\varphi_\ell\bigl(\lambda_{\rm out}(M_N^f)\bigr).
\]
Apply Theorem~\ref{thm:R5-rough-bulk} to the first term and
Lemma~\ref{lem:R6-outlier-function} to the second.  Slutsky's theorem proves
\eqref{eq:R6-full-CLT}.
\end{proof}

\begin{remark}[No additional model assumption]
For fixed \(\theta\), the strict inequality \(|\theta|>1\) already gives a
positive deterministic gap between \(\rho_\theta\) and \([-2,2]\).  Thus no
separate assumption \(|\theta|\ge1+\tau\) is needed.  Analyticity near
\(\rho_\theta\) is a test-function domain requirement for the full-trace
corollary, not an assumption on \(f\), \(\nu\), or \(x\).  The critical case
\(|\theta|=1\) belongs to the non-separated full-trace statement of
Corollary~\ref{cor:subcritical-full-package}.
\end{remark}
\section{Profile Criteria and Obstructions}
\label{app:assumption-W-criteria}

This appendix gives concrete sufficient conditions for W1--W5.  The routes
are organized by likelihood-ratio regularity of the noise law, smoothness of
the transform, translation regularity of a density, and local smoothness near
atoms.  They are sufficient, not necessary, and none imposes an additional
approximation or coefficient-stability hypothesis.  The second subsection
records complementary obstructions showing why bare $L^4(\nu)$ does not
replace translated profile and tail information.

\subsection{Criteria for profile-admissibility}

\begin{criterion}[Direct verification route]
\label{crit:direct-profile-verification}
To verify Condition~\ref{cond:profile-admissibility} directly, it is enough to prove the two shifted-profile expansions, shifted fourth-cumulant stability, and the shifted fourth-moment Lindeberg condition appearing in that condition.  No differentiability of \(f\) is used in the main proof after these data have been established.
\end{criterion}

\begin{proposition}[Uniform shifted fourth-moment criterion]
\label{prop:W-TUI}
Suppose that the shifted-profile expansions for \(m_f\) and \(q_f\) hold and that
\[
\lim_{R\to\infty}\limsup_{\delta\downarrow0}
\sup_{|s|\le\delta}
\E\bigl[|f(\zeta+s)|^4{\bf 1}_{\{|f(\zeta+s)|>R\}}\bigr]=0.
\]
Assume also that
\[
        \E |f(\zeta+s)|^4\longrightarrow \E |f(\zeta)|^4
        \qquad(s\to0).
\]
Then the shifted fourth-cumulant stability and shifted fourth-moment Lindeberg parts of Condition~\ref{cond:profile-admissibility} hold.
\end{proposition}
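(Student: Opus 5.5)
The plan is to transfer the uniform integrability assumed on $f(\zeta+s)$ to the centered variable $Y_s=f(\zeta+s)-m_f(s)$. For W4, the idea is to write the fourth cumulant through moments of $f(\zeta+s)$ and use continuity of each of those moments at $s=0$.

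\textbf{Step 1 (W5).} By W2, $m_f(s)=a_1s+o(s)$, so $|m_f(s)|\le1$ for all $|s|\le\delta_1$. For such $s$ and $R\ge2$, the event $\{|Y_s|>R\}$ forces $|f(\zeta+s)|>R-1\ge R/2$. We also have $|Y_s|^4\le 8(|f(\zeta+s)|^4+1)$. Hence
\[
 \E\bigl[|Y_s|^4\one_{\{|Y_s|>R\}}\bigr]
 \le 8\,\E\bigl[|f(\zeta+s)|^4\one_{\{|f(\zeta+s)|>R/2\}}\bigr]
 +8\,\mathbb P\bigl(|f(\zeta+s)|>R/2\bigr).
\]
The probability term is at most $(R/2)^{-4}\E|f(\zeta+s)|^4$. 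By the assumed convergence $\E|f(\zeta+s)|^4\to\E f(\zeta)^4$, the fourth moments $\E|f(\zeta+s)|^4$ are uniformly bounded for small $|s|$, so this term vanishes as $R\to\infty$. Take $\sup_{|s|\le\delta}$, then $\limsup_{\delta\downarrow0}$, then $R\to\infty$. The first term vanishes by the hypothesis, and this gives W5.

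\textbf{Step 2 (W4).} Write $\mu_k(s):=\E f(\zeta+s)^k$ for $k\le4$. Since $Y_s$ is centered,
\[
 \cum_4(Y_s)=\E Y_s^4-3(\E Y_s^2)^2 .
\]
Expanding $Y_s=f(\zeta+s)-\mu_1(s)$ expresses $\E Y_s^4$ and $\E Y_s^2$ as polynomials in $\mu_1,\dots,\mu_4$. Each coefficient of a power of $\mu_1(s)$ is bounded uniformly for small $|s|$: $\mu_2$ is bounded by W3, and $|\mu_3|\le\mu_4^{3/4}$ is bounded by the bounded fourth moments. Since $\mu_1(s)\to0$ by W2, every term containing $\mu_1$ vanishes. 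This gives
\[
 \E Y_s^4=\mu_4(s)+o(1),\qquad \E Y_s^2=\mu_2(s)-\mu_1(s)^2\to1,
\]
where the last limit uses W3. The step to watch is that the hypothesis concerns $\E|f(\zeta+s)|^4$, but this equals $\mu_4(s)$ because the power is even. So $\mu_4(s)\to\E f(\zeta)^4$, and hence
\[
 \cum_4(Y_s)\to\E f(\zeta)^4-3=\kappa_4^{f,\nu},
\]
which is W4.

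The argument uses only W2 and W3 and never needs continuity of $\mu_3$. The one place that needs care is the order of limits in Step 1. The probability term must vanish uniformly in $s$, and this is why the uniform fourth-moment bound is obtained from moment continuity on a fixed window $|s|\le\delta_1$ before taking the limits. Everything else is bookkeeping.
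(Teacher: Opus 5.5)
Your proof is correct. The fourth-cumulant part follows the paper's argument. The paper only asserts that $\E Y_s^2\to1$ and $\E Y_s^4\to\E f(\zeta)^4$; you add the observation that the cross terms need only $\mu_1(s)\to0$ together with boundedness, not continuity, of $\mu_2$ and $\mu_3$.

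The Lindeberg part takes a different route. The paper's proof does not check (W5) in its stated uniform-in-shift form. It derives the matrix-averaged condition along the actual shifts $s_{ij}$, using $\max_{i<j}|s_{ij}|\to0$ from Assumption~\ref{ass:spike}. It also relies on the informal remark that centering by $m_f(s_{ij})=o(1)$ moves the tail threshold only by a bounded amount. It therefore arrives at the averaged condition \eqref{eq:discussion-averaged-W5}, which the paper itself describes as weaker than (W5).

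You instead prove $\lim_{R}\limsup_{\delta}\sup_{|s|\le\delta}\E[|Y_s|^4\one_{\{|Y_s|>R\}}]=0$ directly, with no reference to the spike. Your ingredients are the pointwise bound $|Y_s|^4\le8(|f(\zeta+s)|^4+1)$, the inclusion $\{|Y_s|>R\}\subset\{|f(\zeta+s)|>R/2\}$, and a Markov bound on the leftover probability using a uniform fourth-moment bound on a fixed window. This makes the paper's bounded-threshold remark quantitative. It also yields exactly the hypothesis used by Lemma~\ref{lem:uniform-fourth-package} and Lemma~\ref{lem:exact-W5-Lindeberg-package}, and it recovers the paper's averaged statement as a corollary.

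The only implicit input is $\E f(\zeta)^4<\infty$ from (W1), which you need when deducing the uniform fourth-moment bound from moment continuity. Alternatively, that bound follows from the displayed tail hypothesis alone, as in the proof of Lemma~\ref{lem:uniform-fourth-package}.
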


\begin{proof}
The convergence of fourth moments and the profile expansions imply convergence of all lower centered moments of
\[
        Y_s=f(\zeta+s)-m_f(s)
\]
which occur in \(\cum_4(Y_s)=\E Y_s^4-3(\E Y_s^2)^2\).  Since \(\E Y_s^2\to1\) and \(\E Y_s^4\to\E f(\zeta)^4\), one obtains
\[
        \cum_4(Y_s)\to \E f(\zeta)^4-3=\kappa_4^{f,\nu}.
\]
The displayed uniform integrability gives, for the microscopic shifts \(s_{ij}\),
\[
\lim_{R\to\infty}\limsup_{N\to\infty}
\frac{2}{N(N-1)}\sum_{i<j}
\E\bigl[|Y_{s_{ij}}|^4{\bf 1}_{\{|Y_{s_{ij}}|>R\}}\bigr]=0,
\]
because \(\max_{i<j}|s_{ij}|\to0\) by Assumption~\ref{ass:spike}, and centering by \(m_f(s_{ij})=o(1)\) changes the tail threshold only by a bounded deterministic amount.  This is the required shifted Lindeberg condition.
\end{proof}

\paragraph{Likelihood-ratio and score-density criteria.}

Assume that \(\nu\) has a density \(p\) and that the translated law of \(\zeta+s\) is absolutely continuous with respect to \(\nu\) for all sufficiently small \(s\).  Put
\[
        L_s(y)=\frac{p(y-s)}{p(y)}
\]
on \(\{p>0\}\).  Then
\[
        \E f(\zeta+s)=\E_\nu[f(\zeta)L_s(\zeta)],\qquad
        \E f(\zeta+s)^2=\E_\nu[f(\zeta)^2L_s(\zeta)].
\]

\begin{proposition}[Weighted likelihood-ratio criterion]
\label{prop:likelihood-W}
Assume \(f\in L^4(\nu)\), \(\E_\nu f=0\), and
\(\E_\nu f^2=1\).  Suppose that there are measurable functions
\(\ell_1,\ell_2\) such that, as \(s\to0\),
\[
        \E_\nu |f(\zeta)|\,|L_s(\zeta)-1-s\ell_1(\zeta)|=o(|s|),
\]
\[
        \E_\nu |f(\zeta)|^2\left|L_s(\zeta)-1-s\ell_1(\zeta)-\frac{s^2}{2}\ell_2(\zeta)\right|=o(s^2),
\]
and
\[
        \sup_{|s|\le s_0}\E_\nu |f(\zeta)|^4L_s(\zeta)<\infty
\]
with uniform integrability of \(\{|f(\zeta)|^4L_s(\zeta): |s|\le s_0\}\).  Then Condition~\ref{cond:profile-admissibility} holds with
\[
        a_1=\E_\nu[f(\zeta)\ell_1(\zeta)],\qquad
        b_1=\E_\nu[f(\zeta)^2\ell_1(\zeta)],
\]
\[
        b_2=\frac12\E_\nu[f(\zeta)^2\ell_2(\zeta)],\qquad
        \beta_2^{f,\nu}=b_2-a_1^2.
\]
\end{proposition}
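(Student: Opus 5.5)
The plan is to verify W1--W5 one at a time, using the translation identities $\E f(\zeta+s)=\E_\nu[fL_s]$ and $\E f(\zeta+s)^2=\E_\nu[f^2L_s]$. The same identity applies to any measurable function $g$ of $f(\zeta+s)$, giving $\E g(f(\zeta+s))=\E_\nu[g(f(\zeta))L_s(\zeta)]$. This rewriting is legitimate because the translated law is absolutely continuous with respect to $\nu$, so the law of $f(\zeta+s)$ does not depend on the chosen representative off $\{p>0\}$.

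W1 is assumed directly. For W2, write
\[
 m_f(s)-s\,\E_\nu[f\ell_1]=\E_\nu\bigl[f(L_s-1-s\ell_1)\bigr],
\]
using $\E_\nu f=0$. The first weighted hypothesis bounds this by $o(|s|)$. The supremum form of W2 follows because $o(|s|)$ as $s\to0$ is exactly the statement that $\sup_{0<|s|\le\delta}|\cdot|/|s|\to0$. W3 is handled the same way:
\[
 q_f(s)-1-s\,\E_\nu[f^2\ell_1]-\tfrac{s^2}2\E_\nu[f^2\ell_2]=\E_\nu\bigl[f^2(L_s-1-s\ell_1-\tfrac{s^2}2\ell_2)\bigr]=o(s^2),
\]
using $\E_\nu f^2=1$. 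This identifies $a_1,b_1,b_2$ as stated. A small point needs checking: the hypotheses do not explicitly give $\E_\nu|f\ell_1|<\infty$. I would deduce it from the first condition applied at one fixed small $s$ together with $fL_s,f\in L^1$, and argue similarly for $f^2\ell_2$.

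For W4 and W5, the uniform integrability hypothesis gives
\[
 \sup_{|s|\le s_0}\E\bigl[|f(\zeta+s)|^4\one_{\{|f(\zeta+s)|>R\}}\bigr]=\sup_{|s|\le s_0}\E_\nu\bigl[|f|^4L_s\one_{\{|f|>R\}}\bigr]\to0
\]
as $R\to\infty$. This is the first hypothesis of Proposition~\ref{prop:W-TUI}, in an even stronger form, since it holds at a fixed window $s_0$. The remaining input for that proposition is $\E|f(\zeta+s)|^4=\E_\nu[|f|^4L_s]\to\E_\nu|f|^4$, and this is the main obstacle. I would split at level $R$. On $\{|f|\le R\}$ the difference is at most $R^4\E_\nu|L_s-1|$. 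On $\{|f|>R\}$, uniform integrability (together with the $s=0$ term, since $L_0=1$) makes the contribution small uniformly in $s$.

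It therefore suffices to show $L_s\to1$ in $L^1(\nu)$. I would get this from the second weighted hypothesis restricted to a set where $f^2$ is bounded below. That only controls $L_s$ on $\{|f|>0\}$, though, and it is not enough in general. The cleaner route is to note that, since $L_s$ is a density ratio, it suffices that $p(\cdot-s)\to p$ in $L^1(\dd y)$, and translation continuity in $L^1$ holds for every integrable $p$. Then by Scheffé, or directly, $\E_\nu|L_s-1|=\int|p(y-s)-p(y)|\,\dd y\to0$, where the hypothesis that the translated law is absolutely continuous with respect to $\nu$ ensures no mass is lost outside $\{p>0\}$.

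With the fourth-moment continuity in hand, Proposition~\ref{prop:W-TUI} yields W4. It also yields W5, and I would recheck W5 directly in its uniform-in-$s$ form: the threshold shift by $m_f(s)=o(1)$ is harmless, since $|Y_s|>R$ implies $|f(\zeta+s)|>R-1$ for small $s$, and $|Y_s|^4\le 8(|f|^4+|m_f(s)|^4)$.
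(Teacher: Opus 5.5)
Your proof is correct and follows the paper's route. The translation identity combined with the two weighted expansions gives W2--W3, and uniform integrability of $|f|^4L_s$ gives W4--W5 (you pass W4 through Proposition~\ref{prop:W-TUI}). You also justify the fourth-moment continuity $\E_\nu[|f|^4L_s]\to\E_\nu|f|^4$ by $L^1$-continuity of translations of $p$, a step the paper's one-line appeal to uniform integrability leaves implicit. The route through the second weighted hypothesis that you set aside would also work: only $\E_\nu[|f|^4(L_s-1)]\to0$ is needed, and that integrand vanishes off $\{f\neq0\}$.
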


\begin{proof}
Since \(\E_\nu f=0\),
\[
        m_f(s)=\E_\nu f(\zeta)L_s(\zeta)=s\E_\nu[f(\zeta)\ell_1(\zeta)]+o(s).
\]
Since \(\E_\nu f^2=1\),
\[
        q_f(s)=\E_\nu f(\zeta)^2L_s(\zeta)
        =1+s\E_\nu[f(\zeta)^2\ell_1(\zeta)]+\frac{s^2}{2}\E_\nu[f(\zeta)^2\ell_2(\zeta)]+o(s^2).
\]
The shifted fourth-moment and Lindeberg requirements follow from the uniform integrability of \(|f|^4L_s\).  Finally,
\[
        \Var(f(\zeta+s))=q_f(s)-m_f(s)^2=1+b_1s+(b_2-a_1^2)s^2+o(s^2).
\]
\end{proof}

\begin{lemma}[Gaussian weighted translation and shifted-tail estimates]
\label{lem:Gaussian-L4epsilon-W}
Let \(\nu=\gamma\) be standard Gaussian measure.  If, for some \(\epsilon>0\),
\[
        f\in L^{4+\epsilon}(\gamma),
        \qquad \E f(\xi)=0,
        \qquad \E f(\xi)^2=1,
\]
put $L_s(\xi)=\exp(s\xi-s^2/2)$.  Then
\begin{align}
 \E\!\left[|f(\xi)|\,|L_s(\xi)-1-s\xi|\right]
 &=o(|s|),\label{eq:Gaussian-weighted-first}\\
 \E\!\left[|f(\xi)|^2
 \left|L_s(\xi)-1-s\xi-\frac{s^2}{2}(\xi^2-1)\right|\right]
 &=o(s^2).\label{eq:Gaussian-weighted-second}
\end{align}
Moreover, for every fixed $s_0>0$,
\begin{equation}\label{eq:Gaussian-weighted-tail}
 \lim_{R\to\infty}\sup_{|s|\le s_0}
 \E\!\left[|f(\xi+s)|^4
 \one_{\{|f(\xi+s)|>R\}}\right]=0.
\end{equation}
Consequently Proposition~\ref{prop:likelihood-W} applies with
$\ell_1(\xi)=\xi$ and $\ell_2(\xi)=\xi^2-1$.
\end{lemma}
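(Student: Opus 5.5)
The plan is to prove the two weighted expansions \eqref{eq:Gaussian-weighted-first}--\eqref{eq:Gaussian-weighted-second} by H\"older's inequality with a fixed conjugate pair. Then we prove the tail estimate \eqref{eq:Gaussian-weighted-tail} by rewriting the shifted expectation with the likelihood ratio and using H\"older once more. Finally we check the hypotheses of Proposition~\ref{prop:likelihood-W}.

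\emph{Step 1 (weighted expansions).} Since $f\in L^{4+\epsilon}(\gamma)\subset L^2(\gamma)$, we have $|f|\in L^2$ and $|f|^2\in L^{2}$. Put $\rho_1(s,\xi):=L_s(\xi)-1-s\xi$ and $\rho_2(s,\xi):=L_s(\xi)-1-s\xi-\frac{s^2}{2}(\xi^2-1)$. Cauchy--Schwarz reduces the two claims to $\|\rho_1(s,\cdot)\|_{L^2(\gamma)}=o(|s|)$ and $\|\rho_2(s,\cdot)\|_{L^2(\gamma)}=o(s^2)$.

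\begin{itemize}[leftmargin=2em]
\item \emph{Exact computation.} One could use the Hermite expansion $L_s=\sum_k s^kH_k(\xi)/k!$ with orthogonal $H_k$ of norm $\sqrt{k!}$. Then $\|\rho_1\|_2^2=\sum_{k\ge2}s^{2k}/k!=e^{s^2}-1-s^2=O(s^4)$ and $\|\rho_2\|_2^2=e^{s^2}-1-s^2-s^4/2=O(s^6)$. Both bounds are stronger than needed.
\item \emph{Elementary alternative.} Taylor's formula gives $|\rho_j(s,\xi)|\le C|s|^{j+1}(1+|\xi|)^{j+1}e^{|s\xi|}$. Gaussian moments of $(1+|\xi|)^{2j+2}e^{2|s\xi|}$ are bounded for $|s|\le1$.
\end{itemize}

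Either route suffices, and I would present the Hermite computation.

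\emph{Step 2 (uniform shifted tails).} Fix $s_0$. The identity \eqref{eq:Gaussian-LR-main} gives
\[
\E\bigl[|f(\xi+s)|^4\one_{\{|f(\xi+s)|>R\}}\bigr]=\E\bigl[|f(\xi)|^4\one_{\{|f(\xi)|>R\}}L_s(\xi)\bigr].
\]
Choose $p=(4+\epsilon)/4>1$ and its conjugate $p'$, and apply H\"older:
\[
\E\bigl[|f(\xi)|^4\one_{\{|f(\xi)|>R\}}L_s(\xi)\bigr]\le\Bigl(\E\bigl[|f|^{4+\epsilon}\one_{\{|f|>R\}}\bigr]\Bigr)^{1/p}\bigl(\E L_s^{p'}\bigr)^{1/p'}.
\]
The last factor equals $\exp\bigl((p'-1)s^2/2\bigr)$, which is bounded uniformly for $|s|\le s_0$. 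The first factor tends to $0$ as $R\to\infty$ by dominated convergence, since $f\in L^{4+\epsilon}$. This proves \eqref{eq:Gaussian-weighted-tail}.

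The same H\"older bound with the indicator replaced by the event $\{L_s>K\}$ shows that $\{|f|^4L_s:|s|\le s_0\}$ is uniformly integrable. Concretely, $\E[|f|^4L_s\one_{\{|f|^4L_s>K\}}]$ is bounded by a sum of a term from $\{|f|>R\}$, which is small uniformly in $s$, and a term $R^4\E[L_s\one_{\{L_s>K/R^4\}}]$. The latter is small because $\sup_{|s|\le s_0}\E L_s^2<\infty$.

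\emph{Step 3 (conclusion).} With $\ell_1=\xi$ and $\ell_2=\xi^2-1$, Steps 1--2 verify every hypothesis of Proposition~\ref{prop:likelihood-W}. This yields W1--W5 and the coefficients \eqref{eq:Gaussian-coefficients-summary}.

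For robustness I would also invoke Proposition~\ref{prop:W-TUI}, which handles W4 without relying on the terse last line of the proof of Proposition~\ref{prop:likelihood-W}. It needs two facts. The tail condition \eqref{eq:Gaussian-weighted-tail} gives its uniform tail hypothesis. The convergence $\E f(\xi+s)^4=\E[f^4L_s]\to\E f^4$ follows from $L_s\to1$ pointwise together with the uniform integrability from Step 2, by Vitali's theorem.

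The \textbf{main obstacle} is only bookkeeping: making sure the $4+\epsilon$ margin is what funds H\"older in Step 2, since bare $L^4$ would leave no room for the $L_s$ weight. Everything else is explicit Gaussian computation.
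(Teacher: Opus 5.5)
Your proposal is correct and follows essentially the same route as the paper. For the two weighted expansions, both arguments use H\"older; you take Cauchy--Schwarz and compute the remainders explicitly through the Hermite expansion, which gives $\|L_s-1-s\xi\|_{L^2(\gamma)}=O(s^2)$ and $O(|s|^3)$ for the second-order remainder, while the paper just asserts the remainder is $o_{L^q}(s^2)$. For the uniform shifted tail, both use the likelihood-ratio identity followed by H\"older with $p=(4+\epsilon)/4$. Your explicit uniform-integrability argument for $\{|f|^4L_s:|s|\le s_0\}$ and the Vitali check through Proposition~\ref{prop:W-TUI} simply spell out what the paper's last line asserts.
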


\begin{proof}
The Gaussian translation likelihood ratio is
\[
        L_s(\xi)=\exp(s\xi-s^2/2)
        =1+s\xi+\frac{s^2}{2}(\xi^2-1)+o_{L^q}(s^2)
\]
for every finite $q$.  H\"older's inequality, paired with $f$ and $f^2$,
gives \eqref{eq:Gaussian-weighted-first}--\eqref{eq:Gaussian-weighted-second}.
For the tail estimate, take $p=(4+\epsilon)/4>1$ and conjugate exponent $q$.
Uniformly for $|s|\le s_0$,
\begin{align*}
&\E\bigl[|f(\xi+s)|^4{\bf 1}_{\{|f(\xi+s)|>R\}}\bigr]\\
&\quad=\E\bigl[|f(\xi)|^4{\bf 1}_{\{|f(\xi)|>R\}}L_s(\xi)\bigr]\\
&\quad\le
\left(\E\bigl[|f(\xi)|^{4+\epsilon}{\bf 1}_{\{|f(\xi)|>R\}}\bigr]\right)^{1/p}
\left(\E L_s(\xi)^q\right)^{1/q}.
\end{align*}
The second factor is uniformly bounded and the first
tends to zero, proving \eqref{eq:Gaussian-weighted-tail}.  These are exactly
the weighted expansion and uniform-integrability hypotheses of
Proposition~\ref{prop:likelihood-W}.
\end{proof}

\begin{corollary}[Score-density sufficient condition]
\label{cor:score-W}
Assume \(f\in L^4(\nu)\), \(\E_\nu f=0\),
\(\E_\nu f^2=1\), and that \(p\) is twice weakly differentiable.  In the
weighted norms
\[
        \|u\|_{f,1}:=\E_\nu |f(\zeta)|\,|u(\zeta)|,
        \qquad
        \|u\|_{f^2,1}:=\E_\nu |f(\zeta)|^2|u(\zeta)|,
\]
one has
\[
        \left\|\frac{p(\cdot-s)}{p(\cdot)}-1+s\frac{p'}{p}\right\|_{f,1}=o(|s|)
\]
and
\[
        \left\|\frac{p(\cdot-s)}{p(\cdot)}-1+s\frac{p'}{p}-\frac{s^2}{2}\frac{p''}{p}\right\|_{f^2,1}=o(s^2).
\]
Assume also \(\sup_{|s|\le s_0}\int |f(y)|^4p(y-s)\,dy<\infty\), with uniform integrability as \(s\to0\).  Then the weighted likelihood-ratio criterion applies with
\[
        \ell_1=-\frac{p'}{p},\qquad \ell_2=\frac{p''}{p}.
\]
For the Gaussian density, \(\ell_1(y)=y\) and \(\ell_2(y)=y^2-1\).
\end{corollary}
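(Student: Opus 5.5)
The plan is to reduce Corollary~\ref{cor:score-W} to Proposition~\ref{prop:likelihood-W} with $\ell_1=-p'/p$ and $\ell_2=p''/p$. Three things must be checked. The two weighted expansions of $L_s=p(\cdot-s)/p$ must hold in the form required there. The uniform fourth-moment integrability must also hold. Finally, the coefficients and the Gaussian specialization must come out as stated.

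\emph{Step 1: the expansions.} Since $\ell_1=-p'/p$, we have $1+s\ell_1=1-s\,p'/p$, so the first weighted hypothesis reads $\|L_s-1+s\,p'/p\|_{f,1}=o(|s|)$. This is exactly the first displayed assumption. Note that the display's ``$-1$'' should be read as the centered remainder $L_s-(1-s p'/p)$; with the sign convention $\ell_1=-p'/p$ both readings agree. Likewise $1+s\ell_1+\tfrac{s^2}{2}\ell_2=1-s\,p'/p+\tfrac{s^2}{2}p''/p$, which is the second-order Taylor polynomial of $p(y-s)/p(y)$ in $s$. Twice weak differentiability of $p$ is what makes $\ell_1,\ell_2$ well-defined measurable functions on $\{p>0\}$. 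The $o(s^2)$ remainder in $\|\cdot\|_{f^2,1}$ is then the second displayed assumption. Together these are verbatim the two integral hypotheses of Proposition~\ref{prop:likelihood-W}.

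\emph{Step 2: the tail condition.} We have
\[
\int|f(y)|^4p(y-s)\,\dd y=\E_\nu|f(\zeta)|^4L_s(\zeta),
\]
provided the translated law is absolutely continuous, which is the standing assumption of the likelihood-ratio paragraph. The assumed uniform bound and uniform integrability as $s\to0$ therefore give the remaining hypothesis of Proposition~\ref{prop:likelihood-W}. That proposition then yields W1--W5 with $a_1=\E_\nu[f\ell_1]$, $b_1=\E_\nu[f^2\ell_1]$ and $b_2=\tfrac12\E_\nu[f^2\ell_2]$.

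\emph{Step 3: the Gaussian case.} For $p(y)=(2\pi)^{-1/2}e^{-y^2/2}$ we get $p'/p=-y$ and $p''/p=y^2-1$. Hence $\ell_1(y)=y$ and $\ell_2(y)=y^2-1$, matching Lemma~\ref{lem:Gaussian-L4epsilon-W}.

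The main obstacle is only bookkeeping. One must confirm that the null set $\{p=0\}$ is harmless under the absolute-continuity assumption, and that the weak derivatives coincide a.e.\ with the classical ones used in the ratio. Beyond that, no further analysis is needed.
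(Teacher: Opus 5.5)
Your proposal is correct and follows essentially the same route as the paper's proof. You identify $L_s=p(\cdot-s)/p$, note that the two displayed weighted remainders are exactly the hypotheses of Proposition~\ref{prop:likelihood-W} with $\ell_1=-p'/p$ and $\ell_2=p''/p$, read the translated fourth-moment assumption as uniform integrability of $|f|^4L_s$ under the standing absolute-continuity convention, and conclude from that proposition. Your Gaussian computation $p'/p=-y$, $p''/p=y^2-1$ is the specialization recorded at the end of the statement.
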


\begin{proof}
The identity \(L_s(y)=p(y-s)/p(y)\) and the two weighted Taylor remainder assumptions give exactly the first- and second-order likelihood-ratio expansions in Proposition~\ref{prop:likelihood-W}.  The fourth-moment assumption is equivalent to uniform integrability of \(|f|^4L_s\).  The conclusion follows from Proposition~\ref{prop:likelihood-W}.
\end{proof}

\paragraph{Smooth and bounded rough-density criteria.}

\begin{proposition}[Smooth-transform criterion]
\label{prop:smooth-transform-W}
Assume \(f\in C^2(\mathbb R)\), \(\E_\nu f=0\),
\(\E_\nu f^2=1\),
\[
        \E\sup_{|u|\le\delta_0}|f(\zeta+u)|^4<\infty,
\]
and that the Taylor remainders of \(f\) and \(f^2\) satisfy
\[
        \E\sup_{|u|\le\delta_0}|f''(\zeta+u)|<\infty,
        \qquad
        \E\sup_{|u|\le\delta_0}\bigl(|f'(\zeta+u)|^2+|f(\zeta+u)f''(\zeta+u)|\bigr)<\infty.
\]
Then Condition~\ref{cond:profile-admissibility} holds with
\[
        a_1=\E f'(\zeta),
        \qquad
        b_1=2\E[f(\zeta)f'(\zeta)],
\]
\[
        b_2=\E\bigl[f'(\zeta)^2+f(\zeta)f''(\zeta)\bigr].
\]
\end{proposition}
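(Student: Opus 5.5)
The plan is to verify W1--W5 directly, using second-order Taylor expansion with integral remainder along the shift. Dominated convergence will handle every limit, with the supremum envelopes in the hypotheses serving as dominating functions. W1 is immediate, since $f\in L^4(\nu)$ follows from taking $u=0$ in the envelope bound. Throughout, fix $|s|\le\delta_0$.

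\emph{W2.} Write
\[
 f(\zeta+s)=f(\zeta)+sf'(\zeta)+s^2\int_0^1(1-t)f''(\zeta+ts)\,\dd t .
\]
Taking expectations gives $m_f(s)=s\E f'(\zeta)+O(s^2)$. The integral remainder is bounded by $\tfrac12 s^2\,\E\sup_{|u|\le\delta_0}|f''(\zeta+u)|$, so it is $O(s^2)=o(s)$. We also need $\E|f'(\zeta)|<\infty$. This follows from the bound on $\E\sup|f'|^2$ and Jensen, which also justifies splitting the expectation. Hence the stated sup condition in W2 holds with $a_1=\E f'(\zeta)$.

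\emph{W3.} Let $g=f^2\in C^2$, so $g'=2ff'$ and $g''=2(f'^2+ff'')$. Expanding $g(\zeta+s)$ to second order gives
\[
 q_f(s)=1+2s\E[ff']+s^2\int_0^1(1-t)\,\E g''(\zeta+ts)\,\dd t .
\]
We need integrability of $ff'$. This follows from Cauchy--Schwarz, using the bound on $f$ in $L^4$ and the bound on $f'$ in $L^2$. The function $u\mapsto g''(\zeta+u)$ is continuous and is dominated by $2\sup_{|u|\le\delta_0}(|f'|^2+|ff''|)$, which is integrable by hypothesis. Dominated convergence therefore gives $\E g''(\zeta+ts)\to\E g''(\zeta)$ uniformly in $t\in[0,1]$ as $s\to0$. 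So the remainder equals $\tfrac12 s^2\E g''(\zeta)+o(s^2)$, which yields $b_2=\E[f'^2+ff'']$ and $b_1=2\E[ff']$.

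\emph{W4 and W5.} Put $F:=\sup_{|u|\le\delta_0}|f(\zeta+u)|$, so that $F\in L^4$. For $|s|\le\delta_0$ we have $|m_f(s)|\le\E F$. Hence $|Y_s|\le F+\E F=:F'$ with $F'\in L^4$, and $F'$ does not depend on $s$. Then
\[
 \E\bigl[|Y_s|^4\one_{\{|Y_s|>R\}}\bigr]\le\E\bigl[F'^4\one_{\{F'>R\}}\bigr]\to0
\]
as $R\to\infty$, uniformly in $s$. This is W5. For W4, continuity of $f$ gives $f(\zeta+s)\to f(\zeta)$ pointwise, and $m_f(s)\to0$ by W2. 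So $Y_s\to f(\zeta)$ pointwise, with every power dominated by $F'^4$. Dominated convergence then gives $\E Y_s^4\to\E f^4$ and $\E Y_s^2\to1$, hence $\cum_4(Y_s)\to\E f(\zeta)^4-3$. (Alternatively, Proposition~\ref{prop:W-TUI} can be invoked at this point.)

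The main obstacle is the uniform $o(s^2)$ control in W3. It requires convergence of $\E g''(\zeta+ts)$, not just boundedness, and that is exactly why the envelope on $f'^2+ff''$ appears in the hypotheses. A minor point in W2 is that the statement lists only $\E\sup|f''|$, so integrability of $f'$ must be obtained from the second-derivative-envelope hypothesis, as noted above.
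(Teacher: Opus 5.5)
Your proposal is correct and follows the paper's route. The paper also uses Taylor expansion of $f$ and $f^2$, with the stated envelopes as dominating functions, to get the two profile expansions, and uses the local fourth-moment domination for W4--W5. The only difference is that you verify W4--W5 directly through the $s$-independent dominating variable $F+\E F$ instead of citing Proposition~\ref{prop:W-TUI}; this gives the uniform form of W5 exactly as the condition states it.
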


\begin{proof}
Taylor's theorem and the domination assumptions give
\[
        f(\zeta+s)=f(\zeta)+sf'(\zeta)+o_{L^1}(s),
\]
and
\[
        f(\zeta+s)^2=f(\zeta)^2+2sf(\zeta)f'(\zeta)
        +s^2\bigl(f'(\zeta)^2+f(\zeta)f''(\zeta)\bigr)+o_{L^1}(s^2).
\]
These formulas give the two shifted-profile expansions.  The local fourth-moment domination gives the shifted fourth-moment convergence and uniform integrability required by Proposition~\ref{prop:W-TUI}.
\end{proof}

\paragraph{Bounded rough transforms under smooth densities.}

\begin{proposition}[Bounded rough transform criterion]
\label{prop:bounded-rough-W}
Assume \(f\in L^\infty\), \(\E_\nu f=0\), \(\E_\nu f^2=1\), and assume the density \(p\) satisfies the translation expansions
\[
        \|p(\cdot-s)-p+s p'\|_{L^1}=o(|s|),
\]
\[
        \left\|p(\cdot-s)-p+s p'-\frac{s^2}{2}p''\right\|_{L^1}=o(s^2).
\]
Then Condition~\ref{cond:profile-admissibility} holds with
\[
        a_1=-\int f(y)p'(y)\,dy,
        \qquad
        b_1=-\int f(y)^2p'(y)\,dy,
\]
\[
        b_2=\frac12\int f(y)^2p''(y)\,dy.
\]
\end{proposition}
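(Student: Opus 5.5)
The plan is to translate everything to integrals of $f$ and $f^2$ against the translated density, using $\E h(\zeta+s)=\int h(y)p(y-s)\,\dd y$. Then W1--W5 reduce to the $L^1$ translation expansions of $p$ together with boundedness of $f$. Set $\norm{f}_\infty=:F$. For W2, centering gives
\[
 m_f(s)=\int f(y)\bigl(p(y-s)-p(y)\bigr)\dd y=-s\int fp'+\int f\,(p(\cdot-s)-p+sp').
\]
The last term is bounded by $F\norm{p(\cdot-s)-p+sp'}_{L^1}=o(|s|)$, so W2 holds with $a_1=-\int fp'$. For W3, I would do the same with $f^2$ and $\int f^2p=1$, using the second-order expansion:
\[
 q_f(s)-1=-s\int f^2p'+\frac{s^2}{2}\int f^2p''+O\bigl(F^2\norm{p(\cdot-s)-p+sp'-\tfrac{s^2}{2}p''}_{L^1}\bigr).
\]
The error is $o(s^2)$, and this gives $b_1,b_2$ as stated. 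Both integrals $\int f p'$ and $\int f^2 p''$ are finite because $p',p''\in L^1$, which is implicit in the hypotheses (the expansions are stated in $L^1$).

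For W4 and W5, boundedness does the work. Since $|m_f(s)|\le F$, we have $|Y_s|\le 2F$ for every $s$, so $\E[|Y_s|^4\one_{\{|Y_s|>R\}}]=0$ once $R>2F$, uniformly in $s$; this is W5. For W4, write $\cum_4(Y_s)=\E Y_s^4-3v(s)^2$. Here $v(s)=q_f(s)-m_f(s)^2\to1$ by W2 and W3. Also $\E Y_s^4$ is a polynomial in $m_f(s)$ and the shifted moments $\E f(\zeta+s)^k$ for $k\le4$. Each of these converges to its unshifted value, because
\[
 \Bigl|\E f(\zeta+s)^k-\E f(\zeta)^k\Bigr|\le F^k\norm{p(\cdot-s)-p}_{L^1}\le F^k\bigl(|s|\norm{p'}_{L^1}+o(|s|)\bigr)\to0.
\]
Since $m_f(0)=0$, it follows that $\E Y_s^4\to\E f(\zeta)^4$, hence $\cum_4(Y_s)\to\kappa_4^{f,\nu}$. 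W1 is immediate, since $f\in L^\infty\subset L^4(\nu)$. Alternatively, W4 and W5 follow at once from Proposition~\ref{prop:W-TUI}: the uniform tail integral vanishes for $R>F$, and the fourth-moment convergence is the $k=4$ case above.

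The only mildly delicate point is the uniform form of W2 and W3, which require a supremum over $0<|s|\le\delta$ of the normalized error. This follows directly because the hypotheses are $o(\cdot)$ statements as $s\to0$, which is exactly that uniform formulation. I expect no real obstacle here. I will also note that no absolute continuity of $\nu_s$ with respect to $\nu$ is needed, since everything is expressed against Lebesgue measure via the density of $\zeta+s$. Consequently the result does not depend on the choice of representative of $f$ modulo Lebesgue-null sets.
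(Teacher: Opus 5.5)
Your proof is correct and follows the paper's argument. Like the paper, you write $m_f$ and $q_f$ as integrals of $f$ and $f^2$ against the translated density $p(\cdot-s)$, multiply the two $L^1$ translation expansions by these bounded functions, and obtain W4--W5 from boundedness; your bound via $\norm{p(\cdot-s)-p}_{L^1}\to0$ for the shifted moments $\E f(\zeta+s)^k$ simply spells out the fourth-moment convergence step that the paper states in one line.
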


\begin{proof}
Since \(f\) is bounded,
\[
        m_f(s)=\int f(y)p(y-s)\,dy,
        \qquad
        q_f(s)=\int f(y)^2p(y-s)\,dy.
\]
The two \(L^1\)-translation expansions imply the profile expansions by multiplying by the bounded functions \(f\) and \(f^2\) and integrating.  Boundedness of \(f\) gives shifted fourth-moment convergence and makes the shifted Lindeberg condition automatic.
\end{proof}

\begin{corollary}[Threshold transforms]
\label{cor:threshold-W}
Let \(f\) be a centered and normalized finite linear combination of indicators of half-lines.  If the density \(p\) has the required one-sided differentiability and second-order Taylor expansions at the relevant thresholds, or more generally satisfies the \(L^1\)-translation assumptions of Proposition~\ref{prop:bounded-rough-W}, then Condition~\ref{cond:profile-admissibility} holds.  The coefficients are determined by the corresponding values of \(p\) and \(p'\) at the thresholds.
\end{corollary}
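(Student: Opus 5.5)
The plan is to reduce Corollary~\ref{cor:threshold-W} to Proposition~\ref{prop:bounded-rough-W}. A finite linear combination $f=\sum_{k=1}^K c_k\one_{(-\infty,t_k]}+c_0$ (half-lines of either orientation differ from this form by constants) is bounded. It is a fixed Borel representative, so W1 is just the assumed centering and normalization. Moreover $f^2$ is again a finite linear combination of indicators of half-lines and intervals with endpoints among the $t_k$. So $m_f$ and $q_f$ are finite linear combinations of the functions $s\mapsto\mathbb P(\zeta\le t_k-s)=F(t_k-s)$, where $F$ is the distribution function of $\nu$, together with constants.

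The key step is the local version of the hypothesis. Each $F(t_k-s)$ has the expansion
\[
 F(t_k-s)=F(t_k)-sp(t_k)+\tfrac{s^2}{2}p'(t_k)+o(s^2),
\]
whenever $p$ is continuous at $t_k$ with one-sided derivatives $p'(t_k\pm)$ agreeing. If the derivatives only exist one-sidedly, the coefficient of $s^2$ is instead $\tfrac12p'(t_k\mp)$ for $s\gtrless0$. In that case the uniform-in-$s$ formulation of W2--W3 is not literally satisfied. I would therefore read "required one-sided differentiability and second-order Taylor expansions" as requiring a single two-sided second-order expansion of $F$ at each threshold, meaning $F\in C^{2}$ in the Peano sense at $t_k$. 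I would state this reading explicitly.

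Under that reading, summing the expansions gives
\begin{align*}
 m_f(s)&=a_1s+o(s), & a_1&=-\sum_k c_kp(t_k),\\
 q_f(s)&=1+b_1s+b_2s^2+o(s^2), & b_1&=-\sum_k c_k'p(t_k), \quad b_2=\tfrac12\sum_k c_k'p'(t_k),
\end{align*}
where $c_k'$ are the jump coefficients of $f^2$ at $t_k$. These coefficients agree with $-\int fp'$, $-\int f^2p'$ and $\tfrac12\int f^2p''$ in the distributional sense, since the derivative of an indicator $\one_{(-\infty,t]}$ is $-\delta_{t}$. This yields W2 and W3 with coefficients determined by $p(t_k)$ and $p'(t_k)$.

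In the alternative case, where $p$ satisfies the $L^1$-translation expansions globally, I would simply invoke Proposition~\ref{prop:bounded-rough-W} directly.

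W4 and W5 come from boundedness. $|Y_s|\le2\norm{f}_\infty$, so the tail in W5 vanishes identically once $R>2\norm{f}_\infty$. For W4, $\E f(\zeta+s)^4$ is again a finite combination of the values $F(t_k-s)$, which is continuous at $s=0$, so it converges to $\E f(\zeta)^4$. Proposition~\ref{prop:W-TUI} then gives $\cum_4(Y_s)\to\kappa_4^{f,\nu}$.

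The main obstacle is the precise meaning of the hypothesis on $p$ at the thresholds. Mismatched one-sided second derivatives break the uniform $o(s^2)$ in W3, so I would record that W2 needs only continuity of $p$ at each $t_k$, while W3 needs the two-sided second-order expansion.
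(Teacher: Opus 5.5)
Your argument is correct and more explicit than the paper, which gives no separate proof: it presents the corollary as a consequence of Proposition~\ref{prop:bounded-rough-W}, pairing global $L^1$-translation expansions of $p$ with the bounded functions $f$ and $f^2$. For the global hypothesis your route coincides with the paper's.

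For the local hypothesis at the thresholds your route genuinely differs, and a separate argument is needed there. Regularity of $p$ at finitely many points does not give the global expansion $\|p(\cdot-s)-p+sp'-\tfrac{s^2}{2}p''\|_{L^1}=o(s^2)$, so the proposition does not literally cover this case. Your argument fills this: you write $m_f$, $q_f$ and the higher shifted moments as finite combinations of $F(t_k-s)$ and Taylor-expand $F$ at each $t_k$. It yields $a_1=-\sum_k c_kp(t_k)$, $b_1=-\sum_k c_k'p(t_k)$ and $b_2=\tfrac12\sum_k c_k'p'(t_k)$. These agree with the distributional form of the coefficients in Proposition~\ref{prop:bounded-rough-W}, as you note.

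Your caveat about one-sided derivatives is also right and worth keeping. W3 requires a single $b_2$ for both signs of $s$. The balance condition in Assumption~\ref{ass:spike} forces shifts of both signs, since a nonnegative unit vector with $\norm{x}_\infty=o(N^{-1/4})$ has $\sum_i x_i\ge 1/\norm{x}_\infty$. So a threshold where $f^2$ jumps and $p'(t_k-)\neq p'(t_k+)$ gives sign-dependent $s^2$ coefficients. Unless these cancel across thresholds, W3 fails.

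The hypothesis should therefore be read as follows. It requires a two-sided second-order expansion of $F$ at thresholds where $f^2$ jumps, and continuity of $p$ (so that $F$ is differentiable) at thresholds where $f$ jumps. This is sufficient, but not necessary, because of possible cancellations.

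W4 and W5 from boundedness and continuity of $F$ are fine. You could even bypass Proposition~\ref{prop:W-TUI}: $\E f(\zeta+s)^q$ for $q\le4$ are all finite combinations of $F(t_k-s)$, so $\cum_4(Y_s)\to\E f(\zeta)^4-3$ directly.
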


\paragraph{Atomic criteria and non-equivalence.}

\begin{proposition}[Finite atomic noise]
\label{prop:atomic-W}
Let \(\nu=\sum_{r\in\mathcal A}p_r\delta_r\) have finite support.  Assume
\(\E_\nu f=0\), \(\E_\nu f^2=1\), and that \(f\) is \(C^2\) in a
neighborhood of every atom, with the neighborhoods chosen so that the shifted
values \(r+s\) remain inside them for all sufficiently small \(|s|\).  Then
Condition~\ref{cond:profile-admissibility} holds with
\[
        a_1=\sum_{r\in\mathcal A}p_rf'(r),
\]
\[
        b_1=2\sum_{r\in\mathcal A}p_rf(r)f'(r),
\]
and
\[
        b_2=\sum_{r\in\mathcal A}p_r\bigl(f'(r)^2+f(r)f''(r)\bigr).
\]
\end{proposition}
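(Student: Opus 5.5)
The key structural fact is that for finite atomic noise every shifted quantity is a finite weighted sum over the atoms,
\[
 \E_\nu g(\zeta+s)=\sum_{r\in\mathcal A}p_r\,g(r+s),
\]
so all five conditions reduce to local smoothness of \(f\) at finitely many points. I would therefore verify W1--W5 directly, following Criterion~\ref{crit:direct-profile-verification}. Since \(\mathcal A\) is finite, I can choose \(\delta_1>0\) such that for every atom \(r\) the interval \([r-\delta_1,r+\delta_1]\) lies in the neighborhood on which \(f\) is \(C^2\). On these compact intervals \(f\), \(f'\), \(f''\) are bounded by a common constant \(C_f\).

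For \textbf{W1}, \(f\in L^4(\nu)\) is trivial because \(\nu\) has finite support, and the centering and normalization are hypotheses. For \textbf{W2}, Taylor's theorem with Peano remainder at each atom gives
\[
 f(r+s)=f(r)+sf'(r)+s^2\rho_r(s),\qquad |\rho_r(s)|\le C_f .
\]
Summing against \(p_r\) and using \(\E_\nu f=0\) yields \(m_f(s)=a_1s+O(s^2)\), uniformly for \(|s|\le\delta_1\). This gives the stated \(a_1\) together with the required uniform ratio bound. For \textbf{W3}, I apply the same argument to \(g=f^2\), which is \(C^2\) near each atom with \(g'=2ff'\) and \(g''=2(f'^2+ff'')\). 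The second-order Peano expansion with \(o(s^2)\) remainder is uniform, since there are finitely many atoms and \(g''\) is continuous on the compact intervals. This produces
\[
 b_1=2\sum_r p_rf(r)f'(r),\qquad b_2=\sum_r p_r\bigl(f'(r)^2+f(r)f''(r)\bigr),
\]
with the normalization \(\E_\nu f^2=1\) supplying the constant term.

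For \textbf{W4} and \textbf{W5}, continuity of \(f\) at each atom gives \(f(r+s)\to f(r)\), and \(m_f(s)\to0\). Then
\[
 \E Y_s^4=\sum_r p_r\bigl(f(r+s)-m_f(s)\bigr)^4\to\sum_r p_rf(r)^4,
\qquad \E Y_s^2\to1,
\]
so \(\cum_4(Y_s)\to\E_\nu f^4-3\). Moreover, \(|Y_s|\le 2C_f\) uniformly for \(|s|\le\delta_1\). Consequently, for \(R>2C_f\) the tail expectation in W5 vanishes identically, and the Lindeberg condition holds trivially.

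There is no genuine obstacle in this argument. The only point needing care is the uniformity of the \(o(s^2)\) remainder in W3, which the finiteness of \(\mathcal A\) and compactness of the neighborhoods supply. I would also remark that the result depends on the chosen representative of \(f\) near the atoms, consistent with the earlier remark that the model uses a fixed representative.
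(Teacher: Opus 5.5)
Your proposal is correct and follows the paper's proof. Both write $m_f$ and $q_f$ as finite sums over the atoms, read off $a_1,b_1,b_2$ from second-order Taylor expansions of $f$ and $f^2$ at each atom, and obtain W4--W5 from the boundedness of $f$ near the finitely many atoms; your version simply makes explicit the uniformity and the bound $|Y_s|\le 2C_f$ that the paper leaves implicit. One small point: the bounded remainder you use in W2 comes from the Lagrange (or integral) form of Taylor's theorem rather than the Peano form, but this is cosmetic.
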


\begin{proof}
The shifted profiles are finite sums:
\[
        m_f(s)=\sum_{r\in\mathcal A}p_rf(r+s),
        \qquad
        q_f(s)=\sum_{r\in\mathcal A}p_rf(r+s)^2.
\]
Taylor expansion at each atom gives the stated coefficients.  Since there are only finitely many atoms and \(f\) is locally bounded near each of them, the shifted fourth-moment and Lindeberg conditions are automatic.
\end{proof}

\paragraph{Summary and non-equivalence.}

The criteria above are not equivalent.  The likelihood-ratio criterion controls rough transforms through regularity of the noise law.  The smooth-transform criterion controls arbitrary noise laws through regularity of \(f\).  The bounded rough-transform criterion preserves the low-regularity philosophy most closely, but requires regular density translations.  The atomic criterion shows that discrete noise laws are admissible only when \(f\) has enough pointwise regularity near the atoms.  These routes justify Condition~\ref{cond:profile-admissibility} in broad regimes while Subsection~\ref{app:bare-L4-obstructions} shows that bare \(L^4(\nu)\) alone is insufficient.
\subsection{Obstructions and scope}
\label{app:bare-L4-obstructions}
\label{app:counterexample-W}

This subsection records obstructions to replacing Condition~\ref{cond:profile-admissibility} by the bare assumption \(f\in L^4(\nu)\).  A finite unshifted fourth moment does not by itself give a finite-covariance transformed-spiked LSS theorem.  In the Wigner setting there is also an atomic obstruction: if \(\nu\) is atomic, the shifted transform \(f(\zeta+s)\) depends on values of \(f\) away from the atoms, which are not controlled by the \(L^4(\nu)\)-class of \(f\).

The next example shows that small shifted \(L^4\) distance alone does not control a coherent low-rank mean error.  It does not obstruct the construction in Subsection~\ref{sec:R2-package}, because that construction matches all first four centered moments before comparison.

\begin{proposition}[Metric transfer does not control deterministic response]
\label{prop:E-metric-not-response}
There exist a profile-admissible bounded target transform \(f\), a bounded transform \(h\) with the same unshifted centering and normalization, and a delocalized spike sequence such that
\begin{equation}\label{eq:E-metric-zero}
        \frac{2}{N(N-1)}\sum_{i<j}
        \E|h(\zeta+s_{ij})-f(\zeta+s_{ij})|^4\longrightarrow0,
\end{equation}
while the deterministic mean-matrix difference converges to a nonzero rank-one perturbation and has a nonvanishing analytic LSS response.
\end{proposition}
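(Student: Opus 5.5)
The plan is an explicit construction that exploits the gap between $L^4$ closeness and first-moment closeness at the matrix scale. An $L^4$ error of size $o(1)$ per entry is invisible in the fluctuation metric. A coherent mean error of size $\epsilon s$ per entry, however, summed against the spike direction, produces an order-one rank-one perturbation $N^{-1/2}\cdot\epsilon\sqrt{\lambda N}x_ix_j=\epsilon\sqrt\lambda\,x_ix_j$. Since $\epsilon$ need not vanish, this perturbation survives.

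I would take $\nu$ with a smooth, rapidly decaying density $p$ (e.g.\ Gaussian), a spike with $\|x\|_\infty=o(N^{-1/4})$ and balance (e.g.\ $x$ with entries $\pm N^{-1/2}$ summing to zero), and a bounded smooth centered normalized $f$. For $h$, set $h=f+g$, where $g$ is a bounded perturbation satisfying $\E_\nu g(\zeta)=0$, $\E_\nu[f g]=0$ and $\E_\nu g^2=0$. The last condition is too strong, since it forces $g=0$ $\nu$-a.e.; with an absolutely continuous $\nu$, $g=0$ a.e.\ gives $h(\zeta+s)=f(\zeta+s)$ almost surely, so $g$ cannot be that.

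Instead I would let $g=g_N$ depend on $N$, or treat a family, and require only that $\E|g(\zeta+s)|^4\to0$ uniformly over the relevant shifts while the derivative $\frac{d}{ds}\E g(\zeta+s)|_{s=0}=-\int g p'$ stays bounded away from zero. A concrete choice is a rescaled oscillation or sawtooth $g_N(y)=c\,\epsilon_N\,\psi(y/\epsilon_N)$ localized where $p'\neq0$. This fails, because $\int g p'$ scales like $\epsilon_N$. A cleaner route is to let $g_N=c\,\mathbf 1_{[t,t+\eta_N]}-(\text{centering})$ with $\eta_N\to0$. Then $\E|g_N|^4\approx c^4p(t)\eta_N\to0$, while $\frac{d}{ds}\E g_N(\zeta+s)=c\,(p(t)-p(t+\eta_N))\approx -c\eta_N p'(t)$, which also vanishes. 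So fixed-amplitude indicators fail too, and amplitude must be traded against width: take $c_N=\eta_N^{-1/2}$. Then $\E|g_N|^4\sim\eta_N^{-1}$, which blows up. This tension is the main obstacle.

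The way out is to let the mean shift itself, rather than its derivative, carry the signal. I would pick $g$ to vanish $\nu$-a.e.\ in $L^4$ only along the actual shifts $s_{ij}$, and use a spike whose shifts take values in a structured set. With $x_i=\pm N^{-1/2}$, all shifts equal $\pm\sqrt{\lambda/N}=:\pm\sigma_N$. So I choose a fixed bounded $g$ with $\E g(\zeta\pm\sigma_N)$ of order $\pm\sigma_N$ but $\E|g(\zeta\pm\sigma_N)|^4\to0$. For example, take $g_N(y)=\mathrm{sgn}$-type functions supported on a window of width $w_N$ with amplitude $A_N$, arranged antisymmetrically so that translation by $\pm\sigma_N$ produces mass difference $A_Nw_N\cdot(\sigma_N/w_N)$ when $w_N\ll\sigma_N$. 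The mean difference is then $\sim A_N\sigma_N$, so $A_N$ of order one gives the desired $\Theta(\sigma_N)$ rank-one response, while the fourth moment is $A_N^4 w_N\to0$.

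Finally I would verify three things. First, $h=f+g_N$ keeps the unshifted centering and normalization up to a vanishing renormalization. Second, \eqref{eq:E-metric-zero} holds. Third, the mean matrices differ by $\delta\,\sqrt\lambda\,xx^{\mathsf T}$ plus $o(1)$ in operator and HS norm, with a fixed $\delta\neq0$. The nonvanishing LSS response would then follow from Proposition~\ref{prop:R4-deterministic-response}: $m^{\rm rank1}_{\varphi,\Gamma}(\theta)$ differs from $m^{\rm rank1}_{\varphi,\Gamma}(\theta+\delta\sqrt\lambda)$ for, say, $\varphi(t)=t^2$, by the computation in the low-degree checks, since $\theta^2$ changes.
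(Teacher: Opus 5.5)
Your opening heuristic is exactly the paper's mechanism. A coherent mean error of size $O(s_{ij})$ per entry has fourth moment $O(N^{-2})$ but sums to an order-one rank-one matrix. The gap is in the realization: with a smooth positive density, which you keep throughout, it cannot be done.

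Your window heuristic fails on its face. Since $\E g_N(\zeta+s)=\int g_N(y)p(y-s)\,\dd y$, you get $|\E g_N(\zeta+s)|\le\norm{p}_\infty A_Nw_N=o(\sigma_N)$ when $A_N=O(1)$ and $w_N\ll\sigma_N$. A ``mass difference $A_N\sigma_N$'' would require $\zeta$ to sit at a point.

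More decisively, for Gaussian noise no $h$ whatsoever works with a spike of modulus $|x_i|=N^{-1/2}$. If $g:=h-f$ satisfies $\E g(\xi)=0$, the translation identity and H\"older give, for $|s|\le1$,
\[
 \E g(\xi+s)=\E\bigl[g(\xi+s)\bigl(1-e^{-s\xi-s^2/2}\bigr)\bigr],
 \qquad
 |\E g(\xi+s)|\le C|s|\bigl(\E|g(\xi+s)|^4\bigr)^{1/4}.
\]
Put $\epsilon_{ij}:=\E|g(\xi+s_{ij})|^4$ and $\Delta A_N(i,j):=N^{-1/2}\E g(\xi+s_{ij})$. Then Cauchy--Schwarz yields
\[
 \norm{\Delta A_N}_{\HS}^2
 \le\frac{C\lambda}{N^2}\sum_{i\ne j}\epsilon_{ij}^{1/2}
 \le C\lambda\Bigl(\frac1{N^2}\sum_{i\ne j}\epsilon_{ij}\Bigr)^{1/2}
 \longrightarrow0
\]
under \eqref{eq:E-metric-zero}, whether or not $h$ depends on $N$ or is bounded. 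So the mean-matrix difference is negligible, and no rank-one limit or LSS response survives. Separately, the statement asks for one fixed bounded $h$ with exactly the same unshifted centering and normalization. It does not allow an $N$-dependent $h_N$ corrected by a vanishing renormalization.

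The missing idea is to choose a noise law whose translates are not controlled by $\nu$ itself. Then the unshifted $L^4(\nu)$ class says nothing about $h$ at shifted points. You noticed the obstruction (for a positive density, $g=0$ $\nu$-a.e.\ propagates to every translate) but did not draw this conclusion.

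The paper takes atomic noise $\nu=\frac12\delta_{-1}+\frac12\delta_1$. It uses a bounded continuous $f$ with $f(\pm1+s)=\pm1+s$ for $|s|\le\frac14$, so $f(\zeta)=\zeta$, $m_f(s)=s$, $Y_s\equiv\zeta$, and W1--W5 are immediate. It then sets $h=f+c\psi$ with $\psi(\pm1)=0$ and $\psi(\pm1+s)=|s|$. With the flat delocalized spike $x_i=N^{-1/2}$ (balance is not required), $h(\zeta+s_N)-f(\zeta+s_N)=cs_N$ deterministically. So the left side of \eqref{eq:E-metric-zero} is $c^4\lambda^2N^{-2}$, while $\Delta A_N=\frac{c\sqrt\lambda}{N}(\one\one^{\mathsf T}-I)$ has top eigenvalue tending to $c\sqrt\lambda$. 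A support edge would also do: for uniform noise on $[-\sqrt3,\sqrt3]$ and $g=\one_{(\sqrt3,\sqrt3+1]}$, one has $\E g(\zeta+s)=\E|g(\zeta+s)|^4=s/(2\sqrt3)$ for small $s>0$.

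The response is then computed directly. For any centered background $W_N$ and $\varphi(u)=u^2$,
\[
 \E\bigl[\Tr(W_N+\Delta A_N)^2-\Tr W_N^2\bigr]=\Tr(\Delta A_N)^2\to c^2\lambda.
\]
Your last step instead applies Proposition~\ref{prop:R4-deterministic-response} to $h$, which presupposes that $h$ is profile-admissible. In the paper's example $m_h(s)=s+c|s|$ violates W2, so that route is not available in general.
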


\begin{proof}
Let \(\nu=\frac12\delta_{-1}+\frac12\delta_1\), and choose a bounded continuous \(f\) satisfying
\[
        f(-1+s)=-1+s,
        \qquad
        f(1+s)=1+s,
        \qquad |s|\le\tfrac14.
\]
Then \(f(\zeta)=\zeta\), \(m_f(s)=s\), \(\E f(\zeta+s)^2=1+s^2\), and \(\Var(f(\zeta+s))=1\).  The centered shifted variable is identically \(\zeta\), so W4 and W5 hold automatically.

Let \(\psi\) be bounded and continuous with
\[
        \psi(-1)=\psi(1)=0,
        \qquad
        \psi(-1+s)=\psi(1+s)=|s|,
        \qquad |s|\le\tfrac14,
\]
and set \(h=f+c\psi\) for fixed \(c\ne0\).  Because \(h=f\) on the support of \(\nu\), the two transforms have identical unshifted laws and zero unshifted \(L^4(\nu)\) distance.

Take \(x_i=N^{-1/2}\).  Then \(s_{ij}=s_N=\sqrt\lambda/\sqrt N>0\) and
\[
        h(\zeta+s_N)-f(\zeta+s_N)=cs_N
\]
deterministically.  The left side of \eqref{eq:E-metric-zero} is therefore \(c^4\lambda^2N^{-2}\).  On the other hand, the zero-diagonal deterministic mean difference is
\[
        \Delta A_N(i,j)=N^{-1/2}cs_N=\frac{c\sqrt\lambda}{N},
        \qquad i\ne j,
\]
so
\[
        \Delta A_N=\frac{c\sqrt\lambda}{N}
        ({\bf 1}{\bf 1}^{\mathsf T}-I).
\]
Its leading eigenvalue tends to \(c\sqrt\lambda\).  If \(W_N\) is any centered Wigner background and \(\varphi(u)=u^2\), independence and centering give
\[
        \E\bigl[\Tr(W_N+\Delta A_N)^2-\Tr W_N^2\bigr]
        =\Tr(\Delta A_N)^2\longrightarrow c^2\lambda\ne0.
\]
Thus the analytic deterministic response does not vanish.
\end{proof}

\paragraph{Atomic and shifted-tail obstructions.}
These examples explain why W2--W5 contain translated information rather than following automatically from bare \(L^4(\nu)\).

Suppose \(\nu=\frac12\delta_{-1}+\frac12\delta_1\).  Then an element of \(L^4(\nu)\) only records the two values \(f(-1)\) and \(f(1)\).  The shifted profile, however, uses
\[
        f(-1+s),
        \qquad
        f(1+s).
\]
Therefore two functions that agree \(\nu\)-almost surely can have different shifted profiles.  Thus Condition~\ref{cond:profile-admissibility} is not a property of the bare \(L^4(\nu)\)-equivalence class unless the transform is represented by a function with additional local regularity near the support of \(\nu\).

\begin{proposition}[Bare \(L^4(\nu)\) is not intrinsic for atomic noise]
Let \(\nu=\frac12\delta_{-1}+\frac12\delta_1\).  There exist functions \(f,g\) with \(f=g\) \(\nu\)-almost surely and with the same centered variance-one normalization under \(\nu\), but such that
\[
        \E f(\zeta+s)\ne \E g(\zeta+s)
\]
for every sufficiently small nonzero \(s\).
\end{proposition}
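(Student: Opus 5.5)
We construct the pair explicitly. Start from the function $f$ of Proposition~\ref{prop:E-metric-not-response}, or from any fixed function with $f(-1)=-1$ and $f(1)=1$. This already gives $\E_\nu f(\zeta)=0$ and $\E_\nu f(\zeta)^2=1$. Then set
\[
 g:=f+\psi,
\]
where $\psi$ vanishes at the two atoms $\pm1$ but not near them. A concrete choice is $\psi(-1+s)=\psi(1+s)=s$ for $0<|s|\le\tfrac14$, with $\psi(\pm1)=0$ and $\psi$ bounded elsewhere. For instance, take $\psi(y)=(y-1)$ on $[\tfrac34,\tfrac54]$, $\psi(y)=(y+1)$ on $[-\tfrac54,-\tfrac34]$, and $\psi=0$ otherwise. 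The function $\psi$ is Borel and bounded.

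Because $\psi(\pm1)=0$, we have $f=g$ on $\{-1,1\}$, which is a set of full $\nu$-measure. So $f=g$ holds $\nu$-almost surely. In particular both functions give the same centered, variance-one law under $\nu$, and they define the same element of $L^4(\nu)$.

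Next compute the shifted means directly. Since $\nu$ is atomic, for $0<|s|\le\tfrac14$ we get
\[
 \E g(\zeta+s)-\E f(\zeta+s)=\tfrac12\psi(-1+s)+\tfrac12\psi(1+s)=s\neq0.
\]
This proves the claim for every small nonzero $s$.

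The only point that needs attention is making $\psi$ have the same sign at both shifted atoms, so that the two contributions add rather than cancel. The choice $\psi(\pm1+s)=s$ guarantees this. A choice that is odd about each atom but with opposite signs at $-1$ and $+1$ would cancel and must be avoided. If one also wants $g$ continuous, the choice $\psi(\pm1+s)=|s|$ from Proposition~\ref{prop:E-metric-not-response}, smoothly cut off, works just as well: the difference is then $|s|\ne0$.
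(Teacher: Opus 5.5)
Your proof is correct and takes the same approach as the paper: keep the values at the atoms $\pm1$ and modify the representative only at the shifted points $\pm1+s$, so the $L^4(\nu)$-class is unchanged while the shifted mean changes. Your explicit $\psi$, which has the same sign at both shifted atoms and yields difference exactly $s$, also makes precise the non-cancellation between the two atoms that the paper's sketch leaves implicit.
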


\begin{proof}
Choose \(f=g\) at \(-1\) and \(1\), but redefine \(g\) in neighborhoods of \(-1\) and \(1\) away from the atoms.  This does not change the \(L^4(\nu)\)-class, but it changes \(g(-1+s)\) and \(g(1+s)\).  Hence the shifted mean changes.
\end{proof}

\paragraph{Shifted fourth-moment obstruction.}

Even for continuous noise, unshifted \(L^4(\nu)\) may fail to control shifted fourth moments.  In particular, for \(\nu=\gamma\) there exists a centered variance-one function \(f\in L^4(\gamma)\) such that
\[
        \E|f(\xi+s)|^4=\infty
\]
for every nonzero \(s\) in a punctured neighborhood of zero.  In the Wigner-profile language, this means that the shifted fourth-moment Lindeberg condition in Condition~\ref{cond:profile-admissibility} fails although the unshifted fourth moment is finite.

\begin{proposition}[Failure of shifted fourth moments]
There exists a probability law \(\nu\) with a smooth positive density and a centered variance-one transform \(f\in L^4(\nu)\) such that
\[
        \E|f(\zeta+s)|^4=\infty
\]
for all sufficiently small nonzero \(s\).
\end{proposition}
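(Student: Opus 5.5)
The plan is to build the counterexample directly: take \(\nu=\gamma\), the standard Gaussian law, whose density is smooth and positive. Then use the likelihood-ratio identity \eqref{eq:Gaussian-LR-main}, which rewrites a shifted fourth moment as an unshifted one with a weight:
\[
 \E|f(\xi+s)|^4=\E\bigl[|f(\xi)|^4e^{s\xi-s^2/2}\bigr].
\]
The task is therefore to find \(g:=|f|^4\) that is integrable against \(\gamma\) but not against \(e^{s\xi}\gamma\) for any \(s\neq0\). The weight \(e^{s\xi}\) grows exponentially on one side only, with the side fixed by the sign of \(s\). So we let \(f\) be large on both tails, with \(\gamma\)-integrable but barely so fourth power.

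\textbf{Construction.} Put
\[
 f_0(y):=e^{y^2/8}\bigl(1+|y|\bigr)^{-1}.
\]
Then
\[
 |f_0(y)|^4\,e^{-y^2/2}=(1+|y|)^{-4},
\]
which is integrable, so \(f_0\in L^4(\gamma)\). The weighted integrand, on the other hand, is
\[
 |f_0(y)|^4\,e^{sy-s^2/2}\,e^{-y^2/2}=e^{sy-s^2/2}(1+|y|)^{-4}.
\]
For \(s>0\) this tends to \(+\infty\) as \(y\to+\infty\), and for \(s<0\) it does so as \(y\to-\infty\). Hence
\[
 \E|f_0(\xi+s)|^4=\infty\qquad\text{for every }s\neq0.
\]
This actually holds for all nonzero \(s\), which is stronger than needed.

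\textbf{Normalization.} Since \(f_0\in L^4(\gamma)\subset L^2(\gamma)\), its mean \(\mu=\E f_0(\xi)\) and variance \(\sigma^2=\Var f_0(\xi)>0\) are finite; \(f_0\) is nonconstant, so \(\sigma^2>0\). Set
\[
 f:=(f_0-\mu)/\sigma .
\]
This \(f\) is centered, has variance one, and lies in \(L^4(\gamma)\). Moreover, since \(|a-b|^4\ge 2^{-3}|a|^4-|b|^4\), the shifted fourth moment of \(f\) differs from that of \(f_0/\sigma\) by at most a finite additive constant, up to a factor. So \(\E|f(\xi+s)|^4=\infty\) for all \(s\neq0\).

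\textbf{Remaining checks.} The only real care is the choice of growth rate. The exponent must be exactly tuned so that the \(\gamma\)-integral converges polynomially while any exponential tilt \(e^{sy}\) diverges. The factor \(e^{y^2/8}\) with the polynomial damping \((1+|y|)^{-1}\) achieves precisely this, and I would check that both integrability statements are elementary. Finally, I would note the consequence: W5 fails, and even the uniform fourth-moment bound of Lemma~\ref{lem:uniform-fourth-package} fails, because \(Y_s\) is no longer in \(L^4\).
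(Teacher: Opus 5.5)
Your proposal is correct and follows the paper's proof essentially step for step. The paper also takes $\nu=\gamma$, uses a transform of size $e^{t^2/8}$ with polynomial damping (there $(1+t^2)^{-1/4}$, so the fourth power against $\gamma$ becomes $(1+t^2)^{-1}$), reads off the divergence from the tilt $e^{st}$ via the likelihood ratio, and then normalizes. In that last step the paper keeps the divergence by noting $|g-\E g(\xi)|^4\ge g^4/16$ on the tails where $g\ge 2\E g(\xi)$. Your inequality $|a-b|^4\ge 2^{-3}|a|^4-|b|^4$ does the same job, read as a lower bound rather than in the ``differs by a finite constant'' phrasing you used.
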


\begin{proof}
Take $\nu=\gamma$ and first define
\[
 g(t):=\frac{\exp(t^2/8)}{(1+t^2)^{1/4}}.
\]
Then
\[
 \E g(\xi)^4
 =\frac{1}{\sqrt{2\pi}}\int_{\mathbb R}\frac{1}{1+t^2}\,\dd t<\infty.
\]
For every $s\ne0$, however,
\begin{align*}
 \E g(\xi+s)^4
 &=\int_{\mathbb R}g(t)^4\frac{e^{-(t-s)^2/2}}{\sqrt{2\pi}}\,\dd t\\
 &=\frac{e^{-s^2/2}}{\sqrt{2\pi}}
   \int_{\mathbb R}\frac{e^{st}}{1+t^2}\,\dd t
 =\infty.
\end{align*}
The divergence occurs on the right tail when $s>0$ and on the left tail when
$s<0$.  Since $g\in L^4(\gamma)$, its mean and variance are finite, and the
variance is positive.  Therefore
$f=(g-\E g(\xi))/\sqrt{\Var(g(\xi))}$ is centered, has variance one, belongs to
$L^4(\gamma)$, and retains the same shifted fourth-moment divergence: on the
tails where $g\ge2\E g(\xi)$, one has
$|g-\E g(\xi)|^4\ge g^4/16$.
\end{proof}

\paragraph{Quadratic LSS failure and interpretation.}

The obstruction is visible already at the level of second moments of the
quadratic statistic.  If shifted fourth moments fail along a non-negligible
family of edge shifts, then the centered quadratic statistic contains terms
of the form
\[
        \sum_{i<j}\left( f(\zeta_{ij}+s_{ij})^2-\E f(\zeta+s_{ij})^2\right).
\]
When the corresponding fourth moments are infinite, the variance of this
quadratic statistic is infinite or undefined.  This prevents an
\(L^2\)-strengthened LSS theorem or convergence of the prelimit covariances,
but infinite prelimit variance alone does not logically exclude weak
convergence in distribution.

\begin{proposition}[Quadratic-statistic moment obstruction]
Suppose that, for at least one edge shift \(s_{ij}\), the variable
\(f(\zeta+s_{ij})^2\) has infinite variance.  Then
\(\Tr(M_N^f)^2\) has infinite variance.  Consequently, an analytic LSS
theorem for \(\varphi(t)=t^2\) cannot additionally assert \(L^2\) convergence
or convergence of the prelimit variance to the finite Wigner-profile
covariance.
\end{proposition}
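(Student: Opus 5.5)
The argument is a short deterministic expansion of the quadratic statistic followed by a variance computation.

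\textbf{Expansion.} Since $M_N^f$ is real symmetric with zero diagonal,
\[
 \Tr(M_N^f)^2=\frac2N\sum_{i<j}f(\zeta_{ij}+s_{ij})^2 .
\]
This is a sum of independent random variables, one per edge. Write $T_{ij}:=f(\zeta_{ij}+s_{ij})^2$.

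\textbf{Infinite-mean case.} I first dispose of the case where some $T_{ij}$ has infinite mean. All summands are nonnegative, so $\E\Tr(M_N^f)^2=\infty$. Then the variance is infinite, or undefined, which is the same obstruction. If instead all $T_{ij}$ have finite mean, each centered variable $T_{ij}-\E T_{ij}$ is well defined and they are independent.

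\textbf{Main step.} Suppose $\Var T_{ab}=\infty$ for the edge $(a,b)$ in the hypothesis. I decompose
\[
 \Tr(M_N^f)^2-\E\Tr(M_N^f)^2=\frac2N\bigl(T_{ab}-\E T_{ab}\bigr)+\frac2N R,
\]
where $R$ is the centered sum over the remaining edges, independent of $T_{ab}$.

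The key inequality is that for independent $X,Y$ with $\E X^2=\infty$, one has $\E(X+Y)^2=\infty$. I would prove this by conditioning on $Y=y$: for each fixed $y$, $\E(X+y)^2=\infty$. Tonelli's theorem applied to the nonnegative integrand $(X+Y)^2$ then gives $\E(X+Y)^2=\int\E(X+y)^2\,\dd P_Y(y)=\infty$.

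No integrability of $R$ is needed for this. That is the only delicate point, and it is handled by conditioning rather than by expanding the square, since expanding would require finite cross terms. Hence $\Var\Tr(M_N^f)^2=\infty$.

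\textbf{Consequence.} The limiting covariance $\mathcal V_\kappa^\Gamma(t^2,t^2)=4+2\kappa$ from \eqref{eq:quadratic-covariance-check} is finite. A sequence of random variables whose variances are all $+\infty$ cannot have prelimit variances converging to this finite number. It also cannot converge in $L^2$ to the Gaussian limit, because $L^2$ convergence requires finite second moments eventually, and with them convergence of the variances.

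I would close by noting, consistent with the preceding paragraph, that this argument says nothing about weak convergence itself.
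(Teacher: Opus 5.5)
Your proposal is correct and follows the paper's route: you expand $\Tr(M_N^f)^2=\frac2N\sum_{i<j}f(\zeta_{ij}+s_{ij})^2$ as a sum of independent nonnegative edge terms, pass the infinite variance of one summand to the whole sum, and then rule out $L^2$ and variance convergence against the finite value $4+2\kappa$. Your separate infinite-mean case and your conditioning/Tonelli argument for $\E(X+Y)^2=\infty$ make rigorous the step that the paper states in one line.
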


\begin{proof}
For the zero-diagonal model,
\[
        \Tr (M_N^f)^2
        =2N^{-1}\sum_{i<j} f(\zeta_{ij}+s_{ij})^2.
\]
The summands are independent.  If a nonempty subfamily has infinite variance,
then the displayed sum has infinite variance.  Therefore no conclusion that
includes finite prelimit variance, convergence of variances to
\(\mathcal V_{\kappa_4^{f,\nu}}^\Gamma(t^2,t^2)\), or convergence in
\(L^2\) can hold.  This proves the stated moment obstruction.
\end{proof}

\paragraph{Interpretation.}

These examples do not prove that Condition~\ref{cond:profile-admissibility}
is necessary in an exact logical sense.  They show that bare unshifted
\(L^4(\nu)\) does not determine the shifted model and does not guarantee the
moment bounds used by the finite-covariance proof.  A theorem with the present
centering, covariance identification, and replacement method therefore needs
shifted-profile information, shifted fourth-moment control, or concrete
sufficient conditions implying both.  A different weak-convergence theorem
under weaker tails is not excluded.

For Gaussian noise this distinction is sharp at the level of the assumptions
used in the present proof.  Lemma~\ref{lem:Gaussian-L4epsilon-W} shows that
$L^{4+\epsilon}(\gamma)$, and hence polynomial growth, is sufficient for the
uniform condition W5.  The shifted fourth-moment example above shows that
bare $L^4(\gamma)$ need not be sufficient.  This does not make the
$4+\epsilon$ exponent necessary: a transform may satisfy W5 directly without
belonging to any $L^{4+\epsilon}(\gamma)$, and an averaged shifted-tail theorem
would constitute a different, Gaussian-specific refinement.

\end{document}